\documentclass[10pt]{amsart}
\usepackage[utf8]{inputenc}
\usepackage{times}
\usepackage{pdflscape} 
\usepackage{afterpage}
\usepackage{longtable}
\usepackage[dvipsnames]{xcolor}
\usepackage{todonotes}

\usepackage{etoolbox}

\makeatletter
\patchcmd{\@thm}{\let\thm@indent\indent}{\let\thm@indent\noindent}{}{}
\patchcmd{\@thm}{\thm@headfont{\scshape}}{\thm@headfont{\bfseries}}{}{}



\usepackage[T1]{fontenc}

\usepackage{hyperref}
\hypersetup{
    colorlinks = true,
    citecolor = [rgb]{0.00, 0.5, 0.00},
}

\usepackage{mathtools}

\let\U\undefined
\let\G\undefined

\usepackage{multirow}
\usepackage{longtable}

\usepackage{amsmath}
\usepackage{amsfonts}
\usepackage{amssymb}
\usepackage{float}
\usepackage{amsthm}
\usepackage{comment}
\usepackage{amscd}
\usepackage{amsxtra}
\usepackage{epsfig}
\usepackage{epigraph}
\usepackage{pgfplots}
\usepackage{enumitem}



\usepackage{listings}
\usepackage{color}

\definecolor{dkgreen}{rgb}{0,0.6,0}
\definecolor{gray}{rgb}{0.5,0.5,0.5}
\definecolor{mauve}{rgb}{0.58,0,0.82}

\lstset{frame=tb,
  language=Java,
  aboveskip=3mm,
  belowskip=3mm,
  showstringspaces=false,
  columns=flexible,
  basicstyle={\small\ttfamily},
  numbers=none,
  numberstyle=\tiny\color{gray},
  keywordstyle=\color{blue},
  commentstyle=\color{dkgreen},
  stringstyle=\color{mauve},
  breaklines=true,
  breakatwhitespace=true,
  tabsize=3
}

\usepackage{color}
\usepackage[all]{xy}
\usepackage{verbatim}

\usepackage{eucal}
\usepackage{mathrsfs}

\usepackage{graphicx}
\usepackage{tikz-cd}

\usepackage{url}
\usepackage{verbatim}

\usepackage{xy}
\xyoption{all}
\newcommand{\xar}[1]{\xrightarrow{{#1}}}

\usepackage{amsthm}
\usepackage{chngcntr}

\usepackage{caption} 
\captionsetup[table]{skip=-10pt}

\setcounter{tocdepth}{1}

\makeatletter
\def\@tocline#1#2#3#4#5#6#7{\relax
  \ifnum #1>\c@tocdepth 
  \else
    \par \addpenalty\@secpenalty\addvspace{#2}%
    \begingroup \hyphenpenalty\@M
    \@ifempty{#4}{%
      \@tempdima\csname r@tocindent\number#1\endcsname\relax
    }{%
      \@tempdima#4\relax
    }%
    \parindent\z@ \leftskip#3\relax \advance\leftskip\@tempdima\relax
    \rightskip\@pnumwidth plus4em \parfillskip-\@pnumwidth
    #5\leavevmode\hskip-\@tempdima
      \ifcase #1
       \or\or \hskip 1em \or \hskip 2em \else \hskip 3em \fi%
      #6\nobreak\relax
    \hfill\hbox to\@pnumwidth{\@tocpagenum{#7}}\par
    \nobreak
    \endgroup
  \fi}
\makeatother

\usepackage[noabbrev,capitalize]{cleveref} 


\crefformat{nul}{(#2#1#3)}
\Crefformat{nul}{(#2#1#3)}

\crefname{section}{\S}{\S\S}
\crefname{subsection}{\S}{\S\S}
\crefname{axioms}{Axiom}{Axioms}
\crefname{exercise}{Exercise}{Exercises}
\crefname{exercisenum}{Exercise}{Exercises}
\crefname{construction}{Construction}{Constructions}
\crefname{problem}{Problem}{Problems}
\crefname{theorem}{Theorem}{Theorems}
\crefname{definition}{Definition}{Definitions}
\crefname{prop}{Proposition}{Propositions}
\crefname{lemma}{Lemma}{Lemmas}
\crefname{example}{Example}{Examples}
\crefname{examplealph}{Example}{Examples}
\crefname{corollary}{Corollary}{Corollaries}
\crefname{nonexample}{Nonexample}{Nonexamples}
\crefname{equation}{}{}
\crefname{summary}{Summary}{Summaries}
\crefname{recollection}{Recollection}{Recollections}
\Crefname{recollection}{Recollection}{Recollections}
\Crefname{nonexample}{Nonexample}{Nonexamples}
\Crefname{corollary}{Corollary}{Corollaries}
\Crefname{corollary}{Corollary}{Corollaries}
\Crefname{axioms}{Axiom}{Axioms}
\Crefname{exercise}{Exercise}{Exercises}
\Crefname{exercisenum}{Exercise}{Exercises}
\Crefname{construction}{Construction}{Constructions}
\Crefname{problem}{Problem}{Problems}
\Crefname{theorem}{Theorem}{Theorems}
\Crefname{definition}{Definition}{Definitions}
\Crefname{prop}{Proposition}{Propositions}
\Crefname{lemma}{Lemma}{Lemmas}
\Crefname{example}{Example}{Examples}
\Crefname{examplealph}{Example}{Examples}
\Crefname{section}{\S}{\S\S}
\Crefname{subsection}{\S}{\S\S}



\DeclareMathOperator{\Fun}{Fun}
\DeclareMathOperator{\Hom}{Hom}
\DeclareMathOperator{\Aut}{Aut}
\DeclareMathOperator{\Ext}{Ext}

\DeclareMathOperator{\spec}{Spec}
\DeclareMathOperator{\spev}{Spev}
\DeclareMathOperator{\spf}{Spf}

\DeclareMathOperator{\QCoh}{\mathrm{QCoh}}

\DeclareMathOperator{\tr}{Tr}
\DeclareMathOperator{\End}{End}
\DeclareMathOperator{\Pic}{Pic}

\DeclareMathOperator{\colim}{colim}

\DeclareMathOperator{\Tot}{Tot}
\DeclareMathOperator{\im}{im}

\DeclareMathOperator{\Sym}{Sym}

\numberwithin{equation}{section}
\newtheorem{lemma}{Lemma}[section]

\newtheorem{corollary}[lemma]{Corollary}

\newtheorem{theorem}[lemma]{Theorem}

\newtheorem{prop}[lemma]{Proposition}

\newtheorem*{conj-moore}{Conjecture~\ref{moore-splitting}}
\newtheorem*{conj-cent}{Conjecture~\ref{centrality-conj}}
\newtheorem*{conj-tmf}{Conjecture~\ref{tmf-conj}}
\newtheorem*{thm-main}{Theorem~\ref{main-thm}}
\newtheorem*{cor-bpn}{Corollary~\ref{bpn}}
\newtheorem*{thm-string}{Theorem~\ref{mstring}}

\theoremstyle{definition}

\newtheorem{definition}[lemma]{Definition}
\newtheorem{warning}[lemma]{Warning}
\newtheorem{example}[lemma]{Example}

\newtheorem{construction}[lemma]{Construction}
\newtheorem{remark}[lemma]{Remark}
\newtheorem{notation}[lemma]{Notation}

\newtheorem{setup}[lemma]{Setup}

\newtheorem{recall}[lemma]{Recollection}

\newtheorem{observe}[lemma]{Observation}

\newtheorem{heuristic}[lemma]{Heuristic}

\DeclareMathSymbol{A}{\mathalpha}{operators}{`A}
\DeclareMathSymbol{B}{\mathalpha}{operators}{`B}
\DeclareMathSymbol{C}{\mathalpha}{operators}{`C}
\DeclareMathSymbol{D}{\mathalpha}{operators}{`D}
\DeclareMathSymbol{E}{\mathalpha}{operators}{`E}
\DeclareMathSymbol{F}{\mathalpha}{operators}{`F}
\DeclareMathSymbol{G}{\mathalpha}{operators}{`G}
\DeclareMathSymbol{H}{\mathalpha}{operators}{`H}
\DeclareMathSymbol{I}{\mathalpha}{operators}{`I}
\DeclareMathSymbol{J}{\mathalpha}{operators}{`J}
\DeclareMathSymbol{K}{\mathalpha}{operators}{`K}
\DeclareMathSymbol{L}{\mathalpha}{operators}{`L}
\DeclareMathSymbol{M}{\mathalpha}{operators}{`M}
\DeclareMathSymbol{N}{\mathalpha}{operators}{`N}
\DeclareMathSymbol{O}{\mathalpha}{operators}{`O}
\DeclareMathSymbol{P}{\mathalpha}{operators}{`P}
\DeclareMathSymbol{Q}{\mathalpha}{operators}{`Q}
\DeclareMathSymbol{R}{\mathalpha}{operators}{`R}
\DeclareMathSymbol{S}{\mathalpha}{operators}{`S}
\DeclareMathSymbol{T}{\mathalpha}{operators}{`T}
\DeclareMathSymbol{U}{\mathalpha}{operators}{`U}
\DeclareMathSymbol{V}{\mathalpha}{operators}{`V}
\DeclareMathSymbol{W}{\mathalpha}{operators}{`W}
\DeclareMathSymbol{X}{\mathalpha}{operators}{`X}
\DeclareMathSymbol{Y}{\mathalpha}{operators}{`Y}
\DeclareMathSymbol{Z}{\mathalpha}{operators}{`Z}


\renewcommand{\AA}{\mathbf{A}}
\newcommand{\FF}{\mathbf{F}}
\newcommand{\Z}{\mathbf{Z}}
\newcommand{\QQ}{\mathbf{Q}}
\newcommand{\cc}{\mathbf{C}}
\newcommand{\cJ}{\mathcal{J}}
\newcommand{\cC}{\mathcal{C}}
\newcommand{\cd}{\mathcal{D}}
\newcommand{\RR}{\mathbf{R}}
\newcommand{\Deltab}{\mathbf{\Delta}}

\newcommand{\Sp}{\mathrm{Sp}}

\newcommand{\Aff}{\mathrm{Aff}}

\newcommand{\Mfg}{\M_\mathrm{fg}}

\newcommand{\co}{\mathcal{O}}

\newcommand{\PP}{\mathbf{P}}

\newcommand{\GG}{\mathbf{G}}

\newcommand{\LMod}{\mathrm{LMod}}

\newcommand{\Mod}{\mathrm{Mod}}
\newcommand{\coMod}{\mathrm{coMod}}
\newcommand{\coLMod}{\mathrm{coLMod}}
\newcommand{\cL}{\mathcal{L}}

\newcommand{\CP}{{\mathbf{C}P}}
\newcommand{\HHP}{\mathbf{H}P}
\newcommand{\RP}{\mathbf{R}P}

\newcommand{\Map}{\mathrm{Map}}

\newcommand{\Lone}{L_{K(1)}}

\newcommand{\cA}{\mathcal{A}}

\newcommand{\cT}{\mathcal{T}}
\newcommand{\LL}{\mathbf{L}}

\newcommand{\Alg}{\mathrm{Alg}}
\newcommand{\Sq}{\mathrm{Sq}}

\newcommand{\N}{\mathrm{N}}

\newcommand{\Eoo}{{\mathbf{E}_\infty}}

\newcommand{\CAlg}{\mathrm{CAlg}}

\newcommand{\cI}{\mathcal{I}}
\newcommand{\cf}{\mathcal{F}}

\newcommand{\univ}{\mathrm{univ}}

\newcommand{\Def}{\mathrm{Def}}

\newcommand{\ad}{\mathrm{ad}}
\newcommand{\Lie}{\mathrm{Lie}}

\newcommand{\ev}{\mathrm{ev}}

\newcommand{\cH}{\mathcal{H}}

\newcommand{\cN}{\mathcal{N}}

\newcommand{\ul}[1]{\underline{#1}}
\newcommand{\ol}[1]{\overline{#1}}

\newcommand{\Perf}{\mathrm{Perf}}

\newcommand{\E}[1]{{\mathbf{E}_{{#1}}}}
\newcommand{\mmod}{/\!\!/}

\newcommand{\id}{\mathrm{id}}

\newcommand{\gl}{\mathfrak{gl}}
\newcommand{\SL}{\mathrm{SL}}
\newcommand{\GL}{\mathrm{GL}}
\newcommand{\BGL}{\mathrm{BGL}}

\newcommand{\MU}{\mathrm{MU}}
\newcommand{\BU}{\mathrm{BU}}

\newcommand{\SO}{\mathrm{SO}}

\newcommand{\KU}{\mathrm{KU}}
\newcommand{\KO}{\mathrm{KO}}
\newcommand{\ku}{\mathrm{ku}}
\newcommand{\ko}{\mathrm{ko}}

\newcommand{\tmf}{\mathrm{tmf}}

\newcommand{\TMF}{\mathrm{TMF}}

\newcommand{\U}{\mathrm{U}}
\newcommand{\SU}{\mathrm{SU}}

\renewcommand{\H}{\mathrm{H}}


\newcommand{\Spin}{\mathrm{Spin}}
\newcommand{\String}{\mathrm{String}}

\newcommand{\Res}{\mathrm{Res}}
\newcommand{\Ind}{\mathrm{Ind}}

\newcommand{\HH}{\mathrm{HH}}
\newcommand{\HC}{\mathrm{HC}}
\newcommand{\HP}{\mathrm{HP}}
\newcommand{\THH}{\mathrm{THH}}

\newcommand{\heart}{\heartsuit}

\newcommand{\Coh}{\mathrm{Coh}}

\newcommand{\op}{\mathrm{op}}

\newcommand{\nil}{\mathrm{nil}}

\newcommand{\cB}{\mathcal{B}}
\newcommand{\dR}{\mathrm{dR}}

\newcommand{\fr}[1]{\mathfrak{#1}}
\newcommand{\g}{\mathfrak{g}}
\newcommand{\G}{\mathrm{G}}

\newcommand{\cP}{\mathcal{P}}

\newcommand{\pr}{\mathrm{pr}}

\newcommand{\cU}{\mathcal{U}}

\newcommand{\Rep}{\mathrm{Rep}}

\newcommand{\Ran}{\mathrm{Ran}}

\newcommand{\gr}{\mathrm{gr}}

\newcommand{\shvcat}{\mathrm{ShvCat}}

\newcommand{\free}{\mathrm{free}}

\newcommand{\cR}{\mathcal{R}}

\newcommand{\coCAlg}{\mathrm{coCAlg}}

\newcommand{\bH}{\mathbf{H}}

\newcommand{\cV}{\mathcal{V}}

\newcommand{\mix}{\mathrm{mixed}}

\newcommand{\an}{\mathrm{an}}
\newcommand{\triv}{\mathrm{triv}}

\newcommand{\unit}{\mathbf{1}}

\newcommand{\cM}{\mathcal{M}}

\newcommand{\Shv}{\mathrm{Shv}}

\newcommand{\DMod}{\mathrm{DMod}}

\newcommand{\Bun}{\mathrm{Bun}}
\renewcommand{\ss}{\mathrm{ss}}

\newcommand{\IndCoh}{\mathrm{IndCoh}}

\newcommand{\Loc}{\mathrm{Loc}}

\newcommand{\Ad}{\mathrm{Ad}}

\newcommand{\M}{\mathcal{M}}

\newcommand{\Bl}{\mathrm{Bl}}

\newcommand{\PGL}{\mathrm{PGL}}

\renewcommand{\det}{\mathrm{det}}

\newcommand{\tate}{\mathrm{Tate}}

\newcommand{\DD}{\mathfrak{D}}

\newcommand{\bX}{\mathbb{X}}
\newcommand{\reg}{\mathrm{reg}}
\newcommand{\rss}{\mathrm{rss}}

\newcommand{\cS}{\mathcal{S}}

\renewcommand{\tr}{\mathrm{tr}}

\renewcommand{\S}{Section }

\newcommand{\PrL}{\mathrm{Pr}^\mathrm{L}}

\newcommand{\WW}{\mathbf{W}}

\newcommand{\sph}{\mathrm{sph}}

\newcommand{\punc}[1]{{\overset{\circ}{#1}}}

\newcommand{\Top}{{\mathcal{S}}}

\newcommand{\gen}{\mathrm{gen}}

\renewcommand{\sqcup}{\amalg}

\makeatletter
\providecommand{\leftsquigarrow}{%
  \mathrel{\mathpalette\reflect@squig\relax}%
}
\newcommand{\reflect@squig}[2]{%
  \reflectbox{$\m@th#1\rightsquigarrow$}%
}
\makeatother

\newcommand{\Cart}{\mathrm{Cart}}

\newcommand{\bull}{\bullet}

\newcommand{\bD}{\mathbf{D}}

\newcommand{\pw}[1]{[\![#1]\!]}
\newcommand{\ls}[1]{(\!(#1)\!)}

\newcommand{\act}{\circlearrowleft}

\newcommand{\Cp}{{\Z/p}}

\newcommand{\Efr}[1]{{\mathbf{E}_{{#1}}^\mathrm{fr}}}

\newcommand{\aff}{\mathrm{aff}}

\newcommand{\rank}{\mathrm{rank}}

\newcommand{\sh}{\mathrm{sh}}

\renewcommand{\sl}{\mathfrak{sl}}

\renewcommand{\tilde}{\widetilde}

\newcommand{\weak}{\mathrm{weak}}

\renewcommand{\min}{\mathrm{min}}
\newcommand{\modc}{{\text{-}\mathrm{mod}}}

\newcommand{\Fl}{\mathrm{Fl}}
\newcommand{\Gr}{\mathrm{Gr}}

\newcommand{\rot}{\mathrm{rot}}

\newcommand{\ld}[1]{{\check{{#1}}}}

\usepackage{slashed}
\newcommand{\fourd}{{4\mathrm{d}}}
\newcommand{\fived}{{5\mathrm{d}}}

\newcommand{\cLoc}{\mathcal{L}\mathrm{oc}}

\newcommand{\pdb}[1]{\langle{#1}\rangle}

\newcommand{\lcc}{\mathrm{lcc}}
\newcommand{\dreg}{\text{-}\mathrm{reg}}
\newcommand{\pgl}{\mathfrak{pgl}}
\newcommand{\elc}{\mathrm{ell}}
\newcommand{\RU}{\mathrm{RU}}
\newcommand{\cpl}{{\text{-}\mathrm{cplt}}}
\newcommand{\Perv}{\mathrm{Perv}}
\newcommand{\bV}{\mathbf{V}}
\newcommand{\std}{\mathrm{std}}

\newcommand{\Sat}{\mathrm{Sat}}
\newcommand{\faux}{\mathrm{faux}}
\newcommand{\OP}{\mathbf{O}P}
\renewcommand{\top}{\mathrm{top}}
\newcommand{\PSO}{\mathrm{PSO}}

\title[Chromatic aberrations of geometric Satake]{Chromatic aberrations of geometric Satake over the regular locus}
\author{S. K. Devalapurkar}
\address{1 Oxford St, Cambridge, MA 02139}
\email{sdevalapurkar@math.harvard.edu, \today}
\thanks{Part of this work was done when the author was supported by the PD Soros Fellowship and NSF DGE-2140743. The present article is a preliminary version, so any comments and suggestions for improving it are greatly appreciated! I'll post major updates to the arXiv, but I'll upload minor edits to my website at \url{https://sanathdevalapurkar.github.io/files/grG-regular.pdf}; so please see there for the most up-to-date version.}

\begin{document}

\epigraph{It is an established practice to take old theorems about ordinary homology, and generalise them so as to obtain theorems about generalised homology theories.}{J. F. Adams, \cite{adams-lectures-coh}}

\begin{abstract}
    Let $G$ be a connected, simply-laced, almost simple algebraic group over $\cc$, let $G_c$ be a maximal compact subgroup of $G(\cc)$, and let $T_c$ be a maximal torus therein. Let $\Gr_G$ denote the affine Grassmannian of $G$, and let $\ld{G}$ denote the Langlands dual group to $G$ with Lie algebra $\ld{\g}$. The derived geometric Satake equivalence of Bezrukavnikov-Finkelberg gives an equivalence between the $\infty$-category $\Loc_{G_c}(\Gr_G; \cc)$ of $G_c$-equivariant local systems of $\cc$-vector spaces on $\Gr_G$ and the $\infty$-category of quasicoherent sheaves on a large open substack of $\ld{\g}^\ast[2]/\ld{G}$. In this article, we study the analogous story when $\Loc_{G_c}(\Gr_G; \cc)$ is replaced by the $\infty$-category of $T_c$-equivariant local systems of $k$-modules over $\Gr_G(\cc)$, where $k$ is ($2$-periodic) rational cohomology, (complex) K-theory, or elliptic cohomology. Crucial to our work is the \textit{genuine} equivariant refinement of these cohomology theories. We show that, although there may not be an \textit{equivalence} as in derived geometric Satake, the $\infty$-category $\Loc_{T_c}(\Gr_G; k)$ admits a \textit{$1$-parameter degeneration} to an $\infty$-category of quasicoherent sheaves built out of the geometry of various Langlands-dual stacks associated to $k$ and the $1$-dimensional group scheme computing $S^1$-equivariant $k$-cohomology. For example, when $k$ is an elliptic cohomology theory with elliptic curve $E$, the $\infty$-category $\Loc_{T_c}(\Gr_G; k)$ degenerates to the $\infty$-category of quasicoherent sheaves on a large open locus in the moduli stack of $\ld{B}$-bundles of degree $0$ on $E$.
    We also study several applications of these equivalences, including: relating the generalized equivariant homology of $\Gr_T$ to generalizations of the Weyl algebra on a torus; proving multiplicative and elliptic versions of the Gelfand-Graev action on the affine closure of the cotangent bundle of the basic affine space; the interaction between power operations (like Steenrod and Adams operations) and derived geometric Satake; relations to work of Brylinski-Zhang and stable splittings of compact Lie groups; and the analogy to work of Hopkins-Kuhn-Ravenel.
\end{abstract}


\maketitle
\tableofcontents
\newpage

\section{Introduction}
\subsection{The derived geometric Satake equivalence}

Let $G$ be a semisimple algebraic group over $\cc$, and let $\Gr_G$ denote the affine Grassmannian, defined as the quotient $G(\cc\ls{t})/G(\cc\pw{t}) =: G\ls{t}/G\pw{t}$. The geometric Satake equivalence of Mirkovic-Vilonen states:
\begin{theorem}[{Mirkovic-Vilonen, \cite{mirkovic-vilonen}}]\label{thm: mirkovic-vilonen}
    Let $\ld{G}_\Z$ denote the smooth split reductive group scheme over $\Z$ whose root datum is dual to that of $G$.
    Then there is an equivalence of symmetric  monoidal abelian categories
    $$\mathrm{Perv}_{G\pw{t}}(\Gr_G; \Z) \simeq \Rep(\ld{G}_\Z).$$
\end{theorem}
The abelian category $\mathrm{Perv}_{G\pw{t}}(\Gr_G; \Z)$ arises as the heart of a $t$-structure on the stable $\infty$-category $\Shv^c_{G\pw{t}}(\Gr_G; \Z)$ of $G\pw{t}$-equivariant constructible sheaves on $\Gr_G$. However, \cref{thm: mirkovic-vilonen} does not lift to an equivalence of stable $\infty$-categories between $\Shv^c_{G\pw{t}}(\Gr_G; \Z)$ and the derived $\infty$-category of $\Rep(\ld{G}_\Z)$. Nevertheless, one has:
\begin{theorem}[{Bezrukavnikov-Finkelberg, \cite{bf-derived-satake}}]\label{thm: intro bf derived satake}
    Let $\ld{G} = \ld{G}_\cc$. There is a $\cc$-linear equivalence of monoidal stable $\infty$-categories
    $$\Shv^c_{G\pw{t}}(\Gr_G; \cc) \simeq \Perf(\ld{\g}^\ast[2]/\ld{G}).$$
    Here, $\ld{\g}^\ast[2]$ denotes the derived scheme given by $\ld{\g}^\ast$ placed in (homological) degree $2$. Using Koszul duality, the right-hand side can be rewritten as
    $$\Perf(\ld{\g}^\ast[2]/\ld{G}) \simeq \Coh((\{0\} \times_{\ld{\g}} \{0\})/\ld{G}).$$
\end{theorem}
In \cite[Theorem 6.6.1]{campbell-raskin-satake}, it was shown that the above equivalence can be refined to an equivalence of monoidal factorization stable $\infty$-categories. (We will not need this refinement here, and only mention it for the sake of completeness.) There are several variants of \cref{thm: intro bf derived satake} which have been proved in the literature; of relevance to us is a theorem from \cite{abg-iwahori-satake} concerning the Iwahori subgroup $I$ of $G\pw{t}$. Namely, fix a Borel subgroup $B\subseteq G$ with unipotent radical $N$, and let $I = G\pw{t} \times_G B$. Then:
\begin{theorem}[{Arkhipov-Bezrukavnikov-Ginzburg, \cite{abg-iwahori-satake}}]\label{thm: intro abg}
    There is an equivalence 
    $$\Shv_I^c(\Gr_G; \cc) \simeq \Coh((\tilde{\ld{\cN}} \times_{\ld{\g}} \{0\})/\ld{G}),$$
    where $\tilde{\ld{\cN}} = T^\ast(\ld{G}/\ld{B})$ is the Springer resolution. Using Koszul duality, the right-hand side can be rewritten as
    $$\Coh((\tilde{\ld{\cN}} \times_{\ld{\g}} \{0\})/\ld{G}) \simeq \Perf(\tilde{\ld{\g}}_\cc[2]/\ld{G}_\cc),$$
    where $\tilde{\ld{\g}}_\cc[2] = \ld{G} \times^{\ld{B}} \ld{\fr{n}}^\perp[2]$ is a shifted analogue of the Grothendieck-Springer resolution. 
\end{theorem}
The equivalences of \cref{thm: intro bf derived satake} and \cref{thm: intro abg} admit simpler analogues, where one considers the full subcategories of the ind-completions of $\Shv^c_{G\pw{t}}(\Gr_G; \cc)$ and $\Shv^c_{I}(\Gr_G; \cc)$ generated by the constant sheaf. Let us denote these $\infty$-categories by $\Loc_{G\pw{t}}(\Gr_G; \cc)$ and $\Loc_I(\Gr_G; \cc)$; they are categories of $G\pw{t}$- and $I$-equivariant local systems of complexes of $\cc$-vector spaces on $\Gr_G$. The notion of ``local system'' here must be interpreted in the derived sense, as (equivariant) representations of the fundamental $\infty$-groupoid of $\Gr_G$, and not of its fundamental groupoid (which would be trivial if $G$ is simply-connected). As we will discuss in the body of the article, the above results then restrict to equivalences
\begin{align}
    \Loc_{G\pw{t}}(\Gr_G; \cc) & \simeq \QCoh(\ld{\g}^{\ast,\reg}[2]/\ld{G}), \label{eq: intro reg locus satake} \\
    \Loc_I(\Gr_G; \cc) & \simeq \QCoh(\tilde{\ld{\g}}^{\reg}[2]/\ld{G}). \label{eq: intro reg locus abg}
\end{align}
Here, $\ld{\g}^{\ast,\reg}$ denotes the open subscheme of \textit{regular} elements in $\ld{\g}^\ast$, i.e., those elements whose stabilizer under the coadjoint action of $\ld{G}$ has dimension given by the rank of $\ld{G}$; and similarly for $\tilde{\ld{\g}}^\reg$. We will refer to these equivalences as the derived geometric Satake equivalence (resp. the ABG equivalence) over the regular locus.
In fact, one can can give a proof of \cref{thm: intro bf derived satake} using \cref{eq: intro reg locus satake} combined with the fact that $\ld{\g}^{\ast,\reg}$ has complement of large codimension in $\ld{\g}^\ast$ (see \cite[Section 3.2]{ku-rel-langlands}).

Our goal in this article is to study an analogue of the derived geometric Satake equivalence over the regular locus where $\cc$ is replaced by other ring (spectra) of coefficients. The observation allowing us to do this is that the $\infty$-categories $\Loc_{G\pw{t}}(\Gr_G; \cc)$ and $\Loc_I(\Gr_G; \cc)$ are homotopy-theoretic in nature. Indeed, they depend only on the $G\pw{t}$-equivariant (resp. $I$-equivariant) homotopy type of $\Gr_G$. This, in turn can be understood using a result of Quillen and Garland-Raghunathan (see \cite{garland-raghunathan, mitchell-buildings}), which gives a homotopy equivalence $\Gr_G \simeq \Omega G_c$ (and a homeomorphism onto the subspace of $\Omega G_c$ on those based loops with \textit{polynomial} Fourier expansion); so all computations reduce to homotopy-theoretic statements about $\Omega G_c$. Here, $G_c \subseteq G(\cc)$ is a maximal compact subgroup. Since $G\pw{t}$ is homotopy equivalent to $G_c$ (and $I$ is homotopy equivalent to a maximal torus $T_c \subseteq G_c$), the equivalences \cref{eq: intro reg locus satake} and \cref{eq: intro reg locus abg} can be restated as a pair of equivalences
\begin{align}
    \Loc_{G_c}(\Gr_G; \cc) & \simeq \QCoh(\ld{\g}^{\ast,\reg}[2]/\ld{G}), \label{eq: intro restated reg locus satake} \\
    \Loc_{T_c}(\Gr_G; \cc) & \simeq \QCoh(\tilde{\ld{\g}}^{\reg}[2]/\ld{G}). \label{eq: intro restated reg locus abg}
\end{align}
\subsection{A K-theoretic and elliptic variant}

As mentioned above, our goal in this article is to generalize the equivalence \cref{eq: intro reg locus abg} to the case of K-theoretic and elliptic cohomology coefficients. In order to motivate the discussion, we need to review some of the setup of equivariant generalized cohomology. (For the homotopy theorists: it is exactly the \textit{genuine} nature of equivariant generalized cohomology which lends substance to results like \cref{thm: intro omnibus} below.)

If $G_c$ is a compact Lie group, Atiyah and Segal defined $G_c$-equivariant complex K-theory $\KU_{G_c}$ in \cite{segal-equiv-KU, atiyah-segal-original}: this is a generalized cohomology theory, viewed as a spectrum in the sense of homotopy theory, which classifies $G_c$-equivariant vector bundles on finite $G_c$-spaces. Direct sum and tensor products of $G$-equivariant vector bundles equips $\KU_{G_c}$ with the structure of a \textit{ring} spectrum; in fact, it is an $\Eoo$-ring, meaning (for instance) that the multiplication on cohomology can be refined by Adams operations.
Despite its definition, the geometric interpretation of cocycles for equivariant K-theory as equivariant vector bundles will play \textit{no} role below. 

Two important examples are the following. When $G_c$ is the trivial group, $\KU_{G_c}$ is simply periodic complex K-theory $\KU$, and Bott periodicity gives a graded isomorphism $\pi_\ast \KU \cong \Z[\beta^{\pm 1}]$ with the Bott class $\beta$ in weight $2$. On the other hand, when $G_c$ is a connected compact Lie group with complex representation ring $\mathrm{RU}(G_c)$, the coefficient ring $\pi_\ast \KU_{G_c}$ is the tensor product $\mathrm{RU}(G_c) \otimes_\Z \Z[\beta^{\pm 1}]$. In particular, if $G_c$ is a compact torus $T_c$, then $\spec \pi_\ast \KU_{T_c}$ is the corresponding algebraic torus $T_{\Z[\beta^{\pm 1}]}$ over $\Z[\beta^{\pm 1}]$.

In \cref{sec: equiv coh}, we define a $\KU$-linear $\infty$-category $\Loc_{T_c}(\Gr_G; \KU)$ of $T_c$-equivariant local systems of $\KU$-modules on $\Gr_G$; this should be viewed as a $\KU$-theoretic analogue of the $\infty$-category $\Loc_I(\Gr_G; \KU)$. The ideal analogue of \cref{eq: intro reg locus abg} would identify $\Loc_{T_c}(\Gr_G; \KU)$ with the $\infty$-category of perfect complexes on some stack over $\KU$ defined in terms of the Langlands dual group. Unfortunately, we do not know how to define this putative stack over $\KU$; instead, we will study a particular \textit{degeneration} of this $\infty$-category, denoted $\Loc_{T_c}^\gr(\Gr_G; \KU)$. The reason for studying this degeneration is explained in the introduction to \cref{sec: degenerations}; we will also discuss its ``philosophical'' meaning momentarily. For the moment, let us just note the utility of this degeneration: while $\Loc_{T_c}(\Gr_G; \KU)$ is a $\KU$-linear $\infty$-category, $\Loc_{T_c}^\gr(\Gr_G; \KU)$ is instead an ordinary ($\Z$-linear) $\infty$-category; so we do not have to be concerned with questions such as the definition of the dual group over $\KU$.

Exactly the same setup works if $k$ is a complex oriented $2$-periodic $\Eoo$-ring which is an elliptic cohomology theory (in the sense of \cite{survey, t-equiv-tmf, gepner-meier}) with associated elliptic curve $E$ over $\pi_0(k)$. Namely, we construct a $k$-linear $\infty$-category $\Loc_{T_c}(\Gr_G; k)$, as well as a degeneration $\Loc_{T_c}^\gr(\Gr_G; k)$ which is instead an ordinary ($\pi_0(k)$-linear) $\infty$-category. In both this case and the case of $\KU$, an object $\cf \in \Loc_{T_c}(\Gr_G; k)$ defines a corresponding object $\cf^\gr \in \Loc_{T_c}^\gr(\Gr_G; k)$, and there is a spectral sequence
\begin{equation}\label{eq: intro sseq cohomology and Loc gr}
    \pi_\ast(k) \otimes_{\pi_0(k)} \pi_\ast \Map_{\Loc_{T_c}^\gr(\Gr_G; k)}(\ul{k}^\gr, \cf^\gr) \Rightarrow \pi_\ast \Map_{\Loc_{T_c}(\Gr_G; k)}(\ul{k}, \cf) = \pi_\ast \Gamma_{T_c}(\Gr_G; \cf).
\end{equation}
Here, $\ul{k}$ denotes the constant sheaf.

To state our main result, we need a small observation. Suppose $G$ is a connected, almost simple, and \textit{simply-laced} algebraic group over $\cc$, and let $\ld{G}_\cc$ denote the Langlands dual over $\cc$. Then $\ld{G}_\cc$ is centrally isogeneous to $G$. For instance, if $G$ is simply-connected, $\ld{G}_\cc$ is the quotient of $G$ by its center. In general (under the simply-laced hypothesis), the action of $G$ on itself by conjugation descends/ascends to an action of $\ld{G}_\cc$. In particular, the action of $T_\cc$ on $G$ by conjugation descends/ascends to an action of $\ld{T}_\cc$ on $G$. Therefore, if $T_c$ denote the maximal compact subgroup of $T_\cc$ (and similarly for $\ld{T}_c$), then the conjugation action of $T_c$ on $\Gr_G$ descends/ascends to an action of $\ld{T}_c$ on $\Gr_G$.

The main result of this article is the following.
\begin{theorem}[\cref{cor: reg locus ordinary ABG}, \cref{cor: ku reg locus ordinary ABG}, \cref{cor: ell reg locus ordinary ABG}]\label{thm: intro omnibus}
    Suppose $G$ is a connected, almost simple, and \textit{simply-laced} algebraic group over $\cc$. Let $T \subseteq G$ be a maximal torus, and let $\ld{T}_c$ denote a maximal compact subgroup of the Langlands dual torus over $\cc$. Let $k$ denote either $2$-periodified rational cohomology $\QQ[u^{\pm 1}]$, complex K-theory $\KU$, or elliptic cohomology with associated elliptic curve $E$, and let $F$ be an algebraically closed field over $\pi_0(k)$. Then there are equivalences
    \begin{align*}
        \Loc_{\ld{T}_c}^\gr(\Gr_G; \QQ[u^{\pm 1}]) \otimes_\QQ F & \simeq \QCoh(\tilde{\ld{\g}}^{',\reg}/\ld{G}), \\
        \Loc_{\ld{T}_c}^\gr(\Gr_G; \KU) \otimes_\Z F & \simeq \QCoh(\tilde{\ld{G}}^{\reg}/\ld{G}), \\
        \Loc_{\ld{T}_c}^\gr(\Gr_G; k) \otimes_{\pi_0(k)} F & \simeq \QCoh(\Bun_{\ld{B}}^0(E)^\reg).
    \end{align*}
    Here, the dual groups on the right-hand side are defined over $F$; the final equivalence is assuming that $k$ is an elliptic cohomology theory; $\tilde{\ld{\g}}^{'}$ denotes $\ld{G} \times^{\ld{B}} \ld{\fr{b}}$; $\tilde{\ld{G}}$ denotes $\ld{G} \times^{\ld{B}} \ld{B}$ for the conjugation action of $\ld{B}$ on itself\footnote{We warn the reader that the symbol $\tilde{\ld{G}}$ will mean $\ld{G} \times^{\ld{B}} \ld{B}$ \textit{only} in the introduction; it has a slightly different meaning in the body of the article, described in \cref{def: mult kostant slice}.}; $\Bun_{\ld{B}}^0(E)$ denotes the moduli stack of degree zero $\ld{B}$-bundles on $E$; and the adornment $\reg$ denotes an open ``regular'' locus whose complement has codimension $2$.
\end{theorem}
Since $T_c$ and $\ld{T}_c$ are isogeneous by a finite group, there is no difference between $\Loc_{T_c}^\gr(\Gr_G; \QQ[u^{\pm 1}])$ and $\Loc_{\ld{T}_c}^\gr(\Gr_G; \QQ[u^{\pm 1}])$. Therefore, the first part of \cref{thm: intro omnibus} is, of course, just \cref{eq: intro restated reg locus abg} (once one identifies $\tilde{\ld{\g}}^{'} \cong \tilde{\ld{\g}}$). However, we warn the reader that if $k$ is not $\QQ[u^{\pm 1}]$, the categories $\Loc_{T_c}^\gr(\Gr_G; k)$ and $\Loc_{\ld{T}_c}^\gr(\Gr_G; k)$ are generally genuinely different, and do not agree even upon base-change along $\pi_0(k) \to F$.

\begin{remark}\label{rmk: parabolic variant}
    The same argument used to prove \cref{thm: intro omnibus} shows that if $G$ is a connected, almost simple, and \textit{simply-laced} algebraic group over $\cc$ with torsion-free fundamental group, and $k$ denotes either $2$-periodified rational cohomology $\QQ[u^{\pm 1}]$, complex K-theory $\KU$, or elliptic cohomology with associated elliptic curve $E$, then there are equivalences
    \begin{align*}
        \Loc_{\ld{G}_c}^\gr(\Gr_G; \QQ[u^{\pm 1}]) \otimes_\QQ F & \simeq \QCoh(\ld{\g}^{\reg}/\ld{G}), \\
        \Loc_{\ld{G}_c}^\gr(\Gr_G; \KU) \otimes_\Z F & \simeq \QCoh(\ld{G}^{\reg}/\ld{G}), \\
        \Loc_{\ld{G}_c}^\gr(\Gr_G; k) \otimes_{\pi_0(k)} F & \simeq \QCoh(\Bun_{\ld{G}}^\ss(E)^\reg),
    \end{align*}
    where $\Bun_{\ld{G}}^\ss(E)^\reg \subseteq \Bun_{\ld{G}}^\ss(E)$ is a particular open substack of the moduli stack of semistable degree zero $\ld{G}$-bundles on $E$. More generally, our arguments are easily modified to show that if $L$ is the Levi quotient of a parabolic subgroup $P \subseteq G$ and $L$ has torsion-free fundamental group, then there are equivalences
    \begin{align*}
        \Loc_{\ld{L}_c}^\gr(\Gr_G; \QQ[u^{\pm 1}]) \otimes_\QQ F & \simeq \QCoh(\tilde{\ld{\g}}_{\ld{P}}^{',\reg}/\ld{P}), \\
        \Loc_{\ld{L}_c}^\gr(\Gr_G; \KU) \otimes_\Z F & \simeq \QCoh(\tilde{\ld{G}}_{\ld{P}}^{\reg}/\ld{P}), \\
        \Loc_{\ld{L}_c}^\gr(\Gr_G; k) \otimes_{\pi_0(k)} F & \simeq \QCoh(\Bun_{\ld{P}}^\ss(E)^\reg),
    \end{align*}
    where $\tilde{\ld{\g}}_{\ld{P}}^{'}$ denotes $\ld{G} \times^{\ld{P}} \ld{\fr{p}}$; $\tilde{\ld{G}}_{\ld{P}}$ denotes $\ld{G} \times^{\ld{P}} \ld{P}$ for the conjugation action of $\ld{P}$ on itself; and $\Bun_{\ld{P}}^\ss(E)$ denotes the moduli stack of degree zero semistable $\ld{P}$-bundles on $E$. The equivalence concerning $\Loc_{\ld{L}_c}^\gr(\Gr_G; \QQ[u^{\pm 1}])$ above is closely related to the parabolic variant of \cref{thm: intro abg} which can be deduced from \cite{chen-dhillon}.
\end{remark}
\cref{thm: intro omnibus} also gives an analogue of \cite[Theorem 4]{bf-derived-satake}. Namely, the stacks $\ld{\g}/\ld{G}$, $\ld{G}/\ld{G}$, and $\Bun_{\ld{G}}^\ss(E)$ each have a canonical map to $B\ld{G}$; let $\QCoh_\free(\ld{\g}/\ld{G})$, $\QCoh_\free(\ld{G}/\ld{G})$, and $\QCoh_\free(\Bun_{\ld{G}}^\ss(E))$ denote the essential images of the resulting pullback functors from $\Rep(\ld{G})$ to $\QCoh(\ld{\g}/\ld{G})$, $\QCoh(\ld{G}/\ld{G})$, and $\QCoh(\Bun_{\ld{G}}^\ss(E))$.
Then, we show:
\begin{corollary}[\cref{cor: ordinary minuscule equivalence}, \cref{cor: ku minuscule equivalence}, and \cref{cor: ell minuscule equivalence}]
    In the setting of \cref{thm: intro omnibus}, let $F$ be an algebraically closed field of characteristic zero over $\pi_0(k)$.
    Then there is a $t$-structure on $\Loc_{G_c}^\gr(\Gr_G; k)$ such that there are fully faithful embeddings
    \begin{align*}
        \QCoh_\free(\ld{\g}/\ld{G})^\heartsuit & \hookrightarrow \Loc_{\ld{G}_c}^\gr(\Gr_G; \QQ[u^{\pm 1}])^\heartsuit \otimes_{\QQ} F, \\
        \QCoh_\free(\ld{G}/\ld{G})^\heartsuit & \hookrightarrow \Loc_{\ld{G}_c}^\gr(\Gr_G; \KU)^\heartsuit \otimes_{\Z} F, \\
        \QCoh_\free(\Bun_{\ld{G}}^\ss(E))^\heartsuit & \hookrightarrow \Loc_{\ld{G}_c}^\gr(\Gr_G; k)^\heartsuit \otimes_{\pi_0(k)} F.
    \end{align*}
    When $\ld{G}$ is not of type $E_8$, we explicitly identify the essential image of these embeddings in terms of a topologically defined subcategory of $\Loc_{\ld{G}_c}^\gr(\Gr_G; k)^\heartsuit \otimes_{\pi_0(k)} F$.
\end{corollary}
In the case of K-theory, this is similar to the expectations of \cite{cautis-kamnitzer}. Again, just as in \cref{rmk: parabolic variant}, there are also parabolic versions of these embeddings. In the case of $2$-periodic rational cohomology, one obtains an upgrade (as in \cite[Theorem 2]{bf-derived-satake}): namely, if $\HC^{\hbar,\free}_{\ld{G}}$ denotes the category of Harish-Chandra bimodules of the form $U_\hbar(\ld{\g}) \otimes V$ for $V \in \Rep(\ld{G})$, then there is a fully faithful embedding 
$$(\HC^{\hbar,\free}_{\ld{G}})^\heartsuit \hookrightarrow \Loc_{G_c \times S^1_\rot}^\gr(\Gr_G; k)^\heartsuit.$$
See \cref{cor: reg locus quantized satake} and the discussion surrounding it.
At least in the case $G = \SL_2$, one can similarly relate a category of Harish-Chandra bimodules for the quantum group to $\Loc_{G_c \times S^1_\rot}^\gr(\Gr_G; \KU)^\heartsuit$ (see \cref{prop: rep q fully faithful}).
\begin{remark}
    We also study the effect of power operations (like Steenrod and Adams operations) on $k$ under the Langlands duality of \cref{thm: intro omnibus}. The reader is referred to \cref{thm: frobenius and langlands} for a precise statement, but let us just mention here that using the theory of degree $p$ isogenies on $\GG_a$, $\GG_m$, and the elliptic curve $E$, we show that power operations on $k$ correspond under \cref{thm: intro omnibus} to natural ``Artin-Schreier'' maps on $\tilde{\ld{\g}}'/\ld{G}$, $\tilde{\ld{G}}/\ld{G}$, and $\Bun_{\ld{B}}^0(E)$. (This is different, but related to, Lonergan's work in \cite{lonergan-frob}: we only consider power operations on $k$, while Lonergan considers power operations on the entire $\E{3}$-algebra $k^{G_c}[\Gr_G]$; we will address the latter situation in future work.)
\end{remark}

\begin{remark}
    Let us mention some previous work towards analogues of the geometric Satake equivalence with other coefficients. For instance, an early paper in the context of geometric representation theory is \cite{ginzburg-kapranov-vasserot}. A conjecture about complex K-theoretic geometric Satake was proposed in \cite{cautis-kamnitzer}; in a similar vein, a discussion of the K-theoretic case is the content of the talk \cite{lonergan-slides}. In \cite{yang-zhao-e-thy-quantum-group}, Yang and Zhao study a higher chromatic analogue of quantum groups, and it would be interesting to study the relationship between the present article and their work. After the first version of this paper was written, the preprint \cite{zhong-equiv-homology-of-Gr} was posted on the arXiv; it is concerned with ideas similar to the ones studied here. 
\end{remark}

The proofs of the equivalences in \cref{thm: intro omnibus} with $\QQ[u^{\pm 1}]$-coefficients follow from the work of Bezrukavnikov-Finkelberg-Mirkovic \cite{bfm} and Yun-Zhu \cite{homology-langlands}; and the proofs of the equivalences involving $\KU$ follow from the aforementioned work of Bezrukavnikov-Finkelberg-Mirkovic. However, the approaches taken in these references rely either on the geometric Satake equivalence (for which there is no existing analogue with coefficients in $\KU$ or elliptic cohomology), or on a geometric interpretation of cycles in the relevant cohomology theory (for which there is no known analogue in elliptic cohomology). We therefore reprove these results in the present article so as to provide a \textit{uniform} approach to all the equivalences of \cref{thm: intro omnibus} and of \cref{rmk: parabolic variant}.
\begin{remark}
    In \cref{sec: brylinski zhang}, we also study a degeneration $\Loc_{G_c}^\gr(G_c; k)$ of the category $\Loc_{G_c}(G_c; k)$ of \textit{conjugation-equivariant} locally constant sheaves on $G_c$. Namely, we show that (at least if $G_c$ has torsion-free fundamental group) $\Loc_{G_c}^\gr(G_c; k)$ is equivalent to the category of quasicoherent sheaves on the additive (resp. multiplicative; resp. elliptic) regular centralizer group scheme for $\ld{G}$ if $k = \QQ[u^{\pm 1}]$ (resp. $k = \KU$; resp. $k$ is an elliptic cohomology theory). Motivated by \cite{nadler-zaslow} and \cite[Theorem 1.1]{ganatra-pardon-shende}, one can heuristically interpret our discussion as describing a version of mirror symmetry for the wrapped Fukaya category of the symplectic stack $T^\ast(G_c/G_c)$, albeit with coefficients in the complex-oriented $2$-periodic $\Eoo$-ring $k$. Namely, the ``$k$-theoretic'' mirror to $T^\ast(G_c/G_c)$ is the appropriate variant of the regular centralizer group scheme for the Langlands dual group.
\end{remark}

Let us now discuss further the degeneration of the $\KU$-linear $\infty$-category $\Loc_{T_c}(\Gr_G; k)$ to $\Loc_{T_c}^\gr(\Gr_G; k)$. This can be explained in ``two'' ways\footnote{The quotes are to indicate that these two approaches are really the same; since it would be too digressive to do so here, we will explain the meaning of this (admittedly cryptic) statement in a sequel to this article.}:
\begin{enumerate}
    \item One important lesson from chromatic homotopy theory is that the stable homotopy groups of spheres are closely connected to the coherent cohomology of the moduli stack $\Mfg$ of $1$-dimensional formal groups. For instance, the Adams-Novikov spectral sequence can be restated as a spectral sequence
    $$E_2^{\ast,\ast} \cong \H^\ast(\Mfg; \omega^{\otimes \ast}) \Rightarrow \pi_\ast(S^0),$$
    where $S^0$ is the sphere spectrum and $\omega$ is the line bundle of invariant differentials on the universal $1$-dimensional formal group over $\Mfg$.
    This picture can in fact be categorified: the $\infty$-category $\Sp$ of spectra behaves like the $\infty$-category $\QCoh(\Mfg)$ in a very precise sense\footnote{This is not quite correct: namely, one has to instead replace $\QCoh(\Mfg)$ by the ind-completion of the thick subcategory of $\QCoh(\Mfg)$ generated by tensor powers of $\omega$. For brevity, we will simply denote this variant category by $\QCoh(\Mfg)$. In the homotopy theory literature, e.g., \cite{gregoric-synthetic} or \cite[Definition 5.14]{bhs-artin-tate}, this variant subcategory is often instead denoted by $\IndCoh(\Mfg)$. However, the use of the symbol $\IndCoh(\Mfg)$ does not agree with the more established notion of ind-coherent sheaves from \cite{gr-i}.}. Namely, the Adams-Novikov spectral sequence is categorified by a $1$-parameter degeneration of $\Sp$ into $\QCoh(\Mfg)$; see \cite{piotr-synthetic, gwx-special-fiber, gregoric-synthetic}. Moreover, this degeneration can be constructed using the stable motivic category over $\cc$. This gives a precise sense in which ``topology is approximated by algebra''. The degeneration of $\Loc_{T_c}(\Gr_G; k)$ into $\Loc_{T_c}^\gr(\Gr_G; k)$ is of exactly the same type. In fact, both degenerations ($\Sp \leadsto \QCoh(\Mfg)$ and $\Loc_{T_c}(\Gr_G; k) \leadsto \Loc_{T_c}^\gr(\Gr_G; k)$) can be constructed simultaneously using the even filtration of \cite{even-filtr, piotr-even-filtr}, and we will address this point in a future article. (That is to say, the Adams-Novikov spectral sequence and \cref{eq: intro sseq cohomology and Loc gr} should both be regarded as special cases of a more general construction.)
    \item In geometric representation theory, one often considers ``graded lifts'' of categories of ($\cc$-valued, say) sheaves on a scheme/stack $X$: this is generally defined as the category $\Shv^\mix(X; \cc)$ of \textit{mixed} sheaves on $X$. See \cite{bbdg} and the more recent \cite{ho-li-mixed}. It is generally expected, for instance, that there is a mixed variant of \cref{thm: intro bf derived satake}, stating that there is an equivalence $\Shv^\mix_{G\pw{t}}(\Gr_G; \cc) \simeq \Perf(\ld{\g}^\ast(2)/\ld{G})$, where $\ld{\g}^\ast(2)/\ld{G}$ is now a classical (not derived!) stack, except with a grading which places $\ld{\g}^\ast(2)$ in weight $2$. This grading can be ignored if we replace $\cc$ by its $2$-periodification. One can view $\Loc_{T_c}^\gr(\Gr_G; k)$ as a category of ``mixed'' $k$-valued local systems. (So where is the grading? It is not visible on the right-hand sides of the equivalences in \cref{thm: intro omnibus} is that the $\Eoo$-ring $k$ is assumed to be $2$-periodic; but the grading will reappear if we assume $k$ is ordinary (non-periodic) cohomology or connective K-theory as in \cite{ku-rel-langlands}.) A natural question, of course, is to define a full $\infty$-category $\Shv_I^\gr(\Gr_G; k)$ which specializes to the usual category of mixed sheaves when $k = \cc[u^{\pm 1}]$; we hope to address this in the future article referred to in the preceding bullet point.
\end{enumerate}

The perspective of the degeneration $\Loc_{T_c}(\Gr_G; k) \leadsto \Loc_{T_c}^\gr(\Gr_G; k)$ as being analogous to the degeneration $\Sp \leadsto \QCoh(\Mfg)$ -- and furthermore that both are related to the even filtration of \cite{even-filtr, piotr-even-filtr} -- is very helpful, because it gives us an indication of how to define $\Loc_{T_c}^\gr(\Gr_G; k)$ when $k$ is not necessarily complex-oriented and $2$-periodic. In particular, we also study the example of $k$ being \textit{real} K-theory $\KO$. This is an $\Eoo$-ring with somewhat complicated homotopy groups. Despite this, the $\Eoo$-ring $\KO$ is itself easy to describe using $\KU$: namely, there is an action of $\Z/2$ on $\KU$ by complex conjugation, and $\KO = \KU^{h\Z/2}$. Moreover, just like the standard Adams-Novikov spectral sequence for the homotopy of the sphere spectrum, there is a spectral sequence
$$E_2^{\ast,\ast} \cong \H^\ast(B\Z/2; \omega^\ast) \Rightarrow \pi_\ast(\KO),$$
where $\omega$ is the line bundle over $B\Z/2$ given by the sign action of $\Z/2$ on $\Z$; in fact, this can be identified with the Adams-Novikov spectral sequence for the homotopy of $\KO$. That is, the sphere spectrum is to $\Mfg$ as $\KO$ is to $B\Z/2$. There is also a good notion of $G_c$-equivariant real K-theory.

Instead of constructing a degeneration of the $\KO$-linear $\infty$-category $\Loc_{T_c}(\Gr_G; \KO)$ into a graded $\pi_\ast(\KO)$-linear $\infty$-category, one can construct a degeneration of $\Loc_{T_c}(\Gr_G; \KO)$ into a $\QCoh(B\Z/2)$-module $\infty$-category $\Loc_{T_c}^\gr(\Gr_G; \KO)$. The construction of $\Loc_{T_c}^\gr(\Gr_G; \KO)$ is straightforward: the $\Z/2$-action via complex conjugation on $\Loc_{T_c}(\Gr_G; \KU)$ defines a $\Z/2$-action on $\Loc_{T_c}^\gr(\Gr_G; \KU)$, and this defines the $\QCoh(B\Z/2)$-module category $\Loc_{T_c}^\gr(\Gr_G; \KO)$.
\begin{prop}[\cref{prop: cplx conj KU and B mod B^}]
    Let $\theta$ denote the involution on $\tilde{\ld{G}}/\ld{G}$ given on $\tilde{\ld{G}} = \ld{G} \times^{\ld{B}} \ld{B}$ by $(g,x)\mapsto (g, x^{-1})$, so that the quotient $(\tilde{\ld{G}}/\ld{G})/\pdb{\theta}$ defines a stack over $B\Z/2$.
    Then there is a $\QCoh(B\Z/2)$-linear equivalence
    $$\Loc_{\ld{T}_c}^\gr(\Gr_G; \KO) \otimes_\Z F \simeq \QCoh((\tilde{\ld{G}}^{\reg}/\ld{G})/\pdb{\theta}).$$
\end{prop}
For instance, if $G = \PGL_2$, so that $\tilde{\ld{G}}$ is the moduli of pairs $(g, \ell)$ with $g \in \SL_2$ and $\ell = [x:y] \in \PP^1$ is a line preserved by $g$, the involution $\theta$ simply inverts $g$. We also briefly discuss the case of coefficients in \textit{connective} real K-theory $\ko$.

Similarly, if one fixes a prime $p$ and sets $T_c[p^\infty]$ to be the $p$-power torsion subgroup of $T_c$, one can also define a $\QCoh(B\Z_p^\times)$-module category $\Loc_{T_c[p^\infty]}^\gr(\Gr_G; \Lone S^0)$. This $\infty$-category is a degeneration of the $\infty$-category $\Loc_{T_c[p^\infty]}(\Gr_G; \Lone S^0)$ of $T_c[p^\infty]$-equivariant local systems of $\Lone S^0$-modules on $\Gr_G$, where $\Lone S^0 = (\KU^\wedge_p)^{h\Z_p^\times}$ is the \textit{$K(1)$-local sphere} (also known as the ``image of J'' spectrum) \cite{adams-jx-iv, ravenel-loc}. In this case, let $\tilde{\ld{G}}_{p^\infty} \subseteq \tilde{\ld{G}}$ denote the locus of pairs $(g, \ld{B}') \in \tilde{\ld{G}}$ where $\ld{B}' \subseteq \ld{G}$ is a Borel subgroup containing $g$ such that the eigenvalues of $g$ are all $p$-power roots of unity. Then there is a $\Z_p^\times \times \ld{G}$-action on $\tilde{\ld{G}}_{p^\infty}$, where $n \in \Z_p^\times$ acts by $(g, \ld{B}')\mapsto (g^n, \ld{B}')$, and \cref{prop: imJ reg locus ABG} similarly states that a $\QCoh(B\Z_p^\times)$-linear equivalence
$$\Loc_{\ld{T}_c[p^\infty]}^\gr(\Gr_G; \Lone S^0) \otimes_{\Z_p} F \simeq \QCoh((\tilde{\ld{G}}_{p^\infty}^{\reg}/\ld{G})/\Z_p^\times).$$
The stack on the right-hand side can be thought of as an open substack in the moduli stack of $\ld{B}$-bundles on the $p$-adic solenoid, modulo the natural symmetries (by $\Z_p^\times$) of this solenoid. In \cref{rmk: morava e-theory}, we make some speculations about the analogous picture when $\Lone S^0$ is replaced by the $K(n)$-local sphere $L_{K(n)} S^0$: the primary modification is that one must now consider the moduli stack of $\ld{B}$-bundles on the $p$-adic $n$-dimensional solenoid, modulo an action of the units in the division algebra over $\QQ_p$ with Hasse invariant $1/n$.
\subsection{A unifying picture?}

In this informal section, we suggest a picture which attempts to unify the various calculations in this article. Throughout, we will again take $G$ to be a connected, almost simple, and \textit{simply-laced} algebraic group over $\cc$ with torsion-free fundamental group, and $k$ will denote either $2$-periodified rational cohomology $\QQ[u^{\pm 1}]$, complex K-theory $\KU$, or elliptic cohomology with associated elliptic curve $E$.  (As long as the discussion below is interpreted correctly, one could even take $k$ to be (connective) real K-theory or the $\Eoo$-ring of topological modular forms \cite{tmf}.)

First, just as there is a degeneration of the $k$-linear $\infty$-category $\Loc_{T_c}(\Gr_G; k)$ to an ordinary ($\pi_0(k)$-linear) $\infty$-category $\Loc_{T_c}^\gr(\Gr_G; k)$, we expect to show in future work that there is a well-behaved $k$-linear $\infty$-category $\Shv_I(\Gr_G; k)$ of Iwahori-equivariant sheaves on $\Gr_G$, along with a degeneration to an ordinary $\pi_0(k)$-linear $\infty$-category $\Shv_I^\gr(\Gr_G; k)$. (This is related to the ``faux'' definition from \cite[Construction 3.7.15]{ku-rel-langlands}.) Just as in \cref{thm: intro omnibus}, we also hope to show that there are equivalences
\begin{align*}
    \Shv_{\ld{I}}^\gr(\Gr_G; \QQ[u^{\pm 1}]) \otimes_\QQ F & \simeq \QCoh(\tilde{\ld{\g}}^{'}/\ld{G}), \\
    \Shv_{\ld{I}}^\gr(\Gr_G; \KU) \otimes_\Z F & \simeq \QCoh(\tilde{\ld{G}}/\ld{G}), \\
    \Shv_{\ld{I}}^\gr(\Gr_G; k) \otimes_{\pi_0(k)} F & \simeq \QCoh(\Bun_{\ld{B}}^0(E));
\end{align*}
there should also be similar equivalences when the Iwahori subgroup is replaced by standard parahorics in $L^+ \ld{G}$. In particular, when $\ld{I}$ is replaced by $L^+ \ld{G}$, we expect to prove equivalences
\begin{align*}
    \Shv_{L^+ \ld{G}}^\gr(\Gr_G; \QQ[u^{\pm 1}]) \otimes_\QQ F & \simeq \QCoh(\ld{\g}/\ld{G}), \\
    \Shv_{L^+ \ld{G}}^\gr(\Gr_G; \KU) \otimes_\Z F & \simeq \QCoh(\ld{G}/\ld{G}), \\
    \Shv_{L^+ \ld{G}}^\gr(\Gr_G; k) \otimes_{\pi_0(k)} F & \simeq \QCoh(\Bun_{\ld{G}}^\ss(E)).
\end{align*}
Let us write $\Bun_{G}^\ss(\GG_0^\vee)$ to denote either of the stacks $\g/G$, $G/G$, and $\Bun_{G}^\ss(E)$, depending on the choice of $k$. See \cref{rmk: 1-shifted cartier} for an explanation of this notation: roughly, $\GG_0^\vee$ is the stack of multiplicative line bundles on $\GG_0 = \GG_a$, $\GG_m$, or $E$, respectively. Note that there is a map $\Bun_G^\ss(\GG_0^\vee) \to BG$; in fact, it naturally lifts/descends to a stack $\Bun_{\ld{G}}^\ss(\GG_0^\vee)'$ defined over $B\ld{G}$. (For instance, if $k$ is $2$-periodic rational cohomology or $\KU$, then $\Bun_{\ld{G}}^\ss(\GG_0^\vee)'$ is isomorphic to $\g/\ld{G} \cong \ld{\g}^\ast/\ld{G}$ or $G/\ld{G}$, respectively.)

Suppose $P \subseteq G$ is a parabolic subgroup with Levi quotient $L$ (with torsion-free fundamental group), and let $I_P$ denote its preimage under the map $G\pw{t} \to G$. Set $I_P^0$ to be the kernel of the composite $I_P \to P \to L$, and $\ld{P} \subseteq \ld{G}$ to be the dual parabolic subgroup to $P$. One can then similarly define a stack $\Bun_{\ld{P}}^\ss(\GG_0^\vee)'$, and we expect to prove equivalences
$$\Shv_{I_P}^\gr(\Gr_G; k) \otimes_{\pi_0(k)} F \simeq \QCoh(\Bun_{\ld{P}}^\ss(\GG_0^\vee)').$$
If we replace $I_P$ on the left-hand side by $I_P^0$, then $\Bun_{\ld{P}}^\ss(\GG_0^\vee)'$ must be replaced by the stack $\Bun_{\ld{P}}^{\ss,\nil}(\GG_0^\vee)'$ defined as the fiber of the map $\Bun_{\ld{P}}^\ss(\GG_0^\vee)' \to \Bun_{\ld{P}}^\ss(\GG_0^\vee)^{',\mathrm{coarse}}$ over the basepoint of the trivial bundle. For instance, if $k$ is $2$-periodic rational cohomology or $\KU$, then $\Bun_{\ld{P}}^{\ss,\nil}(\GG_0^\vee)'$ is isomorphic to $\tilde{\cN}_P/\ld{G}$ or $\tilde{\cU}_P/\ld{G}$, respectively, where $\tilde{\cN}_P$ and $\tilde{\cU}_P$ are the partial resolutions (\`a la \cite{borho-macpherson}) of the nilpotent and unipotent cones in $G$, respectively.

The above discussion suggest the following informal picture, which I hope to make precise in future work. There should be a well-defined $(\infty,2)$-category of ``$\infty$-categories with an action of $\Shv(G\ls{t}; k)$'' (this seems to be rather subtle to make sense of in the genuine equivariant setting), and it should admit a graded analogue such that there is a faithful embedding
\begin{equation}\label{eq: pseudo local langlands}
    \LL: \left\{\begin{tabular}{l}
      Quasi-coherent sheaves of \\
      $\infty$-categories on $\Bun_{\ld{G}}^\ss(\GG_0^\vee)'$ \\
    \end{tabular}\right\} \hookrightarrow \left\{\begin{tabular}{l}
      $\infty$-categories with an \\
      action of $\Shv(G\ls{t}; k)$ \\
    \end{tabular}\right\}^\gr \otimes_{\pi_0(k)} F
\end{equation}
whose essential image consists of those $\Shv(G\ls{t}; k)$-module categories which are generated by their $G\pw{t}$-equivariant objects.
This is similar in spirit to (a small part of) the local geometric Langlands correspondence. In particular, if $\cC$ and $\cd$ are quasi-coherent sheaves of $\infty$-categories on $\Bun_{\ld{G}}^\ss(\GG_0^\vee)'$, then there should be an equivalence
$$\Map_{\mathrm{QCohCat}(\Bun_{\ld{G}}^\ss(\GG_0^\vee)')}(\cC, \cd) \xrightarrow{\sim} \Map_{\Shv(G\ls{t}; k)\modc^\gr}(\LL(\cC), \LL(\cd)) \otimes_{\pi_0(k)} F.$$
(One also expects a tamely-ramified variant of \cref{eq: pseudo local langlands}, where the left-hand side is replaced by the category of modules over $\QCoh(\Bun_{\ld{P}}^\ss(\GG_0^\vee)' \times_{\Bun_{\ld{G}}^\ss(\GG_0^\vee)'} \Bun_{\ld{P}}^\ss(\GG_0^\vee)')$ viewed as a monoidal category under convolution. The essential image of this functor would consist of those $\Shv(G\ls{t}; k)$-module categories which are generated by their $I_P$-equivariant objects.)
Some examples of the value of $\LL$ are given in \cref{table: pseudo local}. The calculations of \cite{ku-rel-langlands} also suggest additional lines in this table, at least when $k$ is connective complex K-theory.

The usual local geometric Langlands correspondence \cite{frenkel-gaitsgory-local-langlands} would relate the $(\infty,2)$-category of $\infty$-categories with an action of $\Shv(G\ls{t})$ to the $(\infty,2)$-category of ind-coherent sheaves of $\infty$-categories on the moduli stack $\Loc_{\ld{G}}(D^\times)$ of $\ld{G}$-local systems on the punctured formal disk $D^\times$. Let us try to informally explain the analogy between this and the generalized correspondence suggested above, where one must instead replace $\Loc_{\ld{G}}(D^\times)$ by $\Bun_{\ld{G}}^\ss(\GG_0^\vee)'$. In the de Rham setting, $\Loc_{\ld{G}}(D^\times)$ can informally be viewed as the stack of maps from the (ill-defined) de Rham stack $D^\times_\dR$ of $D^\times$ to $B\ld{G}$. We are then suggesting that one should view $\GG_0^\vee$ as analogous to $D^\times_\dR$ (or at least to a small part thereof).

In fact, this analogy is not new: if $\GG_0$ was a $1$-dimensional formal group (instead of an actual $1$-dimensional algebraic group), then $\GG_0^\vee$ has been studied in the homotopy theory literature \cite{sibilla-tomasini, toen-hkr, moulinos-loop} as the ``$\GG_0$-circle'' $S^1_{\GG_0}$, marketed as a $\GG_0$-analogue of the homotopy type of the usual circle. For instance, there should be an equivalence $R\Gamma(D^\times_\dR; \co) = R\Gamma_\dR(D^\times)$, so that its cohomology is an exterior algebra over $\cc$ on a single class in cohomological degree $1$. Similarly, the cohomology of $R\Gamma(\GG_0^\vee; \co)$ is also an exterior algebra over $\pi_0(k)$ on a single class in cohomological degree $1$ (but outside of characteristic zero, the derived algebra structures on $R\Gamma(\GG_0^\vee; \co)$ and $R\Gamma_\dR(D^\times)$ generally disagree). Thus, the affinizations of $\GG_0^\vee$ and $D^\times_\dR$, at least, behave similarly. This suggests that $\Bun_{\ld{G}}^\ss(\GG_0^\vee)'$ behaves like a slight variant of (the formal neighborhood of the trivial local system in) $\Loc_{\ld{G}}(D^\times)$.

Informally, \cref{eq: pseudo local langlands} is suggesting that the $(\infty,2)$-category of quasicoherent sheaves of $\infty$-categories on $\Bun_{\ld{G}}^\ss(S^1_{\GG_0})$ should be a full subcategory of a graded analogue of the $(\infty,2)$-category of $\infty$-categories with $\Shv(G\ls{t}; k)$-action. The latter is roughly the $(\infty,2)$-category of $\infty$-categories over the stack $\Bun_{G_k}(S^1_k)$ of $G_k$-bundles on the Betti stack (over $k$) of the topological circle\footnote{Here, if $X$ is a suitably nice topological space, we write $X_k$ to denote the stack such that $\QCoh(X_k) \simeq \Shv(X; k)$.}. Thus the Langlands duality of \cref{eq: pseudo local langlands} ``swaps'' the topological circle $S^1$ (manifested as its Betti stack $S^1_k$) with the $\GG_0$-circle $S^1_{\GG_0}$. The merit of this perspective, especially in the context of topological quantum field theories \cite{kapustin-witten, bzsv}, is completely unclear to me.
\subsection{Outline and other results}

We now give an overview of the content of this article. In \cref{sec: regular locus}, we briefly review the derived geometric Satake and the Arkhipov-Bezrukavnikov-Ginzburg equivalences, and show how to deduce the corresponding statements \cref{eq: intro reg locus satake} and \cref{eq: intro reg locus abg} over the regular loci. 

Motivated by the issue of \textit{decompleting} Borel-equivariant cohomology (which appears naturally in studying the derived geometric Satake and the Arkhipov-Bezrukavnikov-Ginzburg equivalences), we recall in \cref{sec: equiv coh} the setup of (genuine) equivariant generalized cohomology following \cite{survey, t-equiv-tmf, gepner-meier}. For a compact abelian group $T_c$ and a finite $T_c$-space $X$, we also introduce the category $\Loc_{T_c}(X; k)$ of equivariant local systems of $k$-modules on $X$ for an $\Eoo$-ring $k$ equipped with an ``oriented'' $1$-dimensional commutative group scheme $\GG$. This section also reviews/generalizes the all-important Atiyah-Bott localization theorem \cite{atiyah-bott-localization}, and the corresponding results of Goresky-Kottwitz-MacPherson \cite{gkm-original}.

In \cref{sec: degenerations}, we introduce the degeneration $\Loc_{T_c}^\gr(\Gr_G; k)$ of the $\infty$-category $\Loc_{T_c}(\Gr_G; k)$ that we discussed in the preceding subsection, and also discuss the definition of this $\infty$-category for the non-complex-oriented $\Eoo$-ring $\KO$. The definition of $\Loc_{T_c}^\gr(\Gr_G; k)$ depends only on the equivariant homology $k^{T_c}_\ast(\Gr_G)$ (denoted by $\pi_\ast \cf_{T_c}(\Gr_G)^\vee$ in the body of this article) as a coalgebra/Hopf algebroid over the equivariant cohomology $\pi_\ast k_{T_c}$ of a point.

The next \cref{sec: torus loop rot} is essentially purely combinatorial: it studies the case when $G$ is a torus $T$, and imposes the additional data of loop-rotation equivariance. In terms of the homotopy equivalence $\Gr_G \cong \Omega G_c$, this comes from the $S^1$-action on $\Omega G_c$ obtained by viewing it as $\Omega^2 (BG_c) = \Map_\ast(S^2, BG_c)$ and using the $S^1$-action by rotation on $S^2$. Namely, we show that if $k$ is an $\Eoo$-ring equipped with an ``oriented'' $1$-dimensional commutative group scheme $\GG$, then $k^{T_c \times S^1_\rot}_\ast(\Gr_T)$ (or really, its sheafification over $\Hom(\bX^\ast(T), \GG)$) can be identified with a \textit{$\GG$-analogue} of the Weyl algebra of the Langlands dual torus $\ld{T}$. For instance, if $k = \QQ[u^{\pm 1}]$ and $\GG = \GG_a$, then $k^{T_c \times S^1_\rot}_\ast(\Gr_T)$ is the rescaled Weyl algebra of $\ld{T}$; similarly, if $k = \KU$ and $\GG = \GG_m$, then $k^{T_c \times S^1_\rot}_\ast(\Gr_T)$ is the $q$-Weyl algebra of the dual torus $\ld{T}$. We also explain the relationship between this {$\GG$-analogue} of the Weyl algebra of $\ld{T}$ and the ``$F$-de Rham complex'' of \cite{generalized-n-series}.

In \cref{sec: review Q coeff}, we review the classical story concerning $\Loc_{T_c}^\gr(\Gr_G; k)$ when $k = \QQ[u^{\pm 1}]$. The purpose of this section is to reprove the results of \cite{bfm, homology-langlands} using only techniques amenable to generalization to other equivariant cohomology theories. In particular, in \cref{cor: reg locus ordinary ABG}, we reprove the equivalence between $\Loc_{T_c}^\gr(\Gr_G; k)$ and $\QCoh(\tilde{\ld{\g}}^\reg/\ld{G})$. The remainder of \cref{sec: review Q coeff} is concerned with the question of loop-rotation equivariance. Using results of \cite{ginzburg-kapranov-vasserot-residue-hecke} and \cite{ginzburg-whittaker}, we prove that $\Loc_{G_c \times S^1_\rot}^\gr(\Gr_G; k)$ can be identified with a certain localization of the Harish-Chandra category $\HC^\hbar_{\ld{G}} = U_\hbar(\ld{\g})\modc^{(\ld{G}, \weak)}$. This line of argument is, in some sense, precisely the opposite of that of \cite{lonergan-fourier}.

We finally turn to the K-theoretic story in \cref{sec: KU coeff}. \cref{cor: ku reg locus ordinary ABG} therein states that $\Loc_{T_c}^\gr(\Gr_G; k)$ is equivalent to $\QCoh(\tilde{\ld{G}}^\reg/\ld{G})$ when $G$ is connected, almost simple, and simply-laced; in this section, unlike in \cref{thm: intro omnibus}, the multiplicative Grothendieck-Springer resolution $\tilde{\ld{G}}$ is defined to be $\ld{G} \times^{\ld{B}} B$ (instead of $\ld{G} \times^{\ld{B}} \ld{B}$). This is to be understood as analogous to the usual Grothendieck-Springer resolution $\tilde{\ld{\g}}$, which is defined to be $\ld{G} \times^{\ld{B}} \ld{\fr{n}}^\perp$ (as opposed to $\ld{G} \times^{\ld{B}} \ld{\fr{b}}$). We also briefly study the question of loop-rotation equivariance in \cref{thm: ku loop-rot flag} using degenerate affine nil-Hecke algebras, and phrase some precise expectations about the relationship to the representation theory of quantum groups; but we do not yet know how to prove these statements. In \cref{prop: cplx conj KU and B mod B^}, we also study the effect under Langlands duality of complex conjugation on equivariant K-theory.

The elliptic story is studied in \cref{sec: ell coeff}, where we use the important results of \cite{davis-elliptic-springer} to show in \cref{cor: ell reg locus ordinary ABG} that $\Loc_{\ld{T}_c}^\gr(\Gr_G; k)$ is equivalent to a localization of $\QCoh(\Bun_{\ld{B}}^0(E))$ when $G$ is connected, almost simple, and simply-laced. We also briefly study the question of loop-rotation equivariance in \cref{thm: ell loop-rot flag}, but do not even know how to describe the expected Langlands dual story. It should, however, be related to the representation theory of elliptic quantum groups \cite{felder-elliptic-quantum}.

The remainder of this article is concerned with comparisons to (by now) classical constructions in equivariant algebraic topology. \cref{sec: power operations} studies the effect of ``power operations'' on $k$ under the Langlands duality of \cref{thm: intro omnibus}. These are additional symmetries of the $\Eoo$-ring $k$ which yield the theory of Steenrod operations in ordinary cohomology and Adams operations in K-theory. (This is closely related to, but distinct from, work \cite{lonergan-frob} of Lonergan.) We review how the theory of isogenies of $\GG_0$ controls power operations for $k$, which is used to show that power operations for $k$ identify with natural ``Artin-Schreier'' type maps on $\ld{\g}/\ld{G}$, $\ld{G}/\ld{G}$, and $\Bun_{\ld{G}}^\ss(E)$. As an amusing application, we reprove that the restricted Lie operation vanishes on the nilpotent cone in characteristics at least the Coxeter number (see \cref{prop: pth power zero on nilcone}).

In \cref{sec: brylinski zhang}, we explain how the degeneration of $\Loc_{T_c}(\Gr_G; k)$ to $\Loc_{T_c}^\gr(\Gr_G; k)$ should be viewed as analogous to the Hochschild-Kostant-Rosenberg degeneration of Hochschild homology to differential forms. (See \cite{raksit} for a modern take on this degeneration.) Using this perspective, we show how (when $G$ has torsion-free fundamental group) \cref{thm: intro omnibus} recovers results of \cite{brylinski-zhang} identifying the conjugation-equivariant cohomologies $\H^\ast_{G_c}(G_c; \QQ)$ (resp. $\KU^\ast_{G_c}(G_c)$) with the algebra of K\"ahler differentials on $\fr{t}\mmod W$ (resp. $T\mmod W$); the same argument also describes the equivariant elliptic cohomology of $G_c$ in terms of the algebra of K\"ahler differentials on the moduli \textit{space} of semistable degree zero $G$-bundles on the elliptic curve.

In \cref{sec: lifting SL2}, we show by explicit calculation that the group scheme $\SL_2$ (as well as other classical geometric objects, like Grassmannians which are not projective spaces) cannot admit a natural lifting from $\Z$ to the sphere spectrum or even to connective complex K-theory. The argument goes by showing that $\SL_2$ cannot admit a reasonable $\delta$-structure (even mod $p$). This supplements a comment made in the beginning of \cref{sec: degenerations}.

The results of this article were motivated by the work of Hopkins-Kuhn-Ravenel \cite{hkr} describing the generalized equivariant cohomology of \textit{finite} groups. In \cref{sec: comparison hkr}, we briefly review their results (and the corresponding categorifications, due to Lurie \cite{elliptic-iii}). Despite the case of finite groups being the diametric opposite to the case of connected compact Lie groups studied in this article, we give a heuristic argument showing that \cref{thm: intro omnibus} can be viewed as an analogue of (some of) the results of \cite{hkr, elliptic-iii}.

Another motivation for the results of this article came from physics. Namely, the equivariant homology of $\Gr_G$ describes the Coulomb branch of $3$d $\cN=4$ pure gauge theory \cite{bfn-ii}, and one expects that the generalized equivariant $\KU$-homology (resp. elliptic homology) of $\Gr_G$ is related to the Coulomb branch of $4$d $\cN=2$ (resp. $5$d $\cN = 1$) pure gauge theory. We briefly review this story in \cref{sec: coulomb}, and give explicit generators and relations for the Coulomb branches of $3$d $\cN = 4$ and $4$d $\cN = 2$ pure gauge theories with gauge groups $\SL_2$ and $\PGL_2$. The $4$-dimensional case is a $q$-analogue of the quantization of the Atiyah-Hitchin manifold \cite{atiyah-hitchin} from \cite[Equation 5.51]{bullimore-dimofte-gaiotto}.
\subsection{Notation and terminology}

In most of this article, $G$ will denote a connected, almost simple, and \textit{simply-laced} algebraic group over $\cc$, and $B$ will denote a Borel subgroup therein. If $H$ is a reductive algebraic group over $\cc$, we will write $H_c$ to denote the maximal compact subgroup of the complex Lie group $H(\cc)$, so $H_c$ is a compact Lie group. We will use the terminology ``finite $H_c$-space'' to mean a finite $H_c$-CW complex. I have tried to be careful to add the subscript $c$ where necessary, but some omissions have certainly inevitably crept in.

The symbol $k$ will denote an $\Eoo$-ring which will generally be fixed at the beginning of each section/theorem statement. The symbol $F$ will denote an algebraically closed field with a map $\pi_0(k) \to F$. If $H$ is a group scheme acting on a scheme $Y$, the symbol $Y/H$ will mean the stacky quotient, and $Y\mmod H$ will denote the invariant-theoretic quotient. The Langlands dual group $\ld{G}$ will generally be defined over $F$; if we wish to view it as defined over a commutative ring $R$, we will denote it by $\ld{G}_R$. If $T$ is a torus, we will write $\bX^\ast(T)$ and $\bX_\ast(T)$ to denote its lattice of characters and cocharacters. We will also write $\ld{\Lambda}$ to denote the root lattice of $G$ and $\Lambda$ to denote the coroot lattice of $G$. The symbol $\tilde{T}$ will denote the extended torus $T \times \GG_m^\rot$. If $\ld{N}\subseteq \ld{G}$ is the unipotent radical of a Borel subgroup of $\ld{G}$, we will write $\psi: \ld{\fr{n}} \to \GG_a$ to denote a nondegenerate character of $\ld{\fr{n}}$, i.e., an additive character which is nonzero on each simple root space. If $X$ is an $\ld{N}$-scheme, we will write $T^\ast(X/_\psi \ld{N})$ to denote the Hamiltonian reduction of $T^\ast X$ at $\psi$ (that is, if $\mu: T^\ast(X) \to \ld{\fr{n}}^\ast$ is the moment map, then $T^\ast(X/_\psi \ld{N}) \cong \mu^{-1}(\psi)/\ld{N}$).

Finally, we will write $\cS$ to denote the $\infty$-category of spaces (also known as ``anima'', but we will not use this terminology here). The symbols $\QCoh$, $\Mod$, etc. are all to be understood in the \textit{derived} sense, as are all fiber and tensor products; we will make it clear if any of these operations are to be understood in their classical sense. In particular, if $X$ is a scheme or stack, we will write $\QCoh(X)^\heartsuit$ to denote the abelian category of quasicoherent sheaves on $X$. If $A$ is an $\Eoo$-ring, $\cC$ is an $A$-linear $\infty$-category (that is, a $\Mod_A$-module in the $\infty$-category of presentable stable $\infty$-categories), and $A \to B$ is a map of $\Eoo$-rings, then $\cC \otimes_A B$ will denote the $B$-linear $\infty$-category $\cC \otimes_{\Mod_A} \Mod_B$; similarly if $A$ is a classical commutative ring and $\cC$ is an abelian $A$-linear category.

\subsection{Acknowledgements}

I am grateful to Lin Chen, Charles Fu, Tom Gannon, and Kevin Lin for helpful conversations and for entertaining my numerous silly questions. I am also grateful to David Ben-Zvi, Victor Ginzburg, Yiannis Sakellaridis, David Treumann, and Akshay Venkatesh for very enlightening discussions, and Pavel Safronov for a useful email. Thanks to Ben Gammage for discussions which helped shape my understanding of some of the topics in \cref{sec: coulomb}, and to Hiraku Nakajima for a very informative email exchange on the same topic. My interest in this area started after I took a class taught by Roman Bezrukavnikov when I was an undergraduate; I am very grateful to him for suggesting that I read \cite{chriss-ginzburg}, and also for introducing me to \cite{bfm}, which led me down the beautiful road to geometric representation theory. Since the first version of this article, my understanding of the subject has also been influenced by \cite{bzsv}. I would also like to thank some anonymous referees for helpful suggestions on improving this article. Last, but certainly far from least, the influence, support, advice, and encouragement of my advisors Dennis Gaitsgory and Mike Hopkins is evident throughout this project; I cannot thank them enough.
\begin{landscape}
\begin{table}
\centering
\vspace*{-1cm}
{
\begin{tabular}{ |c|c|c|c|c|c|c|c| } 
\hline
Stack over $\Bun_{\ld{G}}^\ss(\GG_0^\vee)'$ & Image under $\LL$ & Citation \\
\hline
$\Bun_{\ld{G}}^\ss(\GG_0^\vee)'$ & $\Shv(\Gr_G; k)$ & \cref{thm: intro omnibus} \\
$\Bun_{\ld{P}}^\ss(\GG_0^\vee)'$ & $\Shv(G\ls{t}/I_P; k)$ & \cref{rmk: parabolic variant} \\
$\Bun_{\ld{P}}^{\ss,\nil}(\GG_0^\vee)'$ & $\Shv(G\ls{t}/I_P^0; k)$ & \cref{rmk: parabolic variant} \\
$\Bun_{\ld{G}}^\ss(\GG_0^\vee)^{',\mathrm{coarse}}$ & $\Shv(G\ls{t}/G\ls{t}; k) = \Mod_k$ & Kostant/Steinberg/elliptic slices \\
$B\ld{G}$ & $\mathrm{Kir}(\Shv(G\ls{t}; k))$ & Conjectural in general; \cite{geometric-casselman-shalika-i, geometric-casselman-shalika-ii} when $k = \QQ[u^{\pm 1}]$ \\
$\ld{G}\backslash \ol{T^\ast_{\GG_0}(\ld{G}/\ld{N})}$ & $\Shv^\sph(G\ls{t}/T\ls{t}; k)$ & \cref{prop: ordinary gelfand-graev}, \cref{prop: ku gelfand-graev}, \cref{prop: ell gelfand-graev} \\
$\GL_n\backslash \cB_\beta(\AA^n, \AA^{n-1})/\GL_{n-1}$ & $\Shv^\sph(\GL\ls{t}_n; k)$ for $k = \QQ[u^{\pm 1}], \KU$ & \cite{mirabolic-satake}, \cite[Remark 4.3.4]{ku-rel-langlands} \\
$\gl_n/\GL_n$ & $\Shv^\sph(\GL\ls{t}_{2n}/\Sp\ls{t}_{2n}; k)$ for $k = \QQ[u^{\pm 1}], \KU$ & \cite{quat-satake}, \cref{ex: symplectic period} \\
\hline
\end{tabular}
}
\vspace{.5cm}
\caption{A (non-exhaustive) list of some stacks $\fr{X}$ over $\Bun_{\ld{G}}^\ss(\GG_0^\vee)'$, and the image of $\QCoh(\fr{X})$ under the conjectural functor \cref{eq: pseudo local langlands}. The citation in the table is possibly to a statement only concerning a ``regular locus'' in $\fr{X}$; but with the ``faux'' definition of graded sheaves from \cite[Construction 3.7.15]{ku-rel-langlands}, the statement can be extended to one about the entirety of $\QCoh(\fr{X})$. For most of these cases, natural symmetries on $k$ match with corresponding evident symmetries on the stack over $\Bun_{\ld{G}}^\ss(\GG_0^\vee)'$. For instance, power operations on $k$ match with the effect of isogenies on $\GG_0$; see \cref{sec: power operations}. However, the meaning of such symmetries on $k$ under this Langlands duality is sometimes less clear (like in the final line; see \cref{ex: symplectic period}). When $k = \QQ[u^{\pm 1}]$ or $\ku$, there are many other examples one could add to this table coming from relative Langlands duality \cite{bzsv}; see there, as well as some of the calculations in \cite{ku-rel-langlands}, for a more comprehensive list in these cases.
\newline
\newline
In the fifth row, $B\ld{G}$ is viewed as a stack over $\Bun_{\ld{G}}^\ss(\GG_0^\vee)'$ via pullback along the map $\GG_0^\vee \to \spec(F)$. The category $\mathrm{Kir}(\Shv(G\ls{t}; k))$ is the ``Kirillov model'' from \cite{gaitsgory-lysenko-kirillov}; the matching in this line should be a version of the geometric Casselman-Shalika formula \cite{geometric-casselman-shalika-i, geometric-casselman-shalika-ii}. See also \cite[Section 10]{lurie-icm} for a related discussion.
In the following row, the scheme $\ol{T^\ast_{\GG_0}(\ld{G}/\ld{N})}$ is introduced below in \cref{prop: ordinary gelfand-graev}, \cref{prop: ku gelfand-graev}, and \cref{prop: ell gelfand-graev}, and $\Shv^\mathrm{sph}(G\ls{t}/T\ls{t}; k)$ is the full subcategory of $\Shv(G\ls{t}/T\ls{t}; k)$ generated by the $G\pw{t}$-equivariant objects.
\newline
\newline
The next row has $G = \GL_n \times \GL_{n-1}$. In the left column, $\cB_\beta(\AA^n, \AA^{n-1})$ denotes the Hamiltonian $\GL_n \times \GL_{n-1}$-space $T^\ast \Hom(\AA^n, \AA^{n-1})$ if $k = \QQ[u^{\pm 1}]$, and denotes Van den Bergh's quasi-Hamiltonian $\GL_n \times \GL_{n-1}$-space \cite{van-den-bergh-double-poisson} of pairs $(u,v)\in \Hom(\AA^n, \AA^{n-1}) \times \Hom(\AA^{n-1}, \AA^n)$ such that $\id + uv$ is invertible if $k = \KU$. In the next column, $\GL_n$ is viewed as a $\GL_n \times \GL_{n-1}$-space by the left and right actions.
\newline
\newline
The next line concerns the quaternionic Satake equivalence, where $G = \GL_{2n}$. When $k = \QQ[u^{\pm 1}]$, the stack $\gl_n/\GL_n$ lives over $\Bun_{\GL_{2n}}^\ss(\GG_a^\vee) = \gl_{2n}/\GL_{2n}$ via the diagonal embedding $\GL_n \subseteq \GL_{2n}$ and the map $\gl_n \to \gl_{2n}$ sending $x\mapsto \begin{psmallmatrix}
    0 & \id_n \\
    x & 0
\end{psmallmatrix}$ (see \cite{quat-satake}). When $k = \KU$, the stack $\gl_n/\GL_n$ lives over $\Bun_{\GL_{2n}}^\ss(\GG_m^\vee) = \GL_{2n}/\GL_{2n}$ via the diagonal embedding $\GL_n \subseteq \GL_{2n}$ and the map $\gl_n \to \GL_{2n}$ sending $x\mapsto \begin{psmallmatrix}
    x+\id_n & \id_n \\
    x & \id_n
\end{psmallmatrix}$. As explained in \cref{rmk: ku symplectic}, one can interpolate between these two cases; but the situation is genuinely different when $k$ is elliptic cohomology (because the quaternionic affine Grassmannian is not $k$-orientable). 
}
\label{table: pseudo local}
\end{table}
\end{landscape}
\newpage

\section{The regular locus}\label{sec: regular locus}
In this section, we will quickly review the derived geometric Satake equivalence following \cite{bf-derived-satake} and \cite{arinkin-gaitsgory-singsupp}. Let $k$ denote a commutative $\QQ$-algebra; all Langlands dual objects will be assumed to live over $k$, and are base-changes of their ``split forms'' over $\QQ$.
\begin{setup}
    Let $G$ be a connected reductive group (over $\cc$, always), and let $\Gr_G = G\ls{t}/G\pw{t}$ denote the affine Grassmannian. There is a canonical left action of $G\ls{t}$ on $\Gr_G$, and hence an action of $G\pw{t} \subseteq G\ls{t}$. The affine Grassmannian is a union of $G\pw{t}$-invariant closed subschemes $X_\alpha$ of finite type, and one defines $\Shv_{G\pw{t}}(\Gr_G; k) = \colim_\alpha \Shv_{G\pw{t}}(X_\alpha; k)$.
    Inside $\Shv_{G\pw{t}}(\Gr_G; k)$ are two full subcategories: 
    \begin{itemize}
        \item $\Shv_{G\pw{t}}(\Gr_G; k)^\lcc$ is the full subcategory of objects whose image under the forgetful functor $\Shv_{G\pw{t}}(\Gr_G; k) \to \Shv(\Gr_G; k)$ is compact. Such objects are called ``locally compact''.
        \item $\Shv_{G\pw{t}}(\Gr_G; k)^\omega$ of compact objects in $\Shv_{G\pw{t}}(\Gr_G; k)$.
    \end{itemize}
    The $\infty$-category $\Shv_{G\pw{t}}(\Gr_G; k)$ admits a monoidal structure, which in fact restricts to a monoidal structure on each of the full subcategories above.
\end{setup}
\begin{setup}
    Let $(e,f,h)$ denote a principal $\sl_2$-triple in the Langlands dual Lie algebra $\ld{\g}$. The element $f$ defines a nondegenerate character $\psi: \ld{\fr{n}} \to \AA^1$. Let $\ld{\g}^{\ast,e}$ denote the orthogonal complement to the subspace $[e,\ld{\g}] \subseteq \ld{\g}$. This defines the \textit{Kostant slice} $\psi + \ld{\g}^{\ast,e} \subseteq \ld{\g}^\ast$; we will denote this inclusion by $\kappa$. Composing the invariant-theoretic quotient map $\chi: \ld{\g}^\ast \to \ld{\g}^\ast\mmod \ld{G}$ with the Kostant slice defines an isomorphism. In other words, the following composite is an isomorphism:
    $$\psi + \ld{\g}^{\ast,e} \xar{\kappa} \ld{\g}^\ast \xar{\chi} \ld{\g}^\ast\mmod \ld{G}.$$
    It will be convenient to identify $\psi + \ld{\g}^{\ast,e}$ with $\ld{\g}^\ast\mmod \ld{G}$ under this isomorphism. If the vector space $\ld{\g}^\ast$ is placed in weight $2$, the map $\kappa$ can be checked to give a \textit{graded} map
    $$\kappa: \ld{\g}^\ast(2)\mmod \ld{G} \to \ld{\g}^\ast(2).$$
    Shearing this graded map (in the sense of \cite[Section 2.1]{ku-rel-langlands}) defines a map $\ld{\g}^\ast[2]\mmod \ld{G} \to \ld{\g}^\ast[2]$, which we will also denote by $\kappa$.
\end{setup}
\begin{lemma}[Chevalley restriction]
    There is an isomorphism $\ld{\g}^\ast\mmod \ld{G} \cong \fr{t}\mmod W$, which refines to an isomorphism of graded schemes
    $$\ld{\g}^\ast(2)\mmod \ld{G} \cong \fr{t}(2)\mmod W \cong \spec \H^\ast_G(\ast; \cc).$$
\end{lemma}
The first part of the following result is \cite[Theorem 5]{bf-derived-satake}, and the second part is \cite[Theorem 12.5.3]{arinkin-gaitsgory-singsupp}.
\begin{theorem}[Bezrukavnikov-Finkelberg, Arinkin-Gaitsgory]\label{thm: derived satake}
    There is a monoidal equivalence
    $$\Shv_{G\pw{t}}(\Gr_G; k)^\lcc \simeq \Perf(\ld{\g}^\ast[2]/\ld{G}),$$
    which restricts to a monoidal equivalence
    $$\Shv_{G\pw{t}}(\Gr_G; k)^\omega \simeq \Perf_{\ld{\cN}/\ld{G}}(\ld{\g}^\ast[2]/\ld{G}),$$
    where the right-hand side is the full subcategory of those perfect complexes which are set-theoretically supported on the nilpotent cone of $\ld{\g}^\ast$.
    Furthermore, there is a commutative diagram
    $$\xymatrix{
    \Ind(\Shv_{G\pw{t}}(\Gr_G; k)^\lcc) \ar[r]^-\sim \ar[d]_{p_!} & \QCoh(\ld{\g}^\ast[2]/\ld{G}) \ar[d]^-{\kappa^\ast} \\
    \Shv_{G\pw{t}}(\ast; k) \ar[r]_-\sim & \QCoh(\ld{\g}^\ast[2]\mmod \ld{G}),
    }$$
    where $p: \Gr_G \to \ast$ is the canonical map to a point and $\kappa^\ast$ is pullback along the (shifted) Kostant slice.
\end{theorem}
We will refer to the first equivalence of \cref{thm: derived satake} as the \textit{derived geometric Satake equivalence}, or more colloquially as ``derived Satake''.
\begin{definition}
    A point $x\in \ld{\g}^\ast$ is called \textit{regular} if its centralizer $Z_{\ld{G}}(x)\subseteq \ld{G}$ has dimension given by the rank of $\ld{G}$. 
    Let $\ld{\g}^{\ast,\reg}$ denote the locus of regular elements; this is an open subscheme whose complement is of codimension $3$.
\end{definition}
\begin{theorem}[{Kostant, \cite{kostant-lie-group-reps}}]\label{thm: kostant reg locus}
    The $\ld{G}$-orbit of the Kostant slice $\kappa: \ld{\g}^\ast\mmod \ld{G} \to \ld{\g}^\ast$ identifies with the regular locus $\ld{\g}^{\ast,\reg}$.
\end{theorem}
\begin{corollary}\label{cor: reg locus satake}
    Let $\ul{k}_{\Gr_G} \in \Ind(\Shv_{G\pw{t}}(\Gr_G; k)^\lcc)$ denote the constant sheaf, and let $\Loc_{G\pw{t}}(\Gr_G; k)$ denote the full subcategory generated by $\ul{k}_{\Gr_G}$. Then there is an equivalence
    $$\Loc_{G\pw{t}}(\Gr_G; k) \simeq \QCoh(\ld{\g}^{\ast,\reg}[2]/\ld{G}).$$
\end{corollary}
\begin{proof}
    Observe that $\ul{k}_{\Gr_G}$ is the pullback $p^\ast \ul{k}$ of the (necessarily constant) sheaf $\ul{k} \in \Shv_{G\pw{t}}(\ast; k)$. Since $p^\ast$ is the right adjoint to $p_!$ (and $\kappa_\ast$ is the right adjoint to $\kappa^\ast$), the commutative diagram of \cref{thm: derived satake} says that $\Loc_{G\pw{t}}(\Gr_G; k)$ is equivalent to the full subcategory of $\QCoh(\ld{\g}^\ast[2]/\ld{G})$ generated by $\kappa_\ast \co_{\ld{\g}^\ast[2]\mmod \ld{G}}$. However, \cref{thm: kostant reg locus} implies that this full subcategory is equivalent to $\QCoh(\ld{\g}^{\ast,\reg}[2]/\ld{G})$, as desired.
\end{proof}
A parallel story holds for the Arkhipov-Bezrukavnikov-Ginzburg (called ``ABG'' in this article) equivalence from \cite{abg-iwahori-satake}. 
\begin{recall}
    Let $\tilde{\ld{\g}}$ denote the Grothendieck-Springer resolution, so that $\tilde{\ld{\g}} = T^\ast(\ld{G}/\ld{N})/\ld{T}$. The action of $\ld{G}$ on $T^\ast(\ld{G}/\ld{N})$ defines the moment map $\mu: \tilde{\ld{\g}} \to \ld{\g}^\ast$. Let $\tilde{\ld{\g}}^{\reg}$ denote the preimage of the regular locus $\ld{\g}^{\ast,\reg} \subseteq \ld{\g}^\ast$ under the moment map $\mu$.
\end{recall}
\begin{prop}\label{prop: psi + t}
    There is an isomorphism $\tilde{\ld{\g}} \cong \ld{G} \times^{\ld{B}} \ld{\fr{n}}^\perp$, as well as a map $\kappa: \psi + \ld{\fr{t}}^\ast \subseteq \ld{\fr{n}}^\perp$ which fits into a Cartesian square
    $$\xymatrix{
    \psi + \ld{\fr{t}}^\ast \ar[r] \ar[d] & \ld{\fr{n}}^\perp \ar[r] & \tilde{\ld{\g}} \ar[d]^-\mu \\
    \psi + \ld{\g}^{\ast,e} \ar[rr] & & \ld{\g}^\ast.
    }$$
\end{prop}
\begin{proof}
    Let $\ld{M}$ be a Hamiltonian $\ld{G}$-scheme with moment map $\mu: \ld{M} \to \ld{\g}^\ast$. Then the pullback $\ld{M} \times_{\ld{\g}^\ast} (\psi + \ld{\g}^{\ast,e})$ can be identified with the Whittaker reduction $\ld{M}/_\psi \ld{N}$. Indeed, a theorem of Kostant's from \cite{kostant-whittaker} identifies $\psi + \ld{\g}^{\ast,e}$ with $(\psi + \ld{\fr{n}}^{-,\perp})/\ld{N}^-$, so that there are isomorphisms
    \begin{align*}
        \ld{M} \times_{\ld{\g}^\ast} (\psi + \ld{\g}^{\ast,e}) & \cong \ld{M}/\ld{G} \times_{\ld{\g}^\ast/\ld{G}} (\psi + \ld{\g}^{\ast,e}) \\
        & \cong \ld{M}/\ld{G} \times_{\ld{\g}^\ast/\ld{G}} (\psi + \ld{\fr{n}}^{-,\perp})/\ld{N}^- \\
        & \cong (\ld{M} \times_{\ld{\fr{n}}^{-,\ast}} \{\psi\})/\ld{N}^- = \ld{M}/_\psi \ld{N}^-.
    \end{align*}
    Therefore, the fiber product in the statement of the proposition identifies with the Whittaker reduction $\tilde{\ld{\g}}/_\psi \ld{N}^-$. Since $\tilde{\ld{\g}} \cong T^\ast(\ld{G}/\ld{N})/\ld{T}$, we may identify $\tilde{\ld{\g}}/_\psi \ld{N}^-$ with the quotient by $\ld{T}$ of $T^\ast(\ld{N}^- {}_\psi\backslash \ld{G}/\ld{N})$. Since Whittaker functions are supported on the big cell, this twisted cotangent bundle is in turn isomorphic to $T^\ast(\ld{N}^- {}_\psi\backslash (\ld{N}^- \times \ld{T} \times \ld{N})/\ld{N}) \cong \ld{T} \times (\psi + \ld{\fr{t}}^\ast)$. The desired Cartesian square follows.
\end{proof}
Again, $\tilde{\ld{\g}}$ admits a $\GG_m$-action obtained by placing $\ld{\fr{n}}^\perp$ in weight $2$, and the map $\kappa: \ld{\fr{t}}^\ast \to \ld{\fr{n}}^\perp$ is equivariant if $\ld{\fr{t}}^\ast$ is also placed in weight $2$. Therefore, shearing (as in \cite[Section 2.1]{ku-rel-langlands}) defines a map
$$\ld{\fr{t}}^\ast[2] \xar{\kappa} \ld{\fr{n}}^\perp[2] \to \tilde{\ld{\g}}[2].$$
We will sometimes denote this composite also by $\kappa$.

The first part of the below equivalence was proved by Arkhipov-Bezrukavnikov-Ginzburg in \cite{abg-iwahori-satake}; the commutative diagram below follows from \cref{prop: psi + t} and \cref{thm: derived satake}.
\begin{theorem}\label{thm: abg}
    Let $B\subseteq G$ be a Borel subgroup, and let $I = G\pw{t} \times_G B$ denote the associated Iwahori subgroup. Then there is an equivalence
    $$\Shv_{I}(\Gr_G; k)^\lcc \simeq \Perf(\tilde{\ld{\g}}[2]/\ld{G}),$$
    which restricts to a monoidal equivalence
    $$\Shv_{I}(\Gr_G; k)^\omega \simeq \Perf_{{\ld{\cN}}/\ld{G}}(\tilde{\ld{\g}}[2]/\ld{G}).$$
    Furthermore, there is a commutative diagram
    $$\xymatrix{
    \Ind(\Shv_{I}(\Gr_G; k)^\lcc) \ar[r]^-\sim \ar[d]_{p_!} & \QCoh(\tilde{\ld{\g}}[2]/\ld{G}) \ar[d]^-{\kappa^\ast} \\
    \Shv_{I}(\ast; k) \ar[r]_-\sim & \QCoh(\ld{\fr{t}}^\ast[2]),
    }$$
    where $p: \Gr_G \to \ast$ is the canonical map to a point and $\kappa^\ast$ is pullback along the (shifted) Kostant slice.
\end{theorem}
As in \cref{cor: reg locus satake}, we find:
\begin{corollary}\label{cor: reg locus abg}
    Let $\ul{k}_{\Gr_G} \in \Shv_{I}(\Gr_G; k)$ denote the constant sheaf, and let $\Loc_{I}(\Gr_G; k)$ denote the full subcategory generated by $\ul{k}_{\Gr_G}$. Then there is an equivalence
    $$\Loc_{I}(\Gr_G; k) \simeq \QCoh(\tilde{\ld{\g}}^{\reg}[2]/\ld{G}).$$
\end{corollary}
The constant sheaf has singular support given by the zero section. In fact, the $\infty$-categories $\Loc_{G\pw{t}}(\Gr_G; k)$ and $\Loc_{I}(\Gr_G; k)$ are the subcategories of \textit{locally constant} (equivariant) sheaves on $\Gr_G$. As such, they depend only on the underlying homotopy types of $G\pw{t}$, $I$, and $\Gr_G$. 
\begin{notation}
    Let $G_c$ be the maximal compact subgroup of $G(\cc)$, and let $T_c$ be the maximal torus of $G_c$ corresponding to the Borel $B$. It is not difficult to see that there are homotopy equivalences
    \begin{align*}
        G\pw{t} & \simeq G(\cc) \simeq G_c \\
        I & \simeq B(\cc) \simeq T_c.
    \end{align*}
    The homotopy type of $\Gr_G$ follows from the next result, due to Quillen and Garland-Raghunathan:
\end{notation}
\begin{theorem}[Quillen, Garland-Raghunathan, \cite{garland-raghunathan, mitchell-buildings}]\label{thm: quillen}
    There is a homotopy equivalence $\Gr_G \simeq \Omega G_c$ (and a homeomorphism onto the subspace of $\Omega G_c$ on those based loops with \textit{polynomial} Fourier expansion) which is equivariant for the left-action of $G_c \subseteq G(\cc) \subseteq G(\cc\pw{t})$ on the left-hand side and the action of $G_c$ on the right-hand side by conjugation.
\end{theorem}
In our discussion below, we will mostly be concerned with the homology of $\Gr_G$, in which case we may replace $\Gr_G$ by $\Omega G_c$. To this extent, we will implicitly use \cref{thm: quillen} without further mention. We will describe analogues of the equivalences of \cref{cor: reg locus satake} and \cref{cor: reg locus abg} for equivariant K-theory and equivariant elliptic cohomology.
\newpage

\section{Equivariant cohomology and the case of tori}\label{sec: equiv coh}
In order to study and prove analogues of the equivalences of \cref{cor: reg locus satake} and \cref{cor: reg locus abg} for other cohomology theories, we need to review some foundational aspects of the theory of equivariant cohomology. I have reviewed some of the basics of equivariant K-theory in \cite[Section 2.2]{ku-rel-langlands}. The theory of equivariant elliptic cohomology is developed similarly, and we will now describe this story (in a somewhat leisurely fashion) following \cite{survey, gepner-meier, t-equiv-tmf}. At the end of this section, we describe the geometric Satake equivalence for tori.

The basic question we will address is giving a definition of the $\infty$-category $\Loc_{T_c}(X; k)$ for a $T_c$-space $X$ for a sufficiently general $\Eoo$-ring $k$. When $k$ is an $\Eoo$-$\QQ$-algebra, \cref{thm: abg} requires that there is an equivalence 
$$\Loc_{T_c}(\ast; k) \simeq \QCoh(\ld{\fr{t}}^\ast[2]).$$
One often defines the $\infty$-category of $k$-modules on a space $X$ as the $\infty$-category $\Fun(X, \Mod_k)$. However, when $X = BT_c$, the $\infty$-category $\Fun(BT_c, \Mod_k)$ does \textit{not} agree with $\QCoh(\ld{\fr{t}}^\ast[2])$; instead, it only agrees with a certain completion of this $\infty$-category, as we will now explain. 
\begin{lemma}\label{lem: Fun BT_c to Perf}
    Let $k$ be an $\Eoo$-algebra. Then there is an equivalence
    $$\Fun(BT_c, \Mod_k) \simeq \IndCoh(\{1\} \times_{\ld{T}} \{1\}).$$
    If, moreover, $k$ is an $\Eoo$-$\QQ$-algebra, this can be rewritten as an equivalence
    $$\Fun(BT_c, \Mod_k) \simeq \QCoh(\widehat{\ld{\fr{t}}}^\ast[2]),$$
    where $\widehat{\ld{\fr{t}}}^\ast$ denotes the completion of $\ld{\fr{t}}^\ast$ at the origin.
\end{lemma}
\begin{proof}
    If $X$ is a finite space, there is an equivalence $\Fun(X, \Mod_k) \simeq \IndCoh_{C_\ast(\Omega X; k)}$, where $C_\ast(\Omega X; k)$ is the $\E{1}$-$k$-algebra of $k$-chains on the based loop space $\Omega X$. When $X = BT_c$, we may identify $\Omega X = T_c$. Recall that $T_c$ is the classifying space of the lattice $\bX_\ast(T)$, so that there is an equivalence
    $$C_\ast(T_c; k) \cong k \otimes_{C_\ast(\bX_\ast(T); k)} k.$$
    Of course, we may identify $C_\ast(\bX_\ast(T); k) \cong k[\bX_\ast(T)]$ with the ring of functions on $\ld{T}$. Therefore, $\spec C_\ast(T_c; k) \cong \{1\} \times_{\ld{T}} \{1\}$, as desired.

    Koszul duality gives an equivalence $\IndCoh(C_\ast(T_c; k)) \to \QCoh(C^\ast(BT_c; k))$ given by $M\mapsto \Hom_{C_\ast(T_c; k)}(k, M)$. If $k$ is an $\Eoo$-$\QQ$-algebra, then $C^\ast(BT_c; k)$ is formal, and so it can be identified with the shearing of $\H^\ast(BT_c; k)$. But
    $$\spf \H^\ast(BT_c; k) \cong \widehat{\fr{t}}(2) \cong \widehat{\ld{\fr{t}}}^\ast(2),$$
    so $\IndCoh(C_\ast(T_c; k))$ is equivalent to $\QCoh(\widehat{\ld{\fr{t}}}^\ast[2])$, as desired.
\end{proof}
\begin{example}
    Suppose $T_c = S^1$. Then \cref{lem: Fun BT_c to Perf} tells us that $\Fun(BS^1, \Mod_k) \simeq \QCoh(\widehat{\AA^1}[2])$; the equivalence sends a functor $BS^1 \to \Mod_k$, regarded as a $k$-module $M$ with $S^1$-action, to its homotopy invariants $M^{hS^1}$. Let $t \in \pi_{-2}(k^{hS^1})$ denote a generator. Observe that if $a_\lambda: k \to k[2]$ denotes the boundary map in the cofiber sequence $k[1] \to C_\ast(S^1; k) \to k$, the homotopy invariants of $k[a_\lambda^{-1}]$ are simply $k^{hS^1}[t^{-1}]$ (i.e., the Tate construction). In particular, $\pi_\ast (k[a_\lambda^{-1}])^{hS^1} \cong \pi_\ast(k)\ls{t}$. However (even if $k$ is an $\Eoo$-$\QQ$-algebra), there is no (ind-)object in $\Fun(BS^1, \Mod_k)$ whose image in $\QCoh(\widehat{\AA^1}[2])$ has homotopy given by $\pi_\ast(k)[t^{\pm 1}]$: any object of $\QCoh(\widehat{\AA^1}[2])$ must have $t$ as a topologically nilpotent element in its homotopy.
\end{example}
We therefore need an alternative definition of $\Loc_{T_c}(\ast; k)$, so that it is equivalent to $\QCoh(\ld{\fr{t}}^\ast[2])$ when $k$ is an $\Eoo$-$\QQ$-algebra. Motivated by methods from equivariant homotopy theory, as well as \cite{survey, elliptic-i, elliptic-ii, elliptic-iii}, we will simply \textit{define} $\Loc_{T_c}(\ast; k)$ to be the category of quasicoherent sheaves on a (spectral) stack $\cM_T$. That this category has any relation to topology will come from the requirement that the category of quasicoherent sheaves on the \textit{completion} of $\cM_T$ at a certain basepoint is equivalent to the ind-completion of $\Fun(BT_c, \Mod_k)$.

For this, we review some constructions from \cite{survey} in a form suitable for our applications.
This review will necessarily be brief, since a detailed exposition may be found in \textit{loc. cit.}; there is also some discussion in the early sections of \cite{ginzburg-kapranov-vasserot} in the setting of ordinary (as opposed to spectral) algebraic geometry.
\begin{setup}
    Fix an $\Eoo$-ring $k$ and a commutative $k$-group $\GG$, so $\GG$ defines a functor $\CAlg_A \to \Mod_{\Z,\geq 0}$ which is representable by a \textit{flat} $k$-algebra; here, $\Mod_{\Z, \geq 0}$ denotes the category of connective $\Z$-module spectra. We will write $\GG_0$ to denote the resulting commutative group scheme over $\pi_0 k$. Note that taking zeroth spaces defines an equivalence between $\Mod_{\Z,\geq 0}$ and topological abelian groups.
\end{setup}
\begin{definition}
    A \textit{preorientation of $\GG$} is a pointed map $S^2 \to \Omega^\infty \GG(k)$ of spaces, i.e., a map $\Sigma^2 \Z \to \GG(k)$ of $\Z$-modules (by adjunction). This induces a map $\CP^\infty = \Omega^\infty \Sigma^2 \Z \to \Omega^\infty \GG(k)$ of topological abelian groups, and hence a map $\spf A^{\CP^\infty} \to \GG$ of $\Eoo$-$k$-group schemes. (Note that $\spf A^{\CP^\infty}$ need not admit the structure of a commutative $k$-group scheme: for instance, $A^{\CP^\infty}$ need not be flat over $k$.)
\end{definition}
\begin{definition}\label{def: orientation}
    Given a preorientation $S^2 \to \Omega^\infty \GG(k)$, we obtain a map $\co_\GG \to C^\ast(S^2; k)$ of $\Eoo$-$k$-algebras. On $\pi_0$, this induces a map $\pi_0 \co_\GG = \co_{\GG_0} \to \pi_0 C^\ast(S^2; k)$. However, the target can be identified with the trivial square-zero extension $\pi_0 k \oplus \pi_{-2} k$, so that the preorientation defines a derivation $\co_{\GG_0} \to \pi_{-2} k$. This defines a map $\beta: \omega = \Omega^1_{\GG_0/\pi_0 k} \to \pi_{-2} k$. The preorientation is called an \textit{orientation} if $\GG_0$ is smooth of relative dimension $1$ over $\pi_0 k$, and the composite
    $$\pi_n k \otimes_{\pi_0 k} \omega \to \pi_n k \otimes_{\pi_0 k} \pi_{-2} k \xar{\beta} \pi_{n-2} k$$
    is an isomorphism for each $n\in \Z$. This forces $k$ to be $2$-periodic (but does not force its homotopy to be concentrated in even degrees).
\end{definition}
\begin{warning}\label{warning: additive-orientation}
    As discussed in \cite[Section 3.2]{survey}, the universal $\Eoo$-$\Z$-algebra over which the additive group scheme $\GG_a$ admits an orientation is given by $\Z[\CP^\infty][\tfrac{1}{\beta}] = \QQ[\beta^{\pm 1}]$. Therefore, we are allowed to let $\GG = \GG_a$ in the story below only when $k$ is a $2$-periodic \textit{$\Eoo$-$\QQ$-algebra}. (If $k$ is not an $\Eoo$-$\Z$-algebra, one cannot in general define $\GG_a = \spec k[t]$ as a commutative $k$-group: the coproduct $k[t] \to k[x,y]$ will in general not be a map of $\Eoo$-$k$-algebras.)
\end{warning}
We can now review the definition of $T_c$-equivariant $k$-cohomology when $T_c$ is a compact torus. We will write $T$ to denote the corresponding  split torus over $\Z$.
\begin{construction}\label{cstr: def-equiv-coh}
    Fix an $\Eoo$-ring $k$ as above and a commutative $k$-group $\GG$. Given a compact abelian Lie group $T_c$, define a $k$-scheme $\cM_T$ by the mapping stack $\Hom(\bX^\ast(T), \GG)$. The underlying $\pi_0(k)$-schemes will be denoted by $\GG_0$ and $\cM_{T,0}$. If we wish to emphasize the dependence on $k$, we will add a superscript (e.g., $\cM_T^k$).
    
    We will be particularly interested in the case when $T_c$ is a torus. Let $\cT$ be the full subcategory of $\Top$ spanned by those spaces which are homotopy equivalent to $BT_c$ with $T_c$ being a compact abelian Lie group. By arguing as in \cite[Theorem 3.5.5]{elliptic-iii}, a preorientation of $\GG$ is equivalent to the data of a functor $\cM: \cT \to \Aff_k$ along with compatible equivalences $\cM(BT_c) \simeq \cM_T$. The $\Eoo$-$k$-algebra $\co_{\cM_T}$ is the $T_c$-equivariant $k$-cochains of a point, and will occasionally be denoted by $k_T$. 

    We can now sketch the construction of the $T_c$-equivariant $k$-cochains of more general $T_c$-spaces; see \cite[Theorem 3.2]{survey}. Let $T_c$ be a torus over $\cc$ for the remainder of this discussion, and let $\GG$ be an \textit{oriented} commutative $k$-group. Let $\Top(T_c)$ denote the $\infty$-category of finite $T_c$-spaces, i.e., the smallest subcategory of $\Fun(BT_c, \Top)$ which contains the quotients $T_c/T_c'$ for closed subgroups $T_c'\subseteq T_c$, and which is closed under finite colimits. There is a functor $\cf_T: \Top(T_c)^\op \to \QCoh(\cM_T)$ which is uniquely characterized by the requirement that it preserve finite limits and sends $T_c/T_c' \mapsto q_\ast \co_{\cM_{T_c'}}$. Here, $q: \cM_{T_c'} \to \cM_T$ is the canonical map induced by the inclusion $T_c'\subseteq T_c$. If $X\in \Top(T_c)$, then the \textit{$T_c$-equivariant $k$-cochains of $X$} is the global sections $\Gamma(\cM_T; \cf_T(X))$; we will denote it by $C^\ast_{T_c}(X; k)$. This can be extended to define $T_c$-equivariant $k$-cochains of filtered colimits of finite $T_c$-spaces. If we wish to emphasize the dependence on $k$, we will denote $\cf_T(X)$ by $\cf_T(X; k)$.
\end{construction}
\begin{remark}\label{rmk: oriented and completion}
    If $k$ is $2$-periodic and $\GG$ is a commutative $k$-group, then \cite[Proposition 4.3.23]{elliptic-ii} shows that the data of an orientation on $k$ (in the sense of \cref{def: orientation}) is equivalent to the formal completion of $\GG$ at the origin being isomorphic to $\spf C^\ast(BS^1; k)$. That is, when $\GG$ is oriented, the formal completion of $\cM_T$ at its basepoint is isomorphic to $\spf C^\ast(BT_c; k)$.
\end{remark}
We will denote the functor $\Gamma(\cM_T; \cf_T(-)): \Top(T_c)^\op \to \Mod(\Gamma(\cM_T; \co_{\cM_T}))$ by $C^\ast_{T_c}(-; k): \Top(T_c)^\op \to \Mod(k_T)$.
\begin{definition}\label{def: equiv-homology}
    If $X\in \Top(T_c)$, then the \textit{$T_c$-equivariant $k$-chains of $X$} is the quasicoherent sheaf on $\cM_T$ given by the $\co_{\cM_T}$-linear dual $\cf_T(X)^\vee$. We will denote its global sections by $C_\ast^{T_c}(X; k)$. Note that if $X$ admits an $\E{n}$-algebra structure (compatible with the $T_c$-action), then $\cf_T(X)^\vee$ admits the structure of an $\E{n}$-algebra\footnote{If $\cC$ is a symmetric monoidal $\infty$-category, \cite[Corollary 3.3.4]{elliptic-i} can be used to show that there is an equivalence $\coCAlg(\Alg_\E{n}(\cC)) \simeq \Alg_\E{n}(\coCAlg(\cC))$.} in $\coCAlg(\QCoh(\cM_T))$. 
    Note that $C_\ast^{T_c}(\ast; k) \simeq k_T$, which completes to the $k$-cochains (\textit{not} $k$-chains) of $BT_c$.
\end{definition}
If $X$ is a filtered colimit $\colim_\alpha X_\alpha$ of finite $T_c$-spaces, we will write $\cf_T(X)^\vee$ to denote $\colim_\alpha (\cf_T(X_\alpha)^\vee)$. Note that if we equip the presentation of $X$ as a filtered colimit $\colim_\alpha X_\alpha$ with the structure of a filtered $\E{n}$-algebra, then $\cf_T(X)^\vee$ acquires the structure of an $\E{n}$-algebra in $\coCAlg(\QCoh(\cM_T))$. 
\begin{notation}\label{notn: ideal-character}
    Let $\lambda: T \to \GG_m$ be a character, and let $T_\lambda = \ker(\lambda)$. Then the map $q: \cM_{T_\lambda} \to \cM_T$ is a closed immersion, and we will denote the ideal in $\co_{\cM_T}$ defined by this closed immersion by $\cI_\lambda$. Equivalently, let $V_\lambda$ denote the $T_c$-representation obtained by the projection $T \to T_\lambda$. Then $\cI_\lambda$ is given by the line bundle $\cf_T(S^{V_\lambda})$.
\end{notation}
It is trickier to extend the definition of equivariant cochains to nonabelian groups, but a construction is sketched in \cite[Section 3.5]{survey}, and a detailed construction is given in \cite{gepner-meier}. However, we will not recall this here, because we will only be concerned with torus-equivariance in the present article. 

We now take a moment to prove some foundational aspects of the theory of generalized equivariant cohomology.
\begin{lemma}[{Atiyah-Bott localization \cite{atiyah-bott-localization}}]\label{lem: atiyah localization}
    Let $X$ be a finite $T_c$-space, and let $\cU_X \subseteq \cM_T$ denote the complement of the union of the closed substacks $\cM_{T'}$ over all stabilizers $T'_c \subseteq T_c$ of points in $X$. Then the map $\cf_T(X) \to \cf_T(X^{T_c})$ is an isomorphism after pulling back to $\cU_X$.
\end{lemma}
\begin{proof}
    This follows from induction on the cell structure of $X$. Namely, the statement is true when the $T$-action on $X$ is trivial, which gives the base case. For the inductive step, note that if $X$ is the cofiber of a map $T/T' \to Y$, then there is a cofiber sequence $\cf_T(X) \to \cf_T(Y) \to \cf_T(T/T')$; but $\cf_T(T/T')$ is isomorphic to the pushforward of the structure sheaf along the map $\cM_{T'} \to \cM_T$, and so it vanishes upon pulling back to $\cU_X$. This implies that the map $\cf_T(X) \to \cf_T(Y)$ is an isomorphism upon pulling back to $\cU_X$, as desired.
\end{proof}
One consequence of \cref{lem: atiyah localization} which is worth restating is the following. Let $\punc{\cM}_T$ denote the complement of the union of the closed subschemes $\cM_{T'}$ ranging over all closed \textit{proper} subgroups $T' \subsetneq T$. Then the map $\cf_T(X) \to \cf_T(X^{T_c})$, and hence the map $\cf_T(X^{T_c})^\vee \to \cf_T(X)^\vee$, is an equivalence upon restriction to $\punc{\cM}_T$.

We will also need a version of the Goresky-Kottwitz-MacPherson approach \cite{gkm-original} to equivariant cohomology; in the setting of generalized equivariant cohomology, it has also been studied in \cite{generalized-gkm, gepner-meier}. As such, we will only give a sketch of the relevant argument.
\begin{definition}\label{def: GKM space}
    Let $X$ be a finite $T_c$-space equipped with a chosen presentation in terms of $T_c$-cells. Say that $X$ is a \textit{GKM space} if the following conditions are satisfied:
    \begin{enumerate}
        \item $\pi_0 \cf_T(X)$ is a vector bundle over $\cM_{T,0}$;
        \item if $X^{(1)}$ denotes the equivariant $1$-skeleton of $X$, then $X^{(1)}$ consists of a finite number of spheres $S^\lambda$ meeting only at the fixed points, where $\lambda$ ranges over characters of $T$.
    \end{enumerate}
    In this setup, let $V$ denote the set $X^{T_c}$ of fixed points, and let $E$ denote the set of characters $\lambda$ such that $S^\lambda \subseteq X^{(1)}$. There are two maps $E \rightrightarrows V$ sending $\lambda$ to the points $0,\infty\in S^\lambda \subseteq X^{(1)}$. The resulting graph with set of vertices $V$ and set of edges $E$ will be referred to as the \textit{GKM graph} of $X$.
\end{definition}
The utility of the first condition in the above definition is due to the following.
\begin{lemma}\label{lem: injective coh}
    Let $X$ be a finite $T_c$-space. If $\pi_0 \cf_T(X)$ is a vector bundle over $\cM_{T,0}$, the map $\pi_0 \cf_T(X) \to \pi_0 \cf_T(X^{T_c})$ is an injection.
\end{lemma}
\begin{proof}
    Since the map $\cf_T(X) \to \cf_T(X^{T_c}) \to \cf_T(X^{T_c})_{\punc{\cM}_T}$ factors as
    $\cf_T(X) \to \cf_T(X)|_{\punc{\cM}_T} \to \cf_T(X^{T_c})|_{\punc{\cM}_T}$,
    and the map $\cf_T(X)|_{\punc{\cM}_T} \to \cf_T(X^{T_c})|_{\punc{\cM}_T}$ is an equivalence by \cref{lem: atiyah localization}, it suffices to show that the map $\cf_T(X) \to \cf_T(X)|_{\punc{\cM}_T}$ induces an injection on $\pi_0$. But $\pi_0 \cf_T(X)$ was assumed to be a vector bundle over $\cM_{T,0}$, so one is reduced to the case $X = \ast$, i.e., to showing that the map $\co_{\cM_T} \to \co_{\cM_T}|_{\punc{\cM}_T}$ induces an injection on $\pi_0$. This, however, is clear, since the closed subscheme $\cM_{T',0} \hookrightarrow \cM_{T,0}$ defined by each closed subgroup $T'\subseteq T$ is cut out by a regular sequence.
\end{proof}
\begin{prop}[Goresky-Kottwitz-MacPherson]\label{prop: gkm}
    Let $X$ be a finite GKM $T_c$-space, and choose a presentation in terms of $T_c$-cells. For each character $\lambda: T \to S^1$, let $T_\lambda$ denote the kernel of $T$, let $q_\lambda: \cM_{T_\lambda} \to \cM_T$ denote the induced map, and let $S(\lambda)$ denote the unit representation sphere. Then there is an equalizer diagram
    $$\pi_0 \cf_T(X) \hookrightarrow \pi_0 \cf_T(X^{T_c}) \cong \Map(V, \co_{\cM_{T,0}}) \rightrightarrows \prod_{\lambda \in E} q_{\lambda, \ast} \co_{\cM_{T_\lambda,0}},$$ 
    where the two maps in the equalizer are defined in the evident manner.
\end{prop}
\begin{proof}[Proof sketch]
    First, we show that the maps $\pi_0 \cf_T(X) \to \pi_0 \cf_T(X^{T_c})$ and $\pi_0 \cf_T(X^{(1)}) \to \pi_0 \cf_T(X^{T_c})$ have the same image. There is an evident map from the image of $\pi_0 \cf_T(X) \to \pi_0 \cf_T(X^{T_c})$ to the image of $\pi_0 \cf_T(X^{(1)}) \to \pi_0 \cf_T(X^{T_c})$, which we will denote by $f$. The map $f$ is an injection by \cref{lem: injective coh}.
    Let $T'$ denote a proper closed subgroup of $T$ of codimension $1$, and let $U' \subseteq \cM_{T',0}$ denote the complement of the union of the closed varieties $\cM_{T'',0}$ ranging over the proper closed subgroups $T''\subseteq T'$.
    By \cref{lem: atiyah localization}, the map $f$ is an isomorphism upon restriction to $U'\subseteq \cM_{T',0} \subseteq \cM_{T,0}$ for each proper closed subgroup $T'\subseteq T$ of codimension $1$. 
    Therefore, the locus $Z \subseteq \cM_{T,0}$ over which $f$ fails to be an isomorphism is contained in the union of closed subvarieties $\cM_{T',0}$ for finitely many $T'\subseteq T$ of codimension at least $2$. However, the map $\pi_0 \cf_T(X) \to \pi_0 \cf_T(X)|_{\cM_{T,0} - Z}$ is an isomorphism (by Hartogs). Since the same is true of the map $\pi_0 \cf_T(X^{T_c}) \to \pi_0 \cf_T(X^{T_c})|_{\cM_{T,0} - Z}$, and the map $\pi_0 \cf_T(X) \to \pi_0 \cf_T(X^{T_c})$ factors through the map $\pi_0 \cf_T(X^{(1)}) \to \pi_0 \cf_T(X^{T_c})$, the desired result follows.

    For the equalizer diagram, an easy induction on the cell structure of $X$ reduces us to the case $X = S^\lambda$ for a character $\lambda: T \to S^1$. In this case, the isomorphism $T/T_\lambda \cong S^\lambda$ defines an isomorphism between $\pi_0 \cf_T(S(\lambda))$ and the pushforward of the structure sheaf along the map $\cM_{T_\lambda,0} \to \cM_{T,0}$. Since $S^\lambda \cong \Sigma S(\lambda)$, we obtain an equalizer diagram
    $$\pi_0 \cf_T(S^\lambda) \to \co_{\cM_{T,0}} \oplus \co_{\cM_{T,0}} \cong \Map(\{0,\infty\}, \co_{\cM_{T,0}}) \rightrightarrows q_{\lambda, \ast} \co_{\cM_{T_\lambda,0}}.$$
    This proves the desired claim.
\end{proof}
The same argument proves the following dual to \cref{prop: gkm} (see also \cite{brion-poincare-dual}).
\begin{prop}\label{prop: hmlgy gkm}
    Let $X$ be a finite GKM $T_c$-space, and choose a presentation in terms of $T_c$-cells. Then $\pi_0 \cf_T(X)^\vee$ is isomorphic to the subset of $\pi_0 \cf_T(X^{T_c})^\vee \cong \co_{\cM_{T,0}}[X^{T_c}]$ of those $\sum_{x \in X^{T_c}} f_x [x] \in \co_{\cM_{T,0}}[X^{T_c}]$ such that:
    \begin{itemize}
        \item For each fixed point $x \in X^{T_c}$, the poles of $f_x$ all have order $\leq 1$, and these are contained in the ideal sheaf of $\co_{\cM_{T_\lambda,0}}$ for each character $\lambda: T_c \to S^1$ such that the $T_c$-orbit $S^\lambda$ meets $x$.
        \item For each character $\lambda: T_c \to S^1$ such that the $T_c$-orbit $S^\lambda$ meets $x_0, x_\infty \in X^{T_c}$, we have
        $$\Res_{\cM_{T_\lambda,0}}(f_{x_0}) + \Res_{\cM_{T_\lambda,0}}(f_{x_\infty}) = 0.$$
    \end{itemize}
\end{prop}
These results can be extended without much trouble to ind-$T_c$-spaces $X$ with isolated fixed points satisfying the conditions of \cref{def: GKM space}. (The first condition therein should be replaced by the condition that $\pi_0 \cf_T(X)$ is an ind-vector bundle over $\cM_{T,0}$.)

The preceding discussion can be categorified, as we now explain. The following categorifies the $T_c$-equivariant $k$-cochains $C_{T_c}^\ast(X; k)$.
\begin{construction}\label{cstr: def-loc}
    Let $\Loc_{T_c}(\ast; k)$ denote the $\infty$-category $\QCoh(\cM_T)$. Let $T_c'\subseteq T_c$ be a closed subgroup, so that there is an associated morphism $q: \cM_{T'} \to \cM_T$. This defines a symmetric monoidal functor $\QCoh(\cM_T) \to \QCoh(\cM_{T'})$, which equips $\QCoh(\cM_{T'})$ with the structure of a $\QCoh(\cM_T)$-module.

    Let $\cLoc_{T_c}(-; k): \Top(T_c)^\op \to \CAlg(\shvcat(\cM_T))$ be the functor uniquely characterized by the requirement that it preserve finite limits and send $T/T' \mapsto \QCoh(\cM_{T'})$. If $X\in \Top(T_c)$, then the $\infty$-category $\Loc_{T_c}(X; k)$ of \emph{$T_c$-equivariant local systems of $k$-modules on $X$} is defined to be the global sections of the quasicoherent stack $\cLoc_{T_c}(X; k)$ on $\cM_T$.  If $X$ is a $T_c$-space which is presented as a filtered colimit of finite $T_c$-spaces $X_\alpha$, we will write $\Loc_{T_c}(X; k)$ to denote $\colim \Loc_{T_c}(X_\alpha; k)$.

    If $f: X \to Y$ is a map in $\Top(T_c)$, the associated symmetric monoidal functor $f^\ast: \Loc_{T_c}(Y; k) \to \Loc_{T_c}(X; k)$ (induced by taking global sections of the morphism $f^\ast: \cLoc_{T_c}(Y; k) \to \cLoc_{T_c}(X; k)$ of $\Eoo$-algebras in quasicoherent stacks over $\cM_T$) will be called the \textit{pullback}. One can show that $\Loc_{T_c}(X; k)$ is a presentable stable $\infty$-category, and that $f^\ast$ preserves small colimits (so it has a right adjoint $f_\ast$, which will be called \textit{pushforward}).
\end{construction}
For instance, if $T_c = \{1\}$, then $\Loc_{T_c}(X; k)$ is equivalent to the $\infty$-category $\Loc(X; k) := \Fun(X, \Mod_k)$ of local systems on $X$.
\begin{remark}
    Let $X$ be a finite $T_c$-space. The \textit{constant local system} $\ul{k}$ is defined to be the image of $\co_{\cM_T}$ under the symmetric monoidal functor $\Loc_{T_c}(\ast; k) \simeq \QCoh(\cM_T) \to \Loc_{T_c}(X; k)$ induced by pullback along $f: X \to \ast$. Observe that if $\ul{k}$ denotes the constant local system, then $\End_{\Loc_{T_c}(X; k)}(\ul{k}) \simeq C_{T_c}^\ast(X; k)$. Indeed, $\End_{\Loc_{T_c}(X; k)}(\ul{k}) \simeq \Gamma(\cM_T; f_\ast f^\ast \co_{\cM_T})$, but it is easy to see that $f_\ast f^\ast \co_{\cM_T} = \cf_T(X)\in \QCoh(\cM_T)$. The desired claim then follows from \cref{cstr: def-equiv-coh}.
\end{remark}
\begin{remark}
    If the complexification of $T_c$ were a \textit{finite} diagonalizable group scheme (such as $\mu_n$), the desired category $\Loc_{T_c}(X; k)$ is closely related to the $\infty$-category of \textit{$\GG$-tempered local systems} on the orbispace $X\mmod T$, as described in \cite{elliptic-iii}. Our understanding is that Lurie is planning to describe an extension of the work in \cite{elliptic-iii} and its connections to equivariant homotopy theory in a future article. We warn the reader that \cref{cstr: def-loc} is somewhat \textit{ad hoc}; so the resulting category of equivariant local systems may or may not agree with the output of forthcoming work of Lurie.
\end{remark}
\begin{remark}
    If $X$ is a finite $T_c$-space, a more straightforward definition of the category of $T_c$-equivariant local systems on $X$ is simply the category $\Fun(X/T_c, \Mod_k)$. Equivalently, it can be described as the functor $\Top(T_c)^\op \to \CAlg(\PrL)$ which is uniquely characterized by the requirement that it preserve finite limits and send $T_c/T_c' \mapsto \Fun(BT_c', \Mod_k)$. It follows from \cref{lem: Fun BT_c to Perf} that $\Fun(BT_c', \Mod_k)$ is equivalent to $\Mod(C^\ast(BT_c'; k))$. As discussed in \cref{rmk: oriented and completion}, if the group scheme $\GG$ is oriented, then this is in turn equivalent to $\QCoh(\widehat{\cM_T})$, where $\widehat{\cM_T}$ is the completion of $\cM_T$ at its basepoint. That is, $\Fun(BT_c', \Mod_k)$ can be viewed as a completion of $\QCoh(\cM_{T'})$. This implies that $\Fun(X/T_c, \Mod_k)$ can be viewed as a completion of the subcategory of compact objects of $\Loc_{T_c}(X; k)$. Motivated by this, we will write $\Loc_{T_c}^\wedge(X; k)$ to denote $\Fun(X/T_c, \Mod_k)$; we will use the same notation to denote the extension of the assignment $X\mapsto \Loc_{T_c}^\wedge(X; k)$ to filtered colimits of finite $T_c$-spaces.
\end{remark}
Using this discussion, let us now discuss geometric Satake with $k$-coefficients in the case of a torus. 
\begin{theorem}\label{thm: torus satake}
    Fix a complex-oriented $2$-periodic $\Eoo$-ring $k$ and an oriented commutative $k$-group scheme $\GG$. Let $\ld{T} = \spec k[\bX^\ast(\ld{T})]$ denote the dual torus over $k$. In the following statements, all actions of $\ld{T}$ are trivial.
    Then there are equivalences
    \begin{align*}
        \Loc_{T_c}^\wedge(\Gr_T; k) & \simeq \IndCoh((\{1\} \times_{\ld{T}} \{1\})/\ld{T}), \\
        \Loc_{T_c}(\Gr_T; k) & \simeq \QCoh(\cM_T/\ld{T}).
    \end{align*}
    Moreover, there is an isomorphism of spectral group $k$-schemes
    $$\spec \cf_T(\Gr_T)^\vee \cong \cM_T \times_{\spec(k)} \ld{T} \cong \cM_T \times_{\cM_T/\ld{T}} \cM_T.$$
\end{theorem}
\begin{proof}
    Since the underlying topological space of $\Gr_T$ is simply the lattice $\bX_\ast(T)$, it follows from \cref{lem: Fun BT_c to Perf} that 
    $$\Loc_{T_c}^\wedge(\Gr_T; k) \simeq \bigoplus_{\bX_\ast(T)} \Loc_{T_c}^\wedge(\ast; k) \simeq \QCoh(B\ld{T}) \otimes_{\Mod_k} \IndCoh(\{1\} \times_{\ld{T}} \{1\}).$$
    For the trivial action of $\ld{T}$ on $\{1\} \times_{\ld{T}} \{1\}$, this is precisely $\IndCoh((\{1\} \times_{\ld{T}} \{1\})/\ld{T})$. Exactly the same discussion proves the second equivalence:
    $$\Loc_{T_c}(\Gr_T; k) \simeq \bigoplus_{\bX_\ast(T)} \Loc_{T_c}(\ast; k) \simeq \QCoh(B\ld{T}) \otimes_{\Mod_k} \QCoh(\cM_T).$$
    The claim about $\cf_T(\Gr_T)^\vee$ can be proved similarly.
\end{proof}
\begin{remark}
    Note that in \cref{thm: torus satake}, the ``spectral''/algebro-geometric description of $\Loc_{T_c}^\wedge(\Gr_T; k)$ does not seem to depend on the choice of coefficient $k$ (in particular, not on $\GG$). This dependence, however, can be made more explicit by noting that $\IndCoh(\{1\} \times_{\ld{T}} \{1\})$ is equivalent to $\Mod(k^{hT_c}) \simeq \QCoh(\widehat{\cM_T})$. That is, there is an equivalence $\Loc_{T_c}^\wedge(\Gr_T; k) \simeq \QCoh(\widehat{\cM_T}/\ld{T})$.
\end{remark}
Our basic goal is to find a replacement of \cref{thm: torus satake} where $\Gr_T$ is replaced by $\Gr_G$ for a general connected reductive group $G$.
\newpage

\section{Degenerations}\label{sec: degenerations}
We begin this section by immediately amending the goal referred to at the end of the preceding section. Namely, instead of studying the $\infty$-category $\Loc_{T_c}(\Gr_G; k)$ for a connected reductive group $G$ and a maximal torus $T\subseteq G$, we will study a particular \textit{degeneration} of this $\infty$-category. Before discussing the construction of this degeneration, let us motivate \textit{why} it is useful (see also the introduction for some ``philosophy'' regarding this degeneration).

Suppose that there was an equivalence of the form $\Loc_{T_c}(\Gr_G; k) \simeq \QCoh(\fr{X}_k)$ for some spectral $k$-stack $\fr{X}_k$. In order for such an equivalence to be considered related to Langlands duality, the stack $\fr{X}_k$ must have some relationship to the dual group $\ld{G}$; for instance, one can wonder whether the underlying classical $\pi_0(k)$-stack of $\fr{X}_k$ lives over the classifying stack $B\ld{G}_{\pi_0(k)}$. Here, $\ld{G}_{\pi_0(k)}$ is the base-change of the Chevalley split form of $\ld{G}$ along the map $\Z \to \pi_0(k)$. (When $k$ is an $\Eoo$-$\QQ$-algebra, the stack $\fr{X}_k$ is $\tilde{\ld{\g}}[2]/\ld{G}$, which does indeed live over $B\ld{G}$.) The most satisfying description of $\fr{X}_k$ must therefore involve a lift of the dual group $\ld{G}$ to a (flat) spectral group scheme over $k$. Unfortunately, the existence of such a lift is far from clear: giving a flat lift of $\SL_2$ (even just as a \textit{scheme}!) to complex K-theory leads to very subtle questions; see \cref{sec: lifting SL2}.

Instead, let us return to the general situation of a finite $T_c$-space $X$. One can then view $\Loc_{T_c}(X; k)$ as a categorification of the cochains $\cf_T(X)\in \QCoh(\cM_T)$; so for the moment, let us just describe a degeneration of $\cf_T(X)$ and $\cM_T$. There is a natural filtered lift of $\cM_T = \spec k_T$ to a filtered $\tau_{\geq \star}(k)$-scheme, given by $\spec \tau_{\geq \star}(k_T)$. (This construction is, of course, closely related to the even filtration constructed in \cite{even-filtr, piotr-even-filtr}.) In particular, one obtains a corresponding graded $\pi_\ast(k)$-scheme $\spec \pi_\ast(k_T)$. Note that this is now a \textit{classical} scheme, with no spectral algebro-geometric nature.
If $k$ is even-periodic, i.e., is equipped with an isomorphism $\pi_\ast(k) \cong \pi_0(k)[u^{\pm 1}]$ with $u \in \pi_2(k)$, then this is equivalent to the data of the classical $\pi_0(k)$-scheme $\spec \pi_0(k_T)$. (Recall that this is the affinization of the scheme $\cM_{T,0}$; to get to the definition described below, one needs to replace $\spec \pi_0(k_T)$ in the below discussion by $\cM_{T,0}$.)

If the finite $T_c$-space $X$ has even cells, then one can construct a well-behaved filtered lift of $\cf_T(X)$ to a filtered quasicoherent sheaf over $\spec \tau_{\geq \star}(k_T)$, given by $\tau_{\geq \star} \cf_T(X)$. This defines a corresponding graded variant of $\cf_T(X)$, given simply by the quasicoherent sheaf $\pi_0 \cf_T(X)$ over $\spec \pi_0(k_T)$. Again, this is an object in the realm of \textit{classical} algebraic geometry; so when applied to the affine Grassmannian $\Gr_G$, it is something that could, in theory, be described in terms of the usual dual group $\ld{G}$ base-changed to $\pi_0(k)$. 

The idea for constructing the desired degeneration of $\Loc_{T_c}(X; k)$ is very similar; we now turn to its mechanics.  Let us begin with a simple observation. If $Y$ is a {connected} space, the $\infty$-category $\Loc(Y; k) = \Fun(Y, \Mod_k)$ of local systems on $Y$ is equivalent, by Koszul duality, to $\LMod_{C_\ast(\Omega Y; k)}$. This is very useful, since it allows one to reduce the study of local systems to the study of a particular (derived) algebra. A similar property is true for $\Loc_{T_c}(X; k)$:
\begin{prop}\label{prop: equivariant-koszul}
    Let $X$ be a connected finite $T_c$-space. Then there is an equivalence $\Loc_{T_c}(X; k) \simeq \LMod_{\cf_T(\Omega X)^\vee}(\QCoh(\cM_T))$.
\end{prop}
\begin{proof}
    Let $s: \ast \to X$ denote the inclusion of a point. We claim that $s^\ast: \Loc_{T_c}(X; k) \to \QCoh(\cM_T)$ admits a left adjoint $s_!$. Indeed, the statement for general $X$ follows formally from the case of $X = T/T'$ for some closed subgroup $T'\subseteq T$ (so $s$ is the inclusion of the trivial coset). In this case, $s^\ast$ is the functor $\QCoh(\cM_{T'}) \to \QCoh(\cM_T)$ given by pushforward along the associated morphism $q: \cM_{T'} \to \cM_T$, so it has a left adjoint $s_!$ given by $q^\ast$. Note that $s^\ast$ also has a right adjoint; in particular, it preserves small limits and colimits. Observe now that $s_! \co_{\cM_T}$ is a compact generator of $\Loc_{T_c}(X; k)$: indeed, suppose $\cf\in \Loc_{T_c}(X; k)$ such that $\Map_{\Loc_{T_c}(X; k)}(s_! \co_{\cM_T}, \cf) \simeq 0$ as an object of $\QCoh(\cM_T)$. Because $s^\ast \cf \simeq \Map_{\Loc_{T_c}(X; k)}(s_! \co_{\cM_T}, \cf)$ in $\QCoh(\cM_T)$, we see that $s^\ast \cf \simeq 0$. Using the connectivity of $X$, we see that $\cf$ itself must be zero, which implies that $s_! \co_{\cM_T}$ is a compact generator of $\Loc_{T_c}(X; k)$. It follows from the Barr-Beck-Lurie theorem \cite[Theorem 4.7.3.5]{HA} that $\Loc_{T_c}(X; k)$ is equivalent to the $\infty$-category of left $\End_{\Loc_{T_c}(X; k)}(s_! \co_{\cM_T})$-modules in $\QCoh(\cM_T)$. But $\End_{\Loc_{T_c}(X; k)}(s_! \co_{\cM_T}) \simeq s^\ast s_! \co_{\cM_T}$, which identifies with $\cf_T(\Omega X)^\vee$.
\end{proof}
\begin{remark}\label{rmk: loc and comod}
    Modifying the preceding argument shows that if $X$ is a connected finite $T_c$-space, there is an equivalence 
    \begin{equation}\label{eq: loc and comod}
        \Loc_{T_c}(X; k) \simeq \coLMod_{\cf_T(X)^\vee}(\QCoh(\cM_T)).
    \end{equation}
    In particular, if $X$ admits an $\E{n}$-algebra structure (compatible with the $T_c$-action), then $\cf_T(X)^\vee$ admits the structure of an $\E{n}$-algebra\footnote{If $\cC$ is a symmetric monoidal $\infty$-category, \cite[Corollary 3.3.4]{elliptic-i} can be used to show that there is an equivalence $\coCAlg(\Alg_\E{n}(\cC)) \simeq \Alg_\E{n}(\coCAlg(\cC))$.} in $\coCAlg(\QCoh(\cM_T))$, and the equivalence \cref{eq: loc and comod} is $\E{n}$-monoidal for the convolution tensor product on both sides. 
\end{remark}
\cref{prop: equivariant-koszul} and \cref{rmk: loc and comod} continue to hold even when $X$ is a filtered colimit of finite $T_c$-spaces. In order for the claim in \cref{rmk: loc and comod} about $\E{n}$-algebra structures to hold, we need the filtered diagram $\{X_\lambda\}$ presenting $X$ to admit the structure of an $\E{n}$-algebra in filtered $T_c$-spaces. We will need to apply this in the case when $X$ is the affine Grassmannian, in which case we can apply the following observation. 
\begin{lemma}\label{filtered E2}
    The $\bX_\ast(T)^+$-indexed Schubert filtration $\{\Gr_G^{\leq \lambda}(\cc)\}$ naturally admits the structure of an $\E{2}$-algebra in $\Fun(\bX_\ast(T)^+, \Top(T_c))$.
\end{lemma}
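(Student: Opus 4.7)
The plan is to exhibit the Schubert-filtered affine Grassmannian as arising from the Beilinson-Drinfeld fusion structure, and to read off the filtered $\E{2}$-algebra structure in $\Fun(\Lambda^{\vee,\pos},\Top)$ (equipped with Day convolution along addition in $\Lambda^{\vee,\pos}$) from there. Recall that an $\E{2}$-algebra in this Day convolution category is precisely a $\Lambda^{\vee,\pos}$-indexed diagram $\{X_\lambda\}$ of spaces together with a coherent family of operations $X_{\lambda_1}\times\cdots\times X_{\lambda_n}\to X_{\lambda_1+\cdots+\lambda_n}$ parameterized by the little $2$-discs operad.

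The first step is to recall the factorization structure on the Beilinson-Drinfeld Grassmannian $\Gr_{G,X^n}$ over $X=\AA^1_\cc$. Fixing a small open disc $D\subset \cc$, the factorization isomorphism $\Gr_{G,D^n}|_{U}\simeq \Gr_{G,D}^n$ over the locus $U\subset D^n$ of distinct configurations, together with the identification of the fiber over the small diagonal with $\Gr_G$, produces (after passing to $\cc$-points) the usual action of the little $2$-discs operad on $\Gr_G(\cc)$. This is naturally identified with the $\E{2}$-structure coming from the equivalence $\Gr_G(\cc)\simeq \Omega^2 BG_c$.

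The central step is to observe that this fusion is compatible with the Schubert stratification. For a tuple $\underline{\lambda}=(\lambda_1,\ldots,\lambda_n)\in (\Lambda^{\vee,\pos})^n$, the closed subvariety $\Gr_{G,D^n}^{\leq\underline{\lambda}}\subset \Gr_{G,D^n}$ parameterizes modifications of the trivial $G$-bundle of dominance degree $\leq\lambda_i$ at the $i$-th marked point. On the open stratum $U$ this restricts to $\prod_i \Gr_G^{\leq \lambda_i}$, while over the small diagonal it lands in $\Gr_G^{\leq\sum_i \lambda_i}$, because dominance degrees of modifications add upon collision. Assembling these Schubert-refined factorization maps over configurations in $D$ then produces the claimed filtered $\E{2}$-structure on $\{\Gr_G^{\leq\lambda}(\cc)\}$.

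The main obstacle is a bookkeeping one: making precise the passage from the algebro-geometric fusion structure to a topological little $2$-discs action on the Schubert filtration, and verifying that the resulting $\E{2}$-structure agrees with the standard one on $\Gr_G(\cc)\simeq \Omega^2 BG_c$ used elsewhere in the paper. The geometric input --- that dominance degrees are subadditive under fusion --- is classical and essentially amounts to the standard fact that $\Gr_G^{\leq\lambda}\cdot \Gr_G^{\leq\mu}\subseteq \Gr_G^{\leq\lambda+\mu}$ under Pontryagin convolution; the work lies in encoding this coherently as an operadic structure relative to the Day convolution on $\Fun(\Lambda^{\vee,\pos},\Top)$, for instance by using a topological model of the Ran space of $D$ together with the closed inclusions $\Gr_G^{\leq\lambda}\hookrightarrow \Gr_G$.
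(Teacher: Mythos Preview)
Your proposal is correct and follows essentially the same route as the paper: both use the Beilinson--Drinfeld Grassmannian over $\AA^1_\cc$ together with the Schubert filtration $\Gr_{G,\Ran,\leq\mu}$ to produce the filtered $\E{2}$-structure, with the key geometric input being subadditivity of dominance degrees under fusion. The paper resolves exactly the ``bookkeeping'' issue you flag by invoking Lurie's characterization of nonunital $\E{2}$-algebras as locally constant $\mathrm{N}(\mathrm{Disk}(\cc))_{\mathrm{nu}}$-algebras (\cite[Proposition~5.4.5.15]{HA}), assigning to each disk $D\subseteq\cc$ the functor $\mu\mapsto \Gr_{G,\Ran,\leq\mu}(D\subseteq\cc)$, and then citing the Hahn--Yuan framework for the structure maps, local constancy, and quasi-unit.
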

\begin{proof}
    This can be proved in essentially the same way as \cite[Theorem 3.10]{hahn-yuan}; let us sketch the argument. We will utilize \cite[Proposition 5.4.5.15]{HA}, which states that if $\cC$ is a symmetric monoidal $\infty$-category, then a nonunital $\E{2}$-algebra object in $\cC$ is equivalent to the datum of a locally constant $\mathrm{N}(\mathrm{Disk}(\cc))_\mathrm{nu}$-algebra object in $\cC$. Concretely, this amounts to specifying an object $A(D)\in \cC$ for every disk $D\subseteq \cc$ and coherent maps $\bigotimes_{i=1}^n A(D_i)\to A(D)$ for every inclusion $\coprod_{i=1}^n D_i\to D$ of disks, such that for every embedding $D\subseteq D'$ of disks, the induced map $A(D)\to A(D')$ is an equivalence.

    In this case, $\cC = \Fun(\bX_\ast(T)^+, \Top(T_c))$, and the object $A(D)\in \Fun(\bX_\ast(T)^+, \Top(T_c))$ assigned to a disk $D\subseteq \cc$ may be defined via the Beilinson-Drinfeld Grassmannian $\Gr_{G,\Ran}$. Namely, the Beilinson-Drinfeld Grassmannian admits (by construction) a morphism $\Gr_{G, \Ran} \to \Ran_{\AA^1}$; upon taking complex points, we obtain a map $\Gr_{G, \Ran}(\cc) \to \Ran(\cc)$. If $S\subseteq \cc$ is a subset, then the preimage of $\Ran(S)\subseteq \Ran(\cc)$ defines a subspace $\Gr_{G, \Ran}(S\subseteq \cc)\subseteq \Gr_{G, \Ran}(\cc)$. The filtration of $\Gr_G$ via the Bruhat decomposition extends to a filtration $\Gr_{G, \Ran, \leq \mu}$ of $\Gr_{G, \Ran}$ by dominant coweights $\mu\in \bX_\ast(T)^+$; see \cite[3.1.11]{zhu-grass}. Finally, the object $A(D)\in \Fun(\bX_\ast(T)^+, \Top(T_c))$ associated to a disk $D\subseteq \cc$ is the functor $\bX_\ast(T)^+\to \Top(T_c)$ sending $\mu\in \bX_\ast(T)^+$ to $\Gr_{G, \Ran, \leq \mu}(D\subseteq \cc)$.

    Suppose $\coprod_{i=1}^n D_i\to D$ is an inclusion of disks. The induced map $\bigotimes_{i=1}^n A(D_i)\to A(D)$ is defined as follows. Let $\mu\in \bX_\ast(T)^+$; for every $n$-tuple $(\mu_1, \cdots, \mu_n)$ with $\sum_{i=1}^n \mu_i\leq \mu$, we need to exhibit maps $\bigotimes_{i=1}^n A(D_i)(\mu_i)\to A(D)(\mu)$ satisfying the obvious coherences. But
    $$\bigotimes_{i=1}^n A(D_i)(\mu_i) = \prod_{i=1}^n \Gr_{G, \Ran, \leq \mu_i}(D_i\subseteq \cc),$$
    so it suffices to show that if $\mu_1 + \mu_2 \leq \mu$, then there are maps $\Gr_{G, \Ran, \leq \mu_1}(D_1\subseteq \cc) \times \Gr_{G, \Ran, \leq \mu_2}(D_2\subseteq \cc)\to \Gr_{G, \Ran, \leq \mu}(D\subseteq \cc)$. The argument for this is exactly as in \cite[Construction 3.15]{hahn-yuan}.

    We next need to show that the $\mathrm{N}(\mathrm{Disk}(\cc))_\mathrm{nu}$-algebra $A$ defined above is locally constant, i.e., that if $D\subseteq D'$ is an embedding of disks, then $A(D)\to A(D')$ is an equivalence of functors $\bX_\ast(T)^+\to \Top(T_c)$. This follows from \cite[Proposition 3.17]{hahn-yuan}. To conclude, it suffices (by \cite[Theorem 5.4.4.5]{HA}) to establish the existence of a quasi-unit for the functor $A:\bX_\ast(T)^+\to \Top(T_c)$, i.e., a map $1_{\Fun(\bX_\ast(T)^+, \Top(T_c))}\to A$ which is both a left and right unit up to homotopy. Since the unit in $\Fun(\bX_\ast(T)^+, \Top(T_c))$ is the functor sending $\mu\in \bX_\ast(T)^+$ to the point $\ast$, a quasi-unit is the datum of a map $\ast \to \Gr_{G, \leq \mu}(\cc)$ for each $\mu\in \bX_\ast(T)^+$. As in the proof of \cite[Theorem 3.10]{hahn-yuan}, this can be taken to be the inclusion of the point corresponding to the trivial $G$-bundle over $\AA^1$ with the canonical trivialization away from the origin.
\end{proof}

Suppose, now, that $A$ is an $\E{1}$-ring with even homotopy. Any left $A$-module $M$ then defines a filtered left $\tau_{\geq 2\star}(A)$-module $\tau_{\geq 2\star}(M)$; we will denote the corresponding associated graded left $\pi_{2\ast}(A)$-module by $\gr_\ev(M)$. If $M,N\in \LMod_A$, there is then a canonical (complete and exhaustive) filtration on the $A$-module $\Map_A(M,N)$ whose associated graded is given by the shearing of $\Map_{\pi_{2\ast}(A)}(\gr_\ev(M), \gr_\ev(N))$. Informally, this means that there is a $1$-parameter degeneration (constructed using the double-speed Postnikov filtration) from $\LMod_A$ to the category $\LMod_{\pi_{2\ast}(A)}^\gr$, given by the filtered category $\LMod_{\tau_{\geq 2\star} A}$.
Motivated by the preceding discussion, we can now define our desired degeneration of $\Loc_{T_c}(X; k)$.
\begin{definition}\label{def: graded Loc}
    Suppose that $X$ is a (ind-)finite $T_c$-space with even cells (such as $\Gr_G$). The $\infty$-category $\Loc_{T_c}^\gr(X; k)$ is defined as
    $$\Loc_{T_c}^\gr(X; k) = \coLMod_{\pi_0(\cf_T(X)^\vee)}(\QCoh(\cM_{T,0})).$$
    The ``constant sheaf'' $\ul{k}^\gr$ in this category is the comodule $\pi_0(\cf_T(X)^\vee)$ itself.
    Similarly, suppose $Y$ is a finite $T_c$-space such that $\Omega Y$ has even cells (such as $G_c$). The $\infty$-category $\Loc_{T_c}^\gr(Y; k)$ is defined as
    $$\Loc_{T_c}^\gr(Y; k) = \LMod_{\pi_0(\cf_T(\Omega Y)^\vee)}(\QCoh(\cM_{T,0})).$$
    The ``constant sheaf'' $\ul{k}^\gr$ in this category is the structure sheaf $\co_{\cM_{T,0}}$ viewed as a $\pi_0(\cf_T(\Omega Y)^\vee)$-module via the augmentation.
\end{definition}
These should be viewed as ``mixed'' (in the sense of \cite{bbdg}) variants of the full $\infty$-categories $\Loc_{T_c}(X; k)$ and $\Loc_{T_c}(Y; k)$.
\begin{remark}\label{rmk: loc gr convolution tensor}
    If $X$ admits an $\E{n}$-algebra structure (compatible with the $T_c$-action), then the $\E{n}$-algebra structure on $\cf_T(X)^\vee$ equips $\pi_0(\cf_T(X)^\vee)$ with the structure of a commutative algebra object in $\QCoh(\cM_{T,0})$. In particular, $\Loc_{T_c}^\gr(X; k)$ acquires a symmetric monoidal structure, which we will refer to as the ``convolution tensor structure'' and denote by $\star$.
\end{remark}
\begin{remark}
    There is an apparent asymmetry in \cref{def: graded Loc}: why could we not have defined $\Loc_{T_c}^\gr(Y; k)$ to be $\coLMod_{\pi_0(\cf_T(Y)^\vee)}(\QCoh(\cM_{T,0}))$? The issue is that since $Y$ contains odd-dimensional cells, taking $\pi_0$ of $\cf_T(Y)^\vee$ is a very destructive process. More generally, as in the discussion at the beginning of this section, $\pi_0 \cf_T(X)^\vee$ for a finite $T_c$-space $X$ should only be regarded as a well-behaved reflection of $\cf_T(X)^\vee$ itself when $X$ has even cells.

    However, if $Y$ was the total space of an iterated fibration of odd-dimensional spheres (which happens when $Y = \U(n)$ or $\Sp(n)$), then one could alternatively consider the category of comodules in $\QCoh(\cM_{T,0})$ over the truncation $\tau_{[-1,0]}(\cf_T(Y)^\vee)$. If the cobar construction on $\tau_{[-1,0]}(\cf_T(Y)^\vee)$ is given by $\pi_0(\cf_T(\Omega Y)^\vee)$, it then follows from Koszul duality that (up to finiteness questions) this new category would be equivalent to the definition of $\Loc_{T_c}^\gr(Y; k)$ from \cref{def: graded Loc}.
\end{remark}
\begin{remark}
    If $k  = \QQ[u^{\pm 1}]$ with $u$ in degree $2$, then (using the results of \cite{abg-iwahori-satake}) $\Loc_{T_c}(\Gr_G; k)$ is equivalent to the shearing of the $2$-periodification of the category $\Loc_{T_c}^\gr(\Gr_G; k)$. This can be understood as a statement about formality. If $k$ is a more general $\Eoo$-ring (like complex K-theory $\KU$), then formality is generally impossible: for instance, a $\KU$-module $M$ is generally not equivalent (even as a spectrum!) to the shearing of $\pi_\ast(M)$, unless $M$ is also a $\QQ$-module.
\end{remark}
\begin{remark}\label{def: graded G equiv loc sys}
    We will not discuss $G_c$-equivariant cohomology much in this article, except for the end of \cref{sec: review Q coeff}. There, we will only consider the case $k = \QQ[u^{\pm 1}]$ with $u$ in degree $2$. In this case, the equivariant cohomology $\H^\ast_{G_c}(\ast; \QQ)$ is concentrated in even weights; in fact, we may identify $\spec \H^0_{G_c}(\ast; k) \cong \fr{t}\mmod W$. It is still reasonable to define $\Loc_{G_c}^\gr(\Gr_G; k)$ to be
    $$\Loc_{G_c}^\gr(\Gr_G; k) = \coLMod_{\H_0^{G_c}(\Gr_G; k)}(\QCoh(\fr{t}\mmod W)).$$
    Similarly, the $\infty$-category $\Loc_{G_c}^\gr(G_c; k)$ can be defined as
    $$\Loc_{G_c}^\gr(G_c; k) = \LMod_{\H_0^{G_c}(\Gr_G; k)}(\QCoh(\fr{t}\mmod W)).$$
\end{remark}
\begin{example}\label{ex: graded torus satake}
    If $G = T$ is a maximal torus, it follows from \cref{thm: torus satake} that there are equivalences of $\pi_0(k)$-linear $\infty$-categories
    \begin{align*}
        \Loc_{T_c}^\gr(\Gr_T; k) & \simeq \QCoh(\cM_{T,0}/\ld{T}), \\
        \Loc_{T_c}^\gr(T_c; k) & \simeq \QCoh(\cM_{T,0} \times_{\spec \pi_0(k)} \ld{T}).
    \end{align*}
\end{example}

Suppose $X$ is a (ind-)finite $T_c$-space with even cells. Since $\Loc_{T_c}^\gr(X; k)$ is a degeneration of $\Loc_{T_c}(X; k)$, one should expect a spectral sequence computing the cohomology $\Gamma_{T_c}(X; \cf)$ for $\cf \in \Loc_{T_c}(X; k)$ from corresponding objects $\cf^\gr\in \Loc_{T_c}^\gr(X; k)$. Similarly, if $Y$ is a finite $T_c$-space such that $\Omega Y$ has even cells, one should expect a spectral sequence computing the cohomology $\Gamma_{T_c}(Y; \cf)$ for $\cf \in \Loc_{T_c}(Y; k)$ from corresponding objects $\cf^\gr\in \Loc_{T_c}^\gr(Y; k)$. This is a special case of the following general setup.
\begin{construction}\label{cstr: sseq degeneration cohomology}
    Recall that if $\fr{X}$ is a spectral stack and $\cf \in \QCoh(\fr{X})$, the truncation $\ul{\tau}_{\geq n}(\cf)$ is the quasicoherent $\co_{\fr{X}}$-module given on an affine open $U$ by $\tau_{\geq n}(\cf(U))$; similarly for $\ul{\tau}_{\leq n}$ and $\ul{\tau}_{[n,m]}$ with $m\geq n$.
    There is a functor $\QCoh(\cM_T) \to \QCoh(\cM_{T,0})$ given by sending a quasicoherent sheaf $\cf$ on $\cM_T$ to the quasicoherent sheaf $\ul{\tau}_{[0,1]}(\cf)$ over $\cM_{T,0}$. This functor can be expressed as the composite of two functors: the first sends the $\co_{\cM_T}$-module $\cf$ to the filtered  $\ul{\tau}_{\geq 2\star} \co_{\cM_T}$-module $\ul{\tau}_{\geq 2\star}(\cf)$; and the second is given by taking associated graded. Note that since the structure sheaf $\co_{\cM_T}$ is $2$-periodic, the data of the graded $\ul{\pi}_{2\ast} \co_{\cM_T}$-module $\gr(\ul{\tau}_{\geq 2\star}(\cf))$ is equivalent to the data of the (ungraded) $\co_{\cM_{T,0}}$-module $\ul{\tau}_{[0,1]}(\cf)$.
    
    Let $\cA$ be an $\Eoo$-coalgebra in $\QCoh(\cM_T)$ whose homotopy sheaves are concentrated in even degrees (such as $\cf_T(X)^\vee$).  If $\cf \in \coMod_\cA(\QCoh(\cM_T))$, the comodule map $\cf \to \cf \otimes_{\co_{\cM_T}} \cA$ induces a comodule map 
    $$\tau_{\leq 2\star} \cf \to \tau_{\leq 2\star} (\cf \otimes_{\co_{\cM_T}} \cA) \to \tau_{\leq 2\star} (\cf) \otimes_{\tau_{\leq 2\star}(\co_{\cM_T})} \tau_{\leq 2\star}(\cA)$$
    due to the oplax symmetric monoidality of the truncation functor. Taking associated graded and using the $2$-periodicity of $\co_{\cM_T}$, we obtain a $\pi_0(\cA)$-comodule structure on the $\co_{\cM_{T,0}}$-module $\ul{\tau}_{[0,1]}(\cf)$. This defines a functor $\coMod_\cA(\QCoh(\cM_T)) \to \coMod_{\pi_0(\cA)}(\QCoh(\cM_{T,0}))$, which we will denote by $\cf \mapsto \cf^\gr$. For instance, if $\cA = \cf_T(X)^\vee$ and $\cf \in \Loc_{T_c}(X; k) = \coMod_\cA(\QCoh(\cM_T))$, then there is a spectral sequence
    \begin{equation}\label{eq: sseq for coh of sheaf from loc gr}
        \pi_\ast(k) \otimes_{\pi_0(k)} \pi_\ast \Map_{\Loc_{T_c}^\gr(X; k)}(\ul{k}^\gr, \cf^\gr) \Rightarrow \pi_\ast \Map_{\Loc_{T_c}(X; k)}(\ul{k}, \cf) = \pi_\ast \Gamma_{T_c}(X; \cf).
    \end{equation}

    Similarly, let $\cB$ be an $\E{1}$-algebra in $\QCoh(\cM_T)$ whose homotopy sheaves are concentrated in even degrees (such as $\cf_T(\Omega Y)^\vee$).  If $\cf \in \LMod_\cB(\QCoh(\cM_T))$, the module map $\cB \otimes_{\co_{\cM_T}} \cf \to \cf$ induces a comodule map 
    $$\tau_{\geq 2\star} (\cB) \otimes_{\tau_{\geq 2\star}(\co_{\cM_T})} \tau_{\geq 2\star}(\cf) \simeq \tau_{\geq 2\star} (\cB \otimes_{\co_{\cM_T}} \cf) \to \tau_{\geq 2\star}(\cf)$$
    due to the lax symmetric monoidality of the cotruncation functor. Taking associated graded and using the $2$-periodicity of $\co_{\cM_T}$, we obtain a left $\pi_0(\cB)$-module structure on the $\co_{\cM_{T,0}}$-module $\ul{\tau}_{[0,1]}(\cf)$. This defines a functor $\LMod_\cB(\QCoh(\cM_T)) \to \LMod_{\pi_0(\cB)}(\QCoh(\cM_{T,0}))$, which we will denote by $\cf \mapsto \cf^\gr$. For instance, if $\cB = \cf_T(\Omega Y)^\vee$ and $\cf \in \Loc_{T_c}(Y; k) = \LMod_\cB(\QCoh(\cM_T))$, then there is a spectral sequence
    $$\pi_\ast(k) \otimes_{\pi_0(k)} \pi_\ast \Map_{\Loc_{T_c}^\gr(Y; k)}(\ul{k}^\gr, \cf^\gr) \Rightarrow \pi_\ast \Map_{\Loc_{T_c}(Y; k)}(\ul{k}, \cf) = \pi_\ast \Gamma_{T_c}(Y; \cf).$$
\end{construction}

Let us now discuss how one might define analogous degenerations if $k$ is not necessarily an even and $2$-periodic $\Eoo$-ring. Although this discussion can be generalized to some other $\Eoo$-rings (such as $\TMF$), we will focus only on the case when $k$ is the $\Eoo$-ring $\KO$ of \textit{real K-theory}. Here is a brief summary of its relevant properties: $\KO$ can be defined from $\KU$ using the $\Z/2$-action on $\KU$ via complex conjugation. Namely, $\KO = \KU^{h\Z/2}$; in fact, as proved in \cite{rognes}, the map $\KO \to \KU$ is a $\Z/2$-Galois extension, meaning that the base-change of any $\KO$-module to $\KU$ acquires the structure of a $\Z/2$-equivariant $\KU$-module. In the discussion below, we will not need to know much about $\KO$, other than the following facts: the generator of $\Z/2$ sends the Bott class $\beta \in \pi_2(\KU)$ to $-\beta$; and the homotopy groups of $\KO$ are \textit{not} even, nor are they $2$-periodic\footnote{In fact, there is an isomorphism
$$\pi_\ast(\KO) \cong \Z[\eta, 2\beta^2, \beta^{\pm 4}]/(2\eta, \eta^3, \eta \cdot (2\beta^2), (2\beta)^2 - 4\beta^4),$$
where $\eta$ is in degree $1$, $2\beta^2$ is in degree $4$, and $\beta^4$ is in degree $8$. The map $\pi_\ast(\KO) \to \pi_\ast(\KU) \cong \Z[\beta^{\pm 1}]$ kills $\eta$, and sends the other classes to their eponyms.}. Therefore, $\KO$ does not quite fit into the setup of \cref{sec: equiv coh} and \cref{sec: degenerations}. Nevertheless, the fact that $\KO$ is the homotopy fixed points $\KU^{h\Z/2}$ does admit a spectral algebro-geometric description: the global sections of the spectral stack $\spec(\KU)/(\Z/2)$ can be identified with $\KO$. Moreover, any $\KO$-module $N$ defines a quasicoherent sheaf over this spectral stack given by the $\Z/2$-action on $\KU \otimes_\KO N$.

Therefore, a more reasonable analogue of the degeneration from a $\KU$-module $M$ to $\pi_\ast(M)$ for a $\KO$-module $N$ is given by considering the graded $\Z/2$-equivariant $\pi_\ast(\KU)$-module $\pi_\ast(\KU \otimes_\KO N)$. If $\KU \otimes_\KO N$ is even, then (since $\pi_\ast(\KU)$ is isomorphic to $\Z[\beta^{\pm 1}]$ with $\beta$ in weight $2$), we may simply view this as the data of the $\Z/2$-equivariant abelian group $\pi_0(\KU \otimes_\KO N)$. That is, studying (spectral) algebraic geometry over $\KO$ amounts simply to keeping track of $\Z/2$-equivariance for (spectral) algebraic geometry over $\KU$. Moreover, the analogue of the degeneration of the spectral scheme $\spec \KU$ to $\spec(\pi_\ast(\KU))/\GG_m \cong \spec(\Z)$ should be understood as a degeneration of the spectral scheme $\spec \KO$ to the $\GG_m$-quotient of $\spec(\pi_\ast(\KU))/(\Z/2) \cong \GG_m/(\Z/2)$, i.e., to the classifying stack $B\Z/2$. Note that if we identify $\Z/2$ with $\spec \Map(\Z/2, \Z) = \spec \Z[a]/(a^2-a)$, where $a$ is the delta function at the non-identity element of $\Z/2$, then the action of $\Z/2$ on $\pi_\ast(\KU)$ is given by the coaction
\begin{equation}\label{eq: Z/2 coaction on KU}
    \Z[\beta^{\pm 1}] \to \Z[\beta^{\pm 1}, a]/(a^2 - a), \ \beta \mapsto (1 - 2a) \beta.
\end{equation}

Motivated by this discussion, we may define $\KO_{T_c}$ for a compact torus $T_c$ as the homotopy $\Z/2$-fixed points of $\KU_{T_c}$ for a particular $\Z/2$-action extending the action of $\Z/2$ on $\KU^{hT_c}$ by complex conjugation. To do so, we need the following simple observation.
\begin{lemma}\label{lem: cplx conj and inversion}
    Under the isomorphism $\pi_0(\KU^{hS^1}) \cong \Z\pw{q-1}$, the action of $\Z/2$ by complex conjugation sends $q\mapsto q^{-1}$. In other words, the action of $\Z/2$ on $\pi_0(\KU^{hS^1})$ is given by the coaction
    $$\Z\pw{q-1} \to \Z\pw{q-1}[a]/(a^2-a), \ q \mapsto q^{1-2a}.$$
\end{lemma}
Motivated by \cref{lem: cplx conj and inversion}, we make the following:
\begin{construction}
    There is an action of $\Z/2$ on the multiplicative group $(\GG_m)_\KU$ over $\KU$ given by inversion. If $T_c$ is a compact torus, this extends to an action of $\Z/2$ on $\cM_T^\KU = T_\KU$. Define $\cM_T^\KO$ to be the spectral stack over $\spec(\KU)/(\Z/2)$ given by $\cM_T^\KU/(\Z/2)$. Observe that the underlying stack of $\cM_T^\KO$ is given by $\cM_{T,0}/(\Z/2)$ over $B\Z/2$ (again, $\Z/2$ acts on $\cM_{T,0} \cong T$ by inversion).
    
    It is clear from \cref{cstr: def-equiv-coh} that the functor $\cf_T(-; \KU): \Top(T_c)^\op \to \QCoh(\cM_T^\KU)$ factors through a functor $\Top(T_c)^\op \to \QCoh(\cM_T^\KO)$. We will denote this new functor by $\cf_T(-; \KO)$. In exactly the same way as in \cref{cstr: def-loc}, one can define a $\QCoh(\cM_T^\KO)$-linear $\infty$-category $\Loc_{T_c}(X; \KO)$ for a finite $T_c$-space $X$. As in \cref{rmk: loc and comod}, there will be an equivalence
    $$\Loc_{T_c}(X; \KO) \simeq \coMod_{\cf_T(X; \KO)^\vee}(\QCoh(\cM_T^\KO));$$
    furthermore, the latter category is equivalent to the $\infty$-category of $\Z/2$-equivariant objects in $\Loc_{T_c}(X; \KU)$.
\end{construction}
Thus, following \cref{def: graded Loc}, we are led to the following.
\begin{definition}\label{def: KO graded Loc}
    Suppose that $X$ is a (ind-)finite $T_c$-space with even cells (such as $\Gr_G$).
    The $\infty$-category $\Loc_{T_c}^\gr(X; \KO)$ is defined as
    $$\Loc_{T_c}^\gr(X; \KO) = \coLMod_{\pi_0(\cf_T(X; \KU)^\vee)}(\QCoh(\cM_{T,0}/(\Z/2))).$$
    Similarly, suppose $Y$ is a finite $T_c$-space such that $\Omega Y$ has even cells (such as $G_c$). The $\infty$-category $\Loc_{T_c}^\gr(Y; \KO)$ is defined as
    $$\Loc_{T_c}^\gr(Y; \KO) = \LMod_{\pi_0(\cf_T(\Omega Y; \KU)^\vee)}(\QCoh(\cM_{T,0}/(\Z/2))).$$
    These categories admit an interesting grading (unlike the analogues with $\KU$-coefficients): the stack $\cM_{T,0}/(\Z/2) = T/(\Z/2)$ lives over $B\GG_m$ via the composite $T/(\Z/2) \to B\Z/2 \to B\GG_m$ where the final map classifies the sign representation of $\Z/2$. We will denote the resulting line bundle over $T/(\Z/2)$ by $\omega$.
\end{definition}
Just as in \cref{eq: sseq for coh of sheaf from loc gr}, if $\cf \in \Loc_{T_c}(X; \KO)$, there is a spectral sequence
\begin{equation}\label{eq: sseq for coh of sheaf from loc gr KO}
    E_2^{\ast,\ast} \cong \pi_\ast \Map_{\Loc_{T_c}^\gr(X; \KO)}(\ul{\KO}^\gr, \cf^\gr \otimes \omega^{\otimes \ast}) \Rightarrow \pi_\ast \Map_{\Loc_{T_c}(X; \KO)}(\ul{k}, \cf) = \pi_\ast \Gamma_{T_c}(X; \cf).
\end{equation}
There is an isomorphism
$$E_2^{\ast,\ast} \cong \H^\ast(B\Z/2, \pi_\ast \Map_{\Loc_{T_c}^\gr(X; \KU)}(\ul{\KU}^\gr, \cf^\gr)[\beta^{\pm 1}]),$$
where $\Z/2$ acts on $\beta$ by negation.
\begin{example}
    It follows from \cref{ex: graded torus satake} that there are equivalences of $\QCoh(B\Z/2)$-linear $\infty$-categories
    \begin{align*}
        \Loc_{T_c}^\gr(\Gr_T; \KO) & \simeq \QCoh(T/(\Z/2) \times B\ld{T}), \\
        \Loc_{T_c}^\gr(T_c; \KO) & \simeq \QCoh(T/(\Z/2) \times \ld{T}).
    \end{align*}
\end{example}

Before proceeding to describing an analogue of the above picture with $\KO$ replaced by the $K(1)$-local sphere, we will describe $\KO_T = \Gamma(\cM_T^\KO; \co)$ for the sake of completeness. There is a spectral sequence 
\begin{equation}\label{eq: sseq KO T}
    E_2^{s,\ast} \cong \H^s(\Z/2; \co_T[\beta^{\pm 1}]) \cong \H^s(\bV(\omega^{-1})^\times; \co) \Rightarrow \pi_{\ast-s}(\KO_T),
\end{equation}
where $\ast$ denotes the grading on $\co_T[\beta^{\pm 1}]$ (so $\beta$ is in weight $2$).
Here, $\bV(\omega^{-1})^\times$ is the complement of the zero section in the total space of the line bundle $\omega^{-1}$ over $T/(\Z/2)$. The action of $\Z/2$ on $\co_T[\beta^{\pm 1}]$ is given by inversion on $T$, and sends $\beta \mapsto -\beta$. One can view \cref{eq: sseq KO T} as the spectral sequence \cref{eq: sseq for coh of sheaf from loc gr KO} computing the cohomology of the constant sheaf on a point. As we will explain below, this spectral sequence has nontrivial differentials, so it does not immediately collapse.

For simplicity, we will focus on the case $T = S^1$, so $\co_T = \Z[x^{\pm 1}]$. Then an elementary calculation in group cohomology shows that the $E_2$-page of \cref{eq: sseq KO T} is given by
$$E_2^{\ast,\ast} \cong \Z[\eta, \beta^{\pm 2}, x + x^{-1}, \tfrac{x^n - x^{-n}}{\beta}]_{n\geq 1}/2\eta,$$
where all classes except for $\eta$ lie in $E_2^{0,\ast}$, and $\eta \in E_2^{1,2}$. A standard calculation in homotopy theory (coming from the analysis of the Adams-Novikov spectral sequence) says that there is a differential $d_3(\beta^2) = \eta^3$. There are no further differentials past this point, and propagating this differential shows:
\begin{prop}\label{prop: htpy KOS1}
    There is an isomorphism
    $$\pi_\ast(\KO_{S^1}) \cong \Z[\eta, 2\beta^2, \beta^{\pm 4}, x + x^{-1}, \tfrac{x^n - x^{-n}}{\beta}]_{n\geq 1}/(2\eta, \eta^3, \eta \cdot (2\beta^2), (2\beta^2)^2 = 4\beta^4),$$
    where the terms $\eta, 2\beta^2, \beta^{\pm 4}$ simply contribute a copy of $\pi_\ast(\KO)$, the term $x + x^{-1}$ contributes a class to $\pi_0(\KO_{S^1})$, and the terms $\tfrac{x^n - x^{-n}}{\beta}$ contribute infinitely many classes to $\pi_2(\KO_{S^1})$.
\end{prop}
It is hard to extract concrete implications\footnote{This is not to say that computing $\KO$-(co)homology groups is a worthless endeavor: in \cite{adams-vector-fields}, Adams famously computed the $\KO$-cohomology of real projective spaces to solve the question of counting linearly independent vector fields on spheres.} for Langlands duality from the structure of $\pi_\ast(\KO_{S^1})$; so we will not compute the homotopy groups of $\pi_0(\cf_T(\Gr_G; \KU)^\vee)$ below, and content ourselves with just describing the $\Z/2$-action on $\pi_0(\cf_T(\Gr_G; \KU)^\vee)$.

\begin{remark}\label{rmk: connective ko def}
    The above story can be extended to include the case of \textit{connective} real K-theory $\ko = \tau_{\geq 0}(\KO)$, too. Since we will only return to this picture occasionally in this article, we will be scant on details. The article \cite{ku-rel-langlands} studied a variant of Langlands duality with coefficients in connective complex K-theory $\ku$, which is an $\Eoo$-ring such that $\pi_\ast(\ku) = \Z[\beta]$ with $\beta$ in degree $2$ (so that $\ku/\beta = \Z$ and $\ku[\beta^{-1}] = \KU$). Its $S^1$-equivariant version $\ku_{S^1}$ has homotopy groups given by $\pi_\ast(\ku_{S^1}) \cong \Z[\beta, x, \tfrac{1}{1+\beta x}]$ with $x$ in weight $-2$. Let $\GG_\beta = \spec \pi_\ast(\ku_{S^1})/\GG_m$, where the group law is given by $x + y + \beta xy$. If $T$ is a torus, let $T_\beta = \Hom(\bX^\ast(T), \GG_\beta)$.

    Since $\ku$ is the connective cover $\tau_{\geq 0}(\KU)$ of $\KU$, the action of $\Z/2$ on $\KU$ by complex conjugation lifts to an action of $\Z/2$ on $\ku$. While there is a map $\ko \to \ku^{h\Z/2}$, this map is \textit{not} an equivalence; rather, it exhibits $\ko$ as the connective cover $\tau_{\geq 0}(\ku^{h\Z/2})$. In particular, while the $\Eoo$-ring $\ku \otimes_\ko \ku$ is not equivalent to $\Map(\Z/2, \ku)$, it is still a finite free $\ku$-algebra with even homotopy.
    Therefore, the appropriate degeneration of the spectral scheme $\spec(\ko)$ is no longer the algebraic stack $(\spec(\pi_\ast(\ku))/\GG_m)/(\Z/2)$, but is rather given by the stack\footnote{The reason for the notation ``$\spev$'' will be explained in a future article, and uses the even filtration of \cite{even-filtr, piotr-even-filtr}.} $\spev(\ko)$ defined as the quotient by $\GG_m$ of the geometric realization of the simplicial stack
    $$\begin{tikzcd}
    	\cdots & {\spec(\pi_\ast(\ku^{\otimes_\ko 3}))} & {\spec(\pi_\ast(\ku^{\otimes_\ko 2}))} & {\spec(\pi_\ast(\ku))}.
    	\arrow[shift left, from=1-1, to=1-2]
    	\arrow[shift left=3, from=1-1, to=1-2]
    	\arrow[shift right=3, from=1-1, to=1-2]
    	\arrow[shift right, from=1-1, to=1-2]
    	\arrow[from=1-2, to=1-3]
    	\arrow[shift right=2, from=1-2, to=1-3]
    	\arrow[shift left=2, from=1-2, to=1-3]
    	\arrow[shift left, from=1-3, to=1-4]
    	\arrow[shift right, from=1-3, to=1-4]
    \end{tikzcd}$$
    A standard calculation says that $\pi_\ast(\ku \otimes_\ko \ku) \cong \Z[\beta, r]/(r^2 - r\beta)$ with $r$ in weight $2$, and that the two maps $\eta_L, \eta_R: \ku \rightrightarrows \ku \otimes_\ko \ku$ send $\eta_L: \beta \mapsto \beta$ and $\eta_R: \beta \mapsto \beta - 2r$. Upon inverting $\beta$, we may identify $\pi_\ast(\ku \otimes_\ko \ku)[\beta^{-1}]$ with $\Z[\beta^{\pm 1}, a]/(a^2 - a)$ where $a = r\beta^{-1}$, and then $\eta_R$ is precisely the coaction from \cref{eq: Z/2 coaction on KU}. As described in \cite[Section 9]{tmf}, $\spev(\ko)$ classifies isomorphism classes of possibly singular quadratic curves (which are locally of the form $y = x^2 + \beta x$).
    
    Note that 
    $$\eta_R(\beta^n) = \beta^n + ((-1)^n - 1) r\beta^{n-1},$$
    so $\beta^{2n}$ is a well-defined function on $\spev(\ko)$ for any $n\geq 0$; the complement of its vanishing locus is precisely $B\Z/2$.
    Just as $B\Z/2$ is an open substack in $\spev(\ko)$, the stack $\cM^\KO_T$ is also open in a certain stack $\cM_T^\ko$, which can be defined as the stack of homomorphisms from $\bX^\ast(T)$ to the quotient of $\GG_\beta$ (viewed as a scheme over $\spec(\Z[\beta])/\GG_m$) by the coaction of $\pi_\ast(\ku \otimes_\ko \ku)$ given by
    \begin{equation}\label{eq: coaction connective ku on Gbeta}
        \Z[\beta, x, \tfrac{1}{1+\beta x}] \to \Z[\beta, x, \tfrac{1}{1+\beta x}, r]/(r^2 - \beta r), \ x\mapsto x - \tfrac{rx^2}{1+\beta x}.
    \end{equation}
    This might look a bit strange, but it is a pleasant exercise to verify (using the binomial formula) that upon inverting $\beta$, it identifies with the map
    $$\Z[\beta^{\pm 1}, x, \tfrac{1}{1+\beta x}] \to \Z[\beta^{\pm 1}, x, \tfrac{1}{1+\beta x}, a]/(a^2 - a), \ (1+\beta x)\mapsto (1+\beta x)^{1-2a}$$
    as forced by \cref{lem: cplx conj and inversion}.
    In any case, given the stack $\cM_T^\ko$, one can define $\QCoh(\cM_T^\ko)$-linear $\infty$-categories $\Loc_{T_c}^\gr(X; \ko)$ exactly as in \cref{def: KO graded Loc}. We will return to this below in \cref{sec: KU coeff}.
\end{remark}

Finally, we turn to the $K(1)$-local sphere. To motivate it, note that the action of complex conjugation on $\KU$ is given simply by the action of the Adams operation $\psi^{-1}$. It is therefore natural to wonder about the action of other Adams operations. To this end, we will fix a prime $p$ and contemplate a parallel story with $\KO$ replaced by the ``image of J''/$K(1)$-local sphere spectrum $L_{K(1)} S^0 = (\KU^\wedge_p)^{h\Z_p^\times}$, where $\Z_p^\times$ acts continuously on $\KU^\wedge_p$ by Adams operations: there is a map $\Z_p^\times \to \Aut_\Eoo(\KU^\wedge_p)$ sending $n \in \Z_p^\times$ to the Adams operation $\psi^n: \KU^\wedge_p \to \KU^\wedge_p$. (In fact, this map is an equivalence!)

The homotopy groups of $L_{K(1)} S^0$ are somewhat complicated\footnote{Explicitly, if $p>2$, then $\pi_i L_{K(1)} S^0$ is isomorphic to $\Z_p$ when $i=0,-1$, and is isomorphic to $\Z/p^{v_p(j)+1}$ for $i = 2(p-1)j - 1$. The order of the latter subgroup is precisely the $p$-part of the denominator of $B_{2(i+1)}/(i+1)$, where $B_{2j}$ is the $2j$th Bernoulli number.}, but just as with $\KO$, studying (spectral) algebraic geometry over $L_{K(1)} S^0$ amounts simply to keeping track of $\Z_p^\times$-equivariance for (spectral) algebraic geometry over $\KU^\wedge_p$. That is to say, $L_{K(1)} S^0$ is the global sections of the structure sheaf on the spectral stack $\spf(\KU^\wedge_p)/\Z_p^\times$. Moreover, the analogue of the degeneration of the spectral scheme $\spf \KU^\wedge_p$ to $\spf(\pi_\ast(\KU^\wedge_p))/\GG_m \cong \spf(\Z_p)$ should be understood as a degeneration of the spectral scheme $\spf L_{K(1)} S^0$ to the $\GG_m$-quotient of $\spf(\pi_\ast(\KU^\wedge_p))/\Z_p^\times$, i.e., to the classifying stack $B\Z_p^\times$.

To define an analogue of \cref{def: KO graded Loc} for $L_{K(1)} S^0$, we need to upgrade the $\Z_p^\times$-action on $\KU$ to an action on equivariant K-theory. Recall that if $\cT$ denotes the full subcategory of $\Top$ spanned by those spaces which are homotopy equivalent to $BT_c$ with $T_c$ being a compact abelian Lie group, the data of a preorientation of $\GG = \GG_m$ is equivalent to the data of a functor $\cM: \cT \to \Aff_\KU$ along with compatible equivalences $\cM(BT_c) \simeq \cM_T$. This can be composed with the functor $\Aff_\KU \to \Aff_{\KU^\wedge_p}^{p\cpl}$ of $p$-completion. 

Unfortunately, even at the level of classical algebra, there is no natural action of $\Z_p^\times$ on $\GG_m = \spf \Z_p[x^{\pm 1}]$ where $n\in \Z_p^\times$ sends $x\mapsto x^n$: the power series $x^n = \sum_{i\geq 0} \binom{n}{i} (x-1)^n$ need not converge without a further completion. Nevertheless, such an action of $\Z_p^\times$ does exist if we restrict to the subgroups $\mu_{p^n} = \spf \Z_p[\Z/p^n] \subseteq \GG_m$; in fact, the action factors through the surjection $\Z_p^\times \twoheadrightarrow (\Z/p^n)^\times$. The subgroups $\mu_{p^n}$ naturally lift to $\KU$ (by $\spec \KU[\Z/p^n]$), and each admits a natural $\Z_p^\times$-action. Of course, these $\Z_p^\times$-actions exist even before $p$-completion; but to get a well-behaved operation on $\Z/p^n$-equivariant $\KU$-cohomology, we need the $\Z_p^\times$-action to preserve the preorientation on $\mu_{p^n}$, and this in turn happens once $\KU$ is $p$-completed. 

Suppose, therefore, that we restrict to the full subcategory $\cT_p \subseteq \cT$ spanned by those spaces which are homotopy equivalent to $BA$ with $A$ being a $p$-power torsion compact abelian Lie group. Then the preceding paragraph implies that $\cM|_{\cT_p}: \cT_p \to \Aff_\KU$ refines to a functor $\cT_p \to (\Aff_{\KU^\wedge_p}^{p\cpl})^{h\Z_p^\times}$. Following \cref{cstr: def-equiv-coh} verbatim defines an action of $\Z_p^\times$ on $\cM_A$, and furthermore equips the quasicoherent sheaf $\cf_A(X) \in \QCoh(\cM_A)$ associated to a finite $A$-space $X$ with a $\Z_p^\times$-equivariant structure. We will write $\cM_A^{L_{K(1)} S^0} = \cM_A/\Z_p^\times$, and let $\cf_A(-; L_{K(1)} S^0)$ denote the corresponding functor $\Top(A)^\op \to \QCoh(\cM_A^{L_{K(1)} S^0})$. Again, following \cref{def: graded Loc}, we are led to\footnote{Just as with connective $\ko$, one can also define a variant of \cref{def: J graded Loc} for the \textit{connective} image of J spectrum $j$. We leave this to the interested reader.}:
\begin{definition}\label{def: J graded Loc}
    Suppose that $A$ is a $p$-power torsion abelian group, and $X$ is a (ind-)finite $A$-space with even cells (such as $\Gr_G$).
    The $\infty$-category $\Loc_{A}^\gr(X; L_{K(1)} S^0)$ is defined as
    $$\Loc_{A}^\gr(X; L_{K(1)} S^0) = \coLMod_{\pi_0(\cf_A(X; \KU)^\vee)}(\QCoh(\cM_{A,0}/\Z_p^\times)).$$
    Similarly, suppose $Y$ is a finite $A$-space such that $\Omega Y$ has even cells (such as $G_c$). The $\infty$-category $\Loc_{A}^\gr(Y; L_{K(1)} S^0)$ is defined as
    $$\Loc_{A}^\gr(Y; L_{K(1)} S^0) = \LMod_{\pi_0(\cf_A(\Omega Y; \KU)^\vee)}(\QCoh(\cM_{A,0}/\Z_p^\times)).$$
    These categories admit an interesting grading (just like the analogue with $\KO$-coefficients): the stack $\cM_{A,0}/(\Z/2) = A/\Z_p^\times$ lives over $B\GG_m$ via the composite $A/\Z_p^\times \to B\Z_p^\times \to B\GG_m$ where the final map classifies the standard (cyclotomic) representation of $\Z_p^\times$ on $\Z_p$. We will denote the resulting line bundle over $A/\Z_p^\times$ by $\omega$.
\end{definition}
For the sake of completness (and partly because it is a pleasant calculation), let us describe $(L_{K(1)} S^0)_{T[p^\infty]} = \Gamma(\cM_{T[p^\infty]}^{L_{K(1)} S^0}; \co)$ when $p$ is odd. Since this is built as a limit of the spectra $(L_{K(1)} S^0)_{T[p^n]}$, we will just compute each of these individually. There is a spectral sequence 
\begin{equation}\label{eq: sseq K1-local mod pn}
    E_2^{s,\ast} \cong \H^s_\mathrm{cts}(\Z_p^\times; \co_{T[p^n]}[\beta^{\pm 1}]) \cong \H^s(\bV(\omega^{-1})^\times; \co) \Rightarrow \pi_{\ast-s}((L_{K(1)} S^0)_{T[p^n]}),
\end{equation}
where $\ast$ denotes the grading on $\co_{T[p^n]}[\beta^{\pm 1}]$ (so $\beta$ is in weight $2$).
Here, $\bV(\omega^{-1})^\times$ is the complement of the zero section in the total space of the line bundle $\omega^{-1}$ over $A/\Z_p^\times$. Fix a topological generator $g \in \Z_p^\times$ such that $g^{p-1} = 1 + p$, so that its action (denoted $\psi^g$) on $\co_{T[p^n]}[\beta^{\pm 1}]$ is given by exponentiation on $T[p^n]$, and sends $\beta \mapsto g\beta$. One can view \cref{eq: sseq K1-local mod pn} as the spectral sequence \cref{eq: sseq for coh of sheaf from loc gr} computing the cohomology of the constant sheaf on a point. This spectral sequence has no nontrivial differentials, so it collapses; this, however, is no longer true if $p=2$.

For simplicity, we will focus on the case $T = S^1$, so $\co_{T[p^n]} = \co_{\mu_{p^n}} = \Z_p[x^{\pm 1}]/(x^{p^n}-1)$ (recall that we have $p$-completed!). We then have:
\begin{prop}\label{prop: htpy K1-local S0}
    If $p>2$, there are isomorphisms
    $$\pi_j((L_{K(1)} S^0)_{\mu_{p^n}}) \cong \begin{cases}
        \pi_j(L_{K(1)} S^0) \oplus \Z_p^{\oplus n} & j=0,-1 \\
        \pi_j(L_{K(1)} S^0) \oplus \bigoplus_{i=0}^{n-1} \Z_p/kp^{n-i} & j=2k-1, k \in \Z \\
        0 & \text{else}.
    \end{cases}$$
\end{prop}
\begin{proof}
    The $E_2$-page of \cref{eq: sseq K1-local mod pn} is given by the group cohomology of $\Z_p^\times$ acting on $\Z_p[x^{\pm 1}, \beta^{\pm 1}]/(x^{p^n}-1)$, so $E_2^{\ast,2k}$ is given by the cohomology of the two-term complex
    \begin{align*}
        \Z_p[x^{\pm 1}]/(x^{p^n}-1) & \xrightarrow{\psi^g - 1} \Z_p[x^{\pm 1}]/(x^{p^n}-1), \\
        f(x) & \mapsto g^k f(x^g) - f(x).
    \end{align*}
    Let us sketch the calculation of the cohomology of this complex, which we will denote by $C^\bull$ below. Write $\Z_p[x^{\pm 1}]/(x^{p^n} - 1) = \Z_p[\Z/p^n]$, so it is a free $\Z_p$-module on the classes $\{1,x,\cdots,x^{p^n - 1}\}$. The action of $\Z_p^\times$ on $\Z/p^n$ (which factors through the quotient map $\Z_p^\times \twoheadrightarrow (\Z/p^n)^\times$) has $n+1$ orbits, with representatives given by $\{p^i\}_{0\leq i \leq n-1} \cup \{0\}$. The orbit of $0$ is a singleton, and the orbit of $p^i$ has size $p^{n-i-1}(p-1)$. It follows that the $p^n \times p^n$-matrix $\psi^g - 1$ can be written as the block sum $(g^k - 1) \oplus \bigoplus_{i=0}^{n-1} A_i$, where $A_i$ is an $p^{n-i-1}(p-1) \times p^{n-i-1}(p-1)$-matrix. For consistency, we will write $A_{-1}$ to denote the scalar $g^k - 1$.

    Let $0\leq i \leq n-1$. Then the matrix $A_i$ acts on the submodule $\Z_p^{\oplus p^{n-i-1}(p-1)} = \Z_p\{x^{p^i}, x^{p^i g}, \cdots, x^{p^i g^{p^{n-i-1}(p-1) - 1}}\}$, and each row and column of $A_i$ has exactly two entries (namely, $-1$ on the diagonal entry, and $g^k$ elsewhere). Computing the Smith normal form of this matrix shows that $A_i$ has no kernel unless $k=0$, in which case its kernel is free of rank $1$. If $k=0$, then the cokernel of $A_i$ is also free of rank $1$, and if $k\neq 0$, then the cokernel of $A_i$ is $\Z_p/(g^{k p^{n-i-1}(p-1)} - 1)$. Since $g \in \Z_p^\times$ was chosen to satisfy $g^{p-1} = 1+p$, it follows that $\Z_p/(g^{kp^{n-i-1}(p-1)} - 1) \cong \Z_p/kp^{n-i}$.
    
    We only need to take care of the block $A_{-1}$. If $k = 0$, then $A_{-1}$ is the zero matrix; but if $k$ is nonzero, then $A_{-1}$ has no kernel, and has cokernel given by $\Z_p/(g^k - 1)$. It follows that if $k=0$, then
    $$\H^s(C^\bull) \cong \Z_p^{\oplus n+1} \text{ for }s=0,1.$$
    If $k\neq 0$, then
    $$\H^s(C^\bull) \cong \begin{cases}
        0 & s=0\\
        \Z_p/(g^k - 1) \oplus \bigoplus_{i=0}^{n-1} \Z_p/kp^{n-i} & s=1.
    \end{cases}$$
    The groups $E_2^{s,\ast}$ vanish for $s>1$, so there cannot be any differentials in the spectral sequence \cref{eq: sseq K1-local mod pn}. Using the calculation of the homotopy groups of $K(1)$-local sphere, we obtain the desired answer for $\pi_\ast((L_{K(1)} S^0)_{\mu_{p^n}})$.
\end{proof}
Just as with $\KO_{S^1}$, the groups $\pi_\ast((L_{K(1)} S^0)_{\mu_{p^n}})$ are interesting but form a rather unpleasant ring to do algebraic geometry with; so we will content ourselves with just understanding the category $\Loc_{T_c[p^\infty]}^\gr(\Gr_G; L_{K(1)} S^0)$ below (and not calculate actual homotopy groups).
\newpage

\section{Loop rotation equivariance}\label{sec: torus loop rot}
In this section, we describe an extension of \cref{thm: torus satake} (or rather, of \cref{ex: graded torus satake}) which includes loop-rotation equivariance. Recall that \cref{thm: torus satake} gives an isomorphism $\cf_{T_c}(\Gr_T)^\vee \cong \co(\ld{T}_k \times_{\spec(k)} \cM_T)$. The action of $T$ on $\Gr_T$ refines to an action of $\tilde{T} = T \times \GG_m^\rot$, where $\GG_m^\rot$ acts by loop rotation;  we may therefore consider the \textit{loop-rotation equivariant} homology $\cf_{\tilde{T}_c}(\Gr_T)^\vee$. There is an equivalence $\cM_{\tilde{T}} \simeq \cM_T \times \GG$, where the second factor is identified as $\cM_{\GG_m^\rot}$. Therefore, $\cf_{\tilde{T}_c}(\Gr_T)^\vee$ is a quasicoherent sheaf over $\cM_T \times \GG$ whose fiber over the zero section of $\GG$ recovers $\cf_{{T}_c}(\Gr_T)^\vee$.
\begin{definition}\label{def: G-diff ops}
    Let $\bH$ be a smooth $1$-dimensional group scheme over a base commutative ring $A$, let $T_c$ be a compact torus, and let $\bH_T = \Hom(\bX^\ast(T), \bH)$. (When $\GG$ is an oriented commutative $k$-group scheme, and $\bH = \GG_0$ is its underlying group scheme over $A = \pi_0(k)$, then $\bH_T$ is precisely $\cM_{T,0}$.)
    Let $\lambda$ be a cocharacter of $T_c$, so that $\lambda$ defines a homomorphism $\bX^\ast(T) \to \Z$, and hence a homomorphism $\lambda^\ast: \bH \to \bH_T$. In turn, this defines a map
    $$f^\lambda: \bH_{\tilde{T}} \simeq \bH_T \times \bH \xar{\pr \times \lambda^\ast} \bH_T.$$
    If $y$ is a local section of $\co_{\bH_T}$, we will write $\lambda^\ast(y)$ to denote the resulting local section of $\co_{\bH_{\tilde{T}}}$.
    
    Let $\cd_{\ld{T}}^{\bH}$ denote the quotient of the associative $\co_{\bH}$-algebra $\co_{\bH_{\tilde{T}}}\pdb{x_\lambda | \lambda\in \bX_\ast(T)}$ by the relations given locally by
    $$x_\lambda \cdot x_\mu = x_{\lambda+\mu}, \  y \cdot x_\lambda = x_\lambda \cdot \lambda^\ast(y).$$
    Here, $\lambda,\mu\in \bX_\ast(T)$, and $y$ is a local section of $\co_{\bH_T}$. We will call $\cd_{\ld{T}}^{\bH}$ the \textit{algebra of $\bH$-differential operators} on $\ld{T}$.
\end{definition}
\begin{remark}\label{rmk: G-mellin}
    The algebra $\cd_{\ld{T}}^{\bH}$ satisfies a Mellin transform: namely, it follows from unwinding the definition that there is an equivalence
    $$\cd_{\ld{T}}^{\bH}\modc \simeq \IndCoh(\bH_{\tilde{T}}/\bX^\ast(\ld{T})),$$
    where $\lambda\in \bX^\ast(\ld{T}) \cong \bX_\ast(T)$ acts on $\bH_{\tilde{T}}$ via $y\mapsto \lambda^\ast y$.
\end{remark}
\begin{notation}
    If $k$ is a complex-oriented $2$-periodic $\Eoo$-ring and $\GG_0$ is the $\pi_0(k)$-group underlying a oriented commutative $A$-group $\GG$, we will write $\cd_{\ld{T}}^\GG$ to denote $\cd_{\ld{T}}^{\GG_0}$, and refer to it as the \textit{algebra of $\GG$-differential operators} on $\ld{T}$. We hope this does not cause any confusion.
\end{notation}
\begin{prop}\label{prop: T homology and quantized diffop}
    There is an isomorphism 
    $$\pi_0 \cf_{\tilde{T}}(\Gr_T)^\vee \cong \cd_{\ld{T}}^\GG$$
    of $\co_{\GG_0}$-algebras. In particular, there is an equivalence
    $$\Loc_{\tilde{T}_c}(\Gr_T; k) \simeq \cd_{\ld{T}}^\GG\modc^{(\ld{T} \times \ld{T}, \weak)},$$
    where the right-hand side denotes the category of left $\cd_{\ld{T}}^\GG$-modules whose underlying quasicoherent sheaf over $\ld{T}$ is equivariant for $\ld{T} \times \ld{T}$-action on $\ld{T}$ given by left and right multiplication.
\end{prop}
\begin{proof}
    Since $\Gr_T \cong \bX_\ast(T)$, it is easy to see that $\pi_0 \cf_{\tilde{T}}(\Gr_T)^\vee \cong \bigoplus_{\lambda\in \bX_\ast(T)} \pi_0 \co_{\cM_{\tilde{T}}}$; let $x_\lambda$ be a generator of the summand indexed by $\lambda\in \bX_\ast(T)$. If $\lambda\in \bX_\ast(T) = \Hom(\bX^\ast(T), \Z)$, the map $\Omega T_c \to \Omega T_c$ given by multiplication-by-$\lambda$ is $T_c\times S^1_\rot$-equivariant for the homomorphism $T_c\times S^1_\rot \to T_c\times S^1_\rot$ given by 
    $$(t, \theta) \mapsto (t \lambda(\theta), \theta),$$
    where $\lambda$ is viewed as a homomorphism $S^1 \to T$. On weight lattices, this homomorphism induces the map $\bX^\ast(T) \times \Z \to \bX^\ast(T) \times \Z$ which sends $(\mu, n) \mapsto (\mu, n+\bX_\ast(T)(\mu))$. In particular, the composite 
    $$\bX^\ast(T) \to \bX^\ast(T) \times \Z \to \bX^\ast(T) \times \Z$$
    sends $\mu \mapsto (\mu, \bX_\ast(T)(\mu))$. Applying $\Hom(-, \GG)$ to this composite precisely produces the map $f^\lambda: \cM_{\tilde{T}} \to \cM_T$ from \cref{def: G-diff ops}. This implies the desired identification of $\pi_0 \cf_{\tilde{T}}(\Gr_T)^\vee$.
\end{proof}
\begin{example}\label{ex: ordinary quantized diffop}
    Let $T \cong S^1$ be a torus of rank $1$ (for simplicity).
    Suppose $k = \QQ[u^{\pm 1}]$ with $u$ in degree $2$, so $\GG = \GG_a$ and $\co_{\GG_0} \cong \QQ[\hbar]$. Then the algebra of \cref{def: G-diff ops} is the quotient of the $\QQ[\hbar]$-algebra $\QQ[\hbar]\pdb{y, x^{\pm 1}}$ by the relation $yx = x(y+\hbar)$. In other words, $y$ acts as the operator $\hbar x\partial_x$, so we simply have that 
    $$\H^{\tilde{T}}_0(\Gr_T; \QQ[u^{\pm 1}]) \cong \H^{\tilde{T}}_\ast(\Gr_T; \QQ) \cong \QQ[\hbar]\pdb{\hbar x\partial_x, x^{\pm 1}}.$$
    This has been stated previously as \cite[Proposition 5.19(2)]{bfn-ii}.
    In particular, the localization $\H^{\tilde{T}}_0(\Gr_T; \QQ[u^{\pm 1}])[\hbar^{-1}]$ is isomorphic to the rescaled Weyl algebra $\cd_{\ld{T}}^\hbar$; this is the motivation behind the terminology in \cref{def: G-diff ops}.
    Note that for a general torus, \cref{rmk: G-mellin} simply reduces to the standard Mellin transform, which gives an equivalence between $\DMod_\hbar(\ld{T})$ and $\IndCoh(\fr{t}_{\QQ[\hbar]}/\bX^\ast(\ld{T}))$; here, $\lambda \in \bX^\ast(\ld{T})$ acts on $\fr{t}_{\QQ[\hbar]}$ by $x \mapsto x + (d\lambda)(\hbar)$.
\end{example}
\begin{example}\label{ex: q quantized diffop}
    Again, let $T \cong S^1$ be a torus of rank $1$ (for simplicity).
    Suppose $k = \KU$, so $\GG = \GG_m$ and $\co_{\GG_0} \cong \Z[q^{\pm 1}]$. Then the algebra of \cref{def: G-diff ops} is the quotient of the $\Z[q^{\pm 1}]$-algebra $\Z[q^{\pm 1}]\pdb{y^{\pm 1}, x^{\pm 1}}$ by the relation $yx = qxy$. (This is also known as the ``quantum torus''.) In other words, $y$ acts as the operator $q^{x\partial_x}$ sending $f(x) \mapsto f(qx)$, so we simply have that 
    $$\KU^{\tilde{T}}_0(\Gr_T) \cong \Z[q^{\pm 1}]\pdb{q^{x\partial_x}, x^{\pm 1}}.$$
    This is closely related to the $q$-Weyl algebra $\cd_q = \Z[q^{\pm 1}]\pdb{\Theta, x^{\pm 1}}/(\Theta x = x(q\Theta+1))$ for $\ld{T} = \GG_m$: indeed, since the logarithmic $q$-derivative $\Theta = x\nabla_q$ is given by the fraction $\frac{q^{x\partial_x}-1}{q-1}$, the pullback of $\cd_{\ld{T}}^\GG$ along $\GG_m-\{1\} \hookrightarrow \GG_m$ is isomorphic to the algebra $\cd_q[\frac{1}{q-1}]$.
    Note that \cref{rmk: G-mellin} gives a ``$q$-Mellin transform'', i.e., an equivalence between $\LMod_{\KU^{\tilde{T}}_0(\Gr_T)}$ and $\IndCoh(T_{\Z[q^{\pm 1}]}/\bX^\ast(\ld{T}))$, where $\lambda \in \bX^\ast(\ld{T}) = \bX_\ast(T)$ acts on $T_{\Z[q^{\pm 1}]}$ by sending $y\mapsto \lambda(q) y$.
\end{example}

Let us briefly outline the relationship between the algebra $\cd_{\ld{T}}^{\bH}$ of \cref{def: G-diff ops} and the $F$-de Rham complex of \cite{generalized-n-series}.
\begin{notation}
    For the purpose of this discussion, we will assume that $T \cong S^1$ is a torus of rank $1$, so that $\ld{T} \cong \GG_m$. We will also fix an invariant differential form on the formal completion $\hat{\bH}$ of $\bH$ at the zero section, so that there is an isomorphism $\hat{\bH} \cong \spf A\pw{t}$ of formal $A$-schemes. Let $F(x,y)$ denote the resulting formal group law over $A$, and define the $n$-series of $F$ by
    $$[n]_F := \overbrace{F(t, F(t, F(t, \cdots F(t, t) \cdots )))}^n.$$
    We will often write $x+_Fy = x+_\GG y$ to denote $F(x,y)$.
    Let $\hat{\cd}_{\ld{T}}^{\bH}$ denote the completion of $\cd_{\ld{T}}^{\bH}$ at the zero section of $\bH$.
\end{notation}
\begin{lemma}[Cartier duality]\label{cartier-duality}
    Let $\hat{\bH}$ be a $1$-dimensional formal group over a commutative ring $A$, and let $\Cart(\hat{\bH})$ denote its Cartier dual (see \cite[Section 3.3]{drinfeld-formal-group} for more on Cartier duals of formal groups). Then there is an equivalence of categories $\QCoh(\hat{\bH}) \simeq \QCoh(B\Cart(\hat{\bH}))$ sending the convolution tensor product on the left-hand side to the usual tensor product on the right-hand side. Under this equivalence, the functor $\QCoh(\hat{\bH}) \to \Mod_A$ given by restriction to the zero section is identified with the functor $\QCoh(B\Cart(\hat{\bH})) \to \Mod_A$ given by pullback along the map $\spec(A) \to B\Cart(\hat{\bH})$.
\end{lemma}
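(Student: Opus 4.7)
The plan is to establish the lemma via the standard Cartier duality for formal groups. Writing $\hat{\GG}_0 = \spf A$ with continuous cocommutative coproduct $\Delta : A \to A \hat{\otimes}_R A$ encoding the group law, I would \emph{define} $\Cart(\hat{\GG}_0) := \spec A^\vee$, where $A^\vee$ is the continuous $R$-linear dual of $A$ equipped with its standard Hopf-algebra structure: the multiplication is dual to $\Delta$ (and is commutative because $\hat{\GG}_0$ is), and the coproduct is dual to the multiplication on $A$. This is precisely the Cartier dual of \cite[Section 3.3]{drinfeld-formal-group}.

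I would then identify both sides of the claimed equivalence with a common third category, namely an appropriate topological version of $\coMod_{A^\vee}$. By the definition of the classifying stack of an affine group, $\QCoh(B\Cart(\hat{\GG}_0))$ is literally $\coMod_{A^\vee}$. For the other side, the continuous pairing $A \otimes_R A^\vee \to R$ converts an $A$-action $A \otimes_R M \to M$ into a coaction $M \to M \hat{\otimes}_R A^\vee$, yielding $\QCoh(\hat{\GG}_0) \simeq \coMod_{A^\vee}$. To see that this equivalence is symmetric monoidal, I would check directly that the convolution product $M_1 \ast M_2 = M_1 \otimes_R M_2$ (with $A$-action restricted along $\Delta$) corresponds under duality to the usual tensor product of $A^\vee$-comodules, whose combined coaction uses the multiplication on $A^\vee$ --- which is defined to be dual to $\Delta$, so the two structures match tautologically. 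Finally, pullback along $\spec R \to B\Cart(\hat{\GG}_0)$ is the forgetful functor $\coMod_{A^\vee} \to \Mod_R$ sending a comodule to its underlying $R$-module, which on the $\QCoh(\hat{\GG}_0)$-side corresponds to the restriction along the zero section (since the reduced locus of the formal scheme $\hat{\GG}_0$ is $\spec R$, and any formal-group module is concentrated there, so the underlying $R$-module functor is identified with $e^\ast$).

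The hard part will be fixing the right topological/pro-finite framework so that the duality $\Mod_A^{\mathrm{top}} \simeq \coMod_{A^\vee}$ is genuinely an equivalence of symmetric monoidal categories with the stated fiber functors. This requires using continuous duals, completed tensor products on the $\QCoh(\hat{\GG}_0)$-side, and possibly restricting to appropriate subcategories (such as locally finite or pro-finitely generated modules) on the $A^\vee$-comodule side, in order to match the conventions of \cite{drinfeld-formal-group}. Once these conventions are pinned down, all the algebraic identifications above go through formally, and the fiber-functor compatibility is automatic from the construction.
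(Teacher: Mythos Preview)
The paper states this lemma without proof, treating it as a standard fact (with a pointer to \cite[Section 3.3]{drinfeld-formal-group} for background on Cartier duals of formal groups). Your proposal is exactly the standard argument one would give: dualize the topological Hopf algebra $A = \co_{\hat{\GG}_0}$ to get the Hopf algebra $A^\vee$ representing $\Cart(\hat{\GG}_0)$, identify both categories with $\coMod_{A^\vee}$, and check that the monoidal structures and fiber functors match. There is nothing to compare since the paper omits the proof entirely; your outline is correct and is the expected route.

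One small point worth tightening: your identification of restriction to the zero section with the forgetful functor is slightly glossed. The zero section $e : \spec R \to \hat{\GG}_0$ corresponds to the counit $\epsilon : A \to R$, so $e^\ast M = R \otimes_A M$, not literally the underlying $R$-module of $M$. The reason this nonetheless agrees with the forgetful functor on $\coMod_{A^\vee}$ is that under the equivalence $\Mod_A^{\mathrm{top}} \simeq \coMod_{A^\vee}$, an $A^\vee$-comodule $N$ corresponds to the $A$-module $A \hat{\otimes}_R N$ with $A$ acting on the first factor, and then $e^\ast(A \hat{\otimes}_R N) = R \otimes_A (A \hat{\otimes}_R N) \simeq N$. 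So the claim is right, but the justification you gave (about the reduced locus) is not quite the mechanism.
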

\begin{prop}\label{prop: endomorphism F-de Rham}
    There is a canonical action of $\hat{\cd}_{\ld{T}}^{\bH}$ on $(\GG_m)_{A\pw{t}} = \spf A\pw{t}[x^{\pm 1}]$ such that $A\pw{t}[x^{\pm 1}] \otimes_{\hat{\cd}_{\ld{T}}^{\bH}} A\pw{t}[x^{\pm 1}]$ is isomorphic to the two-term complex
    $$C^\bull = (A\pw{t}[x^{\pm 1}] \to A\pw{t}[x^{\pm 1}]dx), \ x^n \mapsto [n]_F x^n dx$$
    from \cite[Remark 4.3.8]{generalized-n-series}.
\end{prop}
\begin{proof}[Proof sketch]
    Since $T$ is of rank $1$, there is an isomorphism $\bH_T \cong \bH$, and hence an isomorphism $\hat{\bH}_T \cong \hat{\AA}^1$ of formal $A$-schemes, where $\hat{\bH}_T$ denotes the completion of $\bH_T$ at the zero section. Let $y$ be a local coordinate on $\bH_T$.
    Then, $\hat{\cd}_{\ld{T}}^{\bH}$ is isomorphic to the quotient of the associative $\hat{\co}_{\bH}$-algebra $\hat{\co}_{\bH \times \bH_T}\pdb{x^{\pm 1}}$ subject to the relation $yx = x(y +_\GG t)$.
    The $t$-adic filtration on $\hat{\cd}_{\ld{T}}^{\bH}$ therefore has associated graded $\gr(\hat{\cd}_{\ld{T}}^{\bH}) \cong \hat{\co}_{\bH_T}[x^{\pm 1}]\pw{\ol{t}}$, where $\ol{t}$ lives in weight $1$. View $A$ as a $\co_{\bH_T}$-algebra via the zero section, i.e., the augmentation $\co_{\bH_T} \to A$. Then, the action of $\gr(\hat{\cd}_{\ld{T}}^{\bH})$ on $A[x^{\pm 1}]\pw{\ol{t}}$ is induced by the augmentation $\hat{\co}_{\bH_T} \to A$. The isomorphism $\hat{\bH}_T \cong \hat{\AA}^1$ of formal $A$-schemes then implies an isomorphism $A \otimes_{\co_{\bH_T}} A \cong A[\epsilon]/\epsilon^2$ with $\epsilon$ in homological degree $1$.
    It follows that
    $$A\pw{\ol{t}}[x^{\pm 1}] \otimes_{\gr(\hat{\cd}_{\ld{T}}^{\bH})} A\pw{\ol{t}}[x^{\pm 1}] \simeq A\pw{\ol{t}}[x^{\pm 1}][\epsilon]/\epsilon^2,$$
    where $\ol{t}$ is in weight $1$ and degree $0$, and $\epsilon$ is in weight $0$ and degree $1$.
    
    By \cref{cartier-duality}, the $t$-adic filtration on $\hat{\cd}_{\ld{T}}^{\bH}$ is equivalent to the data of a $\Cart(\hat{\bH})$-action on $A\pw{\ol{t}}[x^{\pm 1}] \otimes_{\gr(\hat{\cd}_{\ld{T}}^{\bH})} A\pw{\ol{t}}[x^{\pm 1}] \simeq A\pw{\ol{t}}[x^{\pm 1}][\epsilon]/\epsilon^2$. This in turn is equivalent to the data of a differential 
    $$\nabla: A\pw{\ol{t}}[x^{\pm 1}] \to A\pw{\ol{t}}[x^{\pm 1}]\cdot \epsilon$$
    satisfying an $\hat{\bH}$-analogue of the Leibniz rule: if\footnote{Note that $\nabla$ has to be homogeneous in the degree of the monomial in $x$, as can be seen by keeping track of the $x$-weight.} $\nabla(x^n) = f(n) x^n \epsilon$ for some $f(n)\in A\pw{t}$, then $f(n+m) = f(n) +_\GG f(m)$.
    It therefore suffices to determine $\nabla(x)$; but the relation $yx = x(y +_\GG t)$ forces $\nabla(x) = tx\epsilon$. This implies that 
    $$\nabla(x^n) = (\overbrace{t +_\GG \cdots +_\GG t}^n) x^n \epsilon = [n]_F x^n \epsilon,$$
    as desired.
\end{proof}
\begin{example}
    When $\bH = {\GG}_a$ over\footnote{Of course, one can work over $\Z$ too; we just chose $\QQ$ to continue with \cref{ex: ordinary quantized diffop}.} $\QQ$, the complex $C^\bull$ is
    $$C^\bull = (\QQ\pw{\hbar}[x^{\pm 1}] \to \QQ\pw{\hbar}[x^{\pm 1}]dx), \ x^n \mapsto n\hbar x^n dx.$$
    Indeed, since $yx = x(y+\hbar)$, we have $yx^n = x^n(y+n\hbar)$; since $t = \hbar$ in this case, we have $x^n\mapsto n\hbar x^n \epsilon$. This is evidently a $\hbar$-rescaling of the classical de Rham complex of $\GG_m$.

    When $\bH = \GG_m$ over $\Z$, the complex $C^\bull$ is
    $$C^\bull = (\Z\pw{q-1}[x^{\pm 1}] \to \Z\pw{q-1}[x^{\pm 1}]dx), \ x^n \mapsto (q^n-1) x^n dx.$$
    Indeed, since $yx = x(qy)$, we have $yx^n = x^n (q^n y)$, and hence 
    $$(y-1)x^n = x^n(q^n y - 1) = x^n((y-1) +_F (q^n-1)),$$
    where $F(z,w) = z + w + zw$ is the multiplicative formal group law; since $t = q-1$ in this case, we have $x^n \mapsto (q^n-1) x^n \epsilon$. The complex $C^\bull$ is a $(q-1)$-rescaling of the $q$-de Rham complex of $\GG_m$ from \cite{scholze-q-def}.
\end{example}
\begin{remark}
    The complex of \cref{prop: endomorphism F-de Rham} is not quite the $F$-de Rham complex of \cite[Definition 4.3.6]{generalized-n-series}; rather, if $\eta_t$ denotes the d\'ecalage functor of \cite{berthelot-ogus} with respect to the ideal $(t)\subseteq A\pw{t}$, the $F$-de Rham complex is given by the d\'ecalage $\eta_t C^\bull$. In particular, the complex of \cref{prop: endomorphism F-de Rham} is isomorphic to the $F$-de Rham complex after inverting $t$. One can modify the algebra $\cd_{\ld{T}}^{\bH}$ of \cref{def: G-diff ops} (by performing a noncommutative analogue of an affine blowup/deformation to the normal cone\footnote{For instance, in the case of \cref{ex: ordinary quantized diffop}, this procedure simply adjoins the fraction $\frac{y}{\hbar}$; in the case of \cref{ex: q quantized diffop}, this procedure simply adjoins the fraction $\frac{y-1}{q-1}$.}) such that the relative tensor product as in \cref{prop: endomorphism F-de Rham} is the $F$-de Rham complex itself. Since it is not needed for this article, we will not describe this modification here.
\end{remark}
\begin{remark}\label{rmk: koszul duality LT}
    Suppose $k$ is a complex-oriented $2$-periodic $\Eoo$-ring equipped with an oriented commutative $k$-group scheme $\GG$.
    \cref{prop: endomorphism F-de Rham} says that $\hat{\cd}_{\ld{T}}^{\GG_0}$ is Koszul dual to the complex $C^\bull$. Forthcoming work of Arpon Raksit shows that the d\'ecalage $\eta_t C^\bull$ can be recovered from the ``even filtration'' (in the sense of \cite{even-filtr}) on the periodic cyclic homology $\HP(\tau_{\geq 0} k[x^{\pm 1}]/\tau_{\geq 0} k)$. See also the discussion in \cite[Section 3.3]{thh-xn}.
    Using similar techniques, one can show that $C^\bull$ can be recovered from the even filtration on the negative cyclic homology $\HC^-(k[x^{\pm 1}]/k) = \HH(k[x^{\pm 1}]/k)^{hS^1}$.

    Recalling that $T = S^1$, this $\Eoo$-$k$-algebra is simply $\HC^-(k[\Omega T]/k)$. The Hochschild homology $\HH(k[\Omega T]/k) \simeq k \otimes \THH(S[\Omega T])$ is $S^1$-equivariantly equivalent to the $k$-chains $C_\ast(\cL T; k)$ on the free loop space of $T$. (For a reference, see \cite[Corollary IV.3.3]{nikolaus-scholze}.) The $k$-chains $k[\cL T]$ is $S^1$-equivariantly Koszul dual\footnote{This Koszul duality essentially stems from the (non-$S^1_\rot$-equivariant) decomposition $\cL T \simeq T \times \Omega T$.} to $k[\Omega T]^{hT}$; this can be identified as a completion of $\cf_T(\Omega T)^\vee$ at the zero section of $\cM_T$. In other words, $\HC^-(k[\Omega T]/k)$ is Koszul dual to the completion of $\cf_{T\times S^1_\rot}(\Omega T)^\vee$ at the zero section of $\cM_T \times \GG$. This is the topological source of the Koszul duality of \cref{prop: endomorphism F-de Rham}.
\end{remark}
\newpage

\section{Review of the classical case}\label{sec: review Q coeff}
To prepare ourselves for the calculations of $\pi_0(\cf_T(\Gr_G)^\vee)$ for $k$ being complex K-theory or elliptic cohomology, we begin with the simpler case of $k$ being $\QQ[u^{\pm 1}]$ with $u$ in degree $2$; recall that $\cM_{T,0}$ is then isomorphic to $\fr{t}$. In this case, the discussion in the present section follows from the work of Bezrukavnikov, Finkelberg, and Mirkovic in \cite{bfm}, as well as the work of Yun and Zhu in \cite{homology-langlands}. We will nevertheless go through this calculation (and discuss several applications) since the argument is different from that of the papers mentioned above, and also because it will serve as a useful template later. Our goal is specifically to \textit{not} appeal to the derived geometric Satake equivalence of \cite{bf-derived-satake}, but rather do the calculation in such a way that proof technique generalizes to the K-theoretic or elliptic setting, so as to apply it to prove an analogue of \cite{bf-derived-satake}.

\textit{In the remainder of this article, we will assume the group $G$ is connected, almost simple, and simply-laced.} The assumption that $G$ is simply-laced provides many simplifications; in particular, it implies that the Chevalley split forms of the groups $G$ and $\ld{G}$ are centrally isogenous (so that the adjoint action of $G$ on $\g$ descends to an action of $\ld{G}$ on $\g$), and that there is a $\ld{G}$-equivariant isomorphism $\g \cong \ld{\g}^\ast$ (even over $\Z$). However, we will \textit{never} use an $\ld{G}$-equivariant isomorphism $\ld{\g} \cong \ld{\g}^\ast$! The latter fails over $\Z$ (e.g., $\sl_2 \not\cong \pgl_2$ over $\Z$), and the effect of reliance on such failures becomes amplified in the settings of K-theory and elliptic cohomology.

In the following discussion, all dual groups are to be understood as defined over $\QQ$ (although some of our discussion will work even over $\Z$, perhaps with some small primes inverted). 

\begin{definition}[(Additive) Kostant slice]\label{def: additive kostant slice}
    Fix a nondegenerate character $\psi \in \ld{\fr{n}}^\ast$; under the isomorphism $\ld{\g}^\ast \cong \g$, there is an isomorphism $\ld{\fr{n}}^\ast \cong \fr{n}$, and $\psi$ corresponds to a principal nilpotent element $f\in \fr{n}$. Let $(e,f,h)$ be the associated $\sl_2$-triple in $\g$, and let $\psi_-: \ld{\fr{n}}_- \to \AA^1$ denote the element corresponding to $e$. Let $\ld{\g}^{\ast,\psi_-} \cong \g^e$ denote the centralizer (so $\g = \g^e \oplus [e,\g]$), and let $\cS := f + \g^e\subseteq \g^\reg$ be the Kostant slice. Note that $\cS \cong \psi + \ld{\g}^{\ast,\psi_-} \subseteq \ld{\g}^{\ast,\reg}$. The composite $f + \g^e \to \g \to \g\mmod G \cong \fr{t}\mmod W$ is an isomorphism, by \cite{kostant-lie-group-reps}.
    
    Recall that the Grothendieck-Springer resolution is defined as
    $$\tilde{\ld{\g}} = \ld{\fr{n}}^\perp \times^\ld{B} \ld{G} \cong \fr{b} \times^{\ld{B}} \ld{G},$$
    so that $\tilde{\ld{\g}}/\ld{G} \simeq \fr{b}/\ld{B}$. A point of $\tilde{\ld{\g}}$ can be regarded as a pair $(\ld{\fr{b}}', x\in (\ld{\fr{n}}')^\perp)$; here, $\ld{\fr{b}}'$ denotes a Borel subalgebra of $\ld{\g}$, and $\ld{\fr{n}}'$ denotes its nilpotent radical.
    There is a map $\tilde{\chi}: \tilde{\ld{\g}} \to \fr{t}$ which sends a pair $(\ld{\fr{b}}', x)$ to the image of $x$ modulo $(\ld{\fr{b}}')^\perp$. Let $\tilde{\cS}$ denote the fiber product $\cS\times_{\ld{\g}^\ast} \tilde{\ld{\g}}$, so that 
    $$\tilde{\cS} \subseteq \tilde{\ld{\g}}^\reg = \ld{\g}^{\ast, \reg} \times_{\ld{\g}^\ast} \tilde{\ld{\g}}.$$
    Then, Kostant's result on the Kostant slice implies formally that the composite 
    $$\tilde{\cS} \to \tilde{\ld{\g}} \xar{\tilde{\chi}} \fr{t}$$
    is an isomorphism. We will often abusively write the inclusion of $\tilde{\cS}$ as a map $\kappa: \fr{t} \to \tilde{\ld{\g}}$.

    In fact, we will only care about the composite $\fr{t} \to \tilde{\ld{\g}} \to \tilde{\ld{\g}}/\ld{G}$ below, so we will also denote it by $\kappa$. If we identify $\tilde{\ld{\g}}/\ld{G} \cong \fr{b}/\ld{B}$, then the map $\kappa$ admits a simple description: it is the composite $f + \fr{t} \to \fr{b} \to \fr{b}/\ld{B}$. (See \cref{prop: psi + t}.) In our discussion below, we will often identify $f + \fr{t}$ with $\psi + \ld{\fr{t}}^\ast$. 
\end{definition}
\begin{definition}
    The stabilizer (inside $\ld{G}$) of the Kostant slice $\cS \subseteq \g^\reg$ is a closed subgroup scheme of the constant group scheme $\ld{G} \times \cS$, and will be denoted by $\ld{J}$. It will be called the \textit{regular centralizer group scheme}; if we wish to emphasize the dependence on $G$, we will denote it by $\ld{J}(G)$.  Note that since the composite $\cS \to \g^\reg \to \g\mmod \ld{G}$ is an isomorphism, we may identify
    $$\ld{J} \cong \cS \times_{\g/\ld{G}} \cS.$$
    Similarly, the stabilizer (inside $\ld{G}$) of the Kostant slice $\tilde{\cS} \subseteq \tilde{\ld{\g}}^\reg$ is a closed subgroup scheme of the constant group scheme $\ld{G} \times \tilde{\cS}$, and will be denoted by $\tilde{\ld{J}}$. Since $\tilde{\cS} \cong \cS\times_{\ld{\g}^\ast} \tilde{\ld{\g}}$, we may identify
    $$\tilde{\ld{J}} \cong \ld{J} \times_{\cS} \tilde{\cS} \cong (f + \fr{t}) \times_{\fr{b}/\ld{B}} (f + \fr{t}).$$
\end{definition}
\begin{theorem}\label{thm: ordinary hmlgy reg centr}
    There is an isomorphism of group schemes over $f + \fr{t} \cong \fr{t} \cong \cM_{T,0}$:
    $$\spec \pi_0 \cf_T(\Gr_G)^\vee \cong (f + \fr{t}) \times_{\fr{b}/\ld{B}} (f + \fr{t}).$$
\end{theorem}
\cref{thm: ordinary hmlgy reg centr} can be proved directly using \cref{prop: hmlgy gkm}, but I find the discussion below more enlightening (of course, it is essentially an elaboration of the application of \cref{prop: hmlgy gkm}).
We first need a few lemmas.
\begin{lemma}\label{lem: kappa for borel is flat}
    The projection map $\tilde{\ld{J}} \to \psi + \ld{\fr{t}}^\ast$ (onto either factor) is flat.
\end{lemma}
\begin{proof}
    For this, we will follow \cite[Step II]{homology-langlands}. 
    Consider the morphism $\ld{B} \times \ld{\fr{t}}^\ast \to \ld{\fr{n}}^{\perp}$ sending $(g, x)\mapsto \Ad_g(\psi + x) - \psi$. Unwinding definitions shows that there is a Cartesian square
    $$\xymatrix{
    \tilde{\ld{J}} \ar[r] \ar[d] & \ld{\fr{t}}^\ast \ar[d] \\
    \ld{B} \times \ld{\fr{t}}^\ast \ar[r] & \ld{\fr{n}}^{\perp},
    }$$
    so $\tilde{\ld{J}}$ is a closed subscheme of $\ld{B} \times \ld{\fr{t}}^\ast$ of codimension $\dim(\ld{\fr{b}}^{\perp}) = \dim(\ld{N})$. This means that the fibers of the map $\tilde{\ld{J}} \to \ld{\fr{t}}^\ast$ are have dimension at least $\dim(\ld{B}) - \dim(\ld{N}) = \rank(\ld{G})$. If all fibers had dimension \textit{exactly} $\rank(\ld{G})$, then miracle flatness would imply that the map $\tilde{\ld{J}} \to \ld{\fr{t}}^\ast$ is flat. To show that all fibers have dimension $\rank(\ld{G})$, observe that there is a contracting $\GG_m$-action on the vector space $\ld{\fr{t}}^\ast$ which pushes everything down to the origin; so it suffices to show that the fiber over $0 \in \ld{\fr{t}}^\ast$ is of the correct dimension.
    
    That is, we need to see that the scheme 
    $$Y := \{(g,x)\in \ld{B} \times \ld{\fr{t}}^\ast | \Ad_g(\psi) = \psi + x\}$$
    is $\rank(\ld{G})$-dimensional. First, observe that if $\Ad_g(\psi) = \psi + x \in \ld{\fr{n}}^{\perp}$ with $x\in \ld{\fr{t}}^\ast$, then actually $x = 0$. This is because the image of $x$ under the map 
    $$\ld{\fr{n}}^{\perp} \to (\ld{\fr{n}} \oplus \ld{\fr{n}}^-)^\perp \cong \ld{\fr{t}}^\ast$$
    is the same as the image of $\psi + x$, which is the same as the image of $\Ad_g(\psi)$. But the above map $\ld{\fr{n}}^{\perp} \to \ld{\fr{t}}^\ast$ is $\Ad$-invariant, and so the image of $\Ad_g(\psi)$ is equal to the image of $\psi$, which is zero. This means that the image of $x$ is also zero. But the inclusion $\ld{\fr{t}}^\ast \subseteq \ld{\fr{n}}^{\perp}$ splits the map $\ld{\fr{n}}^{\perp} \to \ld{\fr{t}}^\ast$, and so we see that $x = 0$. Therefore,
    $$Y \cong \{g\in \ld{B} | \Ad_g(\psi) = \psi\} = Z_{\ld{B}}(\psi).$$
    The centralizer of $\psi$ is contained entirely in $\ld{B}$, so $Z_{\ld{B}}(\psi) \cong Z_{\ld{G}}(\psi)$. This, in turn, has dimension given by $\rank(\ld{G})$ since $\psi$ (corresponding to $e \in \g$) is a regular nilpotent.
\end{proof}
Note that
$$\tilde{\ld{J}} \cong \{(x,y,g) \in \ld{\fr{t}}^\ast \times \ld{\fr{t}}^\ast \times \ld{B} | \Ad_g(\psi + x) = \psi + y\}.$$
The argument at the end of \cref{lem: kappa for borel is flat} allows us to identify $x = y\in \ld{\fr{t}}^\ast$, and so
$$\tilde{\ld{J}} \cong \{(x,g) \in \ld{\fr{t}}^\ast \times \ld{B} | \Ad_g(\psi + x) = \psi + x\}.$$

\begin{notation}
    If $\alpha$ is a root of $\ld{G}$, let $\{e_\alpha, h_\alpha\}$ denote a pinning of $\ld{G}$. Say that a point $x \in \ld{\fr{t}}^\ast$ is \textit{$\alpha$-generic} if $x(h_\beta) \neq 0$ for all roots $\beta\neq \alpha$. This implies that the centralizer $Z_{\ld{G}}(x)$ has semisimple rank at most $1$. Let $\ld{\fr{t}}^\ast_{\alpha\dreg}$ denote the $\alpha$-regular locus. Observe that $\ld{\fr{t}}^\ast_\reg = \bigcup_{\alpha\in \Phi} \ld{\fr{t}}^\ast_{\alpha\dreg} \subseteq \ld{\fr{t}}^\ast$ is open, with complement of codimension $2$.
\end{notation}
\begin{lemma}\label{lem: localization of ordinary J}
    There is an isomorphism
    \begin{equation}\label{eq: J of centralizer}
        \tilde{\ld{J}}(\ld{G})|_{\ld{\fr{t}}^\ast_{\alpha\dreg}} \xar{\sim} \tilde{\ld{J}}(Z_{\ld{G}}(x)^\circ)|_{\ld{\fr{t}}^\ast_{\alpha\dreg}},
    \end{equation}
    where $Z_{\ld{G}}(x)$ is the centralizer of some $x\in \ld{\fr{t}}^\ast_{\alpha\dreg}$ which lies on the $\alpha$-hyperplane, and $Z_{\ld{G}}(x)^\circ$ denotes the connected component of the identity. 
\end{lemma}
\begin{proof}[Proof sketch]
    Let us, for simplicity, write $\ld{H}$ to denote $Z_{\ld{G}}(x)^\circ$.
    There is a map from the left-hand side to the right-hand side, which sends 
    $$\ld{\fr{t}}^\ast \times \ld{B} \ni (x, g)\mapsto (x, g)\in \ld{\fr{t}}^\ast \times (\ld{B} \cap \ld{H}).$$
    Note that $\ld{B} \cap \ld{H}$ is a Borel subgroup of $\ld{H}$.
    To see that the above map gives an isomorphism, observe that if $y\in \ld{\fr{t}}^\ast$, we may identify the centralizer in $\ld{G}$ of $\psi + y$ with the centralizer in $Z_{\ld{G}}(y)^\circ$ of $\psi$.  That \cref{eq: J of centralizer} is an isomorphism is now a consequence of the observation that if $y\in \ld{\fr{t}}^\ast_{\alpha\dreg}$, then this centralizer $Z_{\ld{G}}(y)^\circ$ is contained in $\ld{H}$. That is, if $(x,g)\in \tilde{\ld{J}}(\ld{G})|_{\ld{\fr{t}}^\ast_{\alpha\dreg}}$, then $g$ is already contained in $H$, and so $(x,g)\in \tilde{\ld{J}}(\ld{H})|_{\ld{\fr{t}}^\ast_{\alpha\dreg}}$.
\end{proof}
\begin{proof}[Proof of \cref{thm: ordinary hmlgy reg centr}]
    We begin by noting that $\Gr_G$ only has even cells; so $\pi_0 \cf_T(\Gr_G)^\vee = \pi_0 C_\ast^T(\Gr_G; \QQ[u^{\pm 1}])$ can be identified with $\H_\ast^T(\Gr_G; \QQ)$, regarded now as an ungraded $\QQ$-algebra. Similarly, $\pi_0(k_T) \cong \H^\ast_T(\ast; \QQ)$, again regarded as an ungraded $\QQ$-algebra.
    The equivariant formality of $\Gr_G$ implies that $\H_\ast^T(\Gr_G; \QQ)$ is flat over $\H^\ast_T(\ast; \QQ)$. To prove \cref{thm: ordinary hmlgy reg centr}, it therefore suffices to prove an isomorphism 
    $$\tilde{\ld{J}}|_{\ld{\fr{t}}^\ast_{\alpha\dreg}} \cong \spec \H^{T_c}_\ast(\Omega G; \QQ)|_{\ld{\fr{t}}^\ast_{\alpha\dreg}}$$
    for each root $\alpha$. By Atiyah-Bott localization, the right-hand side can be identified with 
    \begin{equation}\label{eq: atiyah bott for gr}
        \spec \H^{T_c}_\ast(\Omega G; \QQ)|_{\ld{\fr{t}}^\ast_{\alpha\dreg}} \cong \spec \H^{T_c}_\ast(\Omega Z_G(x); \QQ)|_{\ld{\fr{t}}^\ast_{\alpha\dreg}},
    \end{equation}
    where $Z_G(x)$ is the centralizer of some $x\in \fr{t}_{\alpha\dreg}$ which lies on the $\alpha$-hyperplane. 
    Note that the right-hand side depends only on the connected component $Z_G(x)^\circ$ of the identity in $Z_G(x)$; so we might as well replace $Z_G(x)$ by $Z_G(x)^\circ$. Using \cref{lem: localization of ordinary J}, we are therefore reduced to showing that there is an isomorphism
    $$\H^{T_c}_\ast(\Omega Z_G(x)^\circ; \QQ)|_{\ld{\fr{t}}^\ast_{\alpha\dreg}} \cong \tilde{\ld{J}}(Z_{\ld{G}}(x)^\circ)|_{\ld{\fr{t}}^\ast_{\alpha\dreg}}.$$
    Since $Z_G(x)^\circ$ has semisimple rank $1$, we are reduced to checking that \cref{thm: ordinary hmlgy reg centr} holds in this case.

    That is, we may assume $G$ is the product of a torus with $\GL_2$, $\SL_2$, or $\PGL_2$. It is easy to match up the contribution from the toral factors, so we will assume that $G$ is $\GL_2$, $\SL_2$, or $\PGL_2$.
    \begin{itemize}
        \item For $\GL_2$, we may identify $\gl_2^\ast \cong \gl_2$. Then, $\tilde{\ld{J}}$ is the centralizer (in $\ld{B}$) of $\begin{psmallmatrix}
            x & 0 \\
            1 & y
        \end{psmallmatrix}$. It is not hard to compute directly that $\begin{psmallmatrix}
            a & 0\\
            c & d
        \end{psmallmatrix}$ stabilizes $\begin{psmallmatrix}
            x & 0 \\
            1 & y
        \end{psmallmatrix}$ if and only if $c = \tfrac{a-d}{x-y}$, meaning that
        $$\tilde{\ld{J}} \cong \spec \QQ[x,y, a^{\pm 1}, d^{\pm 1}, \tfrac{a-d}{x-y}].$$
        The coproduct sends $a\mapsto a \otimes a$ and $d\mapsto d \otimes d$.
        
        Let us now calculate $\H^{T^2}_\ast(\Omega \GL_2; \QQ)$ as an algebra over $\H^\ast_{T^2}(\ast; \QQ) \cong \QQ[x, y]$. There is a simple $T^2$-equivariant cell decomposition of $\Omega \GL_2$ with $\bX_\ast(T^2) = \Z^2$ many $0$-cells, and where there is a $T^2$-equivariant $1$-cell connecting $\mu_1$ to $\mu_2$ if and only if $\mu_1 - \mu_2$ is a multiple of a root of $\GL_2$. (There are higher equivariant cells, but they will not matter.) This implies, by Atiyah-Bott localization, that the fixed points of the $T^2$-action on $\Omega \GL_2$ are simply $\Omega T^2 = \Z^2$, and so
        $$\H^{T^2}_\ast(\Omega \GL_2; \QQ)[\tfrac{1}{x-y}] \cong \H^{T^2}_\ast(\Omega T^2; \QQ)[\tfrac{1}{x-y}] \cong \QQ[x, y, \tfrac{1}{x-y}, a^{\pm 1}, d^{\pm 1}].$$
        On the other hand, the \textit{completion} $\H^{T^2}_\ast(\Omega \GL_2; \QQ)^\wedge_{(x-y)}$ can be determined directly. After completing at $(x-y, y) = (x,y)$, the equivariant homology $\H^{T^2}_\ast(\Omega \GL_2; \QQ)$ simply becomes the \textit{Borel-equivariant} homology, and this can be computed directly via a spectral sequence
        $$E_2 = \H^\ast(BT^2; \QQ) \otimes_k \H_\ast(\Omega \GL_2; \QQ) \Rightarrow \H^{T^2}_\ast(\Omega \GL_2; \QQ)^\wedge_{(x,y)}.$$
        Since $\H_\ast(\Omega \GL_2; \QQ) = \QQ[A^{\pm 1}, b]$ with $A$ in weight $0$ and $b$ in weight $2$, the $E_2$-page of this spectral sequence is concentrated entirely in even degrees, and hence collapses. This means that 
        $$\H^{T^2}_\ast(\Omega \GL_2; \QQ)^\wedge_{(x,y)} \cong k\pw{x,y}[A^{\pm 1}, b].$$
        This in fact comes from an isomorphism
        $$\H^{T^2}_\ast(\Omega \GL_2; \QQ)^\wedge_{(x-y)} \cong \QQ[x, y, A^{\pm 1}, b]^\wedge_{(x-y)}.$$
        We may therefore recover $\H^{T^2}_\ast(\Omega \GL_2; \QQ)$ via the gluing square
        $$\xymatrix{
        \H^{T^2}_\ast(\Omega \GL_2; \QQ) \ar[r] \ar[d] & \H^{T^2}_\ast(\Omega \GL_2; \QQ)[\tfrac{1}{x-y}] \ar[d] \\
        \H^{T^2}_\ast(\Omega \GL_2; \QQ)^\wedge_{(x-y)} \ar[r] & \H^{T^2}_\ast(\Omega \GL_2; \QQ)^\wedge_{(x-y)}[\tfrac{1}{x-y}].
        }$$
        Explicitly:
        $$\xymatrix{
        \H^{T^2}_\ast(\Omega \GL_2; \QQ) \ar[r] \ar[d] & \QQ[x, y, \tfrac{1}{x-y}, a^{\pm 1}, d^{\pm 1}]\ar[d] \\
        \QQ[x, y, A^{\pm 1}, b]^\wedge_{(x-y)} \ar[r] & \QQ[x, y, A^{\pm 1}, b]^\wedge_{(x-y)}[\tfrac{1}{x-y}].
        }$$
        The right vertical map sends $a-d \mapsto b(x-y)$; and $d \mapsto A$. Note that $b(x-y)$ is topologically nilpotent, so $A + b(x-y)$ is a unit, and this is what $a$ maps to. This discussion implies that the fiber product above identifies with
        $$\H^{T^2}_\ast(\Omega \GL_2; \QQ) \cong \QQ[x, y, a^{\pm 1}, d^{\pm 1}, \tfrac{a-d}{x-y}].$$
        We need to determine the coproduct. Since this ring is flat over $\QQ[x,y]$, it suffices to determine the coproduct after inverting $x-y$. As we have seen, $\H^{T^2}_\ast(\Omega \GL_2; \QQ)[\tfrac{1}{x-y}] \cong \H^{T^2}_\ast(\Omega T^2; \QQ)[\tfrac{1}{x-y}]$, and $\Omega T^2 = \Z^2$. The coproduct here simply comes from the \textit{diagonal} on $\Z^2$, which obviously sends $a\mapsto a \otimes a$ and $d\mapsto d \otimes d$. It follows that 
        $$\spec \H^{T^2}_\ast(\Omega \GL_2; \QQ) \cong \tilde{\ld{J}}$$
        as (graded) group schemes over $\QQ[x,y]$, as desired.
        \item For $G = \SL_2$, one can similarly calculate that
        $$\H^{S^1}_\ast(\Gr_{\SL_2}; \QQ) \cong \QQ[x, a^{\pm 1}, b]/(a = 1 + 2xb) \cong \QQ[x, a^{\pm 1}, \tfrac{a-1}{2x}].$$
        The coproduct is determined by the formula $a\mapsto a \otimes a$, so that
        $$b\mapsto b \otimes 1 + 1\otimes b + 2x b \otimes b.$$
        For completeness, let us quickly summarize the argument. The fixed points of $S^1$ acting on $\Omega \SU(2)$ is $\Omega S^1 = \Z$, and the action of $S^1$ on $\SU(2) \cong S^3$ exhibits it as the one-point compactification of $\RR\oplus \cc$, where $\RR$ is the trivial representation and $\cc$ is the \textit{weight $2$} representation. Therefore, inverting the Chern class $2x$ of the weight $2$ representation lets us identify
        $$\H^{S^1}_\ast(\Gr_{\SL_2}; \QQ)[\tfrac{1}{2x}] \cong \H^{S^1}_\ast(\Omega S^1; \QQ)[\tfrac{1}{2x}] \cong \QQ[x^{\pm 1}, a^{\pm 1}].$$
        On the other hand, the completion of $\H^{S^1}_\ast(\Gr_{\SL_2}; \QQ)$ at the class $2x$ is, via the same spectral sequence argument as in the preceding bullet point, given by
        $$\H^{S^1}_\ast(\Gr_{\SL_2}; \QQ)^\wedge_{(2x)} \cong \QQ\pw{x}[b],$$
        with $b$ in weight $2$. The ring $\H^{S^1}_\ast(\Gr_{\SL_2}; \QQ)$ can be recovered via the gluing square
        $$\xymatrix{
        \H^{S^1}_\ast(\Gr_{\SL_2}; \QQ) \ar[r] \ar[d] & \H^{S^1}_\ast(\Gr_{\SL_2}; \QQ)[\tfrac{1}{2x}] \ar[d] \\
        \H^{S^1}_\ast(\Gr_{\SL_2}; \QQ)^\wedge_{(2x)} \ar[r] & \H^{S^1}_\ast(\Gr_{\SL_2}; \QQ)^\wedge_{(2x)}[\tfrac{1}{2x}].
        }$$
        The right vertical map sends $a-1 \mapsto b \cdot 2x$, and so the above Cartesian square gives an isomorphism
        $$\H^{S^1}_\ast(\Gr_{\SL_2}; \QQ) \cong \QQ[x, a^{\pm 1}, b]/(a = 1 + 2xb),$$
        as desired.
        
        On the other hand, $\tilde{\ld{J}}$ is the centralizer in $\ld{B}\subseteq \PGL_2$ of $\begin{psmallmatrix}
            x & 0\\
            1 & -x
        \end{psmallmatrix} \subseteq \fr{sl}_2 \cong \ld{\g}^\ast$. It is easy to compute directly that $\begin{psmallmatrix}
            a & 0 \\
            c & 1
        \end{psmallmatrix} \in \ld{B}$ (where we only care about this as an element of $\PGL_2$!) stabilizes $\begin{psmallmatrix}
            x & 0\\
            1 & -x
        \end{psmallmatrix}$ if and only if $2xc = a-1$. Therefore, 
        $$\tilde{\ld{J}} \cong \spec \QQ[x, a^{\pm 1}, c]/(a = 1 + 2xc),$$
        and again the group law is determined by the formulae
        $$a\mapsto a \otimes a, \ c \mapsto c \otimes 1 + 1\otimes c + 2x c \otimes c.$$
        Therefore, 
        $$\spec \H^{S^1}_\ast(\Gr_{\SL_2}; \QQ) \cong \tilde{\ld{J}}$$
        as (graded) group schemes over $\QQ[x]$, as desired.
        \item In exactly the same way, for $G = \PGL_2$, one can similarly calculate that
        $$\H^{S^1}_\ast(\Omega \PGL_2; \QQ) \cong \QQ[x, a^{\pm 1}, b]/(a^2 = 1 + xb) \cong \QQ[x, a^{\pm 1}, \tfrac{a^2-1}{x}].$$
        This is because the fixed points of $S^1$ acting on $\Omega \PGL_2 \simeq \Z/2 \times \Omega S^3$ is $\Z$, and the action of $S^1$ on $\PGL_2$, which is homotopy equivalent to $\RP^3$, exhibits it as the $\Z/2$-quotient of the one-point compactification of $\RR\oplus \cc$, where $\RR$ is the trivial representation and $\cc$ is the \textit{weight $1$} representation. The coproduct is determined by the formula $a\mapsto a \otimes a$, so that
        $$b\mapsto b \otimes 1 + 1\otimes b + x b \otimes b.$$

        On the other hand, $\tilde{\ld{J}}$ is the centralizer in $\ld{B}\subseteq \SL_2$ of the equivalence class of $\begin{psmallmatrix}
            x & 0\\
            1 & 0
        \end{psmallmatrix}$ in $\pgl_2 \cong \ld{\g}^\ast$. It is easy to compute directly that $\begin{psmallmatrix}
            a & 0 \\
            c & a^{-1}
        \end{psmallmatrix} \in \ld{B}$ stabilizes $\begin{psmallmatrix}
            x & 0\\
            1 & 0
        \end{psmallmatrix}$ if and only if $xc = a-a^{-1}$. Therefore, 
        $$\tilde{\ld{J}} \cong \spec \QQ[x, a^{\pm 1}, c]/(a = a^{-1} + xc) \cong \spec \QQ[x, a^{\pm 1}, \tfrac{a - a^{-1}}{x}].$$
        Replacing $c$ by $b := ca^{-1}$, we see that the group law is determined by the formulae
        $$a\mapsto a \otimes a, \ b\mapsto b \otimes 1 + 1\otimes b + x b \otimes b.$$
        Therefore, 
        $$\spec \H^{S^1}_\ast(\Omega \PGL_2; \QQ) \cong \tilde{\ld{J}}$$
        as (graded) group schemes over $\QQ[x]$, as desired.\qedhere
    \end{itemize}
\end{proof}
\begin{remark}
    Just for posterity, let us record a more canonical variant of the calculation above for $\ld{G} = \SL_2$, which does not require picking a Borel subgroup (i.e., which does not involve identifying $\tilde{\ld{\g}}/\ld{G} \cong \fr{b}/\ld{B}$). For simplicity, we will use the fact that $2$ is invertible in $\QQ$ to identify $\fr{sl}_2 \cong \fr{pgl}_2$. In this case, the Grothendieck-Springer resolution $\tilde{\ld{\g}} = T^\ast(\AA^2-\{0\})/\GG_m$ is the total space of $\co(-1) \oplus \co(-1)$ over $\PP^1$; we will think of a point in $\tilde{\g}$ as a pair $(x\in \sl_2, \ell\subseteq \cc^2)$ such that $x$ preserves $\ell$. The Kostant slice $\kappa:\fr{t} \cong \AA^1 \to \tilde{\ld{\g}}$ is the map sending $\lambda \in \AA^1$ to the pair $(x, \ell)$ with $x = \begin{psmallmatrix}
    0 & \lambda^2 \\
    1 & 0
    \end{psmallmatrix}$ and $\ell = [\lambda: 1]$. Indeed, this is essentially immediate from the requirement that the following diagram commutes:
    $$\xymatrix{
    \AA^1 \cong \fr{t} \ar[r]^-\kappa \ar[d]_-{\lambda \mapsto \lambda^2} & \tilde{\sl}_2 \ar[d]\\
    \AA^1 \cong \fr{t}\mmod W \ar[r]^-\kappa_-{\lambda\mapsto \begin{psmallmatrix}
    0 & \lambda \\
    1 & 0
    \end{psmallmatrix}} & \sl_2.
    }$$
    Moreover, the $\SL_2$-action on $\tilde{\ld{\g}}$ sends $g\in \SL_2$ and $(x,\ell)$ to $(\Ad_g(x), g\ell)$. If $g = \begin{psmallmatrix}
    a & b \\
    c & d
    \end{psmallmatrix}$, we compute that
    $$\Ad_g\begin{pmatrix}
    0 & \lambda^2 \\
    1 & 0
    \end{pmatrix} = \begin{pmatrix}
    bd-ac\lambda^2 & (a\lambda)^2 - b^2 \\
    d^2 - (c\lambda)^2 & ac\lambda^2 - bd
    \end{pmatrix}, \ g\cdot [\lambda: 1] = [a\lambda + b: c\lambda + d].$$
    From this, we see that $\Ad_g(x) = x$ if and only if $a = d$ and $b = c\lambda^2$, in which case $g$ also fixes $[\lambda: 1]$. In other words, $g = \begin{psmallmatrix}
    a & c\lambda^2 \\
    c & a
    \end{psmallmatrix}$ with $a,c\in k$; in order for $\det(g) = 1$, we need $a^2-c^2\lambda^2=1$. When $\lambda \neq 0$, both $x$ and $g$ are diagonalized by the matrix $\tfrac{1}{2}\begin{psmallmatrix}
    1 & -1 \\
    -\lambda^{-1} & -\lambda^{-1}
    \end{psmallmatrix}\in \SL_2$: the diagonalization of $x$ is $\begin{psmallmatrix}
    \lambda & 0 \\
    0 & \lambda^{-1}
    \end{psmallmatrix}$, and the diagonalization of $g$ is $\begin{psmallmatrix}
    t & 0 \\
    0 & w
    \end{psmallmatrix}$ where $2a = t+w$ and $2\lambda c = t-w$. Since we have $\det(g) = a^2 - (c\lambda)^2 = 1$, we have $w = t^{-1}$. This shows that if $k$ is not of characteristic $2$, then $\fr{t} \times_{\tilde{\sl}_2/\SL_2} \fr{t} \cong \spec \QQ[\lambda, t^{\pm 1}, \tfrac{t-t^{-1}}{\lambda}]$.
\end{remark}
\begin{corollary}\label{cor: reg locus ordinary ABG}
    There is an equivalence
    $$\Loc_{T_c}^\gr(\Gr_G; k) \simeq \QCoh(\tilde{\ld{\g}}^\reg/\ld{G}).$$
    Furthermore, the pushforward functor $\Loc_{T_c}^\gr(\Gr_G; k) \to \Loc_{T_c}^\gr(\ast; k)$ identifies with the pullback functor $\kappa^\ast: \QCoh(\tilde{\ld{\g}}^\reg/\ld{G}) \to \QCoh(\fr{t})$.
\end{corollary}
\begin{proof}
    By definition, $\Loc_{T_c}^\gr(\Gr_G;k)$ is equivalent to the category of comodules over $\pi_0 \cf_T(\Gr_G)^\vee = \H_\ast^T(\Gr_G; \QQ)$ in the category of $\pi_0 k_T \cong \H^\ast_T(\ast; \QQ)$-modules. By \cref{thm: ordinary hmlgy reg centr}, it can be identified the category of quasicoherent sheaves on the quotient stack $(f + \fr{t})/\tilde{\ld{J}}$. As discussed after \cref{lem: kappa for borel is flat}, we may view $\tilde{\ld{J}}$ as a closed subgroup scheme of the constant group scheme $\ld{B} \times (f + \fr{t})$. This gives an isomorphism
    $$(f + \fr{t})/\tilde{\ld{J}} \cong \ld{B} \backslash (\ld{B} \times (f + \fr{t}))/\tilde{\ld{J}}.$$
    It follows from Kostant's work in \cite{kostant-lie-group-reps} that the $\ld{B}$-orbit of $f + \fr{t}$ inside $\fr{b}$ is precisely the regular locus $\fr{b}^\reg$. Since $\tilde{\ld{J}}$ is definitionally the stabilizer of $f + \fr{t} \subseteq \fr{b}$, the quotient $(\ld{B} \times (f + \fr{t}))/\tilde{\ld{J}}$ is isomorphic to $\fr{b}^\reg$; so there is an isomorphism $(f + \fr{t})/\tilde{\ld{J}} \cong \fr{b}^\reg/\ld{B}$.
    To finish, note that $\tilde{\ld{\g}}^\reg/\ld{G} \cong \fr{b}^\reg/\ld{B}$.
\end{proof}
The equivalence of \cref{cor: reg locus ordinary ABG} is in fact symmetric monoidal for the convolution tensor structure on $\Loc_{T_c}^\gr(\Gr_G; k)$ (described in \cref{rmk: loc gr convolution tensor}) and the standard tensor product on $\QCoh(\tilde{\ld{\g}}^\reg/\ld{G})$.

\begin{remark}
    Note that the definition of the Kostant slice $f + \fr{t} \subseteq \fr{b}$ involved the choice of a regular nilpotent element $f \in \g$. However, this choice does not materialize in \cref{cor: reg locus ordinary ABG}. This is because two such slices obtained by choosing two different regular nilpotent elements in $\g$ are \textit{conjugate} to each other (by $\ld{B}$). That is, while the specific inclusion $f + \fr{t} \subseteq \fr{b}$ depends on the choice of $f$, the composite $f + \fr{t} \subseteq \fr{b} \to \fr{b}/\ld{B}$ is independent of said choice. 
\end{remark}
\begin{example}
    Suppose $G = \SL_n$. In this case, $\H_\ast(\Gr_{\SL_n}; \QQ)$ is simply isomorphic to a polynomial algebra $\QQ[b_1, \cdots, b_{n-1}]$ on $n-1$ generators. The coproduct is given by $b_j \mapsto \sum_i b_i \otimes b_{j-i}$, where $b_0$ is understood to be $1$. This result  is classical, and can be found, for instance, in \cite{bott-space-of-loops}. The proof there amounts to the following observation. Consider the map $\CP^{n-1} \to \Gr_{\SL_n}$ given by sending $\ell \in \CP^{n-1}$ to (an appropriate rescaling of) the loop sending $\theta \in S^1$ to rotation by angle $\theta$ about the line $\ell$. Then the image of the induced map $\H_\ast(\CP^{n-1}; \QQ) \to \H_\ast(\Gr_{\SL_n}; \QQ)$ generates $\H_\ast(\Gr_{\SL_n}; \QQ)$; that is, $\CP^{n-1}$ is a generating complex for $\Gr_{\SL_n}$. The formula for the coproduct comes from the coproduct on $\H_\ast(\CP^{n-1}; \QQ)$, which is determined easily by the cup product on $\H^\ast(\CP^{n-1}; \QQ)$. The above description of $\H_\ast(\Gr_{\SL_n}; \QQ)$ implies that $\spec \H_\ast(\Gr_{\SL_n}; \QQ)$ is isomorphic to the group scheme $\WW_{n-1}$ of big Witt vectors of length $n-1$.

    On the other hand, \cref{thm: ordinary hmlgy reg centr} implies that $\spec \H_\ast(\Gr_{\SL_n}; \QQ)$ is isomorphic to the centralizer inside $\PGL_n$ of of the regular nilpotent $f \in \sl_n$. Indeed, if $R$ is a $\QQ$-algebra, then an element $g \in \GL_n(R)$ commutes with $f$ if and only if $g$ is an invertible polynomial in $e$. By the Cayley-Hamilton theorem, such a polynomial is divisible by the minimal polynomial $t^n$ of $e$; that is, $g \in (R[t]/t^n)^\times$. For this to live in $\PGL_n(R)$, we need to quotient out by the scalars $R^\times$. The assignment $R \mapsto (1 + tR[t]/t^n)^\times$ is precisely the functor of points of $\WW_{n-1}$. One can therefore understand the isomorphism between $\spec \H_\ast(\Gr_{\SL_n}; \QQ)$ and $Z_{\PGL_n}(e)$ as being a way to identify the two descriptions of the Witt vector group scheme (either via its functor of points, or via the explicit Witt addition law).
\end{example}
\begin{example}\label{ex: equiv homology Omega SUn and Witt}
    Continuing the preceding example (so $G = \SL_n$), it is not hard to add in torus-equivariance (so $T_c = (S^1)^{n-1}$). In this case, we will identify $\H^\ast_{T_c}(\ast; \QQ) \cong \QQ[x_1, \cdots, x_n]$. One can write down an explicit $T_c$-equivariant cell structure on $\Omega \SU(n)$ to find that $\spec \H_\ast^{T_c}(\Gr_{\SL_n}; \QQ)$ is isomorphic to the deformation of $\WW_{n-1}$ over $\spec \H^\ast_{T_c}(\ast; \QQ) \cong \AA^n$ which sends a $\QQ[x_1, \cdots, x_n]$-algebra $R$ to the group of units $(1 + tR[t]/(t-x_1)\cdots(t-x_n))^\times$. On the other hand, by the same argument using Cayley-Hamilton, the centralizer inside $\GL_n(R)$ of $f+x \in \sl_n$ is isomorphic to the group $(R[t]/(t-x_1)\cdots(t-x_n))^\times$, since the characteristic polynomial of $f+x$ is precisely $(t-x_1)\cdots(t-x_n)$. Quotienting by the scalars $R^\times$, we obtain an isomorphism between $\spec \H_\ast^{T_c}(\Gr_{\SL_n}; \QQ)$ and $Z_{\PGL_n}(f + x)$.
\end{example}
\begin{remark}
    For a general reductive group $G$, Kostant proved (in \cite{kostant-whittaker}) an isomorphism $(f + \fr{b})/\ld{N} \cong \fr{t}\mmod W$. In fact, the natural map $(f + \fr{b})/\ld{N} \to \g/\ld{G}$ identifies with the map $\cS \to \g/\ld{G}$ given by the Kostant slice. Since $(f + \fr{b})/\ld{N}$ is isomorphic to the quotient $\ld{G} \backslash T^\ast(\ld{G}/_\psi \ld{N})$ of the Whittaker reduction of $T^\ast(\ld{G})$, it follows that there are isomorphisms
    \begin{align*}
        \ld{J} & \cong (f + \fr{b})/\ld{N} \times_{\g/\ld{G}} (f + \fr{b})/\ld{N} \\
        & \cong \ld{G} \backslash T^\ast(\ld{G}/_\psi \ld{N}) \times_{\ld{\g}^\ast/\ld{G}} T^\ast(\ld{N} {}_\psi \backslash \ld{G})/\ld{G} \\
        & \cong T^\ast(\ld{N} {}_\psi \backslash \ld{G} /_\psi \ld{N}).
    \end{align*}
    That is, $\ld{J}$ can be identified with the \textit{bi-Whittaker reduction} of the cotangent bundle $T^\ast(\ld{G})$. In particular, \cref{thm: ordinary hmlgy reg centr} gives an isomorphism
    $$\spec \H^{T_c}_\ast(\Gr_G; \QQ) \cong \ld{\fr{t}}^\ast \times_{\ld{\fr{t}}^\ast\mmod W} T^\ast(\ld{N} {}_\psi \backslash \ld{G} /_\psi \ld{N}).$$
    In fact, this isomorphism can be checked to be $W$-equivariant (for the action of $W$ on $\H^{T_c}_\ast(\Gr_G; \QQ)$ via the action on $T$, and for the action on the right-hand side coming from the cover $\ld{\fr{t}}^\ast \to \ld{\fr{t}}^\ast\mmod W$). This implies that there is an isomorphism
    $$\spec \H^G_\ast(\Gr_G; \QQ) \cong T^\ast(\ld{N} {}_\psi \backslash \ld{G} /_\psi \ld{N}).$$
    This isomorphism has been exploited heavily in \cite{teleman-icm}, among others.
\end{remark}

The map $\tilde{\ld{\g}}^\reg/\ld{G} \to B\ld{G}$ defines a functor
\begin{equation}\label{eq: Rep G^ to ordinary loc}
    \Rep(\ld{G}) \to \QCoh(\tilde{\ld{\g}}^\reg/\ld{G}) \simeq \Loc_{T_c}^\gr(\Gr_G; k).
\end{equation}
More generally, the map $\tilde{\ld{\g}}^\reg/\ld{G} \to B\ld{T} \times B\ld{G}$ defines a functor
\begin{equation}\label{eq: Rep T^ x G^ to ordinary loc}
    \Rep(\ld{T} \times \ld{G}) \to \QCoh(\tilde{\ld{\g}}^\reg/\ld{G}) \simeq \Loc_{T_c}^\gr(\Gr_G; k).
\end{equation}
If $V \in \Rep(\ld{G})$, let $\cS_k(V)$ denote the corresponding object of $\Loc_{T_c}^\gr(\Gr_G; k)$. It is natural to ask whether $\cS_k(V) \in \Loc_{T_c}^\gr(\Gr_G; k)$ is given by $\cf^\gr$ for some $\cf \in \Loc_{T_c}(\Gr_G; k)$. Of course, \cref{cor: reg locus abg} says that the answer is yes; but it is not clear how to answer this question in a manner that will generalize to other $\Eoo$-rings $k$. However, it is possible to give a positive (and generalizable) answer to this question in the case when $V$ is a direct sum of tensor products of irreducible representations with \textit{minuscule} highest weights.
\begin{prop}\label{prop: ordinary realizing minuscule reps}
    Let $\lambda_\bull = (\lambda_1, \cdots, \lambda_n)$ be a tuple of dominant minuscule weights of $\ld{G}$, let $|\lambda_\bull| = \sum_i \lambda_i$, and let $\ol{\Gr_G^{\lambda_\bull}}$ denote the corresponding \textit{convolution variety} \cite{mirkovic-vilonen, ngo-polo}. Let $\cf_{\lambda_\bull}$ denote the pushforward of the constant sheaf along the canonical map $q: \ol{\Gr_G^{\lambda_\bull}} \to \ol{\Gr_G^{|\lambda|}} \subseteq \Gr_G$. If $V_{\lambda_i}$ denotes the irreducible representation of $\ld{G}$ with highest weight $\lambda_i$, then there is an isomorphism $\cS_k(\bigotimes_i V_{\lambda_i}) \cong \cf_{\lambda_\bull}^\gr$.
\end{prop}
\begin{proof}
    First, suppose that $\lambda_\bull = \lambda$ consists of single element. Let $P_\lambda \subseteq G$ denote the corresponding maximal parabolic subgroup, so that $\ol{\Gr_G^\lambda} \cong G/P_\lambda$, and let $\cf_\lambda \in \Loc_{T_c}(\Gr_G; k)$ denote the pushforward of the constant sheaf along the inclusion $G/P_\lambda \hookrightarrow \Gr_G$. We then need to show that there is an isomorphism $\cS_k(V_\lambda) \cong \cf_\lambda^\gr$.
    
    Since $V_\lambda$ is an $\ld{G}$-representation, the tensor product $V_\lambda \otimes_\QQ \co_{\fr{t}}$ is a comodule over $\co_{\ld{G} \times \fr{t}}$. In particular, it is a comodule over $\co_{\tilde{\ld{J}}}$ via the closed immersion $\tilde{\ld{J}} \hookrightarrow \ld{G} \times \fr{t}$. It follows from \cref{cor: reg locus ordinary ABG} that we need to show that $V_\lambda \otimes_\QQ \co_{\fr{t}}$ is isomorphic to $\pi_0 \cf_T(G/P_\lambda)$ as $\pi_0 \cf_T(\Gr_G)^\vee \cong \co_{\tilde{\ld{J}}}$-comodules.

    Let $\fr{t}^\gen$ denote the complement of $\bigcup_{\alpha} \fr{t}_\alpha$ as $\alpha$ ranges over the roots of $\ld{G}$, and $\fr{t}_\alpha$ denotes the hyperplane cut out by $\alpha$. Since $V_\lambda \otimes_\QQ \co_{\fr{t}}$, $\pi_0 \cf_T(G/P_\lambda)$, and $\pi_0 \cf_T(\Gr_G)^\vee$ are all flat over $\fr{t}$, it suffices to prove that there is an isomorphism $V_\lambda \otimes_\QQ \co_{\fr{t}} \cong \pi_0 \cf_T(G/P_\lambda)$ of quasicoherent sheaves over $\fr{t}$, and further show that they are isomorphic as $\pi_0 \cf_T(\Gr_G)^\vee \cong \co_{\tilde{\ld{J}}}$-comodules when restricted to $\fr{t}^\gen$.
    
    Let $W_\lambda$ denote the Weyl group of $P_\lambda$, so that $W_\lambda$ is the stabilizer of the weight $\lambda$. 
    Since $G/P_\lambda$ has even cells, $\pi_0 \cf_T(G/P_\lambda)$ is a vector bundle over $\co_{\fr{t}}$, and its rank can be determined by its restriction to $\fr{t}^\gen$. By Atiyah-Bott localization, $\pi_0 \cf_T(G/P_\lambda)|_{\fr{t}^\gen} \cong \pi_0 \cf_T((G/P_\lambda)^T)|_{\fr{t}^\gen}$; but $(G/P_\lambda)^T = W/W_\lambda$, so we conclude that
    $\pi_0 \cf_T(G/P_\lambda)$ is a free $\co_{\fr{t}}$-module of rank $|W/W_\lambda|$. Since $\lambda$ is minuscule, there is an isomorphism $V_\lambda \cong \Map(W/W_\lambda, \QQ)$ (see, e.g., \cite[Proposition 5.1]{gross-minuscule}). We therefore conclude that there is an isomorphism $V_\lambda \otimes_\QQ \co_{\fr{t}} \cong \pi_0 \cf_T(G/P_\lambda)$ of quasicoherent sheaves over $\fr{t}$.
    
    To see that they are isomorphic as $\pi_0 \cf_T(\Gr_G)^\vee \cong \co_{\tilde{\ld{J}}}$-comodules when restricted to $\fr{t}^\gen$, note that $\pi_0 \cf_T(\Gr_G)^\vee|_{\fr{t}^\gen} \cong \co_{\ld{T} \times \fr{t}^\gen}$. We therefore need to check that the weights of $\ld{T}$ acting on $V_\lambda$ and $\pi_0 \cf_T(G/P_\lambda)|_{\fr{t}^\gen}$ agree. The $T$-fixed points of the map $G/P_\lambda \to \Gr_G$ is given by the map $W/W_\lambda \to \Gr_T \cong \bX_\ast(T)$ which is the inclusion of the $W$-orbit of $\lambda$; these are the weights of $\ld{T}$ acting on $\pi_0 \cf_T(G/P_\lambda)|_{\fr{t}^\gen}$. The desired isomorphism of $\ld{T}$-representations between $V_\lambda$ and $\pi_0 \cf_T(G/P_\lambda)|_{\fr{t}^\gen}$ now follows from the fact that the weights of $\ld{T}$ on $V_\lambda$ are also precisely the elements in the $W$-orbit of $\lambda$. 

    Suppose that $\lambda_\bull$ has more than one element. Since the equivalence of \cref{cor: reg locus ordinary ABG} is symmetric monoidal, we find that $\cS_k(\bigotimes_i V_{\lambda_i})$ is equivalent to the convolution tensor product $\cf_{\lambda_1}^\gr \star \cdots \star \cf_{\lambda_n}^\gr$. We therefore need to show that there is an isomorphism $\cf_{\lambda_1}^\gr \star \cdots \star \cf_{\lambda_n}^\gr \cong \cf_{\lambda_\bull}^\gr$. For this, note that the following diagram of homotopy types commutes:
    $$\xymatrix{
    \ol{\Gr_G^{\lambda_\bull}} \ar[r] \ar[d] \ar[dr]^-q & \ol{\Gr_G^{|\lambda|}} \ar[d] \\
    \Gr_G^{\times n} \ar[r] & \Gr_G;
    }$$
    here, the bottom horizontal map is the $\E{2}$-multiplication on $\Gr_G \simeq \Omega G_c$. This implies that $\cf_{\lambda_\bull}^\gr$ is isomorphic to the pushforward of $(\cf_{\lambda_1} \boxtimes \cdots \boxtimes \cf_{\lambda_n})^\gr \cong \cf_{\lambda_1}^\gr \boxtimes \cdots \boxtimes \cf_{\lambda_n}^\gr$ along the map $\Gr_G^{\times n} \to \Gr_G$; but this is precisely the definition of $\cf_{\lambda_1}^\gr \star \cdots \star \cf_{\lambda_n}^\gr$, as desired.
\end{proof}

To convince homotopy theorists that the flag varieties appearing in \cref{prop: ordinary realizing minuscule reps} are in fact (relatively) familiar objects, we have recorded the list of such $G/P_\lambda$ for dominant minuscule $\lambda$ (even in the non-simply-laced cases) in \cref{table: minuscule varieties}.\footnote{Those homotopy theorists who have reached this far in the article may not need this table to be convinced!}
\begin{table}[]
\hspace*{-.5cm}
\begin{tabular}{l|l|l|l|l}
$G$ & {$G/P_\lambda$} & {$\ld{G} \act V_\lambda$} & {$\dim_\cc(G/P_\lambda)$} & {$|W/W_\lambda|$} \\ \hline
$A_n$ & $\Gr_j(\cc^{n+1})$, $1\leq j \leq n$ & $\wedge^j \std_{n+1}$ & $j(n+1-j)$ & $\binom{n+1}{j}$ \\
$B_n$ & Smooth quadric in $\CP^n$ & $\std_{2n}$ & $2n-1$ & $2n$ \\
$C_n$ & Lagrangian Grassmannian $\mathrm{LGr}_n(\cc^{2n})$ & Spin & $\binom{n+1}{2}$ & $2^n$ \\
$D_n$ & Smooth quadric in $\CP^{n-1}$ & $\std_{2n}$ & $2n-2$ & $2n$ \\
$D_n$ & Orthogonal Grassmannian $\mathrm{OGr}_n(\cc^{2n})$ & Half-spin (both) & $\binom{n}{2}$ & $2^{n-1}$ \\
$E_6$ & $\mathrm{EIII} \simeq (E_6)_c/\Spin(10) \cdot \U(1)$ & $\std_{27}, \std_{27}^\ast$ & $16$ & $27$ \\
$E_7$ & $\mathrm{EVII} \simeq (E_7)_c/(E_6)_c \cdot \U(1)$ & $\std_{56}$ & $27$ & $56$
\end{tabular}
\vspace{1cm}
\caption{Minuscule homogeneous varieties for $G$ of adjoint type. Here, $(E_6)_c$ and $(E_7)_c$ denote the compact forms of $E_6$ and $E_7$, respectively. The labelings $\mathrm{EIII}$ and $\mathrm{EVII}$ denote the labelings of these symmetric spaces in \'E. Cartan's classification. In the example of $D_n$ acting on the orthogonal Grassmannian, there are two realizations as a homogeneous variety, which correspond to the two half-spin representations: namely, $\SO_{2n}/P_{\alpha_{n-1}} \cong \SO_{2n}/P_{\alpha_n}$. Note, also, that $|W/W_\lambda|$ is equal to the dimension of $V_\lambda$ and also to the number of cells in a minimal ($T$-equivariant) cell structure on $G/P_\lambda$, while $\dim_\cc(G/P_\lambda)$ is the highest weight of the restriction of $\ld{G} \to \GL(V_\lambda)$ to the principal $\SL_2$ inside $\ld{G}$.}
\label{table: minuscule varieties}
\end{table}
\begin{remark}\label{rmk: ordinary action on minuscule}
    Let $\lambda$ be a dominant minuscule weight of $\ld{G}$. The coaction of $\pi_0 \cf_T(\Gr_G)^\vee \cong \H^T_\ast(\Gr_G; \QQ)$ on $\pi_0 \cf_T(G/P_\lambda) \cong \H^\ast_T(G/P_\lambda; \QQ)$ defines a homomorphism 
    \begin{equation}\label{eq: map from tilde J to cohomology of minuscule flag}
        \spec \pi_0 \cf_T(\Gr_G)^\vee \to \GL(\H^\ast_T(G/P_\lambda; \QQ))
    \end{equation}
    of group schemes over $\fr{t}$, where $\GL(\H^\ast_T(G/P_\lambda; \QQ))$ denotes the group scheme of $\co_{\fr{t}}$-linear automorphisms of the vector bundle $\H^\ast_T(G/P_\lambda; \QQ)$. By \cref{thm: ordinary hmlgy reg centr} and \cref{prop: ordinary realizing minuscule reps}, this homomorphism factors as the composite
    \begin{equation}\label{eq: factorization action on minuscule}
        \tilde{\ld{J}} \to \ld{G} \times \fr{t} \to \GL(V_\lambda) \times \fr{t},
    \end{equation}
    where the second map describes the $\ld{G}$-action on $V_\lambda$. Similarly, the coaction of $\pi_0 \cf_G(\Gr_G)^\vee \cong \H^G_\ast(\Gr_G; \QQ)$ on $\pi_0 \cf_G(G/P_\lambda) \cong \H^\ast_{P_\lambda}(\ast; \QQ)$ defines a homomorphism 
    \begin{equation}\label{eq: map from J to G-equiv cohomology of minuscule flag}
        \spec \pi_0 \cf_G(\Gr_G)^\vee \to \GL(\H^\ast_{P_\lambda}(\ast; \QQ))
    \end{equation}
    of group schemes over $\spec \H^\ast_G(\ast; \QQ) \cong \fr{t}\mmod W$. As an $\co_{\fr{t}\mmod W}$-module, $\H^\ast_{P_\lambda}(\ast; \QQ)$ is isomorphic to $\co_{\fr{t}\mmod W} \otimes V_\lambda$, and \cref{eq: map from J to G-equiv cohomology of minuscule flag} factors as the composite
    \begin{equation}\label{eq: factorization G-action on minuscule}
        {\ld{J}} \to \ld{G} \times \fr{t}\mmod W \to \GL(V_\lambda) \times \fr{t}\mmod W.
    \end{equation}
    In fact, all of these maps already exist \textit{integrally} (i.e., using cohomology with integral coefficients, and working with group schemes over $\Z$). 

    For instance, suppose $G = \SO_{2n}$ is of type $D_n$, and let us take coefficients in $\Z' = \Z[1/2]$; otherwise, the cohomology of $BG$ is not isomorphic to $\co_{\fr{t}\mmod W}$. (See \cite[Example 3.2.14]{ku-rel-langlands} for other classical types.) Then the Levi quotient of $P_\lambda$ is $\SO_2 \times \SO_{2n-2}$, so that $\H^\ast_{P_\lambda}(\ast; \Z') \cong \Z'[x, p_1', \cdots, p_{n-2}', c'_{n-1}]$ with $x$ in weight $2$, $p_i'$ in weight $-4i$, and $c'_i$ in weight $-2i$. A simple calculation with symmetric polynomials shows that as an algebra over $\H^\ast_G(\ast; \Z') \cong \Z'[p_1, \cdots, p_{n-1}, c_n]$, there is an isomorphism
    $$\H^\ast_{P_\lambda}(\ast; \Z') \cong \H^\ast_G(\ast; \Z')[x, c'_{n-1}]/(xc'_{n-1} = c_n, \ x^{2n-2} - p_1 x^{2n-4} - \cdots - p_{n-1} + {c'_{n-1}}^2).$$
    As an $\H^\ast_G(\ast; \Z')$-module, this is indeed isomorphic to $\H^\ast_G(\ast; \Z') \otimes \std_{2n}$. 
    Building on \cref{ex: equiv homology Omega SUn and Witt} shows that as a group scheme over $\H^\ast_G(\ast; \Z')$, the functor of points of $\ld{J}$ sends an $\H^\ast_G(\ast; \Z')$-algebra $R$ to the subgroup of those units $f(x, c'_{n-1}) \in (\H^\ast_{P_\lambda}(\ast; \Z') \otimes_{\H^\ast_G(\ast; \Z')} R)^\times$ such that $f(x, c'_{n-1})^{-1} = f(-x,-c'_{n-1})$. The action of $\ld{J}$ on $\H^\ast_{P_\lambda}(\ast; \Z')$ preserves the symmetric bilinear form given by
    \begin{align*}
        \H^\ast_{P_\lambda}(\ast; \Z') \otimes_{\H^\ast_G(\ast; \Z')} \H^\ast_{P_\lambda}(\ast; \Z') & \xrightarrow{\pdb{-,-}} \H^\ast_G(\ast; \Z') \\
        f,g & \mapsto \text{coefficient of }x^{2n-2}\text{ in }f(x, c'_{n-1}) g(-x, -c'_{n-1}).
    \end{align*}
    Geometrically, this bilinear form comes from $G$-equivariant Poincar\'e duality on $G/P_\lambda$, twisted by the natural action of $\Z/2$ on $G/P_\lambda$. (This $\Z/2$ acts on $\H^\ast_{P_\lambda}(\ast; \Z')$ by sending $x\mapsto -x$ and $c'_{n-1} \mapsto -c'_{n-1}$.)
    The bilinear form $\pdb{-,-}$ on $\H^\ast_{P_\lambda}(\ast; \Z')$ gives the desired factorization \cref{eq: factorization G-action on minuscule} of the map ${\ld{J}} \to \GL_{2n} \times \fr{t}\mmod W$ through the inclusion $\SO_{2n} \times \fr{t}\mmod W \hookrightarrow \GL_{2n} \times \fr{t}\mmod W$.
\end{remark}

As we have seen, the calculation of \cref{thm: ordinary hmlgy reg centr} is quite powerful. Here is another simple application, motivated by \cite{ginzburg-kazhdan} and \cite{ginzburg-riche}; see also \cite[Example 3.6.13]{ku-rel-langlands}, where the same example is presented.
\begin{prop}[Gelfand-Graev action]\label{prop: ordinary gelfand-graev}
    The natural action of $\ld{G} \times \ld{T}$ on the affine closure $\ol{T^\ast(\ld{G}/\ld{N})}$ extends to an action of $\ld{G} \times (W \rtimes \ld{T})$, where $W$ is the Weyl group.
\end{prop}
\begin{proof}
    Let $T^\ast(\ld{G}/\ld{N})_\reg = \ld{G} \times^{\ld{N}} \ld{\fr{n}}^\perp_\reg$ denote the regular locus in $T^\ast(\ld{G}/\ld{N})$; then $T^\ast(\ld{G}/\ld{N})_\reg \subseteq T^\ast(\ld{G}/\ld{N})$ is open, with complement of codimension $2$, so that $\ol{T^\ast(\ld{G}/\ld{N})} \cong \ol{T^\ast(\ld{G}/\ld{N})_\reg}$. Note that there is an isomorphism
    $$\ld{G} \backslash T^\ast(\ld{G}/\ld{N})_\reg/\ld{T} \cong \ld{\fr{n}}^\perp_\reg/\ld{B},$$
    so (the proof of) \cref{cor: reg locus ordinary ABG} gives isomorphisms
    \begin{equation}\label{eq: reg locus in T*G/N}
        T^\ast(\ld{G}/\ld{N})_\reg \cong (\ld{G} \times \ld{T}) \times^{\ld{B}} \ld{\fr{n}}^\perp_\reg \cong (\ld{G} \times \ld{T} \times (\psi + \ld{\fr{t}}^\ast))/\tilde{\ld{J}}.
    \end{equation}
    There is a canonical $W$-action on $\ld{G} \times \ld{T} \times (\psi + \ld{\fr{t}}^\ast)$, given by the natural $W$-actions on $\ld{T}$ and on $\psi + \ld{\fr{t}}^\ast \cong \ld{\fr{t}}^\ast$. Similarly, $\tilde{\ld{J}}$ also admits a natural $W$-action; it is given via \cref{thm: ordinary hmlgy reg centr} by the natural $W$-action on $\H^{T_c}_\ast(\Gr_G; \QQ)$. Moreover, the closed immersion
    $$\tilde{\ld{J}} \to \ld{G} \times \ld{T} \times (\psi + \ld{\fr{t}}^\ast)$$
    is $W$-equivariant (indeed, the map $\tilde{\ld{J}} \to \ld{T} \times (\psi + \ld{\fr{t}}^\ast)$ is induced by the inclusion $\H^{T_c}_\ast(\Gr_T; \QQ) \to \H^{T_c}_\ast(\Gr_G; \QQ)$ on equivariant homology). This implies that the quotient of \cref{eq: reg locus in T*G/N} admits a $W$-action, which defines a $W$-action on the affine closure of $T^\ast(\ld{G}/\ld{N})_\reg$ as desired.
\end{proof}
Note that we assumed in \cref{cor: reg locus ordinary ABG} that $G$ is simply-laced; but this is not necessary, because we know (by the discussion in \cref{sec: regular locus}) that the main result of \cite{abg-iwahori-satake} implies \cref{cor: reg locus ordinary ABG} is true for any connected reductive $G$. Alternatively, one can observe that the proof of \cref{cor: reg locus ordinary ABG} itself never seriously appeals to $G$ being simply-laced. 
\begin{remark}
    The proof of \cref{prop: ordinary gelfand-graev} generalizes to show that if $\ld{P} \subseteq \ld{G}$ is a parabolic subgroup with Levi quotient $\ld{L}$ and unipotent radical $U_{\ld{P}}$, then the natural action of $\ld{G} \times \ld{L}$ on the affine closure $\ol{T^\ast(\ld{G}/U_{\ld{P}})}$ extends to an action of $\ld{G} \times (W_L \rtimes \ld{L})$, where $W_L = N_{\ld{G}}(\ld{L})/\ld{L}$ is the Weyl group. (Also see \cite{affine-closure-G-UP}.)
\end{remark}

The $W$-action of \cref{prop: ordinary gelfand-graev} is known as the (semiclassical) \textit{Gelfand-Graev action}. The moment map $\ol{T^\ast(\ld{G}/\ld{N})} \to \ld{\g}^\ast$ is $W$-equivariant for the trivial action on the target. There is a commutative diagram
$$\xymatrix{
\tilde{\ld{\g}} \ar@{^(->}[r] \ar[dr] & \ol{T^\ast(\ld{G}/\ld{N})}/\ld{T} \ar[d] \\
& \ld{\g}^\ast
}$$
which relates $\ol{T^\ast(\ld{G}/\ld{N})}$ to the Grothendieck-Springer resolution; and via this diagram, the Gelfand-Graev action is closely related to the Weyl action in Springer theory.
\begin{example}\label{rmk: Z/2 symplectic fourier}
    When $\ld{G} = \SL_2$, the affine closure $\ol{T^\ast(\ld{G}/\ld{N})}$ is simply $T^\ast(\AA^2)$, and the $W = \Z/2$-action on it is given by the symplectic Fourier transform. To see this, let $\ld{J}_X$ denote the kernel of the homomorphism $\tilde{\ld{J}} \to\ld{T} \times (\psi + \ld{\fr{t}}^\ast)$ of group schemes over $\psi + \ld{\fr{t}}^\ast$. (This follows the notation from \cite{ku-rel-langlands}.) Then \cref{eq: reg locus in T*G/N} gives an isomorphism
    $$(\ld{G} \times (\psi + \ld{\fr{t}}^\ast))/\ld{J}_X \xrightarrow{\cong} T^\ast(\ld{G}/\ld{N})_\reg.$$
    In the case at hand, $\psi + \ld{\fr{t}}^\ast \cong \AA^1$ with coordinate $x$, and the group scheme $\ld{J}_X$ is just $\spec \Z[x,b]/bx$ (where the group law sends $b\mapsto b \otimes 1 + 1 \otimes b$). The above isomorphism defines a map
    $$q: \SL_2 \times \AA^1 \to \ol{T^\ast(\ld{G}/\ld{N})} = T^\ast(\AA^2),$$
    and the affine closure of the image is all of $T^\ast(\AA^2)$. The map $q$ can be explicitly described as follows. View a point of $T^\ast(\AA^2)$ as a pair $\left(\begin{psmallmatrix}
        u_1 \\
        u_2
    \end{psmallmatrix}, (v_1, v_2)\right)$ of a vector and a covector. Then $q$ is the natural extension to $\SL_2 \times \AA^1$ of the map $\kappa: \AA^1 \to T^\ast(\AA^2)$ which sends $x\mapsto \left(\begin{psmallmatrix}
        1 \\
        0
    \end{psmallmatrix}, (x,0)\right)$. In other words, $q$ sends
    $$(g, x) = \left(\begin{psmallmatrix}
        a & b \\
        c & d
    \end{psmallmatrix}, x\right) \mapsto \left(g \begin{psmallmatrix}
        1 \\
        0
    \end{psmallmatrix}, (g^T)^{-1} (x,0)\right) = \left(\begin{psmallmatrix}
        a \\
        c
    \end{psmallmatrix}, (dx, -bx)\right).$$ 
    Of course, one could also swap the roles of $\AA^2$ and $(\AA^2)^\ast$ in $T^\ast(\AA^2)$; the map $\kappa$ would then send $x\mapsto \left(\begin{psmallmatrix}
        0 \\
        x
    \end{psmallmatrix}, (0,1)\right)$, and $q$ would send
    $$(g, x) = \left(\begin{psmallmatrix}
        a & b \\
        c & d
    \end{psmallmatrix}, x\right) \mapsto \left(\begin{psmallmatrix}
        0 \\
        x
    \end{psmallmatrix} \cdot g^T, (0,1) \cdot g^{-1}\right) = \left(\begin{psmallmatrix}
        bx \\
        dx
    \end{psmallmatrix}, (-c, a)\right).$$ 
    If we compose with the involution sending $x\mapsto -x$, the resulting involution
    $$\left(\begin{psmallmatrix}
        a \\
        c
    \end{psmallmatrix}, (dx, -bx)\right) \mapsto \left(\begin{psmallmatrix}
        -bx \\
        -dx
    \end{psmallmatrix}, (-c, a)\right).$$
    This, of course, is precisely the symplectic Fourier transform, which sends
    $$\left(\begin{psmallmatrix}
        u_1 \\
        u_2
    \end{psmallmatrix}, (v_1, v_2)\right) \mapsto \left(\begin{psmallmatrix}
        v_2 \\
        -v_1
    \end{psmallmatrix}, (-u_2, u_1)\right).$$
\end{example}
We also have the following, which is an obvious consequence of \cref{cor: reg locus satake} and \cref{cor: reg locus abg} (but can be reproved using \cref{cor: reg locus ordinary ABG}).
\begin{prop}\label{prop: ordinary full faithful on gr loc}
    Let $\Loc_{T_c}^\gr(\Gr_G; k)^\heart$ denote the heart of the $t$-structure on $\Loc_{T_c}^\gr(\Gr_G; k) = \coMod_{\pi_0(\cf_T(\Gr_G))^\vee}(\QCoh(\fr{t}))$ coming from the standard (homological truncation) $t$-structure on $\QCoh(\fr{t})$. 
    Then, the composite functor
    $$\Loc_{T_c}^\gr(\Gr_G; k) \simeq \QCoh(\tilde{\ld{\g}}^\reg/\ld{G}) \to \QCoh(\ld{G}\backslash \ol{T^\ast(\ld{G}/\ld{N})}/\ld{T})$$
    is $t$-exact, and on hearts, it restricts to a fully faithful functor on the essential image of the functor \cref{eq: Rep T^ x G^ to ordinary loc}. Furthermore, this functor is $W$-equivariant for the natural action of $W = \N_{G_c}(T_c)/T_c$ on the left-hand side and the Gelfand-Graev action of \cref{prop: ordinary gelfand-graev} on the right-hand side.

    Similarly, let $\Loc_{G_c}^\gr(\Gr_G; k)^\heart$ denote the heart of the $t$-structure on $\Loc_{G_c}^\gr(\Gr_G; k) = \coMod_{\pi_0(\cf_G(\Gr_G))^\vee}(\QCoh(\fr{t}\mmod W))$ coming from the standard (homological truncation) $t$-structure on $\QCoh(\fr{t}\mmod W)$. 
    Then, the composite functor
    $$\Loc_{G_c}^\gr(\Gr_G; k) \simeq \QCoh(\ld{\g}^{\ast,\reg}/\ld{G}) \to \QCoh(\ld{\g}^\ast/\ld{G})$$
    is $t$-exact, and on hearts, it restricts to a fully faithful functor on the essential image of the functor $\Rep(\ld{G}) \to \Loc_{G_c}^\gr(\Gr_G; k)$ (analogous to \cref{eq: Rep G^ to ordinary loc}).
\end{prop}
\begin{proof}
    If $V \in \Rep(\ld{G})$, the object $\cS_k(V)$ lies in $\Loc_{T_c}^\gr(\Gr_G; k)^\heart$, and we need to show that there is an isomorphism
    \begin{multline*}
        \Map_{\QCoh(\ld{G}\backslash \ol{T^\ast(\ld{G}/\ld{N})}/\ld{T})^\heart}(V_1 \otimes_\QQ \co_{\ol{T^\ast(\ld{G}/\ld{N})}}, V_2 \otimes_\QQ \co_{\ol{T^\ast(\ld{G}/\ld{N})}}) \\
        \xrightarrow{\cong} \Map_{\Loc_{T_c}^\gr(\Gr_G; k)^\heart}(\cS_k(V_1), \cS_k(W_1))
    \end{multline*}
    of $\QQ$-vector spaces for any two representations $V_1,V_2 \in \Rep(\ld{G})$. In other words, 
    By \cref{cor: reg locus ordinary ABG}, there is an isomorphism
    $$\Map_{\Loc_{T_c}^\gr(\Gr_G; k)^\heart}(\cS_k(V), \cS_k(W)) \cong \Map_{\QCoh(\tilde{\ld{\g}}^\reg/\ld{G})^\heart}(V \otimes_\QQ \co_{\tilde{\ld{\g}}^\reg}, W \otimes_\QQ \co_{\tilde{\ld{\g}}^\reg}).$$
    Since $\tilde{\ld{\g}}^\reg/\ld{G} \hookrightarrow \ld{G} \backslash \ol{T^\ast(\ld{G}/\ld{N})}/\ld{T}$ has complement of codimension $2$ and $\ol{T^\ast(\ld{G}/\ld{N})}$ is affine and normal, the algebraic Hartogs lemma implies that the restriction map
    \begin{multline*}
        \Map_{\QCoh(\ld{G} \backslash \ol{T^\ast(\ld{G}/\ld{N})}/\ld{T})^\heart}(V \otimes_\QQ \co_{\ol{T^\ast(\ld{G}/\ld{N})}}, W \otimes_\QQ \co_{\ol{T^\ast(\ld{G}/\ld{N})}}) \\
        \to \Map_{\QCoh(\tilde{\ld{\g}}^\reg/\ld{G})^\heart}(V \otimes_\QQ \co_{\tilde{\ld{\g}}^\reg}, W \otimes_\QQ \co_{\tilde{\ld{\g}}^\reg})
    \end{multline*}
    is an isomorphism, as desired. (This is where it is crucial that we work at the level of abelian categories.) The same argument works for $\Loc_{G_c}^\gr(\Gr_G; k)$; in this case, $\ld{\g}^{\ast,\reg} \hookrightarrow \ld{\g}^\ast$ even has complement of codimension $3$.
\end{proof}
\cref{prop: ordinary full faithful on gr loc} gives an analogue of \cite[Theorem 4]{bf-derived-satake}: namely, if $\QCoh_\free(\ld{\g}^\ast/\ld{G})$ denotes the essential image of the pullback functor $\Rep(\ld{G}) \to \QCoh(\ld{\g}^\ast/\ld{G})$, then there is a fully faithful embedding 
$$\QCoh_\free(\ld{\g}^\ast/\ld{G})^\heartsuit \hookrightarrow \Loc_{G_c}^\gr(\Gr_G; k)^\heartsuit.$$
Similarly, if $\QCoh_\free(\ld{G} \backslash \ol{T^\ast(\ld{G}/\ld{N})}/\ld{T})$ denotes the essential image of the pullback functor $\Rep(\ld{G} \times \ld{T}) \to \QCoh(\ld{G} \backslash \ol{T^\ast(\ld{G}/\ld{N})}/\ld{T})$, then there is a fully faithful embedding 
$$\QCoh_\free(\ld{G} \backslash \ol{T^\ast(\ld{G}/\ld{N})}/\ld{T})^\heartsuit \hookrightarrow \Loc_{T_c}^\gr(\Gr_G; k)^\heartsuit.$$

Let $\Rep_\min(\ld{G})$ denote the idempotent completion of the subcategory of $\Rep(\ld{G})$ spanned by tensor products of irreducible $\ld{G}$-representations with minuscule highest weights. In general, if $\ld{G}$ is simple (but not necessarily simply-laced) and not of types $G_2$, $F_4$, or $E_8$, then any representation is a summand of a tensor product of irreducible $\ld{G}$-representations with minuscule highest weights, so that $\Rep_\min(\ld{G}) \simeq \Rep(\ld{G})$.\footnote{When $\ld{G}$ is of types $G_2$, $F_4$, or $E_8$, there are no minuscule weights at all. In general, $\Rep(\ld{G})$ is the idempotent completion of its full subcategory spanned by tensor products of irreducible $\ld{G}$-representations with minuscule and {quasi}-minuscule highest weights.}
\begin{corollary}\label{cor: ordinary minuscule equivalence}
    Let $\QCoh_{\free}(\ld{\g}^\ast/\ld{G})^{\min,\heartsuit}$ denote the essential image of $\Rep_\min(\ld{G})$ under the pullback functor $\Rep(\ld{G})^\heartsuit \to \QCoh(\ld{\g}^\ast/\ld{G})^\heartsuit$ (so it is the entirety of $\QCoh(\ld{\g}^\ast/\ld{G})^\heartsuit$ if $\ld{G}$ is not of type $E_8$). Similarly, let $\Loc_{G_c}^\gr(\Gr_G; k)^{\min,\heartsuit}$ denote the idempotent completion of the subcategory of $\Loc_{G_c}^\gr(\Gr_G; k)^\heartsuit$ spanned by $\cf_{\lambda_\bull}^\gr$ ranging over sequences $\lambda_\bull$ of minuscule highest weights. Then there is an equivalence
    $$\QCoh_\free(\ld{\g}^\ast/\ld{G})^{\min,\heartsuit} \simeq \Loc_{G_c}^\gr(\Gr_G; k)^{\min,\heartsuit}.$$
\end{corollary}
There is a similar equivalence 
$$\Loc_{T_c}^\gr(\Gr_G; k)^{\min,\heartsuit} \simeq \QCoh_\free(\ld{G} \backslash \ol{T^\ast(\ld{G}/\ld{N})}/\ld{T})^{\min,\heartsuit},$$
where these categories are defined analogously by idempotent completion. 

Note that the category $\Loc_{G_c}^\gr(\Gr_G; k)^{\min,\heartsuit}$ is the heart of a degeneration, in the sense of \cref{sec: degenerations}, of the similarly-defined category $\Loc_{G_c}(\Gr_G; k)^\min$. (In particular, \cref{cor: ordinary minuscule equivalence} gives an equivalence between the purely algebraically defined category $\QCoh_\free(\ld{\g}^\ast/\ld{G})^{\min,\heartsuit}$ and a degeneration of the purely topologically defined category $\Loc_{G_c}(\Gr_G; k)^\min$.) If $\lambda_\bull$ and $\mu_\bull$ are two sequences of dominant minuscule weights of $\ld{G}$, there is an equivalence of $k$-modules
\begin{align*}
    \Map_{\Loc_{G_c}(\Gr_G; k)^\min}(\cf_{\lambda_\bull}, \cf_{\mu_\bull}) & \simeq \cf_G(\ol{\Gr_G^{\lambda_\bull}}) \otimes_{\coMod_{\cf_{G}(\Gr_G)^\vee}(\QCoh(\cM_G))} \cf_G(\ol{\Gr_G^{\mu_\bull}}) \\
    & \simeq \cf_{G}(\ol{\Gr_G^{\lambda_\bull}} \times_{\Gr_G} \ol{\Gr_G^{\mu_\bull}}),
\end{align*}
where the final equivalence uses the K\"unneth formula at the level of $k$-cochains. The category $\Loc_{G_c}(\Gr_G; k)^\min$ therefore compares to (the $k$-analogue of) the category from \cite[Section 3.4]{cautis-kamnitzer}.

At first glance, the existence of the $t$-structure on $\Loc_{T_c}^\gr(\Gr_G; k)$ from \cref{prop: ordinary full faithful on gr loc} may perhaps be a bit surprising, since $k$ is a \textit{$2$-periodic} $\Eoo$-ring. In fact, this periodicity prohibits $\Loc_{T_c}(\Gr_G; k)$ itself from having a $t$-structure. However, the $\infty$-category $\Loc_{T_c}^\gr(\Gr_G; k)$ ``flattens'' out the homological periodicity in $\Loc_{T_c}(\Gr_G; k)$ to a weight periodicity, but it is itself also a stable $\infty$-category. In particular, it has a \textit{homological} shift operation, which is distinct from the operation of shifting \textit{weights} (just as with the usual category of mixed sheaves). (The $2$-periodicity of $k$ implies that the weight-shifting operation on $\Loc_{T_c}^\gr(\Gr_G; k)$ is an equivalence, which is why we do not see gradings/weights when discussing $\Loc_{T_c}^\gr(\Gr_G; k)$; but the weight-shifting operation will be nontrivial on, say, $\Loc_{T_c}^\gr(\Gr_G; \QQ)$.) The resulting homological shift on $\Loc_{T_c}^\gr(\Gr_G; k)$ is no longer periodic, and it is therefore reasonable to equip this $\infty$-category with a $t$-structure. (This $t$-structure is unrelated to the perverse $t$-structure on $\Shv_I(\Gr_G; \QQ)$.)

We will now discuss a \textit{deformation quantization} of \cref{cor: reg locus ordinary ABG} by adding loop-rotation equivariance. Write $\tilde{T} = T \times \GG_m^\rot$ to denote the corresponding affine torus. In the case when $G$ is a torus, we have already discussed this in \cref{sec: torus loop rot}. For more general $G$, this turns out to be a bit tricky: while $\H^{T_c}_\ast(\Gr_G; \QQ)$ is a bicommutative Hopf algebra\footnote{To be more precise, the $\E{2}$-space structure on $\Gr_G$ equips $C^{T_c}_\ast(\Gr_G; \QQ)$ with the structure of an $\E{2}$-algebra in $\Eoo$-coalgebras over $C_{T_c}^\ast(\ast; \QQ)$.}, the loop-rotation equivariant homology $\H^{\tilde{T}_c}_\ast(\Gr_G; \QQ)$ is only a cocommutative coalgebroid over $\H^\ast_{\tilde{T}_c}(\ast; \QQ)$. That is, it does not admit an algebra structure. While this is not a mathematical issue, it does make the task of explicitly understanding $\H^{\tilde{T}_c}_\ast(\Gr_G; \QQ)$ in a satisfactory way more complicated. 
Instead, it turns out to be easier to describe $\H^{\tilde{T}_c}_\ast(\Fl_G; \QQ)$, where $\Fl_G$ is the \textit{affine flag variety}, defined as the quotient $G\ls{t}/I$ for the Iwahori subgroup $I \subseteq G\pw{t}$ associated to a Borel subgroup $B \subseteq G$.
To state the result, we need a definition from \cite{ginzburg-kapranov-vasserot-residue-hecke}.
\begin{definition}\label{def: nil-hecke}
    Let $(\Lambda, \Phi, \ld{\Lambda}, \ld{\Phi})$ be a root datum with associated Weyl group $W$ and torus $T = \Hom(\Lambda, \GG_m)$. Let $\Delta$ be a base of simple roots, let $\Phi^+$ denote the corresponding set of positive roots, and let $\Phi'$ denote the subset $W \cdot \Delta \subseteq \Phi$. Let $\bH$ be a $1$-dimensional group scheme (over a commutative ring $R$). As in \cref{def: G-diff ops}, let $\bH_T = \Hom(\Lambda, T)$, and for each character $\lambda \in \Lambda$, let $\bH_{T_\lambda} \hookrightarrow \bH_T$ denote the subgroup corresponding to the subtorus $T_\lambda = \ker(\lambda) \subseteq T$.
    Let $Q(\co_{\bH_T})$ denote the sheaf of functions on the generic point of $\co_{\bH_T}$. The twisted group algebra $Q(\co_{\bH_T})[W]$ is the algebra which is additively given by the tensor product $Q(\co_{\bH_T}) \otimes_F F[W]$, and whose multiplication law is given by 
    $$(f_1 \otimes w_1) \cdot (f_2 \otimes w_2) = (f \cdot w_1 g) \otimes (w_1 w_2).$$
    The algebra $\cH(\bH, T, W)$ is defined to be the subset of $Q(\co_{\bH_T})[W]$ of those elements $\sum_{w \in W} f_w [w]$ such that:
    \begin{itemize}
        \item The poles of $f_x$ all have order $\leq 1$, and these are contained in the divisors $\bH_{T_\alpha}$ for each $\alpha \in \Phi'$.
        \item For each $w \in W$ and $\alpha \in \Phi^+ \cap \Phi'$, we have
        $$\Res_{\bH_{T_\alpha}}(f_w) + \Res_{\bH_{T_\alpha}}(f_{s_\alpha w}) = 0.$$
    \end{itemize}
    In \cite{ginzburg-kapranov-vasserot-residue-hecke}, this algebra is denoted $\tilde{\bH}$.
    It is proved in \cite[Theorem 1.4]{ginzburg-kapranov-vasserot-residue-hecke} that $\cH(\bH, T, W)$ is a \textit{subalgebra} of $Q(\co_{\bH_T})[W]$.
\end{definition}
\begin{remark}
    The pair $(Q(\co_{\bH_T}), Q(\co_{\bH_T})[W])$ admits the structure of a (cocommutative) Hopf algebroid; we will abusively say that $Q(\co_{\bH_T})[W]$ admits the structure of a Hopf $Q(\co_{\bH_T})$-algebroid. The coproduct comes from the diagonal on $W$; the left unit comes from the inclusion $Q(\co_{\bH_T}) \subseteq Q(\co_{\bH_T})[W]$; and the right unit comes from the action of $W$ on $\bH_T$ (which defines a coaction of $W$ on $\co_{\bH_T}$ that extends to a coaction on $Q(\co_{\bH_T})$). The resulting Hopf $\co_{\bH_T}$-algebroid structure on $Q(\co_{\bH_T})[W]$ restricts to $\cH(\bH, T, W)$, so that $\cH(\bH, T, W)$ admits the structure of a (cocommutative) Hopf $\co_{\bH_T}$-algebroid. (See \cite[Theorem 4.11]{hopf-algebroid-nil-hecke} for the case $\bH = \GG_a$.)
    
    When $W$ is finite, \cite[Proposition 2.3]{ginzburg-kapranov-vasserot-residue-hecke} states that {upon rationalization}, the action of $\cH(\bH, T, W)$ on $\co_{\bH_T}$ gives an isomorphism between $\cH(\bH, T, W)$ and $\End_{\co_{\bH_T}^W}(\co_{\bH_T})$. This gives a Morita equivalence between the category of $\cH(\bH, T, W)$-modules and the category of $\co_{\bH_T}^W$-modules. Under this equivalence, the symmetric monoidal structure on the category of $\cH(\bH, T, W)$-modules from the cocommutative Hopf algebroid structure on $\cH(\bH, T, W)$ identifies with the standard symmetric monoidal structure on the category of $\co_{\bH_T}^W$-modules.
\end{remark}
If $\Lambda$ denotes the \textit{co}root lattice of ${G}$, let $W^\aff = \Lambda \rtimes W$ denote the corresponding affine Weyl group, and let $\tilde{W} = \bX_\ast({T}) \rtimes W$ denote the extended affine Weyl group.\footnote{The affine Weyl group $W^\aff$ introduced above is very slightly different from the affine Weyl group studied in \cite[Section 7.2]{ginzburg-whittaker} or \cite{gannon-tmmodw}; the affine Weyl group there is the semidirect product $W'^\aff = \ld{\Lambda} \rtimes W$, where $\ld{\Lambda}$ is the \textit{root} lattice of $G$. When $G$ is simply-laced, these are, of course, isomorphic; but they differ otherwise.} 
For clarity, note that the action of $\tilde{W}$ on $\bX^\ast(T)$ (and hence on $\bH_T$) is given as follows: if $\alpha$ is a coweight of $T$ and $n \in \Z$, the generator $s_{\alpha,n}$ of $W^\aff$ acts on $\bX^\ast(T)$ by reflection along the affine hyperplane $\{x\in \bX^\ast(T) | \pdb{x, \alpha} = n\}$.
The \textit{degenerate nil-Hecke algebra} $\cH(\bH, \tilde{T}, \tilde{W})$ is defined to be $\bX_\ast(T) \ltimes_\Lambda \cH(\bH, \tilde{T}, W^\aff)$. In the following discussion, we will simply write $Q(\co_{\bH_{\tilde{T}}})[\tilde{W}]$ to denote $\bX_\ast({T}) \ltimes_\Lambda Q(\co_{\bH_{\tilde{T}}})[W^\aff]$, so that $\cH(\bH, \tilde{T}, \tilde{W})$ is contained in $Q(\co_{\bH_{\tilde{T}}})[\tilde{W}]$.

There is a natural inclusion $\cd_{\ld{T}}^\bH[W] \hookrightarrow \cH(\bH, \tilde{T}, \tilde{W})$ of (sheaves of) algebras. The following result can be proved exactly as in \cite[Proposition 7.2.4]{ginzburg-whittaker}; one only has to use \cite[Proposition 2.3]{ginzburg-kapranov-vasserot-residue-hecke} in place of \cite[Lemma 7.1.5]{ginzburg-whittaker}, and also observe that the arguments of \cite{lonergan-descent} generalize to the setting of descent along the map $\bH_T/W \to \bH_T \mmod W$.
\begin{prop}\label{prop: ascending to nil-hecke module}
    Let $\cf$ be $\cd_{\ld{T}}^\bH[W]$-module\footnote{Here, we mean a module in the usual, underived, sense of the word; but it is easy to generalize the statement to the setting of perfect $\cd_{\ld{T}}^\bH[W]$-modules by induction on the length of the bounded complex.}. Then the action of $\cd_{\ld{T}}^\bH[W]$ extends (necessarily uniquely) along the inclusion $\cd_{\ld{T}}^\bH[W] \hookrightarrow \cH(\bH, \tilde{T}, \tilde{W})$ if and only if the natural map $\co_{\bH_T} \otimes_{\co_{\bH_T}^W} \cf^W \to \cf$ is an isomorphism.
\end{prop}
\begin{remark}\label{rmk: relationship to t mmod Waff}
    Let us remark on a relationship to \cite{gannon-tmmodw}. Following \textit{loc. cit.}, let $\Gamma_{W^\aff}$ denote the ind-scheme given by the union of graphs of the affine Weyl group $W^\aff$ acting on $\bH_{\tilde{T}}$, and let $\Gamma_{\tilde{W}}$ denote $\tilde{W} \times^{W^\aff} \bH_{\tilde{T}}$. Then there are two projections $\Gamma_{\tilde{W}} \rightrightarrows \bH_{\tilde{T}}$. This can be extended to a simplicial diagram $\Gamma_\bull$ of ind-schemes. Define the stack $\bH_{\tilde{T}}\mmod \tilde{W}$ to be the geometric realization of $\Gamma_\bull$. (For instance, if $W$ is trivial, this is the quotient $\bH_{\tilde{T}}/\bX_\ast(T)$. Similarly, if $\bH = \GG_a$, so that $\bH_{\tilde{T}} \cong \fr{t} \oplus \AA^1_\hbar$, then the specialization of the quotient $\bH_{\tilde{T}}\mmod \tilde{W}$ to $\hbar=1$ agrees with the quotient $\fr{t}\mmod \tilde{W}$ from \cite{gannon-tmmodw}.) In general, there is a map of stacks $\phi: \bH_{\tilde{T}}/\tilde{W} \to \bH_{\tilde{T}} \mmod \tilde{W}$. By arguing exactly as in \cite[Theorem 4.23]{gannon-tmmodw}, one can show that the pullback functor $\phi^!$ is fully faithful; and furthermore, an object of $\IndCoh(\bH_{\tilde{T}}/\tilde{W})$ descends\footnote{That is, it lies in the essential image of the left adjoint $\phi^!$ to $\phi_\ast^\IndCoh$.} along $\phi$ if and only if the corresponding object of $\IndCoh(\bH_T/W)$ descends to $\bH_T\mmod W$. Since \cref{rmk: G-mellin} gives an equivalence between $\IndCoh(\bH_{\tilde{T}}/\tilde{W})$ and $\cd_{\ld{T}}^\bH[W]\modc$, \cref{prop: ascending to nil-hecke module} can be used to obtain an equivalence between $\cH(\bH, \tilde{T}, \tilde{W})\modc$ and $\IndCoh(\bH_{\tilde{T}} \mmod \tilde{W})$.
\end{remark}

\cref{prop: hmlgy gkm} yields the following result due to Kostant and Kumar \cite{kostant-kumar, kostant-kumar-2, kumar-kac-moody}, which (as we will explain momentarily) could also be seen as a consequence of results from \cite{ginzburg-whittaker, lonergan-fourier, bf-derived-satake}.
In the discussion below, $\bH = \GG_a$. Note that $\H^\ast_{\tilde{T}_c}(\ast; \QQ)$ is isomorphic to $\co_{\tilde{\fr{t}}} \cong \co_{\bH_{\tilde{T}}}$. 
\begin{theorem}\label{thm: ordinary loop-rot flag}
    There is an isomorphism of associative $\QQ[\hbar]$-algebras
    \begin{equation}\label{eq: comparison to nil hecke}
        \H^{\tilde{T}_c}_\ast(\Fl_G; \QQ) \cong \cH(\GG_a, \tilde{T}, \tilde{W}).
    \end{equation}
    Here, $\H^{\tilde{T}_c}_\ast(\Fl_G; \QQ)$ is equipped with the associative algebra structure coming from convolution. Moreover, the above isomorphism is also one of (cocommutative) Hopf $\H^\ast_{\tilde{T}_c}(\ast; \QQ) \cong \co_{\bH_{\tilde{T}}}$-algebroids.
\end{theorem}
\begin{proof}
    The affine flag variety $\Fl_G$ is an ind-finite GKM space in the sense of \cref{def: GKM space}, and so we may use \cref{prop: hmlgy gkm} to describe $\H_\ast^{\tilde{T}_c}(\Fl_G; \QQ)$. The GKM graph of $\Fl_G$ has set of vertices given by $\Fl_G^{\tilde{T}_c} = \bX_\ast(T) \rtimes W \cong \tilde{W}$, and an edge $w \to s_{\alpha,n} w$ for each affine reflection $s_{\alpha,n} \in \tilde{W}$. In particular, if $\punc{\tilde{\fr{t}}}$ denotes the complement of the union of affine hyperplanes in $\tilde{\fr{t}}$, then $\H^{\tilde{T}_c}_\ast(\Fl_G; \QQ)$ is a subalgebra of $\H^{\tilde{T}_c}_\ast(\Fl_G^{\tilde{T}_c}; \QQ)|_{\punc{\tilde{\fr{t}}}}$. The latter is isomorphic to $\H^{\tilde{T}_c}_\ast(\tilde{W}; \QQ)|_{\punc{\tilde{\fr{t}}}}$, which in turn can be identified (using \cref{prop: T homology and quantized diffop}, for instance) with a localization of $\cd^\hbar_{\ld{T}}[W]$. This localization of $\cd^\hbar_{\ld{T}}[W]$ is  isomorphic to $Q(\co_{\bH_{\tilde{T}}})[\tilde{W}]$, so $\H^{\tilde{T}_c}_\ast(\Fl_G; \QQ)$ is a subalgebra of $Q(\co_{\bH_{\tilde{T}}})[\tilde{W}]$. 
    
    \cref{prop: hmlgy gkm} now gives an isomorphism between the two subsets
    $$\H^{\tilde{T}_c}_\ast(\Fl_G; \QQ) \subseteq Q(\co_{\bH_{\tilde{T}}})[\tilde{W}] \supseteq \cH(\GG_a, \tilde{T}, \tilde{W}).$$
    To see that this is an isomorphism of sub\textit{algebras}, simply observe that both $\H^{\tilde{T}_c}_\ast(\Fl_G; \QQ)$ and $\cH(\GG_a, \tilde{T}, \tilde{W})$ inherit their multiplicative structure from $Q(\co_{\bH_{\tilde{T}}})[\tilde{W}]$. That this is an isomorphism of Hopf $\co_{\bH_{\tilde{T}}}$-algebroids is also elementary: for instance, the coproduct on both $\H^{\tilde{T}_c}_\ast(\Fl_G; \QQ)$ and $\cH(\GG_a, \tilde{T}, \tilde{W})$ are inherited from the $\co_{\bH_{\tilde{T}}}$-linear coproduct on $Q(\co_{\bH_{\tilde{T}}})[\tilde{W}]$ coming from the diagonal on $\Fl_G^{\tilde{T}_c} = \tilde{W}$.
\end{proof}
\begin{remark}
    The left-hand side of \cref{thm: ordinary loop-rot flag} admits an obvious grading; on the right-hand side, the resulting grading on $\cH(\GG_a, \tilde{T}, \tilde{W})$ can be identified with that inherited from $Q(\co_{(\GG_a)_{\tilde{T}}})[\tilde{W}]$, where the coordinates of $(\GG_a)_{\tilde{T}}$ are placed in weight $2$.

    Moreover, \cref{thm: ordinary loop-rot flag} holds even if $\QQ$ is replaced by $\Z$ (as long as, on the right-hand side, $\GG_a$ is viewed as defined over $\Z$).
\end{remark}

\begin{remark}
    Suppose $W$ is finite. Then \cite[Proposition 2.3]{ginzburg-kapranov-vasserot-residue-hecke} states that \textit{upon rationalization}, the action of $\cH(\GG_a, T, W)$ on $\co_{(\GG_a)_T} = \co_{\fr{t}}$ gives an isomorphism between $\cH(\GG_a, T, W)$ and $\End_{\co_{\fr{t}}^W}(\co_{\fr{t}})$. (This result is false without rationalization, or at least without inverting enough primes.) Its $\co_{\fr{t}}$-linear dual is therefore $\co_{\fr{t}} \otimes_{\co_{\fr{t}}^W} \co_{\fr{t}} \cong \co_{\fr{t} \times_{\fr{t}\mmod W} \fr{t}}$. Note that this naturally admits the structure a cocommutative Hopf $\co_{\fr{t}}$-algebroid. The analogue of \cref{thm: ordinary loop-rot flag} states that there is an isomorphism $\H^{T_c}_\ast(G_c/T_c; \QQ) \cong \cH(\GG_a, T, W)$ of (cocommutative) Hopf $\H^\ast_{T_c}(\ast; \QQ) \cong \co_{\fr{t}}$-algebroids.
\end{remark}

Let $\mathbf{e} = \frac{1}{|W|} \sum_{w \in W} [w]$ denote the symmetrizer, viewed as an element of $\QQ[W]$. The spherical subalgebra $\cH(\GG_a, \tilde{T}, \tilde{W})^\sph$ is defined to be $\mathbf{e} \cH(\GG_a, \tilde{T}, \tilde{W}) \mathbf{e}$. The following result is now an easy consequence of \cref{thm: ordinary loop-rot flag}.
\begin{corollary}\label{cor: ordinary loop-rot Gr}
    There is an isomorphism of associative $\QQ[\hbar]$-algebras
    $$\H^{G_c \times S^1_\rot}_\ast(\Gr_G; \QQ) \cong \cH(\GG_a, \tilde{T}, \tilde{W})^\sph.$$
    Here, $\H^{G_c \times S^1_\rot}_\ast(\Gr_G; \QQ)$ is equipped with the associative algebra structure coming from convolution. Moreover, the above isomorphism is also one of (cocommutative) Hopf $\H^\ast_{G \times S^1_\rot}(\ast; \QQ) \cong \co_{\fr{t}\mmod W \times \AA^1_\hbar}$-algebroids.
\end{corollary}
Let $\cd^\hbar_{\ld{G}}$ denote the algebra of (rescaled) differential operators on $\ld{G}$, and let $\ld{N} {}_\psi \backslash \cd^\hbar_{\ld{G}}/_\psi \ld{N}$ denote its bi-Whittaker reduction (that is, its two-sided Hamiltonian reduction by the left and right actions of $\ld{N}$ with respect to a nondegenerate character $\psi: \ld{\fr{n}} \to \GG_a$). \cref{cor: ordinary loop-rot Gr} and \cite[Theorem 1.2.1]{ginzburg-whittaker} yield:
\begin{corollary}[{\cite[Theorem 3]{bf-derived-satake}}]\label{cor: loop-rot Gr and biWhit}
    There is an isomorphism of associative $\QQ[\hbar]$-algebras
    $$\H^{G_c \times S^1_\rot}_\ast(\Gr_G; \QQ) \cong \ld{N} {}_\psi \backslash \cd^\hbar_{\ld{G}}/_\psi \ld{N}.$$
\end{corollary}
Note that the diagonal on $\ld{G}$ equips $\cd^\hbar_{\ld{G}}$ with the structure of a coalgebra in the category of $U_\hbar(\ld{\g})$-bimodules. This in turn equips the bi-Whittaker reduction $\ld{N} {}_\psi \backslash \cd^\hbar_{\ld{G}}/_\psi \ld{N}$ with the structure of a (cocommutative) Hopf algebroid over $U_\hbar(\ld{\g})/_\psi \ld{N}$; by \cite{kostant-whittaker}, the latter is isomorphic to $Z(U_\hbar(\ld{\g})) \cong \Sym(\fr{t}^\ast)^W[\hbar]$.
Again, one can verify (by reduction to the case of a torus) that the isomorphism of \cref{cor: loop-rot Gr and biWhit} is one of cocommutative Hopf coalgebroids over $\H_{G_c \times S^1_\rot}^\ast(\ast; \QQ) \cong \Sym(\fr{t}^\ast)^W[\hbar]$.
\begin{remark}
    Since $\H^{G_c \times S^1_\rot}_\ast(\Gr_G; \QQ)$ is Morita equivalent to $\H^{T_c \times S^1_\rot}_\ast(\Fl_G; \QQ)$, \cref{rmk: relationship to t mmod Waff}, \cref{thm: ordinary loop-rot flag}, \cref{cor: ordinary loop-rot Gr}, and \cref{cor: loop-rot Gr and biWhit} together tell us that there are equivalences of categories
    \begin{multline*}
        \H^{T_c \times S^1_\rot}_\ast(\Fl_G; \QQ)\modc \simeq \H^{G_c \times S^1_\rot}_\ast(\Gr_G; \QQ)\modc \\
        \simeq \ld{N} {}_\psi \backslash \cd^\hbar_{\ld{G}}/_\psi \ld{N}\modc
        \simeq \cH(\GG_a, \tilde{T}, \tilde{W})\modc \simeq \IndCoh(\tilde{\fr{t}}\mmod \tilde{W}).
    \end{multline*}
\end{remark}

\begin{definition}\label{def: kappa-hbar}
    Denote by $\HC^\hbar_{\ld{G}}$ the $\infty$-category $\cd^\hbar_{\ld{G}}\modc^{\ld{G} \times \ld{G}, \weak} \simeq U_\hbar(\ld{\g})\modc^{\ld{G}, \weak}$ of Harish-Chandra bimodules. Let $\kappa_\hbar: \HC^\hbar_{\ld{G}} \to U_\hbar(\ld{\g})\modc^{(\ld{N}, \psi)}$ denote the Kostant functor of \cite[Section 2.3]{bf-derived-satake}, so that it is given by the composite
    $$\HC^\hbar_{\ld{G}} \xrightarrow{\mathrm{forget}} U_\hbar(\ld{\g})\modc \xrightarrow{\mathrm{Av}_{\ld{N},\psi}} U_\hbar(\ld{\g})\modc^{(\ld{N}, \psi)}.$$
    Note that by Skryabin's theorem (see the appendix of \cite{premet}), there is an equivalence $U_\hbar(\ld{\g})\modc^{(\ld{N}, \psi)} \simeq \QCoh(\fr{t}\mmod W \times \AA^1_\hbar)$.  Define $(\HC^\hbar_{\ld{G}})_\reg$ to denote the localizing subcategory of $\HC^\hbar_{\ld{G}}$ on which $\kappa_\hbar$ is conservative.
\end{definition}
One can check that upon ``setting $\hbar = 1$'', the category $(\HC^\hbar_{\ld{G}})_\reg$ identifies with the category $\mathcal{HC}_\mathrm{nondeg}$ from \cite[Remark 4.22]{gannon-thesis}.\footnote{Let me note here my aversion to the phrase ``setting $\hbar = 1$''. As we have seen above, $\hbar$ arises naturally as a generator of $\H^2_{S^1_\rot}(\ast; \QQ)$, and as such, it lives in \textit{nonzero grading}. It is therefore not sensible to set $\hbar$ to be equal to a nonzero number. A better -- and in some sense equivalent -- way to ``set $\hbar = 1$'' in a graded $\QQ[\hbar]$-module/category $M_\hbar$ is to extract the weight zero piece of the localization $M_\hbar[\hbar^{-1}]$. Doing this procedure to $(\HC^\hbar_{\ld{G}})_\reg$ will product $\mathcal{HC}_\mathrm{nondeg}$.} 
Before proceeding, we need a category-theoretic result, which follows from \cite[Corollary 4.7.5.3]{HA}.
\begin{prop}\label{prop: cosimplicial full faithful}
    Let $\cC^\bull$ be an augmented cosimplicial presentable stable $\infty$-category. Suppose that:
    \begin{enumerate}
        \item For every $[n]\in \Deltab^+$, the face map $d^0: \cC^i \to \cC^{i+1}$ admits a left adjoint $(d^0)^L$.
        \item The ``Beck-Chevalley conditions'' hold. That is, for every morphism $\alpha: [m] \to [n]$ in $\Deltab^+$, the following diagram commutes:
        $$\xymatrix{
        \cC^{m+1} \ar[d]_{(d^0)^L} \ar[r]^-{([0] \star \alpha)^\ast} & \cC^{n+1} \ar[d]^{(d^0)^L} \\
        \cC^m \ar[r]^-{\alpha^\ast} & \cC^n.
        }$$
    \end{enumerate}
    Then the functor $\cC^{-1} \to \Tot(\cC^\bull|_{N(\Deltab)})$ admits a fully faithful right adjoint; moreover, the essential image of this functor identifies with the full subcategory of $\cC^{-1}$ on which the functor $\cC^{-1} \to \cC^0$ is conservative.
\end{prop}
It is my understanding that the following result is closely related to recent work of Gannon and Ginzburg \cite{gannon-ginzburg} (but I have not made a comparison).
\begin{corollary}\label{cor: reg locus quantized satake}
    Recall the category $\Loc_{G_c \times S^1_\rot}^\gr(\Gr_G; k)$ from \cref{def: graded G equiv loc sys}. There is an equivalence
    $$\Loc_{G_c \times S^1_\rot}^\gr(\Gr_G; k) \simeq (\HC^\hbar_{\ld{G}})_\reg.$$
    Furthermore, the pushforward functor $\Loc_{G_c \times S^1_\rot}^\gr(\Gr_G; k) \to \Loc_{G_c \times S^1_\rot}^\gr(\ast; k)$ identifies with the functor $\kappa_\hbar: (\HC^\hbar_{\ld{G}})_\reg \to \QCoh(\fr{t}\mmod W \times \AA^1_\hbar)$.
\end{corollary}
\begin{proof}
    By definition, $\Loc_{G_c \times S^1_\rot}^\gr(\Gr_G; k)$ is the $\infty$-category of left comodules over $\H^{G_c \times S^1_\rot}_\ast(\Gr_G; k)$ in $\Loc_{G_c \times S^1_\rot}(\ast; k)$. The latter category can be identified with 
    $$\Loc_{G_c \times S^1_\rot}(\ast; k) \simeq \QCoh(\fr{t}\mmod W \times \AA^1_\hbar) \simeq U_\hbar(\ld{\g})\modc^{(\ld{N}, \psi)}.$$
    Let us denote this category by $\cC^0$. Just as in Skryabin's theorem, there is an equivalence 
    $$\ld{N} {}_\psi \backslash \cd^\hbar_{\ld{G}}/_\psi \ld{N}\modc \simeq \cd^\hbar_{\ld{G}}\modc^{(\ld{N} \times \ld{N}, \psi \times \psi)}.$$
    The Hopf algebroid structure on the pair $(U_\hbar(\ld{\g})/_\psi \ld{N}, \ld{N} {}_\psi \backslash \cd^\hbar_{\ld{G}}/_\psi \ld{N})$ defines a cosimplicial diagram
    $$\begin{tikzcd}
	{\cC^0} & {\cC^1} & {\cC^1 \otimes_{\cC^0} \cC^1} & \cdots
	\arrow[shift right, from=1-1, to=1-2]
	\arrow[shift left, from=1-1, to=1-2]
	\arrow[shift right=2, from=1-2, to=1-3]
	\arrow[shift left=2, from=1-2, to=1-3]
	\arrow[from=1-2, to=1-3]
	\arrow[shift right=3, from=1-3, to=1-4]
	\arrow[shift left=3, from=1-3, to=1-4]
	\arrow[shift right, from=1-3, to=1-4]
	\arrow[shift left, from=1-3, to=1-4]
    \end{tikzcd}$$
    The preceding discussion implies that its totalization computes the $\infty$-category of comodules over the cocommutative Hopf algebroid $(U_\hbar(\ld{\g})/_\psi \ld{N}, \ld{N} {}_\psi \backslash \cd^\hbar_{\ld{G}}/_\psi \ld{N})$. \cref{cor: loop-rot Gr and biWhit} gives an isomorphism $\H^{G_c \times S^1_\rot}_\ast(\Gr_G; k) \cong \ld{N} {}_\psi \backslash \cd^\hbar_{\ld{G}}/_\psi \ld{N}$ of cocommutative Hopf algebroids over $U_\hbar(\ld{\g})/_\psi \ld{N} \cong \H^\ast_{G_c \times S^1_\rot}(\ast; k)$, and so the totalization of the above cosimplicial diagram is equivalent to $\Loc_{G_c \times S^1_\rot}^\gr(\Gr_G; k)$.

    There are equivalences
    \begin{align*}
        \cC^0 & = U_\hbar(\ld{\g})\modc^{(\ld{N}, \psi)} \simeq \cd^\hbar_\ld{G}\modc^{(\ld{G}, \weak), (\ld{N}, \psi)}, \\
        \cC^1 & = \cd^\hbar_{\ld{G}}\modc^{(\ld{N} \times \ld{N}, \psi \times \psi)} \simeq \cC^0 \otimes_{\HC^\hbar_{\ld{G}}} \cC^0 \simeq \End_{\HC^\hbar_{\ld{G}}}(\cC^0),
    \end{align*}
    which refine to give an equivalence of cosimplicial $\infty$-categories
    $$\cC^\bull \simeq (\cC^0)^{\otimes_{\HC^\hbar_{\ld{G}}} \bull+1}.$$
    Observe that $\cC^\bull$ extends to an {augmented} cosimplicial $\infty$-category $\tilde{\cC^\bull}$ by setting $\cC^{-1} = \HC^\hbar_{\ld{G}}$, where the functor $\cC^{-1} \to \cC^0$ induced by the unique morphism $[-1] \to [0]$ in $\Deltab^+$ is given by the Kostant functor $\kappa_\hbar$.
    It is straightforward to check that both conditions in \cref{prop: cosimplicial full faithful} hold for $\tilde{\cC^\bull}$, so we find that $\Tot(\cC^\bull)$ is equivalent the localizing subcategory $(\HC^\hbar_{\ld{G}})_\reg$ of $\cC^{-1} = \HC^\hbar_{\ld{G}}$ spanned those objects on which the Kostant functor is conservative.
\end{proof}
\begin{remark}
    One can also deduce \cref{cor: reg locus quantized satake} from \cite{bf-derived-satake}, as discussed in \cite{lonergan-fourier}. This, combined with \cite[Theorem 1.2.1]{ginzburg-whittaker}, gives an alternative proof of \cref{thm: ordinary loop-rot flag} assuming the results of \cite{bf-derived-satake}. However, as mentioned in the introduction to this section, we specifically do \textit{not} want to appeal to \cite{bf-derived-satake}, since it does not have analogues in the K-theoretic or elliptic settings.
\end{remark}
\begin{remark}\label{rmk: S1-rot ordinary full faithful on gr loc}
    Just as with \cref{prop: ordinary full faithful on gr loc}, if $\HC^{\hbar,\free}_{\ld{G}}$ denotes the essential image of the pullback functor $\Rep(\ld{G}) \to \HC^\hbar_{\ld{G}}$, then there is a fully faithful embedding 
    $$(\HC^{\hbar,\free}_{\ld{G}})^\heartsuit \hookrightarrow \Loc_{G_c \times S^1_\rot}^\gr(\Gr_G; k)^\heartsuit.$$
    This can be understood as an analogue of \cite[Theorem 2]{bf-derived-satake}.
\end{remark}
\begin{remark}
    There is a Kostant functor 
    $$\kappa_\hbar: \DMod_\hbar(\ld{G} / \ld{N})^{(\ld{G} \times \ld{T}, \weak)} \to U_\hbar(\ld{\fr{t}})\modc \simeq \QCoh(\fr{t} \times \AA^1_\hbar)$$
    given by the composite
    \begin{align*}
        \DMod_\hbar(\ld{G} / \ld{N})^{(\ld{G} \times \ld{T}, \weak)} & \xrightarrow{\mathrm{forget}} \DMod_\hbar(\ld{G} / \ld{N})^{(\ld{T}, \weak)}\\
        & \xrightarrow{\mathrm{Av}_{\ld{N},\psi}} \DMod_\hbar(\ld{G} / \ld{N})^{(\ld{T}, \weak), (\ld{N}, \psi)} \\
        & \simeq \DMod_\hbar(\ld{T})^{(\ld{T}, \weak)} \simeq U_\hbar(\ld{\fr{t}})\modc.
    \end{align*}
    Using $\kappa_\hbar$, one can define an $\infty$-category $\DMod_\hbar(\ld{G} / \ld{N})^{(\ld{G} \times \ld{T}, \weak)}_\reg$.
    Just as in \cref{cor: reg locus quantized satake}, there is an equivalence
    \begin{equation}\label{eq: gen quantized ABG}
        \Loc_{T_c \times S^1_\rot}^\gr(\Gr_G; k) \simeq \DMod_\hbar(\ld{G} / \ld{N})^{(\ld{G} \times \ld{T}, \weak)}_\reg.
    \end{equation}
    Furthermore, the pushforward functor $\Loc_{T_c \times S^1_\rot}^\gr(\Gr_G; k) \to \Loc_{T_c \times S^1_\rot}^\gr(\ast; k)$ identifies with the Kostant functor $\DMod_\hbar(\ld{G} / \ld{N})^{(\ld{G} \times \ld{T}, \weak)}_\reg \to \QCoh(\fr{t} \times \AA^1_\hbar)$. The arguments in this case are slightly more subtle, though: the equivariant homology $\H^{\tilde{T}_c}_\ast(\Gr_G; \QQ)$ no longer admits an algebra structure, but it still does admit the structure of a cocommutative coalgebra over $\H^\ast_{\tilde{T}_c}(\ast; \QQ)$. In fact, $\H^{\tilde{T}_c}_\ast(\Gr_G; \QQ)$ is isomorphic as a $(\H^{G_c \times S^1_\rot}_\ast(\Gr_G; \QQ), \H^{\tilde{T}_c}_\ast(\Fl_G; \QQ))$-bimodule to the $(\cH(\GG_a, \tilde{T}, \tilde{W}), \mathbf{e} \cH(\GG_a, \tilde{T}, \tilde{W}) \mathbf{e})$-bimodule 
    $$\cH(\GG_a, \tilde{T}, \tilde{W}) \mathbf{e} \cong \ld{N} {}_\psi \backslash \cd^\hbar_{\ld{G}}/_\psi \ld{N} \otimes_{Z(U_\hbar(\ld{\g}))} \Sym(\ld{\fr{t}})[\hbar].$$
    This bimodule is denoted $\mathbb{M}_\hbar$ in \cite[Theorem 8.1.2]{ginzburg-whittaker}.
\end{remark}
\begin{remark}
    Just as \cref{cor: reg locus abg} can be viewed as a ``generic'' version of the Arkhipov-Bezrukavnikov-Ginzburg \cite{abg-iwahori-satake} equivalence
    $$\Shv^c_I(\Gr_G; k) \simeq \QCoh(\tilde{\ld{\g}}/\ld{G}),$$
    the equivalence \cref{cor: reg locus quantized satake} can be viewed as a ``generic'' version of the Bezrukavnikov-Finkelberg \cite{bf-derived-satake} equivalence
    $$\Shv^c_{G(\co) \rtimes \GG_m^\rot}(\Gr_G; k) \simeq \HC^\hbar_{\ld{G}}.$$
    Similarly, the equivalence of \cref{eq: gen quantized ABG} can be viewed as a  ``generic'' version of the quantized Arkhipov-Bezrukavnikov-Ginzburg equivalence
    $$\Shv^c_{I \rtimes \GG_m^\rot}(\Gr_G; k) \simeq \DMod_\hbar(\ld{G} / \ld{N})^{(\ld{G} \times \ld{T}, \weak)}.$$
    Unfortunately, I am not aware of a reference for this final statement, but it can be deduced from the work of Ginzburg-Riche in \cite{ginzburg-riche}.
\end{remark}
\newpage

\section{The K-theoretic story}\label{sec: KU coeff}
Our goal in this section is to prove an analogue of \cref{cor: reg locus ordinary ABG}, albeit with coefficients in $k = \KU$. Note that in this case, $\cM_{T,0} \cong T$. To do so, we need an analogue of \cref{def: additive kostant slice} and constructions surrounding it. Recall that the group $G$ (over $\cc$) is connected, almost simple, and simply-laced. We will also fix an algebraically closed field $F$,
over which the Langlands dual group $\ld{G}$ will live. When dealing with the algebraic geometry (as opposed to the topology) of $G$, we will also view it as living over $F$; since $G$ is simply-laced, it is isogenous to $\ld{G}$.
\begin{definition}\label{def: mult kostant slice}
    Let $G^\mathrm{sc}$ denote the simply-connected cover of $G$, and let $f\in G^\mathrm{sc}$ be a principal nilpotent element as defined in \cite[Theorem 4.6]{steinberg-slice}. We will denote its image under the map $G^\mathrm{sc} \to G$ also by $f$. Then the map $\GG_a \to G$ corresponding to $f$ factors through the map $\GG_a = B \to \SL_2$; we will denote the image of the standard generator $\begin{psmallmatrix}
    1 & 0\\
    1 & 1
    \end{psmallmatrix}$ under the map $\SL_2 \to G$ by $e\in G$. Let $Z_G(e)^\circ$ be the connected component of the identity in the centralizer of $e$ in $G$. Define the \textit{multiplicative Kostant slice} $\cS_\mu$ by $f\cdot Z_G(e)^\circ \subseteq G$. Since $G$ is assumed to be simply-connected, the composite 
    $$\cS_\mu \to G \to G\mmod G \cong G\mmod \ld{G} \cong T\mmod W$$
    is an isomorphism. We will often denote the inclusion of the Kostant slice by $\kappa: T\mmod W \to G$.

    The \textit{multiplicative Grothendieck-Springer resolution} $\tilde{\ld{G}}$ is defined as
    $$\tilde{\ld{G}} = B \times^{\ld{B}} \ld{G},$$
    where $\ld{B}$ acts on $B$ by conjugation. (This makes sense thanks to the assumption that $G$ is simply-laced.) There is a natural map $\tilde{\ld{G}} \to G$, given by the conjugation action of $\ld{G}$ on $B$.
    Let $\tilde{\cS}_\mu$ denote the fiber product $\tilde{\cS}_\mu \times_G \tilde{\ld{G}}$, so that the composite 
    $$\tilde{\cS}_\mu \to \tilde{\ld{G}} \to T$$
    is an isomorphism; we will denote the inclusion of $\tilde{\cS}_\mu$ as a map $\kappa: \tilde{\cS}_\mu \cong T \to \tilde{\ld{G}}$.

    As with the additive Kostant slice, we will only care about the composite $T \to \tilde{\ld{G}} \to \tilde{\ld{G}}/\ld{G}$ below, so we will also denote it by $\kappa$. If we identify $\tilde{\ld{G}}/\ld{G} \cong B/\ld{B}$, then the map $\kappa$ admits a simple description: it is the composite $f\cdot T \to B \to B/\ld{B}$. 
\end{definition}
\begin{definition}
    The stabilizer (inside $\ld{G}$) of the multiplicative Kostant slice $\cS_\mu \subseteq G^\reg$ is a closed subgroup scheme of the constant group scheme $\ld{G} \times \cS_\mu$, and will be denoted by $\ld{J}_\mu$. It will be called the \textit{multiplicative regular centralizer group scheme}; if we wish to emphasize the dependence on $G$, we will denote it by $\ld{J}_\mu(G)$.  Note that since the composite $\cS_\mu \to G^\reg \to G\mmod \ld{G}$ is an isomorphism, we may identify
    $$\ld{J}_\mu \cong \cS_\mu \times_{G/\ld{G}} \cS_\mu.$$
    Similarly, the stabilizer (inside $\ld{G}$) of the multiplicative Kostant slice $\tilde{\cS}_\mu \subseteq \tilde{\ld{G}}^\reg$ is a closed subgroup scheme of the constant group scheme $\ld{G} \times \tilde{\cS}_\mu$, and will be denoted by $\tilde{\ld{J}}_\mu$. Since $\tilde{\cS}_\mu \cong \cS_\mu \times_{G} \tilde{\ld{G}}$, we may identify
    $$\tilde{\ld{J}}_\mu \cong \ld{J}_\mu \times_{\cS_\mu} \tilde{\cS}_\mu \cong (f \cdot T) \times_{B/\ld{B}} (f\cdot T).$$
\end{definition}
The following calculation also appears in \cite{bfm}, albeit using different techniques.
\begin{theorem}\label{thm: ku hmlgy reg centr}
    There is an isomorphism of group schemes over $f \cdot T \cong T \cong \cM_{T,0}$:
    $$\spec (\pi_0 \cf_T(\Gr_G)^\vee \otimes_\Z F) \cong (f \cdot T) \times_{B/\ld{B}} (f\cdot T).$$
\end{theorem}
Just as in \cref{thm: ordinary hmlgy reg centr}, the proof of \cref{thm: ku hmlgy reg centr} will rely on two lemmas.
\begin{lemma}\label{lem: ku borel is flat}
    The projection map $\tilde{\ld{J}}_\mu \to f\cdot T$ (onto either factor) is flat.
\end{lemma}
\begin{proof}
    Like in the proof of \cref{lem: kappa for borel is flat}, it suffices, by miracle flatness, to show that the fibers of the map $\tilde{\ld{J}}_\mu \to f\cdot T$ have dimension exactly $\rank(\ld{G})$. The fiber of this map over $f \cdot x\in f \cdot T$ is the scheme 
    $$Y = \{(g,y) \in \ld{B} \times T | \Ad_g(fy) = fx\}.$$
    Observe that the image of $\Ad_g(fy)$ and $fx$ (viewed as elements of $B$) under the map $B \to T$ are $y$ and $x$; so $y=x$ in $T$, which means that $Y$ is isomorphic to the centralizer $Z_{\ld{B}}(fx)$. The dimension estimate is equivalent to the claim that $fx$ is a regular element of $G$, since this means that its centralizer has minimal dimension (namely, the rank of $G$, which is also the rank of $\ld{G}$). The desired regularity of $fx$ follows from the discussion in \cite[Remark 4.7]{steinberg-slice}. (Note that, as mentioned in \textit{loc. cit.}, the specific choice of the regular unipotent $f$ is crucial for the regularity of $fx$.)
\end{proof}
\begin{notation}
    Let $\alpha$ be a root of $\ld{G}$. Say that a point $x \in T$ is \textit{$\alpha$-generic} if $x(h_\beta) \neq 1$ for all roots $\beta\neq \alpha$. This implies that the centralizer $Z_{\ld{G}}(x)$ has semisimple rank at most $1$. Let $T_{\alpha\dreg}$ denote the $\alpha$-regular locus. Observe that $T_\reg = \bigcup_{\alpha\in \Phi} T_{\alpha\dreg} \subseteq T$ is open, with complement of codimension $2$.
\end{notation}
The proof of the next result is exactly as in \cref{lem: localization of ordinary J}.
\begin{lemma}\label{lem: ku localization of ordinary J}
    There is an isomorphism
    \begin{equation}\label{eq: ku J of centralizer}
        \tilde{\ld{J}}_\mu(\ld{G})|_{T_{\alpha\dreg}} \xar{\sim} \tilde{\ld{J}}_\mu(Z_{\ld{G}}(x)^\circ)|_{T_{\alpha\dreg}},
    \end{equation}
    where $Z_{\ld{G}}(x)$ is the centralizer of some $x\in T_{\alpha\dreg}$ which lies on the $\alpha$-hyperplane, and $Z_{\ld{G}}(x)^\circ$ denotes the connected component of the identity. 
\end{lemma}
\begin{proof}[Proof of \cref{thm: ku hmlgy reg centr}]
    The argument of \cref{thm: ordinary hmlgy reg centr} reduces us to checking that the isomorphism of \cref{thm: ku hmlgy reg centr} holds if $G$ has semisimple rank $1$, i.e., is the product of a torus with one of $\GL_2$, $\SL_2$, or $\PGL_2$. Again, it is easy to match up the contributions from the toral factors, so we will assume that $G$ is either $\GL_2$, $\SL_2$, or $\PGL_2$. In this case, we can even replace $F$ by $\Z$. 
    \begin{itemize}
        \item When $G = \GL_2$, we may identify $\tilde{\ld{J}}_\mu$ with the centralizer (in $\ld{B}$) of $\begin{psmallmatrix}
            x & 0\\
            x & y
        \end{psmallmatrix}$. It is easy to compute that $\begin{psmallmatrix}
            a & 0 \\
            c & d
        \end{psmallmatrix}$ stabilizes $\begin{psmallmatrix}
            x & 0\\
            x & y
        \end{psmallmatrix}$ if and only if $c = \frac{a-d}{x-y} \cdot x$, meaning that 
        $$\tilde{\ld{J}}_\mu \cong \spec \Z[x^{\pm 1}, y^{\pm 1}, a^{\pm 1}, d^{\pm 1}, \tfrac{a-d}{x-y}].$$
        The coproduct sends $a\mapsto a \otimes a$ and $d\mapsto d \otimes d$.
        The same argument as in \cref{thm: ordinary hmlgy reg centr} implies that
        $$\KU^{T_c}_\ast(\Omega \GL_2) \cong \Z[u^{\pm 1}, x^{\pm 1}, y^{\pm 1}, a^{\pm 1}, d^{\pm 1}, \tfrac{a-d}{x-y}].$$
        The map induced on $T$-equivariant $\KU$-homology by the inclusion $T^2 \to \GL_2$ is simply given by the inclusion of the subalgebra $\Z[u^{\pm 1}, x^{\pm 1}, y^{\pm 1}, a^{\pm 1}, d^{\pm 1}]$. The coproduct on this subalgebra (and hence, on $\KU^{T_c}_\ast(\Omega \GL_2)$) is determined by the formulas $a\mapsto a \otimes a$ and $d \mapsto d \otimes d$. It follows that $\spec \KU^{T_c}_0(\Omega \GL_2)$ is isomorphic to $\tilde{\ld{J}}_\mu$ as group schemes over $\spec \pi_0 \KU_{T_c} \cong \spec \Z[x^{\pm 1}, y^{\pm 1}]$, as desired.
        \item When $G = \SL_2$, we may identify $\tilde{\ld{J}}_\mu$ with the centralizer (in $\ld{B} \subseteq \PGL_2$) of $\begin{psmallmatrix}
            x & 0\\
            x & x^{-1}
        \end{psmallmatrix}$. An element $\begin{psmallmatrix}
            a & 0 \\
            c & 1
        \end{psmallmatrix} \in \ld{B} \subseteq \PGL_2$ stabilizes $\begin{psmallmatrix}
            x & 0\\
            x & x^{-1}
        \end{psmallmatrix}$ if and only if $c = \frac{a-1}{x-x^{-1}} \cdot x$. Therefore, 
        $$\tilde{\ld{J}}_\mu \cong \spec \Z[x^{\pm 1}, a^{\pm 1}, \tfrac{a-1}{x-x^{-1}}];$$
        the coproduct sends $a\mapsto a \otimes a$.
        
        Next, there is an isomorphism
        $$\KU^{S^1}_\ast(\Omega \SL_2) \cong \Z[u^{\pm 1}, x^{\pm 1}, a^{\pm 1}, \tfrac{a-1}{x^2-1}].$$
        This is proved exactly as in \cref{thm: ordinary hmlgy reg centr}; the role of the class $2x$ is now played by the Chern class $x^2 - 1 \in \pi_0 \KU_{S^1}$ of the weight $2$ representation of $S^1$. (Recall that the action of $S^1$ on $G_c \cong \SU(2) \cong S^3$ exhibits it as the one-point compactification of the trivial $1$-dimensional representation summed with the weight $2$ representation of $S^1$ on $\cc$.)
        The map induced on $T$-equivariant $\KU$-homology by the inclusion $S^1 \to \SU(2)$ of the maximal torus is simply given by the inclusion of the subalgebra $\Z[u^{\pm 1}, x^{\pm 1}, a^{\pm 1}]$. The coproduct on this subalgebra (and hence, on $\KU^{S^1}_\ast(\Omega \SL_2)$) is determined by the formula $a\mapsto a \otimes a$. It follows that $\spec \KU^{S^1}_0(\Omega \SL_2)$ is isomorphic to $\tilde{\ld{J}}_\mu$ as group schemes over $\spec \pi_0 \KU_{S^1} \cong \spec \Z[x^{\pm 1}]$, as desired.
        \item When $G = \PGL_2$, we may identify $\tilde{\ld{J}}_\mu$ with the centralizer (in $\ld{B} \subseteq \SL_2$) of $\begin{psmallmatrix}
            x & 0\\
            x & 1
        \end{psmallmatrix}$. An element $\begin{psmallmatrix}
            a & 0 \\
            c & a^{-1}
        \end{psmallmatrix} \in \ld{B} \subseteq \SL_2$ stabilizes $\begin{psmallmatrix}
            x & 0\\
            x & 1
        \end{psmallmatrix}$ if and only if $c = \frac{a-a^{-1}}{x-1}\cdot x$. Therefore, 
        $$\tilde{\ld{J}}_\mu \cong \spec \Z[x^{\pm 1}, a^{\pm 1}, \tfrac{a-a^{-1}}{x-1}];$$
        the coproduct sends $a\mapsto a \otimes a$.
        Again, as in the preceding cases, there is an isomorphism
        $$\KU^{S^1}_\ast(\Omega \PGL_2) \cong \Z[u^{\pm 1}, x^{\pm 1}, a^{\pm 1}, \tfrac{a-a^{-1}}{x-1}],$$
        where the coproduct sends $a\mapsto a \otimes a$. It follows that $\spec \KU^{S^1}_0(\Omega \PGL_2)$ is isomorphic to $\tilde{\ld{J}}_\mu$ as group schemes over $\spec \pi_0 \KU_{S^1} \cong \spec \Z[x^{\pm 1}]$, as desired.\qedhere
    \end{itemize}
\end{proof}

\begin{remark}\label{rmk: mult GS alternative}
    Just for posterity, let us record a more canonical variant of the calculation above for $\ld{G} = \SL_2$, which does not require picking a Borel subgroup (i.e., which does not involve identifying $\tilde{\ld{G}}/\ld{G} \cong B/\ld{B}$). If $\lambda\in \GG_m$, we denote $\lambda + \lambda^{-1}\in \AA^1$ by $f(\lambda)$. The Kostant slice $\kappa:\ld{T} \cong \GG_m \to \tilde{\SL}_2$ is the map sending $\lambda \in \GG_m$ to the pair $(x, \ell)$ with
    $$x = \begin{pmatrix}
    f(\lambda)-1 & f(\lambda)-2 \\
    1 & 1
    \end{pmatrix}, \ \ell = \left[\lambda-1: 1\right].$$
    Note that this indeed a well-defined point in $\tilde{\SL}_2$, since one can check that $x$ preserves $\ell$: the key point is the conic relation
    $$2\lambda = f(\lambda)-\sqrt{f(\lambda)^2-4}.$$
    Indeed, this calculation of $\kappa(\lambda)$ is essentially immediate from the requirement that the following diagram commutes:
    $$\xymatrix{
    \GG_m \cong \ld{T} \ar[r]^-\kappa \ar[d]_-{\lambda \mapsto f(\lambda)} & \tilde{\SL}_2 \ar[d]\\
    \AA^1 \cong \ld{T}\mmod W \ar[r]^-\kappa_-{\lambda\mapsto \begin{psmallmatrix}
    \lambda-1 & \lambda-2 \\
    1 & 1
    \end{psmallmatrix}} & \SL_2.
    }$$
    Moreover, the $\SL_2$-action on $\tilde{\SL}_2$ sends $g\in \SL_2$ and $(x,\ell)$ to $(\Ad_g(x), g\ell)$. If $g = \begin{psmallmatrix}
    a & b \\
    c & d
    \end{psmallmatrix}$, we directly compute that $\Ad_g(x) = x$ if and only if $b = c(f(\lambda) - 2)$ and $a-d = (f(\lambda) - 2)c$, in which case $g$ also preserves $\ell$. Therefore, $g = \begin{psmallmatrix}
    (f(\lambda) - 2)c + d & (f(\lambda)-2)c \\
    c & d
    \end{psmallmatrix}$ for $c,d\in k$. In order for $\det(g) = 1$, we need 
    $$d^2 + c(f(\lambda)-2)(d-c) = 1.$$
    Both $x$ and $g$ can be simultaneously diagonalized (if $f(\lambda) \neq \pm 2$); note that $\lambda+\lambda^{-1}$ is an eigenvalue of $x$. If $t$ is an eigenvalue of $g$, then we have $c = \tfrac{t-t^{-1}}{\lambda - \lambda^{-1}}$ and $d = \tfrac{t^2\lambda + 1}{t(\lambda+1)}$.
    When $k$ is not of characteristic $2$, this shows that 
    $$\GG_m \times_{\tilde{\SL}_2/\SL_2} \GG_m \cong k[\lambda^{\pm 1}, t^{\pm 1}, \tfrac{t-t^{-1}}{\lambda - \lambda^{-1}}].$$
    This in turn implies that
    $$\GG_m \times_{\tilde{\SL}_2/\PGL_2} \GG_m \cong k[\lambda^{\pm 1}, t^{\pm 2}, \tfrac{t^2-1}{\lambda - \lambda^{-1}}],$$
    as desired.
\end{remark}
There is \textit{another} choice of slice when $G$ is simply-connected; the calculation of \cref{thm: ku hmlgy reg centr} continues to hold for it, too, as we now illustrate in the example of $\SL_2$.
\begin{definition}[Steinberg slice]
    Let $G$ be a simply-connected semisimple algebraic group. Given $w\in W$, let $N_w = N \cap w^{-1} N^- w$, so that $N_w = \prod_{\alpha\in \Phi_w} U_\alpha$, where $\Phi_w$ is the set of roots made negative by $w$. Let $w = \prod_{\alpha\in \Delta} s_\alpha\in W$ be a Coxeter element, and let $\dot{w}$ be a lift of $w$ to $N_G(T)$. Define the Steinberg slice $\Sigma = \dot{w} N_w\subseteq G$. Then \cite{steinberg-slice} proved/stated that the composite $\Sigma \to G \to G\mmod G \cong T\mmod W$ is an isomorphism. 
    Let $\tilde{\Sigma}$ denote the fiber product $\Sigma\times_G \tilde{\ld{G}}$, so that the composite $\tilde{\Sigma} \to \tilde{\ld{G}} \to T$ is an isomorphism. We will denote the inclusion of $\tilde{\Sigma}$ by $\sigma: T \to \tilde{\ld{G}}$.
\end{definition}
\begin{observe}
    We will illustrate the calculation of $T \times_{\tilde{\ld{G}}/\ld{G}} T$ (with $T$ mapping to $\tilde{\ld{G}}$ by $\sigma$) when ${G} = \SL_2$.
    View a point in $\tilde{\ld{G}}$ as a pair $(x\in \SL_2, \ell\subseteq \cc^2)$ such that $x$ preserves $\ell$. The Steinberg slice $\sigma:\GG_m \to \tilde{\SL}_2$ is the map sending $\lambda \in \GG_m$ to the pair $(x, \ell)$ with 
    $$x = \begin{pmatrix}
    \lambda + \lambda^{-1} & -1 \\
    1 & 0
    \end{pmatrix}, \ \ell = \left[\lambda: 1\right].$$
    Note that this indeed a well-defined point in $\tilde{\SL}_2$, since one can check that $x$ preserves $\ell$. This calculation of $\sigma(\lambda)$ is essentially immediate from the requirement that the following diagram commutes:
    $$\xymatrix{
    \GG_m \cong \ld{T} \ar[r]^-\sigma \ar[d]_-{\lambda \mapsto \lambda + \lambda^{-1}} & \tilde{\SL}_2 \ar[d]\\
    \AA^1 \cong \ld{T}\mmod W \ar[r]^-\sigma_-{\lambda\mapsto \begin{psmallmatrix}
    \lambda & -1 \\
    1 & 0
    \end{psmallmatrix}} & \SL_2.
    }$$
    Moreover, the $\SL_2$-action on $\tilde{\SL}_2$ sends $g\in \SL_2$ and $(x,\ell)$ to $(\Ad_g(x), g\ell)$. If $g = \begin{psmallmatrix}
    a & b \\
    c & d
    \end{psmallmatrix}$, one can directly compute that $g$ commutes with $\begin{psmallmatrix}
    \lambda + \lambda^{-1} & -1 \\
    1 & 0
    \end{psmallmatrix}$ if and only if $a = c(\lambda + \lambda^{-1}) + d$ and $b=-c$. Therefore, $g = \begin{psmallmatrix}
    c(\lambda + \lambda^{-1}) + d & -c \\
    c & d
    \end{psmallmatrix}$ for $c,d\in k$. In order for $\det(g) = 1$, we need 
    $$c^2 + d^2 + cd(\lambda + \lambda^{-1}) = 1.$$
    As long as $\lambda\neq \pm 1$, both $x$ and $g$ can be simultaneously diagonalized by $\begin{psmallmatrix}
    \lambda & \lambda^{-1} \\
    1 & 1
    \end{psmallmatrix}$: the diagonalization of $x$ is $\begin{psmallmatrix}
    \lambda & 0 \\
    0 & \lambda^{-1}
    \end{psmallmatrix}$, and the diagonalization of $g$ is $\begin{psmallmatrix}
    c\lambda + d & 0 \\
    0 & c\lambda^{-1} + d
    \end{psmallmatrix}$. If $t = c\lambda+d$, then $c\lambda^{-1}+d = t^{-1}$ by the above determinant relation. We also have that $a = t - \tfrac{\lambda(t-t^{-1})}{\lambda - \lambda^{-1}}$ and $c = \tfrac{t-t^{-1}}{\lambda - \lambda^{-1}}$. This shows that 
    $$\GG_m \times_{\tilde{\SL}_2/\SL_2} \GG_m \cong \spec k[\lambda^{\pm 1}, t^{\pm 1}, \tfrac{t-t^{-1}}{\lambda - \lambda^{-1}}],$$
    and hence that
    $$\GG_m \times_{\tilde{\SL}_2/\PGL_2} \GG_m \cong \spec k[\lambda^{\pm 1}, t^{\pm 2}, \tfrac{t^2-1}{\lambda - \lambda^{-1}}],$$
    as desired.
\end{observe}

\begin{corollary}\label{cor: ku reg locus ordinary ABG}
    There is an $F$-linear equivalence
    $$\Loc_{T_c}^\gr(\Gr_G; \KU) \otimes_\Z F \simeq \QCoh(\tilde{\ld{G}}^\reg/\ld{G}).$$
    Furthermore, the pushforward functor $\Loc_{T_c}^\gr(\Gr_G; \KU) \to \Loc_{T_c}^\gr(\ast; \KU)$ identifies with the pullback functor $\kappa^\ast: \QCoh(\tilde{\ld{G}}^\reg/\ld{G}) \to \QCoh(T)$.
\end{corollary}
\begin{proof}
    By definition, $\Loc_{T_c}^\gr(\Gr_G; \KU)$ is equivalent to the category of comodules over $\pi_0 \cf_T(\Gr_G)^\vee = \KU_0^T(\Gr_G)$ in the category of $\pi_0 \KU_{T_c}$-modules. By \cref{thm: ku hmlgy reg centr}, it can be identified the category of quasicoherent sheaves on the quotient stack $(f \cdot T)/\tilde{\ld{J}}_\mu$. We may view $\tilde{\ld{J}}_\mu$ as a closed subgroup scheme of the constant group scheme $\ld{B} \times (f \cdot T)$. This gives an isomorphism
    $$(f \cdot T)/\tilde{\ld{J}}_\mu \cong \ld{B} \backslash (\ld{B} \times (f \cdot T))/\tilde{\ld{J}}_\mu.$$
    It follows from Steinberg's work in \cite{steinberg-slice} that the $\ld{B}$-orbit of $f \cdot T$ inside $B$ is precisely the regular locus $B^\reg$. Since $\tilde{\ld{J}}_\mu$ is definitionally the stabilizer of $f \cdot T \subseteq B$, the quotient $(\ld{B} \times (f \cdot T))/\tilde{\ld{J}}_\mu$ is isomorphic to $B^\reg$; so there is an isomorphism $(f \cdot T)/\tilde{\ld{J}}_\mu \cong B^\reg/\ld{B}$.
    To finish, note that $\tilde{\ld{G}}^\reg/\ld{G} \cong B^\reg/\ld{B}$.
\end{proof}
Similarly, there is an $F$-linear equivalence
$$\Loc_{\ld{T}_c}^\gr(\Gr_G; \KU) \otimes_\Z F \simeq \QCoh(\tilde{\ld{G}}^{',\reg}/\ld{G}),$$
where $\tilde{\ld{G}}^{'}$ is $\ld{G} \times^{\ld{B}} \ld{B}$, with $\ld{B}$ acting on itself by conjugation. Note that $\tilde{\ld{G}}^{',\reg}/\ld{G} \cong \ld{B}^\reg/\ld{B}$ is an open substack of the stack $\ld{B}/\ld{B} \cong \Map(B\Z, B\ld{B})$ of $\ld{B}$-bundles on the circle $S^1 = B\Z$.

The equivalence of \cref{cor: ku reg locus ordinary ABG} is in fact symmetric monoidal for the convolution tensor structure on $\Loc_{T_c}^\gr(\Gr_G; \KU)$ (described in \cref{rmk: loc gr convolution tensor}) and the standard tensor product on $\QCoh(\tilde{\ld{G}}^\reg/\ld{G})$.

\begin{remark}\label{rmk: Loc G for KU}
    It can be shown that if $G$ has torsion-free fundamental group, there is an $F$-linear equivalence
    $$\Loc_{G_c}^\gr(\Gr_G; \KU) \otimes_\Z F \simeq \QCoh(G^\reg/\ld{G}).$$
    Just as in \cref{sec: degenerations}, the left-hand side is defined as
    $$\Loc_{G_c}^\gr(\Gr_G; \KU) = \coLMod_{\pi_0(\cf_G(\Gr_G)^\vee)}(\QCoh(T\mmod W)).$$
    Note that this is a sensible definition since $\pi_\ast \cf_G(\Gr_G)^\vee$ is concentrated in even degrees.
    Furthermore, the pushforward functor $\Loc_{G_c}^\gr(\Gr_G; \KU) \to \Loc_{G_c}^\gr(\ast; \KU)$ identifies with the pullback functor $\kappa^\ast: \QCoh(G^\reg/\ld{G}) \to \QCoh(T\mmod W)$. The proof of the displayed equivalence is quite similar to that of \cref{cor: ku reg locus ordinary ABG}, and in fact can be deduced from it using the observation that $\pi_0(\cf_G(\Gr_G)^\vee) = \pi_0(\cf_T(\Gr_G)^\vee)^W$ and that the natural map $\tilde{\ld{G}}^\reg \to G^\reg$ is a (ramified) $W$-cover. The first statement uses that $G$ has torsion-free fundamental group, and the second is a multiplicative version of Grothendieck-Springer theory.
\end{remark}
\begin{remark}
    In \cite[Section 3.7]{ku-rel-langlands}, we study a variant of \cref{cor: ku reg locus ordinary ABG}, where $\KU$ is replaced by \textit{connective} complex K-theory $\ku$; that is, $\Loc_{T_c}^\gr(\Gr_G; \KU)$ is replaced by $\Loc_{T_c}^\gr(\Gr_G; \ku)$. On the Langlands dual side, this has the effect of replacing $\tilde{\ld{G}}^\reg/\ld{G}$ by the $1$-parameter family over $\spec(\pi_\ast(\ku))/\GG_m \cong \AA^1/\GG_m$ whose generic fiber is $\tilde{\ld{G}}^\reg/\ld{G}$, and whose special fiber is $\tilde{\ld{\g}}^\reg/\ld{G}$.
\end{remark}
\begin{remark}\label{rmk: non-simply-laced ku}
    There is a variant of \cref{cor: ku reg locus ordinary ABG} if $G$ is not simply-laced, but it is more complicated to state. Let us just give the analogue of \cref{thm: ku hmlgy reg centr}. Suppose $G$ is not simply-laced, and let $T$ be a maximal torus of $G$; then $\ld{\g}$ is the fixed point subalgebra $\ld{\fr{h}}^\tau$ of an finite-order outer automorphism $\tau$ of a simply-laced Lie algebra $\ld{\fr{h}}$. Let $H$ denote the simply-connected simply-laced group corresponding to the Langlands dual $\fr{h}$, and let $T_H$ denote its maximal torus.  Then we may identify the fixed subset $\bX^\ast(T')^\tau$ with $\bX^\ast(T)$. If $n$ denotes the order of $\tau$, there is an action of $\Z/n$ on $T\pw{t}$, $G\pw{t}$, and $G\ls{t}$, given by $\tau$ on $T$ and $G$, and $t\mapsto \zeta_n \tau$ for a primitive $n$th root of unity $\zeta_n$. The appropriate replacement of $\pi_0 \cf_T(\Gr_G)^\vee$ in this case is $\pi_0 \cf_{T\pw{t}^{\Z/n}}(G_\ad\ls{t}^{\Z/n}/G_\ad\pw{t}^{\Z/n})^\vee$. The analogue of \cref{thm: ku hmlgy reg centr} (see \cite[Theorem 3.9]{finkelberg-tsymbaliuk}) states that this algebra is isomorphic to the stabilizer $\cS_\mu \times_{\ld{G}/\ld{G}} \cS_\mu$. 
\end{remark}
The map $\tilde{\ld{G}}^\reg/\ld{G} \to B\ld{G}$ defines a functor
\begin{equation}\label{eq: Rep G^ to KU loc}
    \Rep(\ld{G}) \to \QCoh(\tilde{\ld{G}}^\reg/\ld{G}) \simeq \Loc_{T_c}^\gr(\Gr_G; \KU) \otimes_\Z F.
\end{equation}
More generally, the map $\tilde{\ld{G}}^\reg/\ld{G} \to B\ld{G} \times B\ld{T}$ defines a functor
\begin{equation}\label{eq: Rep T^ x G^ to KU loc}
    \Rep(\ld{G} \times \ld{T}) \to \QCoh(\tilde{\ld{G}}^\reg/\ld{G}) \simeq \Loc_{T_c}^\gr(\Gr_G; \KU) \otimes_\Z F.
\end{equation}
If $V \in \Rep(\ld{G})$, let $\cS_\KU(V)$ denote the corresponding object of $\Loc_{T_c}^\gr(\Gr_G; \KU) \otimes_\Z F$. The same argument as in \cref{prop: ordinary realizing minuscule reps} shows the following, which says that $\cS_\KU(V) \in \Loc_{T_c}^\gr(\Gr_G; \KU)$ is the associated graded of a particular object $\cf_\lambda \in \Loc_{T_c}(\Gr_G; \KU)$ if $V$ is a minuscule $\ld{G}$-representation. 
\begin{prop}\label{prop: KU realizing minuscule reps}
    Let $\lambda_\bull = (\lambda_1, \cdots, \lambda_n)$ be a tuple of dominant minuscule weights of $\ld{G}$, let $|\lambda_\bull| = \sum_i \lambda_i$, and let $\ol{\Gr_G^{\lambda_\bull}}$ denote the corresponding \textit{convolution variety}. Let $\cf_{\lambda_\bull}$ denote the pushforward of the constant sheaf along the canonical map $q: \ol{\Gr_G^{\lambda_\bull}} \to \ol{\Gr_G^{|\lambda|}} \subseteq \Gr_G$. If $V_{\lambda_i}$ denotes the irreducible representation of $\ld{G}$ with highest weight $\lambda_i$, then there is an isomorphism $\cS_\KU(\bigotimes_i V_{\lambda_i}) \cong \cf_{\lambda_\bull}^\gr$.
\end{prop}
It would be very interesting to understand whether \cref{prop: KU realizing minuscule reps} can be extended to other non-minuscule irreducible representations. As in \cref{rmk: ordinary action on minuscule}, if $\lambda$ is a dominant minuscule weight of $\ld{G}$, then the coaction of $\pi_0 \cf_T(\Gr_G)^\vee$ on $\pi_0 \cf_T(G/P_\lambda)$ defines a homomorphism 
\begin{equation}
    \spec \pi_0 \cf_T(\Gr_G)^\vee \to \GL(\pi_0 \cf_T(G/P_\lambda))
\end{equation}
of group schemes over $T$, where $\GL(\pi_0 \cf_T(G/P_\lambda))$ denotes the group scheme of $\co_T$-linear automorphisms of the vector bundle $\pi_0 \cf_T(G/P_\lambda)$. Under the isomorphisms of \cref{thm: ku hmlgy reg centr} and \cref{prop: KU realizing minuscule reps}, this homomorphism factors as the composite
\begin{equation}\label{eq: KU factorization action on minuscule}
    \tilde{\ld{J}}_\mu \to \ld{G} \times T \to \GL(V_\lambda) \times T,
\end{equation}
where the second map describes the $\ld{G}$-action on $V_\lambda$.
Similar statements hold with $\tilde{\ld{J}}_\mu$ replaced by $\ld{J}_\mu$ and $\pi_0 \cf_T(G/P_\lambda)$ replaced by $\pi_0 \cf_G(G/P_\lambda) \cong \pi_0 \KU_{L_\lambda}$ (where $L_\lambda$ is the Levi quotient of $P_\lambda$).

\cref{thm: ku hmlgy reg centr} has several applications. Here is one, following the same proof as in \cref{prop: ordinary gelfand-graev}; it gives a \textit{multiplicative} version of the Gelfand-Graev action on the affine closure $\ol{T^\ast(\ld{G}/\ld{N})}$:
\begin{prop}[Multiplicative Gelfand-Graev action]\label{prop: ku gelfand-graev}
    The natural action of $\ld{G} \times \ld{T}$ on the affine closure $\ol{\ld{G} \times^{\ld{N}}B}$ extends to an action of $\ld{G} \times (W \rtimes \ld{T})$, where $W$ is the Weyl group.
\end{prop}
In the following, we will write $\ol{T^\ast_{\GG_m}(\ld{G}/\ld{N})}$ to denote the affine closure of the ``multiplicative'' cotangent bundle ${\ld{G} \times^{\ld{N}}B}$.
Unlike with \cref{prop: ordinary gelfand-graev}, \cref{prop: ku gelfand-graev} does require $G$ to be simply-laced; otherwise $\ol{T^\ast_{\GG_m}(\ld{G}/\ld{N})}$ would not even be well-defined. The moment map $\ol{T^\ast_{\GG_m}(\ld{G}/\ld{N})} \to G$ is $W$-equivariant for the trivial action on the target. There is a commutative diagram
$$\xymatrix{
\tilde{\ld{G}} \ar@{^(->}[r] \ar[dr] & \ol{T^\ast_{\GG_m}(\ld{G}/\ld{N})}/\ld{T} \ar[d] \\
& G
}$$
which relates $\ol{T^\ast_{\GG_m}(\ld{G}/\ld{N})}$ to the multiplicative Grothendieck-Springer resolution; and via this diagram, the multiplicative Gelfand-Graev action is closely related to the Weyl action in trigonometric/multiplicative Springer theory.
\begin{remark}
    The proof of \cref{prop: ku gelfand-graev} generalizes to show that if $\ld{P} \subseteq \ld{G}$ is a parabolic subgroup with Levi quotient $\ld{L}$ and unipotent radical $U_{\ld{P}}$, then the natural action of $\ld{G} \times \ld{L}$ on the affine closure $\ol{\ld{G} \times^{U_{\ld{P}}} P}$ extends to an action of $\ld{G} \times (W_L \rtimes \ld{L})$, where $W_L = N_{\ld{G}}(\ld{L})/\ld{L}$ is the Weyl group.
\end{remark}
\begin{example}\label{ex: Z/2 multiplicative symplectic fourier}
    Let us make the above action explicit in the example of $\ld{G} = \SL_2$ (so $W = \Z/2$). The group $B$ in this case is contained in $\PGL_2$, and can be chosen to be represented by matrices of the form $\begin{psmallmatrix}
        x & y \\
        0 & 1
    \end{psmallmatrix}$. The action of $\begin{psmallmatrix}
        1 & n \\
        0 & 1
    \end{psmallmatrix} \in \ld{N}$ on $\ld{G} \times B$ sends
    $$\begin{psmallmatrix}
        a & b \\
        c & d
    \end{psmallmatrix} \mapsto \begin{psmallmatrix}
        a & an + b \\
        c & cn + d
    \end{psmallmatrix}, \ \begin{psmallmatrix}
        x & y \\
        0 & 1
    \end{psmallmatrix} \mapsto \begin{psmallmatrix}
        x & y-n(x-1) \\
        0 & 1
    \end{psmallmatrix}.$$
    As explained in \cite[Remark 5.1.19]{ku-rel-langlands}, this means that the $\GG_a$-action fixes $a,c,x$, $B := ay + (x-1) b$, and $D = cy + (x-1) d$. There is a single relation between these classes, given by
    $$aD - cB = x-1.$$
    Let us relabel these variables so that $u = \begin{psmallmatrix}
        u_1 \\
        u_2
    \end{psmallmatrix} = \begin{psmallmatrix}
        a \\
        c
    \end{psmallmatrix}$ and $v = (v_1, v_2) = (D, -B)$. Since $x$ must be invertible, it follows that the affine closure $\ol{\SL_2 \times^{\GG_a} B}$ is given by the complement of the hypersurface $1 + \pdb{u,v}$ in $T^\ast(\AA^2)$. This is Van den Bergh's multiplicative quiver variety $\cB(U, V)$ from \cite{van-den-bergh-double-poisson}, specialized to the case when the vector spaces $U, V$ are $\AA^2, \AA^1$. An elementary analysis as in \cref{rmk: Z/2 symplectic fourier} shows that the $\Z/2$-action of \cref{prop: ku gelfand-graev} is given on $\ol{\SL_2 \times^{\GG_a} B} \subseteq T^\ast(\AA^2)$ by the formula
    $$\left(\begin{psmallmatrix}
        u_1\\
        u_2
    \end{psmallmatrix}, (v_1, v_2)\right) \mapsto \left(\tfrac{1}{1 + \pdb{u,v}}\begin{psmallmatrix}
        -v_2 \\
        v_1
    \end{psmallmatrix}, (u_2, -u_1)\right).$$
    In particular, it can be viewed as a multiplicative version of the symplectic Fourier transform.
\end{example}
\begin{remark}
    The multiplicative symplectic Fourier transform of \cref{ex: Z/2 multiplicative symplectic fourier} is related to another, more geometric, Fourier-type transform, as we now describe. Let $\ell$ be a (complex) line. Recall from \cite{beilinson-glue-perverse} that the ($1$-)category $\Perv(\ell)$ of perverse sheaves on $\ell$ with respect to the stratification by $0\in \ell$ and its complement is equivalent to the category of diagrams of the form
    \begin{equation}\label{eq: perverse on disk}   
    \begin{tikzcd}
	    X & Y
	\arrow["u"', shift right, from=1-1, to=1-2]
	\arrow["v"', shift right, from=1-2, to=1-1]
    \end{tikzcd}
    \end{equation}
    with $X$ and $Y$ being vector spaces, such that $\id_Y+uv$ (and therefore $\id_X + vu$) is invertible. This equivalence sends $\cf \in \Perv(\ell)$ to its spaces of nearby and vanishing cycles at $0\in \ell$ (and the maps $u,v$ arise via monodromy). The Fourier-Sato transform (see \cite[Definition 3.7.8]{kashiwara-schapira}) gives an equivalence $\Perv(\ell) \to \Perv(\ell^\ast)$, and one can check that it sends an object \cref{eq: perverse on disk} to the object
    $$\begin{tikzcd}
	    Y & X.
	\arrow["-v"', shift right, from=1-1, to=1-2]
	\arrow["u(\id+vu)^{-1}"', shift right, from=1-2, to=1-1]
    \end{tikzcd}$$
    \cref{ex: Z/2 multiplicative symplectic fourier} defines a morphism from $\ol{\SL_2 \times^{\GG_a} B}$ to the moduli of isomorphism classes of objects of $\Perv(\ell)$ (where $X = \AA^2$ and $Y = \AA^1$); this morphism intertwines the multiplicative symplectic Fourier transform with the Fourier-Sato transform.
\end{remark}
We also have the following analogue of \cref{prop: ordinary full faithful on gr loc}, whose proof is exactly the same (one only needs to note that $\tilde{\ld{G}}^\reg \hookrightarrow \tilde{\ld{G}}$ has complement of codimension $2$, and similarly for $G^\reg \hookrightarrow G$).
\begin{prop}\label{prop: KU full faithful on gr loc}
    Let $\Loc_{T_c}^\gr(\Gr_G; \KU)^\heart$ denote the heart of the $t$-structure on $\Loc_{T_c}^\gr(\Gr_G; \KU) = \coMod_{\pi_0(\cf_T(\Gr_G))^\vee}(\QCoh(T))$ coming from the standard (homological truncation) $t$-structure on $\QCoh(T)$. 
    Then, the composite functor
    $$\Loc_{T_c}^\gr(\Gr_G; \KU) \otimes_\Z F \simeq \QCoh(\tilde{\ld{G}}^\reg/\ld{G}) \to \QCoh(\ld{G}\backslash \ol{T^\ast_{\GG_m}(\ld{G}/\ld{N})}/\ld{T})$$
    is $t$-exact, and on hearts, it restricts to a fully faithful functor on the essential image of \cref{eq: Rep T^ x G^ to KU loc}. Furthermore, this functor is $W$-equivariant for the natural action of $W = \N_{G_c}(T_c)/T_c$ on the left-hand side and the Gelfand-Graev action of \cref{prop: ku gelfand-graev} on the right-hand side.

    Similarly, suppose $G$ has torsion-free fundamental group, and let $\Loc_{G_c}^\gr(\Gr_G; \KU)^\heart$ denote the heart of the $t$-structure on $\Loc_{G_c}^\gr(\Gr_G; \KU) = \coMod_{\pi_0(\cf_G(\Gr_G))^\vee}(\QCoh(T\mmod W))$ coming from the standard (homological truncation) $t$-structure on $\QCoh(T\mmod W)$. 
    Then, the composite functor
    $$\Loc_{G_c}^\gr(\Gr_G; \KU) \otimes_\Z F \simeq \QCoh(G^\reg/\ld{G}) \to \QCoh(G/\ld{G})$$
    is $t$-exact, and on hearts, it restricts to a fully faithful functor on the essential image of the functor $\Rep(\ld{G}) \to \Loc_{G_c}^\gr(\Gr_G; \KU) \otimes_\Z F$ (analogous to \cref{eq: Rep G^ to KU loc}).
\end{prop}
\cref{prop: KU full faithful on gr loc} gives an analogue of \cite[Theorem 4]{bf-derived-satake}: namely, if $\QCoh_\free(G/\ld{G})$ denotes the essential image of the pullback functor $\Rep(\ld{G}) \to \QCoh(G/\ld{G})$, then there is a fully faithful embedding 
$$\QCoh_\free(G/\ld{G})^\heartsuit \hookrightarrow \Loc_{G_c}^\gr(\Gr_G; \KU)^\heartsuit \otimes_\Z F.$$
Similarly, if $\QCoh_\free(\ld{G}\backslash \ol{T^\ast_{\GG_m}(\ld{G}/\ld{N})}/\ld{T})$ denotes the essential image of the pullback functor $\Rep(\ld{G} \times \ld{T}) \to \QCoh(\ld{G}\backslash \ol{T^\ast_{\GG_m}(\ld{G}/\ld{N})}/\ld{T})$, then there is a fully faithful embedding 
$$\QCoh_\free(\ld{G}\backslash \ol{T^\ast_{\GG_m}(\ld{G}/\ld{N})}/\ld{T})^\heartsuit \hookrightarrow \Loc_{T_c}^\gr(\Gr_G; \KU)^\heartsuit \otimes_\Z F.$$
This implies the following result.
\begin{corollary}\label{cor: ku minuscule equivalence}
    Let $\QCoh_{\free}(G/\ld{G})^{\min,\heartsuit}$ denote the essential image of $\Rep_\min(\ld{G})$ under the pullback functor $\Rep(\ld{G})^\heartsuit \to \QCoh(G/\ld{G})^\heartsuit$ (so it is the entirety of $\QCoh(G/\ld{G})^\heartsuit$ if $F$ has characteristic zero and $\ld{G}$ is not of type $E_8$). Similarly, let $(\Loc_{G_c}^\gr(\Gr_G; \KU)^{\heartsuit} \otimes_\Z F)^\min$ denote the idempotent completion of the subcategory of $\Loc_{G_c}^\gr(\Gr_G; \KU)^\heartsuit \otimes_\Z F$ spanned by $\cf_{\lambda_\bull}^\gr$ ranging over sequences $\lambda_\bull$ of minuscule highest weights. Then there is an equivalence
    $$\QCoh_\free(G/\ld{G})^{\min,\heartsuit} \simeq (\Loc_{G_c}^\gr(\Gr_G; \KU)^{\heartsuit} \otimes_\Z F)^\min.$$
\end{corollary}
There is a similar equivalence 
$$(\Loc_{T_c}^\gr(\Gr_G; \KU)^{\heartsuit} \otimes_\Z F)^\min \simeq \QCoh_\free(\ld{G}\backslash \ol{T^\ast_{\GG_m}(\ld{G}/\ld{N})}/\ld{T})^{\min,\heartsuit},$$
where these categories are defined analogously by idempotent completion. 

Note, again, that the category $(\Loc_{G_c}^\gr(\Gr_G; \KU)^{\heartsuit} \otimes_\Z F)^\min$ is the heart of a degeneration, in the sense of \cref{sec: degenerations}, of the similarly-defined category $(\Loc_{G_c}(\Gr_G; \KU) \otimes_\KU F[u^{\pm 1}])^\min$. (In particular, \cref{cor: ku minuscule equivalence} gives an equivalence between the purely algebraically defined category $\QCoh_\free(G/\ld{G})^{\min,\heartsuit}$ and a degeneration of the purely topologically defined category $(\Loc_{G_c}(\Gr_G; \KU) \otimes_\KU F[u^{\pm 1}])^\min$.) If $\lambda_\bull$ and $\mu_\bull$ are two sequences of dominant minuscule weights of $\ld{G}$, there is an equivalence of $\KU$-modules
$$\Map_{(\Loc_{G_c}(\Gr_G; \KU) \otimes_\KU F[u^{\pm 1}])^\min}(\cf_{\lambda_\bull}, \cf_{\mu_\bull}) \simeq \cf_{G_c}(\ol{\Gr_G^{\lambda_\bull}} \times_{\Gr_G} \ol{\Gr_G^{\mu_\bull}}),$$
so that the category $(\Loc_{G_c}(\Gr_G; \KU) \otimes_\KU F[u^{\pm 1}])^\min$ compares to the category from \cite[Section 3.5]{cautis-kamnitzer}.

As with \cref{prop: ordinary full faithful on gr loc}, the existence of the $t$-structure on $\Loc_{T_c}^\gr(\Gr_G; \KU)$ from \cref{prop: KU full faithful on gr loc} may at first glance perhaps be a bit surprising, since $\KU$ is a \textit{$2$-periodic} $\Eoo$-ring. Again, this periodicity prohibits $\Loc_{T_c}(\Gr_G; \KU)$ itself from having a $t$-structure; but the $\infty$-category $\Loc_{T_c}^\gr(\Gr_G; k)$ itself has both a \textit{homological} shift operation and a (periodic) \textit{weight} shifting operation. The homological shift on $\Loc_{T_c}^\gr(\Gr_G; \KU)$ is no longer periodic, and it is therefore reasonable to equip this $\infty$-category with a $t$-structure.

We now turn to the question of the analogue of \cref{cor: ku reg locus ordinary ABG} if $\KU$ is replaced by \textit{real} K-theory $\KO$. (Recall the definition of $\Loc_{T_c}^\gr(\Gr_G; \KO)$ from \cref{def: KO graded Loc}.) We begin by constructing a $\Z/2$-action on $\tilde{\ld{G}}/\ld{G} \cong B/\ld{B}$.
\begin{lemma}\label{lem: gamma from T to B^}
    There is a map $\gamma: T \to \ld{B}$ such that if $x\in T$, then $\Ad_{\gamma(x)}$ sends $(fx)^{-1}$ to $fx^{-1}$; moreover, $\gamma(x)$ squares to the identity.
\end{lemma}
\begin{proof}
    This follows from the fact that $(fx)^{-1}$ and $fx^{-1}$ in $B$ both have image $x^{-1}$ under the map $B \to B \mmod \ld{B} \cong T$.
\end{proof}
\begin{definition}\label{def: cplx conj on B mod B}
    Denote by $\chi$ the map $B \to B \mmod \ld{B} \cong T$. There is an involution $\theta$ of $B$ sending $x\mapsto \Ad_{\gamma(\chi(x))}(x^{-1})$, and similarly an involution $\theta$ of the constant group scheme $\ld{B} \times T$ over $T$ sending $(g, y) \mapsto (\Ad_{\gamma(y)}(g), y^{-1})$. This defines an involution $\theta$ of $B/\ld{B}$, and hence a $\Z/2$-action on it.
\end{definition}
\begin{example}
    Suppose $G = \GL_2$ or $\SL_2$. Then one can take for $\gamma$ the constant map $T \cong \GG_m^2 \to \ld{B}$ sending $(x,y)\mapsto \begin{psmallmatrix}
        1 & 0\\
        1 & -1
    \end{psmallmatrix}$. If $G = \PGL_2$, one can simply multiply $\gamma$ by a primitive fourth root of unity to get an element of $\ld{G} = \SL_2$. If $G = \GL_3$, then one can take for $\gamma$ the map $T \cong \GG_m^3 \to \ld{B}$ sending $(x,y,z)\mapsto \begin{psmallmatrix}
        1 & 0 & 0\\
        1 & -1 & 0 \\
        0 & 0 & zy^{-1}
    \end{psmallmatrix}$.
\end{example}
It is easy to show:
\begin{lemma}
    The involution $\theta: B/\ld{B} \to B/\ld{B}$ is isomorphic to the map induced by inversion on $B$.
\end{lemma}
\begin{prop}\label{prop: cplx conj KU and B mod B^}
    The $\Z/2$-action by $\theta$ on $\tilde{\ld{G}}/\ld{G} \cong B/\ld{B}$ restricts to an action on $\tilde{\ld{G}}^\reg/\ld{G}$, and under the equivalence of \cref{cor: ku reg locus ordinary ABG}, it identifies with the $\Z/2$-action via complex conjugation on equivariant $\KU$. In particular, there is an equivalence
    $$\Loc_{T_c}^\gr(\Gr_G; \KO) \otimes_\Z F \simeq \QCoh((\tilde{\ld{G}}^\reg/\ld{G})/\pdb{\theta}).$$
\end{prop}
\begin{proof}
    It follows from \cref{def: cplx conj on B mod B} that there is a commutative diagram
    $$\xymatrix{
    T \ar[r]^-{x \mapsto x^{-1}} \ar[d]_-\kappa & T \ar[d]^-\kappa \\
    B \ar[r]_-\theta & B.
    }$$
    Therefore, $\theta$ induces an automorphism of $T \times_{B^\reg/\ld{B}} T$, and it suffices (by the proof of \cref{cor: ku reg locus ordinary ABG}) to show that under the isomorphism 
    \begin{equation}\label{eq: for proof hmlgy cplx conj}
        \spec \pi_0 \cf_T(\Gr_G)^\vee \cong T \times_{B^\reg/\ld{B}} T
    \end{equation}
    of \cref{thm: ku hmlgy reg centr}, the action of $\theta$ corresponds to the action of complex conjugation on equivariant K-theory. Let $T^\gen \subseteq T$ denote the complement of the union of all hypertori cut out by the coroots of $G$. Since both sides of \cref{eq: for proof hmlgy cplx conj} are flat and affine over $T$, their rings of functions inject into the corresponding localizations along the map $T^\gen \to T$. Furthermore, these localizations are $\Z/2$-equivariant (for complex conjugation and $\theta$, respectively), and so it suffices to show that these localizations are $\Z/2$-equivariantly isomorphic.
    
    By \cref{lem: atiyah localization}, there is an isomorphism 
    $$\pi_0 \cf_T(\Gr_G)^\vee|_{T^\gen} \cong \pi_0 \cf_T(\Gr_T)^\vee|_{T^\gen} \cong \co_{T^\gen}[\bX_\ast(T)].$$
    Under this isomorphism, the action via complex conjugation on $\KU$ is given simply by inversion on $T^\gen$, and acts trivially on $\bX_\ast(T)$.
    Similarly, since $fx \in B$ is regular \textit{semisimple} if $x\in T^\gen$, and the centralizers of regular semisimple elements are tori, there is an isomorphism
    $$(T \times_{B^\reg/\ld{B}} T) \times_T T^\gen \cong T^\gen \times \ld{T}.$$
    Under this isomorphism, the action of $\theta$ is given simply by inversion on $T^\gen$, and acts trivially on $\ld{T}$. This clearly matches with the action on $\pi_0 \cf_T(\Gr_G)^\vee|_{T^\gen}$ via complex conjugation on $\KU$, as desired.
\end{proof}
\cref{prop: cplx conj KU and B mod B^} says that, up to replacing $B/\ld{B}$ by $\ld{B}/\ld{B}$ (that is, replacing $\Loc_{T_c}^\gr(\Gr_G; \KU)$ by $\Loc_{\ld{T}_c}^\gr(\Gr_G; \KU)$), the $\Z/2$-action via complex conjugation on equivariant $\KU$ identifies under \cref{cor: ku reg locus ordinary ABG} with the $\Z/2$-action on $\ld{B}/\ld{B} = \Map(B\Z, B\ld{B})$ coming from inversion on $\Z$.
\begin{remark}\label{rmk: cplx conj on G equiv KU}
    Assume $G$ has torsion-free fundamental group. One can similarly compute the effect of complex conjugation for $G_c$-equivariant local systems. Namely, as in \cref{lem: gamma from T to B^}, there is a map $\delta: T\mmod W \to \ld{B}$ such that if $x\in T\mmod W$, then $\Ad_{\delta(x)}$ sends $(fx)^{-1}$ to $fx$. Just as in \cref{def: cplx conj on B mod B}, we obtain an involution $\Theta$ on $G/\ld{G}$ which can be identified with the effect of inversion on $G$, and the resulting $\Z/2$-action on $\QCoh(G^\reg/\ld{G})$ identifies, under the equivalence of \cref{rmk: Loc G for KU}, with the $\Z/2$-action on $\Loc_{G_c}^\gr(\Gr_G; \KU)$ coming from complex conjugation on equivariant $\KU$. This gives an equivalence
    $$\Loc_{G_c}^\gr(\Gr_G; \KO) \otimes_\Z F \simeq \QCoh((G^\reg/\ld{G})/\pdb{\Theta}).$$
    Applied to the constant sheaf, the spectral sequence \cref{eq: sseq for coh of sheaf from loc gr KO} becomes
    $$E_2^{\ast,\ast} \cong \H^\ast(\Z/2; \co_{T\mmod W \times_{G/\ld{G}} T\mmod W}[u^{\pm 1}]) \Rightarrow \KO^{G_c}_\ast(\Gr_G) \otimes_\Z F.$$
\end{remark}
Let us now make a brief comment about the case of \textit{connective} real K-theory $\ko$, discussed in \cref{rmk: connective ko def}. For this, recall from \cite[Section 3.7]{ku-rel-langlands} that if $\GG_\beta$ denotes the group scheme over $\spec(\Z[\beta])/\GG_m$ given by $\spec \Z[\beta, x, \tfrac{1}{1+\beta x}]$ with group law $x + y + \beta xy$, $\DD(\GG_\beta)$ denotes its Cartier dual, and $H_\beta$ denotes $\Hom(\DD(\GG_\beta), H)$ for any group scheme $H$, then there is an equivalence
$$\Loc_{T_c}^\gr(\Gr_G; \ku) \otimes_\Z F \simeq \QCoh(B_\beta^\reg/\ld{B}),$$
where $B_\beta^\reg$ is the regular locus in $B_\beta$. Similarly, if $G$ has torsion-free fundamental group, there is an equivalence
$$\Loc_{G_c}^\gr(\Gr_G; \ku) \otimes_\Z F \simeq \QCoh(G_\beta^\reg/\ld{G}),$$
where $G_\beta^\reg$ is the regular locus in $G_\beta$. To descend these equivalences to $\ko$-coefficients, we need to describe an action of $\spec(\pi_\ast(\ku \otimes_\ko \ku))/\GG_m$ on $B_\beta$ and $G_\beta$. This is provided by \cref{eq: coaction connective ku on Gbeta}: if we write $\pi_\ast(\ku \otimes_\ko \ku) \cong \Z[\beta, r]/(r^2 - \beta r)$, then the action is given by the map
$$\spec(\Z[\beta, r]/(r^2 - \beta r)) \times_{\spec(\Z[\beta])} G_\beta \to G_\beta, (r,x) \mapsto x - \tfrac{rx^2}{1+\beta x}.$$
When $G = \SL_n$, this can be expressed in terms of $g = \id + \beta x$:
$$(r,g) \mapsto 1 + \tfrac{(\beta - 2 r)(g-1)}{\beta} + r \tfrac{g-g^{-1}}{\beta}.$$
The action of $\spec(\pi_\ast(\ku \otimes_\ko \ku))/\GG_m$ on $B_\beta$ and $G_\beta$ defines stacks $B_\beta^\ko$ and $G_\beta^\ko$ over $\spev(\ko)$. For any closed point $\spec(F) \to \spev(\ko)$, we obtain $F$-linear equivalences
\begin{align*}
    \Loc_{T_c}^\gr(\Gr_G; \ko) \otimes_{\spev(\ko)} F & \simeq \QCoh(B_\beta^{\ko,\reg}/\ld{B}), \\
    \Loc_{G_c}^\gr(\Gr_G; \ko) \otimes_{\spev(\ko)} F & \simeq \QCoh(G_\beta^{\ko,\reg}/\ld{G});
\end{align*}
upon inverting $\beta$, these are the equivalences of \cref{prop: cplx conj KU and B mod B^} and \cref{rmk: cplx conj on G equiv KU}.

Let us note that the calculation in \cref{prop: htpy KOS1} tells us that the actual homotopy groups of $\KO^{G_c}_\ast(\Gr_G)$ could differ from a calculation at the level of $\QCoh((G^\reg/\ld{G})/\pdb{\Theta})$, and furthermore that the resulting answer could be somewhat complicated. In general, the groups $\KO^{G_c}_\ast(\Gr_G)$ will not necessarily be concentrated in even degrees, and the differentials in the preceding spectral sequence will capture some of the (equivariant) attaching maps of the (equivariant) cells in $\Gr_G$. Let us now illustrate this by describing $\KO_\ast(\Gr_G)$ for $G = \SL_3$.
\begin{example}\label{ex: KO of Gr SLn}
    If $G = \SL_2$, then the James splitting says that stable homotopy type of $\Gr_G$ splits as the direct sum $\bigoplus_{n\geq 0} S^{2n}$. This implies that $\KO_\ast(\Gr_G) \simeq \bigoplus_{n\geq 0} \KO_{\ast-2n}$, and in fact there is a ring isomorphism $\KO_\ast(\Gr_G) \cong \KO_\ast[a]$ with $a$ in weight $2$. However, already in the case $G = \SL_3$, the analogous ring isomorphism $\KO_\ast(\Gr_G) \cong \KO_\ast[a,b]$ (with $a$ in weight $2$ and $b$ in weight $4$) fails. Let us indicate the topological reason for this failure: there is a map $\CP^2 \to \Gr_{\SL_3}$ which exhibits $\CP^2$ as a generating complex, meaning that the $2$- and $4$-cells of $\CP^2$ hit the classes $a$ and $b$, respectively. The ring $\KO_\ast(\Gr_{\SL_3})$ is therefore controlled by the $\KO_\ast$-module $\KO_\ast(\CP^2)$. The key point is that a classical theorem of Wood \cite{wood-banach} (which we reprove below) gives a $\KO$-module equivalence $\KO[\CP^2] \simeq \KO \oplus \KU$. In particular, the $\KO$-module $\KO[\CP^2]$ is not equivalent to $\KO \oplus \Sigma^2 \KO$ (unlike $\KU[\CP^2]$, which is equivalent to $\KU \oplus \Sigma^2 \KU$). This implies that $\KO_\ast(\Gr_{\SL_3})$ cannot be isomorphic to $\KO_\ast[a,b]$. 
    
    In fact, this can be generalized: there are equivalences $\KO[\CP^{2n}] \simeq \KO \oplus \KU^{\oplus n}$ and $\KO[\CP^{2n+1}] \simeq \KO \oplus \KU^{\oplus n} \oplus \Sigma^{2n+2} \KO$. This implies that $\KO_\ast(\Gr_{\SL_n})$ is not isomorphic to $\KO_\ast[a_1,\cdots, a_{n-1}]$ for $n>2$ (and in fact the behavior of $\KO_\ast(\Gr_{\SL_n})$ will depend on the parity of $n$, in stark contrast to the way one usually thinks about special linear groups!). Geometrically, this arises from the fact that the generating complex $\CP^{2n+1}$ of $\Gr_{\SL_{2n+2}}$ is a spin manifold, while the generating complex $\CP^{2n}$ of $\Gr_{\SL_{2n+1}}$ is not a spin manifold; and $\KO$ is $\Spin$-oriented \cite{atiyah-bott-shapiro} (meaning that spin manifolds admit $\KO$-fundamental classes).

    Calculating $\KO_\ast(\Gr_{\SL_3})$ explicitly is somewhat unpleasant, but let us at least indicate (from the perspective of \cref{sec: degenerations}) why there is a $\KO$-module equivalence $\KO[\CP^2] \simeq \KO \oplus \KU$. To do so, let $\tilde{\KU}_\ast(\CP^2)$ denote the reduced $\KU$-homology of $\CP^2$. One then has the homotopy fixed points spectral sequence
    \begin{equation}\label{eq: hfpss KO CP2}
        E_2^{s,\ast} \cong \H^s(\Z/2; \tilde{\KU}_\ast(\CP^2)) \Rightarrow \tilde{\KO}_{\ast-s}(\CP^2),
    \end{equation}
    which we will now calculate. This can be viewed as a special case of the spectral sequence \cref{eq: sseq for coh of sheaf from loc gr}, applied to $k = \KO$ and $\cf$ being the pushforward of the constant sheaf along the map $\CP^2 \to \Gr_{\SL_3}$. To compute \cref{eq: hfpss KO CP2}, one first observed that the action of complex conjugation on $\KU_\ast(\Gr_{\SL_3}) \cong \Z[u^{\pm 1}, a,b]$ is given by $u\mapsto -u$, $a\mapsto -a$, and $b \mapsto b+a$. 
    The action on $a$ and $b$ can also be seen from the equivalence
    $$\Loc^\gr(\Gr_G; \KO) \otimes_\Z F \simeq \QCoh((\cU^\reg/\ld{G})/\pdb{\theta})$$
    derived from \cref{prop: cplx conj KU and B mod B^}, where $\cU^\reg$ is the regular locus in the unipotent cone of $G$ (and $\theta$ acts by inversion on $\cU^\reg$). The action of complex conjugation on $\KU_0(\Gr_{\SL_3})$ identifies with the action of $\theta$ on the centralizer of a regular unipotent element of $\SL_3$, which consists of matrices of the form $\begin{psmallmatrix}
        1 & a & b \\
        0 & 1 & a \\
        0 & 0 & 1
    \end{psmallmatrix}$.
    
    This specifies the action of $\Z/2$ on $\tilde{\KU}_\ast(\CP^2) \cong \Z[u^{\pm 1}]\{a,b\}$, from which we find that
    $$E_2^{\ast,\ast} \cong E_2^{0,\ast} \cong \Z\{\cdots, (2b+a)u^{-2}, au^{-1}, 2b+a, au, (2b+a)u^2, au^3, \cdots\}.$$
    The entire spectral sequence is concentrated in a single line, so it automatically degenerates; this implies that $\tilde{\KO}_\ast(\CP^2)$ is isomorphic to $\Z$ in each even degree (and is zero otherwise). There are canonical maps $\KO \to \KU$ and $\CP^2 \to \KU$, which define a map $\KO \otimes \CP^2 \to \KU$. The above calculation implies that it induces an isomorphism on homotopy groups, and hence is an equivalence.
\end{example}

It is also possible to describe an analogue of \cref{cor: ku reg locus ordinary ABG} with coefficients in the $K(1)$-local sphere $\Lone S^0$ (for some fixed prime $p$). Recall from \cref{def: J graded Loc} that if $A$ is a $p$-power torsion abelian group and $X$ is a (ind-)finite $A$-space with even cells, then the $\infty$-category $\Loc_{A}^\gr(X; \Lone S^0)$ is obtained from $\Loc_{A}^\gr(X; \KU)$ by taking homotopy $\Z_p^\times$-invariants. 
\begin{definition}\label{def: Zpx and grothendieck springer}
    For $n\geq 0$, let $\tilde{\ld{G}}_{p^n}$ denote the (derived) fiber product
    $$\tilde{\ld{G}}_{p^n} := \tilde{\ld{G}} \times_{T} T[p^n].$$
    That is, $\tilde{\ld{G}}_{p^n}/\ld{G} \cong B_{p^n}/\ld{B}$, where $B_{p^n}$ is the subgroup of those elements of $B$ whose eigenvalues are all $p^n$th roots of unity. Similarly, let $\tilde{\ld{G}}_{p^n}^\reg$ denote the fiber product
    $$\tilde{\ld{G}}_{p^n}^\reg := \tilde{\ld{G}}^\reg \times_{T} T[p^n],$$
    There is an action of $\Z_p^\times$ (which factors through an action of $(\Z/p^n)^\times$) on $B_{p^n}$ given by exponentiation; the $\Z_p^\times$-action commutes with the $\ld{B}$-action by conjugation, and hence defines a $\Z_p^\times$-action on the quotient stack $B_{p^n}/\ld{B} \cong \tilde{\ld{G}}_{p^n}/\ld{G}$.
\end{definition}
\begin{prop}\label{prop: imJ reg locus ABG}
    Let $n\geq 0$. The $\Z_p^\times$-action on $\tilde{\ld{G}}_{p^n}/\ld{G}$ restricts to an action on $\tilde{\ld{G}}_{p^n}^\reg/\ld{G}$, and there is an equivalence
    $$\Loc_{T_c[p^n]}^\gr(\Gr_G; \Lone S^0) \otimes_{\Z_p} F \simeq \QCoh((\tilde{\ld{G}}_{p^n}^\reg/\ld{G})/\Z_p^\times).$$
\end{prop}
\begin{proof}
    Base-changing the $\QCoh(T)$-linear equivalence \cref{cor: ku reg locus ordinary ABG} along $\QCoh(T) \to \QCoh(T[p^n])$ gives an equivalence
    $$\Loc_{T_c[p^n]}^\gr(\Gr_G; \KU^\wedge_p) \otimes_\Z F \simeq \QCoh(\tilde{\ld{G}}_{p^n}^\reg/\ld{G}).$$
    Since the $\Z_p^\times$-action on $T[p^n]$ is given by exponentiation, the strategy of \cref{prop: cplx conj KU and B mod B^} shows that the $\Z_p^\times$-action on the left-hand side of the above equivalence via Adams operations on $p$-completed $\KU$ identifies with the $\Z_p^\times$-action on $\tilde{\ld{G}}_{p^n}^\reg/\ld{G}$ described in \cref{def: Zpx and grothendieck springer}. Taking homotopy $\Z_p^\times$-invariants of the displayed equivalence then yields the desired statement.
\end{proof}

The equivalences of \cref{prop: imJ reg locus ABG} are all compatible in $n$, and one finds that there is an equivalence
$$\Loc_{T_c[p^\infty]}^\gr(\Gr_G; \Lone S^0) \otimes_{\Z_p} F \simeq \QCoh((\tilde{\ld{G}}_{p^\infty}^\reg/\ld{G})/\Z_p^\times).$$
\begin{lemma}\label{lem: p-nilpotence and p power torsion}
    There is an isomorphism $B_{p^\infty} \cong B[p^\infty]$ over a $p$-nilpotent ring.
\end{lemma}
\begin{proof}
    Recall that $B_{p^\infty} = B \times_{T} T[p^\infty]$, so there is a canonical map $B[p^\infty] \to B_{p^\infty}$. It suffices to show that if $N$ denotes the unipotent radical of $B$, then $N \cong N[p^\infty]$. This follows by induction on the central series of $N$ (whose quotients are all isomorphic to $\GG_a$), and the fact that $\GG_a \cong \GG_a[p^\infty]$ since we are working over a $p$-nilpotent base.
\end{proof}
It follows that there is an equivalence
$$\Loc_{T_c[p^\infty]}^\gr(\Gr_G; \Lone S^0) \otimes_{\Z_p} F \simeq \QCoh((B[p^\infty]^\reg/\ld{B})/\Z_p^\times);$$
similarly, there is an equivalence
$$\Loc_{\ld{T}_c[p^\infty]}^\gr(\Gr_G; \Lone S^0) \otimes_{\Z_p} F \simeq \QCoh((\ld{B}[p^\infty]^\reg/\ld{B})/\Z_p^\times).$$
Note that $\ld{B}[p^\infty]^\reg/\ld{B}$ is an open substack of $\ld{B}[p^\infty]/\ld{B} \cong \colim_n \Map(B\Z/p^n, B\ld{B})$; one might heuristically view the latter as the stack of $\ld{B}$-bundles on the $p$-adic solenoid.

Finally, let us discuss the question of loop-rotation equivariance. Recall from \cref{def: nil-hecke} the algebra $\cH(\bH, T, W)$ associated to a $1$-dimensional group scheme $\bH$ over a field $F$ and a root system with torus $T$ and Weyl group $W$. In the following discussion, we will set $\bH = \GG_m$, so that $\bH_T = T$; we will also write $q$ to denote the standard character of $S^1_\rot$, so that $\pi_0 \KU_{S^1_\rot} \cong \Z[q^{\pm 1}]$. Exactly the same argument as in \cref{thm: ordinary loop-rot flag} shows the following result; here, $G$ does not need to be simply-laced.
\begin{theorem}\label{thm: ku loop-rot flag}
    There is an isomorphism of associative $\Z[q^{\pm 1}]$-algebras
    \begin{equation}\label{eq: ku comparison to nil hecke}
        \pi_0 \cf_{\tilde{T}_c}(\Fl_G)^\vee \cong \cH(\GG_m, \tilde{T}, \tilde{W}).
    \end{equation}
    Here, $\pi_0 \cf_{\tilde{T}_c}(\Fl_G)^\vee$ is equipped with the associative algebra structure coming from convolution. Moreover, the above isomorphism is also one of (cocommutative) Hopf $\pi_0 \KU_{\tilde{T}_c} \cong \co_{\bH_{\tilde{T}}}$-algebroids.
\end{theorem}
\begin{remark}
    Recall the quotient $\tilde{T}\mmod \tilde{W}$ from \cref{rmk: relationship to t mmod Waff}. The discussion therein combined with \cref{thm: ku loop-rot flag} gives an equivalence of categories
    $$\pi_0 \cf_{\tilde{T}_c}(\Fl_G)^\vee\modc \simeq \cH(\GG_m, \tilde{T}, \tilde{W})\modc \simeq \IndCoh(\tilde{T}\mmod \tilde{W}).$$
    It follows, via the argument of \cref{cor: reg locus quantized satake}, that $\Loc_{\tilde{T}_c}^\gr(\Fl_G; \KU) \otimes_\Z F$ is equivalent to the quotient of $\QCoh(\tilde{T})$ by the action of $\IndCoh(\tilde{T}\mmod \tilde{W})$.
\end{remark}
Assume, that $G$ is simple, simply-connected, and simply-laced.
Just as in \cref{sec: review Q coeff}, one would like to use \cref{thm: ku loop-rot flag} to prove analogues of \cref{cor: reg locus quantized satake} and \cref{eq: gen quantized ABG}. Namely, we expect that $\Loc_{T_c \times S^1_\rot}^\gr(\Gr_G; \KU)$ can be identified with a certain localization of the \textit{quantum} version $\co^{\univ,q}_\ld{G}$ of $\DMod_\hbar(\ld{G} / \ld{N})^{(\ld{G} \times \ld{T}, \weak)}$; the category $\co^{\univ,q}_\ld{G}$ itself is described in \cite[Definition 4.24]{univ-cat-o}. Similarly, we expect that $\Loc_{G_c \times S^1_\rot}^\gr(\Gr_G; \KU)$ can be identified with a certain localization of the \textit{quantum} version $\HC^{q}_\ld{G}$ of $\HC^\hbar_{\ld{G}}$. Here, $\HC^q_\ld{G}$ denotes a slight variant of the category described in \cite[Definition 2.24]{univ-cat-o}: instead of left $\co_q(\ld{G})$-modules in $\Rep_q(\ld{G})$, one needs to consider left $\co_q(G)$-modules in $\Rep_q(\ld{G})$. There is a fully faithful embedding of $\HC^q_\ld{G}$ into the category of $\co_q(G)$-bimodules in $\Rep_q(\ld{G})$.

The strategy of \cref{cor: reg locus quantized satake} very nearly works to prove these expectations: the only sticking point is that we do not have a $q$-analogue of \cite[Theorem 1.2.1]{ginzburg-whittaker} (which would give a $q$-analogue of \cref{cor: loop-rot Gr and biWhit}); I am currently exploring this direction of research. This is related to \cite[Conjecture 3.17]{finkelberg-tsymbaliuk}. 
Despite this complication, one can nevertheless use \cite[Theorem 4.10]{ondrus-whit-uq-sl2} (which gives an analogue of the isomorphism $Z(U(\ld{\g})) \cong U(\ld{\g})/_\psi \ld{N}$ of \cite[Theorem 3.7]{kostant-whittaker}), \cref{lem: hochschild and def to nc}, and the arguments of \cite[Section 2.3, Lemma 4]{bf-derived-satake} to show the following analogue of \cite[Theorem 2]{bf-derived-satake} and \cref{prop: KU full faithful on gr loc}:
\begin{prop}\label{prop: rep q fully faithful}
    Let $[n]_q = \tfrac{q^n - q^{-n}}{q - q^{-1}}$, and let $R$ denote the $(q-1)$-adic completion of $\cc\pw{q-1}[\tfrac{1}{[1]_q}, \tfrac{1}{[2]_q}, \cdots]$. 
    Let $\HC^{q, \free}_{\PGL_2}$ denote the full subcategory of $\HC^{q}_{\PGL_2} = \LMod_{\co_q(\SL_2)}(\Rep_q(\PGL_2))$ spanned by the essential image of the functor $\Rep_q(\PGL_2) \to \HC^{q}_{\PGL_2}$. 
    Finally, define
    $$\Loc_{\SU(2) \times S^1_\rot}^\gr(\Gr_{\SL_2}; \KU)^\wedge = \coLMod_{\pi_0 (\cf_{\SU(2)}(\Gr_{\SL_2})^\vee)^{hS^1_\rot}}(\QCoh(\cM_{\SU(2),0} \times \widehat{\cM_{S^1_\rot,0}})).$$
    Then the standard (homological) $t$-structure on $\QCoh(\cM_{\SU(2),0} \times \widehat{\cM_{S^1_\rot,0}})$ defines a $t$-structure on $\Loc_{\SU(2) \times S^1_\rot}^\gr(\Gr_{\SL_2}; \KU)^\wedge$, and there is a fully faithful functor 
    $$(\HC^{q, \free}_{\PGL_2})^\heartsuit \hookrightarrow \Loc_{\SU(2) \times S^1_\rot}^\gr(\Gr_{\SL_2}; \KU)^{\wedge,\heartsuit} \otimes_{\Z\pw{q-1}} R.$$
\end{prop}
Since it is rather technical, and is mostly a repetition of the arguments of \cite[Section 2.3 and Lemma 4]{bf-derived-satake}, we omit the argument. Note that $\Loc_{\SU(2) \times S^1_\rot}^\gr(\Gr_{\SL_2}; \KU)^\wedge$ is a full subcategory of $\Loc_{\SU(2) \times S^1_\rot}^\gr(\Gr_{\SL_2}; \KU)$: it should be understood as the $(q-1)$-adic completion of $\Loc_{\SU(2) \times S^1_\rot}^\gr(\Gr_{\SL_2}; \KU)$, where we identify $\pi_0 \KU_{S^1_\rot} \cong \Z[q^{\pm 1}]$.
\newpage

\section{The elliptic story}\label{sec: ell coeff}
In this section, we will work over a given algebraically closed field $F$. For the moment, $\ld{G}$ will be a (split) almost-simple group over $F$ with torsion-free fundamental group. Let $E$ be a (smooth) elliptic curve over $k$, let $\Bun_\ld{B}^0(E)$ denote the moduli stack of $\ld{B}$-bundles on $E$ of degree $0$, and let $\Bun_\ld{T}^0(E)$ denote the scheme of $\ld{T}$-bundles on $E$ of degree $0$. We will also make use of the stack $\Bun_\ld{G}^\ss(E)$ of semistable $\ld{G}$-bundles on $E$. Our main references for the structure of $\Bun_\ld{B}^0(E)$ and $\Bun_\ld{G}^\ss(E)$ will be \cite{davis-elliptic-springer, grojnowski-shepherd-barron}.
\begin{definition}\label{B-bundle regular}
    Say that a $\ld{B}$-bundle $\cP_\ld{B}$ on $E$ is \textit{regular} if $\dim \Aut(\cP_\ld{B}) = \rank(\ld{G})$. Let $\Bun_\ld{B}^0(E)^\reg$ denote the open substack of $\Bun_\ld{B}^0(E)$ defined by the regular $\ld{B}$-bundles. Similarly, if $\cP\in \Bun_\ld{G}^\ss(E)$ is a semistable $\ld{G}$-bundle on $E$, we say that $\cP$ is \textit{regular} if $\dim \Aut(\cP) = \rank(\ld{G})$. Let $\Bun_\ld{G}^\ss(E)^\reg \subseteq \Bun_\ld{G}^\ss(E)$ denote the open substack of regular semistable $\ld{G}$-bundles.
\end{definition}
\begin{prop}\label{elliptic-kostant}
    The map $\Bun_\ld{B}^0(E) \to \Bun_\ld{T}^0(E)$ admits a canonical unique section $\kappa: \Bun_\ld{T}^0(E) \to \Bun_\ld{B}^0(E)$ landing in $\Bun_\ld{B}^0(E)^\reg$.
\end{prop}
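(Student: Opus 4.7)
The plan is to construct the section $\kappa$ by induction along the lower central series $N = N^{(1)} \supseteq N^{(2)} \supseteq \cdots \supseteq N^{(r+1)} = \{1\}$ of $N = [B,B]$, reducing at each stage to a linear extension problem on $E$. Each successive quotient $N^{(i)}/N^{(i+1)}$ is abelian and canonically decomposes, as a $T$-representation, as $\bigoplus_{\alpha \in \Phi^+_i} \fr{g}_\alpha$ for a subset $\Phi^+_i \subseteq \Phi^+$. Twisting by $\cP_T\in \Bun_T^0(E)$, one obtains a corresponding filtration of the twisted sheaf of groups $N_{\cP_T} = \cP_T \times^T N$ on $E$, whose abelian successive quotients are $\bigoplus_{\alpha \in \Phi^+_i} \cL_\alpha$ with $\cL_\alpha := \cP_T \times^\alpha \GG_a$ a degree zero line bundle. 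By Riemann-Roch and Serre duality on $E$, one has $H^0(E; \cL_\alpha) = H^1(E; \cL_\alpha) = 0$ unless $\cL_\alpha \cong \co_E$, in which case both are one-dimensional.

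I would then carry out the induction. Suppose a lift of $\cP_T$ to a $B/N^{(i+1)}$-bundle has been chosen; the set of further lifts to a $B/N^{(i+2)}$-bundle is a torsor over $\bigoplus_{\alpha \in \Phi^+_i} H^1(E; \cL_\alpha)$. For each $\alpha$ with $\cL_\alpha \not\cong \co_E$, this group vanishes and the lift is unique. For $\alpha$ with $\cL_\alpha \cong \co_E$, I would select the class of the canonical non-split (Atiyah-type) extension, which is determined up to the $\Aut(\cL_\alpha) = \GG_m$-action by the generator of $H^1(E; \co_E) \cong k$. Because the obstructions to lifting through the nonabelian step live in $H^2(E; -) = 0$, the induction proceeds unimpeded.

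For uniqueness and for the regularity of the image, the key observation is that $T$ always acts on any $B$-reduction $\cP_B$ of a degree-zero $G$-bundle via global automorphisms, so $\dim \Aut(\cP_B) \geq \rank(G)$; equality holds precisely when, at each step of the central series with $\cL_\alpha \cong \co_E$, the associated extension class is nonzero --- otherwise the root subgroup $U_\alpha$ would contribute an extra dimension to $\Aut(\cP_B)$. Since the nonvanishing choice is canonical (up to rescaling absorbed into the torus), this simultaneously pins down the section $\kappa$ uniquely and shows that $\kappa(\cP_T)$ lies in $\Bun_B^0(E)^\reg$.

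The main obstacle I anticipate is the final regularity verification: one must check that forcing the extensions at each root to be the Atiyah-type extension does cut the automorphism group down to exactly $\rank(G)$, rather than merely preventing contributions from each $U_\alpha$ individually. After base-changing to a geometric point and passing to infinitesimal automorphisms, this reduces to a Lie-algebra computation identifying the stabilizer with the centralizer of a principal nilpotent element in $\fr{b}$ --- the exact analogue of Kostant's transversality result used in \cref{additive kostant slice} --- and the base case $G = \SL_2$ is already explicit from the Atiyah bundle description recalled in the introduction. A cross-check on the whole construction is then that it restricts along the inclusion of the identity section $e: \spec k \hookrightarrow E$ to the ordinary Kostant slice $\fr{t}\mmod W \to \fr{b}/B$, which serves as a sanity consistency with the additive story.
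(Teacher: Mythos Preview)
Your approach is genuinely different from the paper's: the paper does not carry out a direct construction, but instead imports the section $\kappa$ from Davis's elliptic Springer theory, specifically composing the zero section of the theta line bundle $\Theta^{-1}$ with the map $\Theta^{-1} \to \tilde{\Bun}_G^{\ss}(E)^\reg \cong \Bun_B^0(E)^\reg$ of \cite[Theorem 4.3.2]{davis-elliptic-springer}. The regularity characterization is likewise cited from \cite[Proposition 5.5.5]{davis-elliptic-springer}, and uniqueness from the fact that each geometric fiber of $\Bun_G^\ss(E) \to \Hom(\bX^\ast(T),E)\mmod W$ contains a unique regular bundle. Your lower-central-series induction is closer in spirit to the older Friedman--Morgan argument (also cited in the paper), and is more elementary; its advantage is that it avoids the Kontsevich--Mori compactification and theta-bundle machinery entirely.

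There is, however, a real gap in your uniqueness argument. Your criterion ``equality holds precisely when, at each step with $\cL_\alpha\cong\co_E$, the extension class is nonzero'' is not the correct one: the characterization in \cite[Proposition 5.5.5]{davis-elliptic-springer} (quoted in the paper's proof) says regularity is governed only by the \emph{simple} roots in $\Delta_\cP$. At height $\geq 2$, where only non-simple roots appear, the extension class does not affect regularity, so your argument does not force a unique choice there. You then need to show that the automorphisms of the already-constructed partial lift $\cP_B/N^{(i+1)}$ act transitively on the set of further lifts --- this is where the genuine work lies, and it is not absorbed simply by ``rescaling into the torus'' (the torus only acts diagonally via the characters $\alpha$, which does not obviously identify all choices at a fixed height).

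A secondary issue: your construction is pointwise over $\cP_T$, but $\kappa$ must be a morphism of stacks over $\Bun_T^0(E)$. The locus where $\cL_\alpha\cong\co_E$ is a divisor, and the rank of $R^1p_*\cL_\alpha$ jumps there, so gluing your fiberwise choices into a family needs care. (The paper's later $\SL_2$ computation in the proof of \cref{borel-intersection} does exactly this, via the global Atiyah bundle specified by a nonzero class in $H^1(E^\vee\times\GG_0;\cP)\cong k$; extending this to higher rank inductively is plausible but not automatic.) Finally, your sanity check ``restricts along $e:\spec k\hookrightarrow E$ to the Kostant slice'' is not literally meaningful --- restricting a bundle on $E$ to a point yields a trivial bundle; the correct comparison is via degeneration of $E$ to a cuspidal or nodal curve.
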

\begin{proof}
    Let $\cP$ be a semistable $\ld{G}$-bundle on $E$.
    By \cite[Proposition 4.4.5]{davis-elliptic-springer}, the regularity of $\cP$ is equivalent to the condition that for any (or some) $\ld{B}$-reduction $\cP_\ld{B}$ of $\cP$ of degree $0$, the associated $\ld{N}$-bundle $\cP_\ld{B}/\ld{T}$ is induced from an $\ld{N}_\cP$-bundle with nontrivial associated $\ld{N}_\alpha$-bundle for each simple root $\alpha$ in a particular subset of $\Delta$ determined by $\cP$. Moreover, every geometric fiber of the map $\Bun_\ld{G}^\ss(E) \to \Hom(\bX^\ast(\ld{T}), E)\mmod W$ to the coarse moduli space of $\Bun_\ld{G}^\ss(E)$ contains a unique regular semistable $\ld{G}$-bundle. Also see \cite[Proposition 3.9]{friedman-morgan-witten}, where a similar result is stated.

    Following \cite[Definition 3.1.7]{davis-elliptic-springer}, set
    $$\tilde{\Bun}_\ld{G}^\ss(E)^\reg \cong \Bun_\ld{G}^\ss(E)^\reg \times_{\Hom(\bX^\ast(\ld{T}), E)\mmod W} \Hom(\bX^\ast(\ld{T}), E).$$
    Let $\Bun_\ld{B}^0(E)^\reg$ denote the moduli stack of $\ld{B}$-bundles on $E$ of degree $0$.
    It then follows from the isomorphism $\tilde{\Bun}_\ld{G}^\ss(E) \cong \Bun_\ld{B}^0(E)$ of \cite[Proposition 2.1.11]{davis-elliptic-springer} and the equality $\dim \Aut(\cP) = \dim \Aut(\cP_\ld{B})$ that there is an isomorphism $\tilde{\Bun}_\ld{G}^\ss(E)^\reg \cong \Bun_\ld{B}^0(E)^\reg$. In particular, every geometric fiber of the map $\Bun_\ld{B}^0(E) \to \Hom(\bX^\ast(\ld{T}), E) = \Bun_\ld{T}^0(E)$ contains a unique regular $\ld{B}$-bundle of degree $0$. 

    The existence of $\kappa$ is a consequence of \cite[Theorem 4.3.2]{davis-elliptic-springer}, which is a refinement of \cite[Theorem 5.1.1]{friedman-morgan-ii}. Since we will not need the full strength of \cite[Theorem 4.3.2]{davis-elliptic-springer} outside of this proof, we will only briefly recall the necessary notation and statements. In \textit{loc. cit.}, the scheme $\Bun_\ld{T}^0(E)$ is denoted by $Y$. Let $\tilde{\Bun}_\ld{G}(E)$ denote the Kontsevich-Mori compactification of $\tilde{\Bun}_\ld{G}^\ss(E) \cong \Bun_\ld{B}^0(E)$; see \cite[Definition 2.1.2]{davis-elliptic-springer}. Let $\Theta$ denote the theta-line bundle over $\Bun_\ld{T}^0(E)$ of \cite[Corollary 3.2.10]{davis-elliptic-springer}, and let $\tilde{\chi}: \tilde{\Bun}_\ld{G}(E) \to \Theta^{-1}/\GG_m$ denote the map constructed in \cite[Corollary 3.3.2]{davis-elliptic-springer}. Then, \cite[Theorem 4.3.2]{davis-elliptic-springer} shows that there is a map $\Theta^{-1} \to \tilde{\Bun}_\ld{G}^\ss(E)$ landing in $\tilde{\Bun}_\ld{G}^\ss(E)^\reg$ such that the composite 
    $$\Theta^{-1} \to \tilde{\Bun}_\ld{G}^\ss(E) \xar{\tilde{\chi}} \Theta^{-1}/\GG_m$$
    is the canonical map. Composing with the zero section of $\Theta^{-1}$, we obtain a map 
    $$\Bun_\ld{T}^0(E) \cong 0_{\Theta^{-1}} \to \Theta^{-1} \to \tilde{\Bun}_\ld{G}^\ss(E)^\reg \cong \Bun_\ld{B}^0(E).$$
    This is the desired map $\kappa$.
\end{proof}
\begin{definition}
    The map $\kappa: \Bun_\ld{T}^0(E) \to \Bun_\ld{B}^0(E)$ from \cref{elliptic-kostant} will be called the \textit{elliptic Kostant slice}.
\end{definition}
The elliptic Kostant slice builds on work of Friedman-Morgan \cite{friedman-morgan, friedman-morgan-ii, friedman-morgan-iii, friedman-morgan-witten}.

If $E$ is replaced by the constant stack $S^1$ or by $B\GG_a$, the stack $\Bun_\ld{B}^0(E)$ is to be interpreted as $\ld{B}/\ld{B}$ and $\ld{\fr{b}}/\ld{B}$, respectively. The analogue of the elliptic Kostant section is given by the maps $f\cdot \ld{T} \to \ld{B}/\ld{B}$ and $f + \ld{\fr{t}} \to \ld{\fr{b}}/\ld{B}$, respectively.

The following is \cite[Lemma 3.1.11]{davis-elliptic-springer}.
\begin{lemma}\label{lem: vanishing and B-subgroups}
    Let $I\subseteq \Phi^-$ be a subset, and let $\Bun_\ld{T}^0(E)_I$ denote the subscheme of $\Bun_\ld{T}^0(E)$ defined by those bundles $\cP_\ld{T}$ whose $\alpha$-component is trivial precisely for $\alpha\in I$. Let $\ld{N}_I\subseteq \ld{N}$ be the smallest unipotent subgroup which is invariant under $\ld{T}$-conjugation and which contains $\ld{N}_\alpha$ for every $\alpha\in I$. Then the natural map
    $$\Bun_{\ld{T}\ld{N}_I}^0(E) \times_{\Bun_\ld{T}^0(E)} \Bun_\ld{T}^0(E)_I \to \Bun_\ld{B}^0(E) \times_{\Bun_\ld{T}^0(E)} \Bun_\ld{T}^0(E)_I$$
    is an isomorphism.
\end{lemma}
\begin{example}
    Suppose that $I = \emptyset$, so that $\Bun_\ld{T}^0(E)_\emptyset$ denotes the open subscheme of $\ld{T}$-bundles of degree zero whose $\alpha$-component is nontrivial for every negative root $\alpha$. The isomorphism $\tilde{\Bun}_\ld{G}^\ss(E) \cong \Bun_\ld{B}^0(E)$ implies that the map $\tilde{\Bun}_\ld{G}^\ss(E) \to \Bun_\ld{T}^0(E)$ is an isomorphism over $\Bun_\ld{T}^0(E)_\emptyset$. In particular, every point of $\Bun_\ld{T}^0(E)_\emptyset$ has a canonical associated (regular) semistable $\ld{G}$-bundle.
    
    The above results continue to hold if $E$ is replaced by the constant stack $S^1$ or by $B\GG_a$ (in which case  $\Bun_\ld{B}^0(E)$ is to be interpreted as $\ld{B}/\ld{B}$ and $\ld{\fr{b}}/\ld{B}$, respectively). In the case of $S^1$, for instance, the semistable $\ld{G}$-bundles obtained in this way from $\Bun_\ld{T}^0(E)_\emptyset$ are precisely those which lie in the regular \textit{semisimple} locus $\ld{G}^{\rss}/\ld{G}$; similarly for the case of $B\GG_a$.
\end{example}
We now turn to the topology of $G$, so it is connected, almost simple, and simply-laced over $\cc$.  In this setting, $k$ will be an even $2$-periodic $\Eoo$-ring equipped with an oriented group scheme $\GG$ whose underlying classical scheme $\GG_0$ over $\pi_0(k)$ is an elliptic curve $E$. We will continue to fix an algebraically closed field $F$ over $\pi_0(k)$, over which the Langlands dual group $\ld{G}$ will live. As usual, when dealing with the algebraic geometry (as opposed to the topology) of $G$, we will also view it as living over $F$; since $G$ is simply-laced, it is isogenous to $\ld{G}$. 
\begin{definition}
    The \textit{elliptic regular centralizer group scheme} $\tilde{\ld{J}}_\elc$ is defined to be the group scheme over $\Bun_{\ld{T}}^0(E)$ given by the fiber product
    $$\tilde{\ld{J}}_\elc \cong \Bun_{\ld{T}}^0(E) \times_{\Bun_{\ld{B}}^0(E)} \Bun_{\ld{T}}^0(E).$$
    Note that this is very slightly (but importantly) different from the definition of $\tilde{\ld{J}}_\mu$ and $\tilde{\ld{J}}$; the analogues of the fiber product above would instead be $(f \cdot \ld{T}) \times_{\ld{B}/\ld{B}} (f\cdot \ld{T})$ and $(f + \ld{\fr{t}}) \times_{\ld{\fr{b}}/\ld{B}} (f + \ld{\fr{t}})$. 
\end{definition}
In the following discussion, we will consider the $\ld{T}$-equivariant elliptic homology of $\Gr_G$ (instead of the $T$-equivariant elliptic homology); this will capture the minor difference between the definitions of $\tilde{\ld{J}}_\elc$ and $\tilde{\ld{J}}$ mentioned above.
\begin{theorem}\label{thm: elliptic hmlgy reg centr}
    There is an isomorphism of group schemes over $\Bun_{\ld{T}}^0(E) \cong \cM_{\ld{T},0}$:
    $$\spec_{\Bun_{\ld{T}}^0(E)}(\pi_0 \cf_{\ld{T}}(\Gr_G)^\vee) \otimes_{\pi_0(k)} F \cong \Bun_{\ld{T}}^0(E) \times_{\Bun_{\ld{B}}^0(E)} \Bun_{\ld{T}}^0(E).$$
    Here, $\spec_{\Bun_{\ld{T}}^0(E)}(\pi_0 \cf_{\ld{T}}(\Gr_G)^\vee)$ denotes the relative $\spec$ of $\pi_0 \cf_{\ld{T}}(\Gr_G)^\vee$ over $\Bun_{\ld{T}}^0(E)$.
\end{theorem}
As with \cref{thm: ordinary hmlgy reg centr} and \cref{thm: ku hmlgy reg centr}, the proof of \cref{thm: elliptic hmlgy reg centr} relies on two lemmas.
\begin{lemma}\label{lem: ell borel is flat}
    The projection map $\tilde{\ld{J}}_\elc \to \Bun_{\ld{T}}^0(E)$ (onto either factor) is flat.
\end{lemma}
\begin{proof}
    Like in the proof of \cref{lem: kappa for borel is flat}, it suffices, by miracle flatness, to show that the fibers of the map $\tilde{\ld{J}}_\elc \to \Bun_{\ld{T}}^0(E)$ have dimension exactly $\rank(\ld{G})$. But this follows from the fact that the map $\Bun_{\ld{T}}^0(E) \to \Bun_{\ld{B}}^0(E)$ lands in $\Bun_{\ld{B}}^0(E)^\reg$ (see \cref{elliptic-kostant}).
\end{proof}
For a root $\alpha$, let $\Bun_{\ld{T}}^0(E)_{\alpha\dreg} \subseteq \Bun_{\ld{T}}^0(E)$ denote the union of the substacks $\Bun_{\ld{T}}^0(E)_{\{\alpha\}}$ and $\Bun_{\ld{T}}^0(E)_{\emptyset}$. 
The next result follows exactly as in \cref{lem: localization of ordinary J} (using \cref{lem: vanishing and B-subgroups}).
\begin{lemma}\label{lem: ell localization of ordinary J}
    There is an isomorphism
    \begin{equation}\label{eq: ell J of centralizer}
        \tilde{\ld{J}}_\elc(\ld{G})|_{\Bun_{\ld{T}}^0(E)_{\alpha\dreg}} \xar{\sim} \tilde{\ld{J}}_\elc(Z_{\ld{G}}(x)^\circ)|_{\Bun_{\ld{T}}^0(E)_{\alpha\dreg}},
    \end{equation}
    where $Z_{\ld{G}}(x)$ is the centralizer of some $x\in \Bun_{\ld{T}}^0(E)_{\alpha\dreg}$ which lies in $\Bun_{\ld{T}}^0(E)_{\{\alpha\}}$, and $Z_{\ld{G}}(x)^\circ$ denotes the connected component of the identity. 
\end{lemma}
Recall that if $X$ is a scheme with subschemes $V = V(\cI) \subseteq D = V(\cJ)$ (so that $\cJ \subseteq \cI$) where $D$ is locally principal, the affine blowup $\Bl_V^D(X)$ is defined to be the complement of $V_+(\cJ)$ in the blowup $\Bl_V(X)$. That is, it is the relative $\spec$ of the algebra $\co_X[\tfrac{\cI}{\cJ}]$ of weight zero elements in $\Bl_\cI(\co_X)[\tfrac{1}{\cJ}]$, where $\Bl_\cI(\co_X) = \co_X \oplus \cI \oplus \cI^2 \oplus \cdots$ is the Rees algebra.
\begin{proof}[Proof of \cref{thm: elliptic hmlgy reg centr}]
    The argument of \cref{thm: ordinary hmlgy reg centr} reduces us to checking that the isomorphism of \cref{thm: elliptic hmlgy reg centr} holds if $G$ has semisimple rank $1$, i.e., is the product of a torus with one of $\GL_2$, $\SL_2$, or $\PGL_2$. Again, it is easy to match up the contributions from the toral factors, so we will assume that $G$ is either $\GL_2$, $\SL_2$, or $\PGL_2$. In this case, we can even replace $F$ by $\pi_0(k)$. The proofs are all rather uniform (as we have seen in \cref{thm: ordinary hmlgy reg centr} and \cref{thm: ku hmlgy reg centr}), so we will simply illustrate the argument when $G = \SL_2$ and $G = \PGL_2$.

    We begin with the case $G = \SL_2$. Since $\ld{T} = \GG_m$, we may identify $\Bun_{\ld{T}}^0(E) \cong E$; to emphasize that it plays the role of the base of $S^1$-equivariant elliptic cohomology, we will denote it by $\cM$. Let $\infty \in \cM = E$ denote the identity section. Consider the closed subschemes 
    $$V = \{(\infty,1)\} \subseteq D = \{\infty\} \times \GG_m \subseteq \cM \times \GG_m.$$
    Then, as in \cref{thm: ordinary hmlgy reg centr} and \cref{thm: ku hmlgy reg centr}, $\spec_{\Bun_{\ld{T}}^0(E)}(\pi_0 \cf_{\ld{T}}(\Gr_G)^\vee)$ identifies with the affine blowup $\Bl_V^D(\cM \times \GG_m)$. 
    
    Since $\ld{G} = \PGL_2$, an $S$-point of the stack $\Bun_{\ld{B}}^0(E)$ is the data of a degree zero rank $2$ vector bundle $\cV$ over $S \times E$ along with a line subbundle $\cL \subseteq \cV$ and an isomorphism $\cV/\cL \cong \co_{S \times E}$.
    In this language, the elliptic Kostant section $\cM = E \to \Bun_{\ld{B}}^0(E)$ classifies the unique indecomposable extension $\cV$ of $\co_{\cM \times E}$ by the Poincar\'e line bundle $\cP$. (Recall that $\cP$ can be identified, for instance, with the line bundle corresponding to the divisor $\Delta - E \times \{\infty\} - \{\infty\} \times E$.) This extension is classified by a nonzero section of $\ul{\Ext}^1_{\cM \times E}(\co_{\cM \times E}, \cP)$. 
    
    Let us now compute $\tilde{\ld{J}}_\elc$. The fiber product $\cM \times_{\Bun_{\ld{B}}^0(E)} \cM$ is isomorphic (as a group scheme over $\cM$) to the subgroup of the constant group scheme $\ul{\ld{B}} := \cM \times \ld{B}$ of those $b \in \ul{\ld{B}}$ such that $b\cdot \cV = \cV$. First, let $U = (\cM - \{\infty\}) \times E$; then $\cV|_U$ splits as $\co_U \oplus \cP|_U$. Indeed, the restriction $\cP|_U$ is a nontrivial line bundle on $U$, so its pushforward to $\cM - \{\infty\}$ has no cohomology (and hence the extension class is trivial).
    It follows that $\Aut_{\ld{B}}(\cV)|_U = \cM \times_{\Bun_{\ld{B}}^0(E)} U$ can be identified with $U \times \GG_m$.

    On the other hand, let $Z = \{\infty\} \times E$ denote the complement of $U$, so that the formal neighborhood $\hat{Z}$ of $Z$ is isomorphic to $\cM^\wedge_\infty \times E = \hat{\AA}^1 \times E$. Let $t$ denote a coordinate on $\hat{\AA}^1$. Then, the restriction of $\cP$ to $\hat{Z}$ is given by the $1$-parameter family of line bundles $\co_{\hat{Z}}(t - \infty)$ over $\hat{\AA}^1 \times E$. The restriction of $\cV$ to $\hat{Z}$ is classified by a map $\co_{\hat{Z}} \to \co_{\hat{Z}}(t - \infty)[1]$ which vanishes except at the origin of $\hat{\AA}^1$, where it is given by the unique (up to nonzero scalar) nontrivial map $\co_E \to \co_E[1]$.
    
    For instance, $\cV|_Z$ is isomorphic to the Atiyah bundle over $E$ from \cite{atiyah-bundle-elliptic} (i.e., the unique indecomposable rank 2 extension of the structure sheaf by itself), so that it can be realized away from $\infty \in E$ by pairs $(f_1, f_2)$ of regular functions on $E$; and near $\infty$ by pairs $(f_1, f_2)$ such that $f_1$ and $f_1 - zf_2$ are regular, where $z$ is a local coordinate of $E$. Under this description, $\End(\cV|_Z) = \End(\cV)|_Z$ is spanned by the identity and the map $(f_1, f_2) \mapsto (0, f_1)$. That is, $\End(\cV)|_Z$ is isomorphic to the group of matrices $\begin{psmallmatrix}
        a & b\\
        0 & a
    \end{psmallmatrix}$, and so $\Aut_{\ld{B}}(\cV)|_Z$ is isomorphic to $Z \times \GG_a$. It is easy to extend this description to the formal neighborhood of $Z$, and thereby find that $\Aut_{\ld{B}}(\cV)|_{\hat{Z}}$ is isomorphic to the canonical degeneration of $\GG_m$ into $\GG_a$. In other words, there is an isomorphism
    $$\Aut_{\ld{B}}(\cV)|_{\hat{Z}} \cong \spec \pi_0(k)\pw{t}[a^{\pm 1}, \tfrac{a-1}{t}].$$
    Gluing this with the description of $\Aut_{\ld{B}}(\cV)|_U$ from the preceding paragraph, we find that $\Aut_{\ld{B}}(\cV) \cong \cM \times_{\Bun_{\ld{B}}^0(E)} \cM$ is isomorphic to the affine blowup $\Bl_V^D(\cM \times \GG_m)$. We will leave it to the reader to verify that the resulting sequence of isomorphisms
    $$\Aut_{\ld{B}}(\cV) \cong \cM \times_{\Bun_{\ld{B}}^0(E)} \cM \cong \Bl_V^D(\cM \times \GG_m) \cong \spec_{\Bun_{\ld{T}}^0(E)}(\pi_0 \cf_{\ld{T}}(\Gr_G)^\vee)$$
    is one of group schemes over $\cM$.

    The case when $G = \PGL_2$ is very similar; we only indicate the necessary changes. Let $E[2] \subseteq E$ denote the $2$-torsion subgroup, and consider the closed subschemes 
    $$V = E[2] \times \mu_2 \subseteq D = E[2] \times \GG_m \subseteq \cM \times \GG_m.$$
    By arguing as in \cref{thm: ordinary hmlgy reg centr} and \cref{thm: ku hmlgy reg centr}, we find that $\spec_{\Bun_{\ld{T}}^0(E)}(\pi_0 \cf_{\ld{T}}(\Gr_G)^\vee)$ identifies with the affine blowup $\Bl_V^D(\cM \times \GG_m)$. 
    In this case, $\ld{G} = \SL_2$, and the elliptic Kostant section $\cM = E \to \Bun_{\ld{B}}^0(E)$ sends a line bundle $\cL$ to the trivially filtered $\SL_2$-bundle $\co_E \subseteq \co_E \oplus \cL$ if $\cL^2 \neq \co_E$; and to the Atiyah extension of $\cL$ by itself if $\cL^2 \cong \co_E$. This extension is defined by a nontrivial element of $\Ext^1_E(\cL, \cL^{-1}) \cong \H^1(E; \cL^{-2})$. The calculation of $\cM \times_{\Bun_{\ld{B}}^0(E)} \cM$ follows exactly the same path as in the case $G = \SL_2$ studied above.
\end{proof}
\begin{remark}
    The most classical instantiation of the Atiyah bundle $\cA$ is via the Weierstrass functions. The $\GG_a$-torsor over $E$ associated to $\cA$ is the complement of the section at $\infty$ of the projective line $\PP(\cA)$. If we work complex-analytically, $E^\an$ can be identified as the quotient $\cc/\Lambda$ for some rank $2$ lattice $\Lambda\subseteq \cc$. Associated to $\Lambda$ are two Weierstrass functions defined on $\cc$:
    \begin{align*}
        \wp(z; \Lambda) & = \tfrac{1}{z^2} + \sum_{\lambda \in \Lambda-\{0\}} \left(\tfrac{1}{(z-\lambda)^2} - \tfrac{1}{\lambda^2}\right), \\
        \zeta(z; \Lambda) & = \tfrac{1}{z} + \sum_{\lambda \in \Lambda-\{0\}} \left(\tfrac{1}{z-\lambda} + \tfrac{1}{\lambda} + \tfrac{z}{\lambda^2}\right).
    \end{align*}
    Note that $\wp(z; \Lambda)$ is doubly-periodic, i.e., $\wp(z + \lambda; \Lambda) = \wp(z; \Lambda)$ for any $\lambda \in \Lambda$. Alternatively, $\wp$ defines a map $\cc \to \cc$ which factors through a map $\cc/\Lambda = E^\an \to \cc$.

    Although $\zeta(z; \Lambda)$ is not doubly-periodic, an easy calculation shows that $\wp(z; \Lambda) = -\partial_z \zeta(z; \Lambda)$; so if $\lambda \in \Lambda$, then $\zeta(z+\lambda; \Lambda) - \zeta(z; \Lambda) = c(\lambda)$ for some constant $c(\lambda)$. The function $\lambda \mapsto c(\lambda)$ is evidently additive, and defines a homomorphism $\Lambda \to \cc$, which defines a $\cc$-bundle over $E^\an = \cc/\Lambda$. This $\cc$-bundle is precisely the analytification of the $\GG_a$-torsor associated to the Atiyah bundle. It follows that although $\zeta$ is not defined on $E^\an$, this analytification is the universal space over $E^\an$ on which $\zeta$ is well-defined.

    This discussion also describes the total space of the rank $2$-bundle $\cA^\an$ purely analytically. For instance, if $q\in \cc^\times$ is a unit complex number of modulus $<1$, we can identify $\Tot(\cA^\an)$ over the Tate curve $\cc^\times/q^\Z$ with the quotient
    $$\Tot(\cA^\an) = \left(\cc^\times \times \cc^2\right)/\left((z,x) \sim \left(qz, \begin{psmallmatrix}
    1 & 1\\
    0 & 1
    \end{psmallmatrix} x\right)\right).$$
    The appearance of the Jordan block $\begin{psmallmatrix}
    1 & 1\\
    0 & 1
    \end{psmallmatrix}$ is the basic reason why the Atiyah bundle plays the role of the principal nilpotent element $f$ in the proof of \cref{thm: elliptic hmlgy reg centr}.
\end{remark}

\begin{corollary}\label{cor: ell reg locus ordinary ABG}
    There is an $F$-linear equivalence
    $$\Loc_{\ld{T}_c}^\gr(\Gr_G; k) \otimes_{\pi_0(k)} F \simeq \QCoh(\Bun_{\ld{B}}^0(E)^\reg).$$
    Furthermore, the pushforward functor $\Loc_{\ld{T}_c}^\gr(\Gr_G; k) \to \Loc_{\ld{T}_c}^\gr(\ast; k)$ identifies with the pullback functor $\kappa^\ast: \QCoh(\Bun_{\ld{B}}^0(E)) \to \QCoh(\Bun_{\ld{T}}^0(E))$.
\end{corollary}
\begin{proof}
    By definition, $\Loc_{\ld{T}_c}^\gr(\Gr_G; k)$ is equivalent to the category of comodules over $\pi_0 \cf_\ld{T}(\Gr_G)^\vee$ in $\QCoh(\cM_{\ld{T},0}) = \QCoh(\Bun_{\ld{T}}^0(E))$. By \cref{thm: elliptic hmlgy reg centr}, it can be identified the category of quasicoherent sheaves on the quotient stack $\Bun_{\ld{T}}^0(E)/\tilde{\ld{J}}_\elc$.  We may view $\tilde{\ld{J}}_\elc$ as a closed subgroup scheme of the constant group scheme $\ld{B} \times \Bun_{\ld{T}}^0(E)$. This gives an isomorphism
    $$\Bun_{\ld{T}}^0(E)/\tilde{\ld{J}}_\elc \cong \ld{B} \backslash (\ld{B} \times \Bun_{\ld{T}}^0(E))/\tilde{\ld{J}}_\elc.$$
    Let $\Bun_{\ld{B}}^0(E)_\triv$ denote the scheme whose $S$-points are of $\ld{B}$-bundles over $S \times E$  of degree $0$ equipped with a trivialization at $S \times \{\infty\}$, so that there is a natural map $\Bun_{\ld{B}}^0(E)_\triv \to \Bun_{\ld{B}}^0(E)$. Let $\Bun_{\ld{B}}^0(E)_\triv^\reg$ denote the restriction of $\Bun_{\ld{B}}^0(E)_\triv$ to the regular locus $\Bun_{\ld{B}}^0(E)^\reg \subseteq \Bun_{\ld{B}}^0(E)$.
    It follows from Davis' work in \cite{davis-elliptic-springer} that the $\ld{B}$-orbit of $\Bun_{\ld{T}}^0(E)$ inside $\Bun_{\ld{B}}^0(E)_\triv$ is precisely the regular locus $\Bun_{\ld{B}}^0(E)_\triv^\reg$. Since $\tilde{\ld{J}}_\elc$ is by definition the stabilizer of $\kappa: \Bun_{\ld{T}}^0(E) \to \Bun_{\ld{B}}^0(E)$, the quotient $\ld{B} \backslash (\ld{B} \times \Bun_{\ld{T}}^0(E))/\tilde{\ld{J}}_\elc$ is isomorphic to $\Bun_{\ld{B}}^0(E)^\reg$; so there is an isomorphism $\Bun_{\ld{T}}^0(E)/\tilde{\ld{J}}_\mu \cong \Bun_{\ld{B}}^0(E)^\reg$.
\end{proof}
The equivalence of \cref{cor: ell reg locus ordinary ABG} is in fact symmetric monoidal for the convolution tensor structure on $\Loc_{T_c}^\gr(\Gr_G; k)$ (described in \cref{rmk: loc gr convolution tensor}) and the standard tensor product on $\QCoh(\Bun_{\ld{B}}^0(E)^\reg)$.
\begin{remark}\label{rmk: g-equiv regular satake elliptic}
    The work of Gepner and Meier in \cite{gepner-meier, t-equiv-tmf} sets up the theory of $G_c$-equivariant elliptic cohomology for compact Lie groups $G_c$. In particular, they describe a scheme $\cM_G$ over $k$ with underlying scheme $\cM_{G,0}$ over $\pi_0(k)$, such that the global sections of the structure sheaf of $\cM_G$ computes $G_c$-equivariant $k$-cohomology.
    Using this setup (and assuming a slight extension of the results of \cite{davis-elliptic-springer} replacing the simply-connectedness assumption with the condition of having torsion-free fundamental group), it can be shown that if $G$ is almost simple and simply-laced, and has torsion-free fundamental group, there is an $F$-linear equivalence
    $$\Loc_{\ld{G}_c}^\gr(\Gr_G; k) \otimes_{\pi_0(k)} F \simeq \QCoh(\Bun_{\ld{G}}^\ss(E)^\reg).$$
    Here, the left-hand side is defined to be the $\infty$-category $\coLMod_{\pi_0(\cf_G(\Gr_G)^\vee)}(\QCoh(\cM_{G,0}))$, just as in \cref{sec: degenerations}. The proof of the displayed equivalence is quite similar to that of \cref{cor: ell reg locus ordinary ABG}, and in fact can be deduced from it using the observation that $\pi_0(\cf_G(\Gr_G)^\vee) = \pi_0(\cf_T(\Gr_G)^\vee)^W$ and that the natural map $\Bun_{\ld{B}}^0(E)^\reg \to \Bun_{\ld{G}}^\ss(E)^\reg$ is a (ramified) $W$-cover. The first statement uses that $G$ is simply-connected, and the second is the elliptic version of Grothendieck-Springer theory studied in \cite[Proposition 3.1.14]{davis-elliptic-springer}.
\end{remark}

Restriction of a $\ld{B}$-bundle on $E$ to the zero section defines a map $q: \Bun_{\ld{B}}^0(E) \to B\ld{B} \to B\ld{G}$, which in turn defines a functor
\begin{equation}\label{eq: Rep G^ to ell loc}
    \Rep(\ld{G}) \to \QCoh(\Bun_{\ld{B}}^0(E)^\reg) \simeq \Loc_{\ld{T}_c}^\gr(\Gr_G; k) \otimes_{\pi_0(k)} F.
\end{equation}
More generally, the map $q: \Bun_{\ld{B}}^0(E) \to B\ld{B} \to B\ld{G} \times B\ld{T}$ defines a functor
\begin{equation}\label{eq: Rep T^ x G^ to ell loc}
    \Rep(\ld{G} \times \ld{T}) \to \QCoh(\Bun_{\ld{B}}^0(E)^\reg) \simeq \Loc_{\ld{T}_c}^\gr(\Gr_G; k) \otimes_{\pi_0(k)} F.
\end{equation}
If $V \in \Rep(\ld{G})$, let $\cS_k(V)$ denote the corresponding object of $\Loc_{\ld{T}_c}^\gr(\Gr_G; k) \otimes_{\pi_0(k)} F$. The same argument as in \cref{prop: ordinary realizing minuscule reps} shows the following, which says that $\cS_k(V) \in \Loc_{T_c}^\gr(\Gr_G; k)$ is the associated graded of a particular object $\cf_\lambda \in \Loc_{T_c}(\Gr_G; k)$ if $V$ is a minuscule $\ld{G}$-representation. 
\begin{prop}\label{prop: ell realizing minuscule reps}
    Let $\lambda_\bull = (\lambda_1, \cdots, \lambda_n)$ be a tuple of dominant minuscule weights of $\ld{G}$, let $|\lambda_\bull| = \sum_i \lambda_i$, and let $\ol{\Gr_G^{\lambda_\bull}}$ denote the corresponding \textit{convolution variety}. Let $\cf_{\lambda_\bull}$ denote the pushforward of the constant sheaf along the canonical map $q: \ol{\Gr_G^{\lambda_\bull}} \to \ol{\Gr_G^{|\lambda|}} \subseteq \Gr_G$. If $V_{\lambda_i}$ denotes the irreducible representation of $\ld{G}$ with highest weight $\lambda_i$, then there is an isomorphism $\cS_k(\bigotimes_i V_{\lambda_i}) \cong \cf_{\lambda_\bull}^\gr$.
\end{prop}
It would be very interesting to understand whether \cref{prop: ell realizing minuscule reps} can be extended to other non-minuscule irreducible representations. Again, as in \cref{rmk: ordinary action on minuscule}, if $\lambda$ is a dominant minuscule weight of $\ld{G}$, then the coaction of $\pi_0 \cf_T(\Gr_G)^\vee$ on $\pi_0 \cf_T(G/P_\lambda)$ defines a homomorphism 
\begin{equation}
    \spec \pi_0 \cf_T(\Gr_G)^\vee \to \GL(\pi_0 \cf_T(G/P_\lambda))
\end{equation}
of group schemes over $\Bun_T^0(E)$, where $\GL(\pi_0 \cf_T(G/P_\lambda))$ denotes the group scheme of $\co_{\Bun_T^0(E)}$-linear automorphisms of the vector bundle $\pi_0 \cf_T(G/P_\lambda)$. Under the isomorphisms of \cref{thm: elliptic hmlgy reg centr} and \cref{prop: ell realizing minuscule reps}, this homomorphism factors as the composite
\begin{equation}\label{eq: ell factorization action on minuscule}
    \tilde{\ld{J}}_\elc \to \ld{G} \times \Bun_T^0(E) \to \GL(V_\lambda) \times \Bun_T^0(E),
\end{equation}
where the second map describes the $\ld{G}$-action on $V_\lambda$.

\begin{remark}\label{rmk: 1-shifted cartier}
    The statements of \cref{cor: reg locus ordinary ABG}, \cref{cor: ku reg locus ordinary ABG}, and \cref{cor: ell reg locus ordinary ABG} can be packaged into a single statement as follows. Suppose $k$ is a complex-oriented $2$-periodic $\Eoo$-ring, and let $\GG$ be an oriented commutative $k$-group scheme. Let $\GG_0$ denote the underlying commutative group scheme over $\pi_0(k)$, and let $\GG_0^\vee = \Hom(\GG_0, B\GG_m)$ denote its $1$-shifted Cartier dual. Let $F$ be an algebraically closed field over $\pi_0(k)$; then there is an $F$-linear equivalence
    $$\Loc_{\ld{T}_c}^\gr(\Gr_G; k) \otimes_{\pi_0(k)} F \simeq \QCoh(\Bun_{\ld{B}}^0(\GG_0^\vee)^\reg).$$
    Similarly, there is an $F$-linear equivalence
    $$\Loc_{\ld{G}_c}^\gr(\Gr_G; k) \otimes_{\pi_0(k)} F \simeq \QCoh(\Bun_{\ld{G}}^\ss(\GG_0^\vee)^\reg).$$
    In fact, the arguments of \cref{cor: reg locus ordinary ABG}, \cref{cor: ku reg locus ordinary ABG}, and \cref{cor: ell reg locus ordinary ABG} show that these equivalences are monoidal for the convolution tensor products on $\Loc_{\ld{T}_c}^\gr(\Gr_G; k)$ and $\Loc_{\ld{G}_c}^\gr(\Gr_G; k)$ coming from the $\E{2}$-structure on $\Gr_G$, and the ordinary tensor product of quasicoherent sheaves. Moreover, a simple adaptation of the discussion at the end of \cref{sec: KU coeff} (as well as the discussion in \cref{sec: power operations}) shows that the above equivalences are canonical: they respect natural symmetries of $k$ coming from endomorphisms of $\GG_0$. 
    
    The object $\Bun_{\ld{G}}^\ss(\GG_0^\vee)$ has also appeared previously in the literature in connection to equivariant elliptic cohomology; see, for instance, \cite{sibilla-tomasini, toen-hkr}. One could heuristically view $\QCoh(\Bun_{\ld{B}}^0(\GG_0^\vee))$ and $\QCoh(\Bun_{\ld{G}}^\ss(\GG_0^\vee))$ as the ``$\GG_0$-Hochschild homology'' of $\QCoh(B\ld{B})$ and $\QCoh(B\ld{G})$, respectively.
    
    To see that these equivalences do indeed package \cref{cor: reg locus ordinary ABG}, \cref{cor: ku reg locus ordinary ABG}, and \cref{cor: ell reg locus ordinary ABG}, note that if $k = \QQ[u^{\pm 1}]$ and $\GG = \GG_a$, then the $1$-shifted Cartier dual of $\GG_0$ is $B\hat{\GG}_a$, and $\Map(B\hat{\GG}_a, B\ld{B}) \cong \ld{\fr{b}}/\ld{B}$.\footnote{In fact, this works even if $k$ is an $\Eoo$-$\Z$-algebra. Indeed, the $1$-shifted Cartier dual of $\GG_a$ over $\Z$ is the classifying stack of $\Hom(\GG_a, \GG_m) = \widehat{\GG_a^\sharp}$; here, $\widehat{\GG_a^\sharp}$ denotes the formal scheme $\spf(\Z\pdb{x}/I^{[n]})$ where $\Z\pdb{x}$ is the divided power algebra on a class $x$ and $I^{[n]}$ is the ideal generated by elements of $\Z\pdb{x}$ of degree $\geq n$. Then, $\Map(B\widehat{\GG_a^\sharp}, X)$ is isomorphic to the $1$-shifted tangent bundle $T[-1](X)$, so that $\Map(B\widehat{\GG_a^\sharp}, B\ld{B}) \cong \ld{\fr{b}}/\ld{B}$ even over $\Z$.} Similarly, if $k = \KU$ and $\GG = \GG_m$, then the $1$-shifted Cartier dual of $\GG_0$ is $B\Z$, and $\Map(B\Z, B\ld{B}) \cong \ld{B}/\ld{B}$. Finally, if $\GG_0$ is an elliptic curve $E$, then its $1$-shifted Cartier dual is $\Pic^0(E) = E$, so $\Bun_{\ld{B}}^0(\GG_0^\vee) = \Bun_{\ld{B}}^0(E)$. In fact, in this language, the calculations of \cite{ku-rel-langlands} show that the stated equivalence continues to hold if $k = \ku$ (now one must replace $\pi_0(k)$ by $\Z[\beta]$, and $F$ by $F[\beta]$) and $\GG$ is the group scheme $\spec \Z[\beta, x, \tfrac{1}{1+\beta x}]$ with group law $x + y + \beta xy$.
\end{remark}
Observe that if $\cL$ is a degree zero line bundle on $\GG_0^\vee$, then $\H^\ast(\GG_0^\vee; \cL)$ vanishes unless $\cL$ is trivial, in which case it is isomorphic to an exterior algebra over $k$ on a class in degree $1$. Using this, the Kostant slice is straightforward to describe in the semisimple rank $1$ cases. For instance, if $\ld{G} = \PGL_2$, the map $\kappa: \GG_0 \to \Bun_{\ld{B}}^0(\GG_0^\vee)$ can be understood as follows. Since $\GG_0 = \Hom(\GG_0^\vee, B\GG_m)$, a point of $\GG_0$ can be viewed as a degree zero line bundle on $\GG_0^\vee$. Given such a line bundle $\cL$, the map $\kappa$ sends it to the trivial $\ld{B}$-bundle $\cL \subseteq \cL \oplus \co_{\GG_0^\vee} \twoheadrightarrow \co_{\GG_0^\vee}$ if $\cL$ is nontrivial, and to the unique nontrivial extension $\co_{\GG_0^\vee} \subseteq \cA \twoheadrightarrow \co_{\GG_0^\vee}$ if $\cL$ is trivial. This nontrivial extension comes from a nonzero section of $\H^1(\GG_0^\vee; \co)$.
\begin{remark}\label{rmk: morava e-theory}
    \cref{rmk: 1-shifted cartier} suggests that there might be an analogue of \cref{thm: intro omnibus} for other $\Eoo$-rings $k$ which may not be equipped with a $1$-dimensional $k$-group scheme $\GG$, but which may only be equipped with an \textit{$S$-divisible group} for finite set $S$ of primes. (This is the data of a $p$-divisible group for each prime $p\in S$.) We have already seen one example of such an $\Eoo$-ring in \cref{prop: imJ reg locus ABG}: namely, $k = L_{K(1)} S^0 = (\KU^\wedge_p)^{h\Z_p^\times}$ being the $K(1)$-local sphere at a prime $p$. In this case, one only has the $p$-divisible group $\mu_{p^\infty}$ over $\spf(\KU^\wedge_p)/\Z_p^\times$. This was reflected accordingly in \cref{prop: imJ reg locus ABG}, in the sense that we could only consider the $\infty$-category $\Loc_{T_c[p^\infty]}^\gr(\Gr_G; L_{K(1)} S^0)$ of $T_c[p^\infty]$-equivariant (and not $T_c$-equivariant) sheaves on $\Gr_G$.

    To this end, suppose $k$ is an $\Eoo$-ring equipped with an oriented $S$-divisible spectral group scheme $\GG$ in the sense of \cite{elliptic-ii, elliptic-iii} for some set $S$ of primes. Let $\GG_0$ denote the corresponding (classical) $S$-divisible group over $\pi_0(k)$. Let $T_c[S^\infty]$ denote the subgroup of $T_c$ given by the union of $T_c[p^\infty]$ over all $p\in S$. We then expect that one can define the $\infty$-category $\Loc_{T_c[S^\infty]}^\gr(\Gr_G; k)$, and that just as in \cref{rmk: 1-shifted cartier}, there is an $F$-linear equivalence
    \begin{equation}\label{eq: expected S-divisible equivalence}
        \Loc_{\ld{T}_c[S^\infty]}^\gr(\Gr_G; k) \otimes_{\pi_0(k)} F \simeq \QCoh(\Bun_{\ld{B}}^0(\GG_0^\vee)^\reg).
    \end{equation}
    Here, following the lead of \cref{lem: p-nilpotence and p power torsion} and the surrounding discussion, we define
    $$\Bun_{\ld{B}}^0(\GG_0^\vee) = \bigcup_{p\in S} \colim_n \Bun_{\ld{B}}^0(\GG_0[p^n]^\vee).$$
    Both sides of \cref{eq: expected S-divisible equivalence} naturally admit an action of $\Aut(\GG_0)$, and \cref{eq: expected S-divisible equivalence} should be equivariant for this action.
    
    In fact, it should be possible to take $F$ in \cref{eq: expected S-divisible equivalence} to be the $\pi_0(k)$-algebra classifying isomorphisms of $S$-divisible groups between $\GG_0$ and a constant $S$-divisible group. For instance, if $S = \{p\}$ and $\GG_0$ is of height $h \geq 1$, then $\Bun_{\ld{B}}^0(\GG_0^\vee)$ over such an $F$ would be isomorphic to the stack of commuting $h$-tuples of $p$-power torsion elements in $\ld{B}$ modulo simultaneous $\ld{B}$-conjugation. 

    One particularly interesting instance is the case when $k$ is a Lubin-Tate theory associated to a height $n$ formal group over a perfect field of characteristic $p>0$, and $\GG$ denotes the associated ``Quillen $p$-divisible group'' over $k$ (see \cite{elliptic-ii}). Let $\co_{D_{1/n}}^\times$ denote the Morava stabilizer group of automorphisms of the fiber of $\GG_0$ over the residue field of $\pi_0(k)$, so that $\co_{D_{1/n}}^\times$ is the group of units in the ring of integers of the division algebra over $\QQ_p$ of Hasse invariant $1/n$. Then $\co_{D_{1/n}}^\times$ acts on $k$ through $\Eoo$-ring maps, with homotopy fixed points given by the $K(n)$-local sphere. Assuming the discussion surrounding \cref{eq: expected S-divisible equivalence}, we would find that $\Loc_{\ld{T}_c[p^\infty]}^\gr(\Gr_G; L_{K(n)} S^0)$ is well-defined, and is furthermore equivalent (upon an appopriate base-change) to $\QCoh(\Bun_{\ld{B}}^0(\GG_0^\vee)^\reg/\co_{D_{1/n}}^\times)$. Regardless of whether these statements are true, it seems interesting to investigate the stack $\Bun_{\ld{B}}^0(\GG_0^\vee)$ and the action of $\co_{D_{1/n}}^\times$ on it. (When $n=1$, this was done in \cref{prop: imJ reg locus ABG}.)
\end{remark}

Let us now return to some more concrete consequences of \cref{thm: elliptic hmlgy reg centr}. 
Just as with \cref{prop: ordinary gelfand-graev} and \cref{prop: ku gelfand-graev}, the calculation of \cref{thm: elliptic hmlgy reg centr} gives an \textit{elliptic} version of the Gelfand-Graev action on the affine closure $\ol{T^\ast(\ld{G}/\ld{N})}$. Taking the affine closure in the naive sense is very destructive in the case of elliptic cohomology. Nevertheless, one can define $\ol{T^\ast_E(\ld{G}/\ld{N})}$ to be the relative spectrum over $\Bun_{\ld{T}}^0(E)$ of $\pi_0$ of the (\textit{classical}, not derived!) pushforward of the structure sheaf along the quotient morphism
$$(\ld{G} \times \ld{T} \times \Bun_{\ld{T}}^0(E))/\tilde{\ld{J}}_\elc \to \Bun_{\ld{T}}^0(E).$$
\begin{prop}[Elliptic Gelfand-Graev action]\label{prop: ell gelfand-graev}
    The natural action of $\ld{G} \times \ld{T}$ on $\ol{T^\ast_E(\ld{G}/\ld{N})}$ extends to an action of $\ld{G} \times (W \rtimes \ld{T})$, where $W$ is the Weyl group.
\end{prop}
The moment map $\ol{T^\ast_E(\ld{G}/\ld{N})}/\ld{G} \to \Bun_{\ld{G}}^\ss(E)$ is $W$-equivariant for the trivial action on the target. There is a commutative diagram
$$\xymatrix{
\Bun_{\ld{B}}^0(E) \ar@{^(->}[r] \ar[dr] & \ol{T^\ast_E(\ld{G}/\ld{N})}/\ld{T} \ar[d] \\
& \Bun_{\ld{G}}^\ss(E)
}$$
which relates $\ol{T^\ast_E(\ld{G}/\ld{N})}$ to the elliptic Grothendieck-Springer resolution \cite{bzn-elliptic-springer}; and via this diagram, the elliptic Gelfand-Graev action is closely related to the Weyl action in elliptic Springer theory.
\begin{remark}
    The proof of \cref{prop: ell gelfand-graev} generalizes to show that if $\ld{P} \subseteq \ld{G}$ is a parabolic subgroup with Levi quotient $\ld{L}$ and unipotent radical $U_{\ld{P}}$, then the natural action of $\ld{G} \times \ld{L}$ on the affine closure $\ol{T^\ast_E(\ld{G}/U_{\ld{P}})}$ extends to an action of $\ld{G} \times (W_L \rtimes \ld{L})$, where $W_L = N_{\ld{G}}(\ld{L})/\ld{L}$ is the Weyl group.
\end{remark}
As in \cref{rmk: Z/2 symplectic fourier} and \cref{ex: Z/2 multiplicative symplectic fourier}, it is possible to make the action of \cref{prop: ell gelfand-graev} explicit in the case when $\ld{G} = \SL_2$.
\begin{example}\label{ex: Z/2 ell symplectic fourier}
    Let $\co(\infty)$ denote the inverse of the ideal sheaf cutting out the zero section inside $E$, and let $\cf = (\co \oplus \co(\infty))^{\oplus 2}$. As in \cref{ex: Z/2 multiplicative symplectic fourier}, $\ol{T^\ast_E(\SL_2/\GG_a)}$ can be identified with the space of sections $(u,v) = \left(\begin{psmallmatrix}
        u_1 \\
        u_2
    \end{psmallmatrix}, (v_1, v_2)\right)$ of the bundle $\cf \to E$ such that the resulting section $\pdb{u,v} = u_1 v_1 + u_2 v_2$ of $\co(\infty)$ has vanishing locus given by the zero section of $E$. 
    Modifying the analysis of \cref{rmk: Z/2 symplectic fourier} shows that if $[-1]: E \to E$ denotes the inversion map, the $\Z/2$-action of \cref{prop: ell gelfand-graev} sends 
    $$\left(\begin{psmallmatrix}
        u_1 \\
        u_2
    \end{psmallmatrix}, (v_1, v_2)\right) \mapsto \left(\begin{psmallmatrix}
        -u_2 \\
        u_1
    \end{psmallmatrix}, \alpha(v_2, -v_1)\right),$$
    where $\alpha$ is given locally around $\infty$ by multiplication by $-\tfrac{[-1](\pdb{u,v})}{\pdb{u,v}}$.
    (The discussion here makes sense with $E$ replaced by any $1$-dimensional group scheme $\bH$. When $\bH = \GG_a$ or $\GG_m$, the class $-\tfrac{[-1](\pdb{u,v})}{\pdb{u,v}}$ is equal to $1$ or $\tfrac{1}{1 + \pdb{u,v}}$, respectively, as expected from \cref{rmk: Z/2 symplectic fourier} and \cref{ex: Z/2 multiplicative symplectic fourier}.)
\end{example}
One could regard the variety $\ol{T^\ast_E(\SL_2/\GG_a)}$ of \cref{ex: Z/2 ell symplectic fourier} as an elliptic version of Van den Bergh's multiplicative quiver variety $\cB(\AA^1, \AA^2)$ from \cite{van-den-bergh-double-poisson}. Motivated by this observation, we hope to similarly define a notion of ``elliptic quiver varieties'' (generalizing the notion of multiplicative quiver variety from \cite{crawley-boevey-shaw}) in future work.

We also have the following analogue of \cref{prop: ordinary full faithful on gr loc}, whose proof is exactly the same (one only needs to use \cite[Proposition 3.1.16]{davis-elliptic-springer}, which says that $\Bun_{\ld{B}}^0(E)^\reg \hookrightarrow \Bun_{\ld{B}}^0(E)$ has complement of codimension $2$, and similarly for $\Bun_{\ld{G}}^\ss(E)^\reg \hookrightarrow \Bun_{\ld{G}}^\ss(E)$).
\begin{prop}\label{prop: ell full faithful on gr loc}
    Let $\Loc_{\ld{T}_c}^\gr(\Gr_G; k)^\heart$ denote the heart of the $t$-structure on $\Loc_{\ld{T}_c}^\gr(\Gr_G; k) = \coMod_{\pi_0(\cf_\ld{T}(\Gr_G))^\vee}(\QCoh(\Bun_\ld{T}^0(E)))$ coming from the standard (homological truncation) $t$-structure on $\QCoh(\Bun_\ld{T}^0(E))$. 
    Then, the composite functor
    $$\Loc_{\ld{T}_c}^\gr(\Gr_G; k) \otimes_{\pi_0(k)} F \simeq \QCoh(\Bun_{\ld{B}}^0(E)^\reg) \to \QCoh(\ld{G}\backslash \ol{T^\ast_E(\ld{G}/\ld{N})}/\ld{T})$$
    is $t$-exact, and on hearts, it restricts to a fully faithful functor on the essential image of \cref{eq: Rep T^ x G^ to ell loc}. Furthermore, this functor is $W$-equivariant for the natural action of $W = \N_{G_c}(\ld{T}_c)/\ld{T}_c$ on the left-hand side and the Gelfand-Graev action of \cref{prop: ell gelfand-graev} on the right-hand side.

    Similarly, suppose $G$ has torsion-free fundamental group, and let $\Loc_{\ld{G}_c}^\gr(\Gr_G; k)^\heart$ denote the heart of the $t$-structure on $\Loc_{\ld{G}_c}^\gr(\Gr_G; k) = \coMod_{\pi_0(\cf_{\ld{G}}(\Gr_G))^\vee}(\QCoh(\cM_{\ld{G},0}))$ coming from the standard (homological truncation) $t$-structure on $\QCoh(\cM_{\ld{G},0})$. 
    Then, the composite functor
    $$\Loc_{\ld{G}_c}^\gr(\Gr_G; k) \otimes_{\pi_0(k)} F \simeq \QCoh(\Bun_{\ld{G}}^\ss(E)^\reg) \to \QCoh(\Bun_{\ld{G}}^\ss(E))$$
    is $t$-exact, and on hearts, it restricts to a fully faithful functor on the essential image of the functor $\Rep(\ld{G}) \to \Loc_{G_c}^\gr(\Gr_G; k) \otimes_{\pi_0(k)} F$ (analogous to \cref{eq: Rep G^ to ell loc}).
\end{prop}
\cref{prop: ell full faithful on gr loc} gives an analogue of \cite[Theorem 4]{bf-derived-satake}: namely, if $\QCoh_\free(\Bun_{\ld{G}}^\ss(E))$ denotes the essential image of the pullback functor $\Rep(\ld{G}) \to \QCoh(\Bun_{\ld{G}}^\ss(E))$, then there is a fully faithful embedding 
$$\QCoh_\free(\Bun_{\ld{G}}^\ss(E))^\heartsuit \hookrightarrow \Loc_{\ld{G}_c}^\gr(\Gr_G; k)^\heartsuit \otimes_{\pi_0(k)} F.$$ 
Similarly, if $\QCoh_\free(\ld{G}\backslash \ol{T^\ast_E(\ld{G}/\ld{N})}/\ld{T})$ denotes the essential image of the pullback functor $\Rep(\ld{G} \times \ld{T}) \to \QCoh(\ld{G}\backslash \ol{T^\ast_E(\ld{G}/\ld{N})}/\ld{T})$, then there is a fully faithful embedding 
$$\QCoh_\free(\ld{G}\backslash \ol{T^\ast_E(\ld{G}/\ld{N})}/\ld{T})^\heartsuit \hookrightarrow \Loc_{\ld{T}_c}^\gr(\Gr_G; k)^\heartsuit \otimes_{\pi_0(k)} F.$$
This implies the following result.
\begin{corollary}\label{cor: ell minuscule equivalence}
    Let $\QCoh_{\free}(\Bun_{\ld{G}}^\ss(E))^{\min,\heartsuit}$ denote the essential image of $\Rep_\min(\ld{G})$ under the pullback functor $\Rep(\ld{G})^\heartsuit \to \QCoh(\Bun_{\ld{G}}^\ss(E))^\heartsuit$ (so it is the entirety of $\QCoh(\Bun_{\ld{G}}^\ss(E))^\heartsuit$ if $F$ has characteristic zero and $\ld{G}$ is not of type $E_8$). Similarly, let $(\Loc_{\ld{G}_c}^\gr(\Gr_G; \KU)^{\heartsuit} \otimes_{\pi_0(k)} F)^\min$ denote the idempotent completion of the subcategory of $\Loc_{\ld{G}_c}^\gr(\Gr_G; \KU)^\heartsuit \otimes_{\pi_0(k)} F$ spanned by $\cf_{\lambda_\bull}^\gr$ ranging over sequences $\lambda_\bull$ of minuscule highest weights. Then there is an equivalence
    $$\QCoh_\free(\Bun_{\ld{G}}^\ss(E))^{\min,\heartsuit} \simeq (\Loc_{\ld{G}_c}^\gr(\Gr_G; k)^{\heartsuit} \otimes_{\pi_0(k)} F)^\min.$$
\end{corollary}
There is a similar equivalence
$$(\Loc_{\ld{T}_c}^\gr(\Gr_G; k)^{\heartsuit} \otimes_{\pi_0(k)} F)^\min \simeq \QCoh_\free(\ld{G}\backslash \ol{T^\ast_E(\ld{G}/\ld{N})}/\ld{T})^{\min,\heartsuit},$$
where these categories are defined analogously by idempotent completion. 

Note, again, that the category $(\Loc_{\ld{G}_c}^\gr(\Gr_G; k)^{\heartsuit} \otimes_{\pi_0(k)} F)^\min$ is the heart of a degeneration, in the sense of \cref{sec: degenerations}, of the similarly-defined category $(\Loc_{\ld{G}_c}(\Gr_G; k) \otimes_k F[u^{\pm 1}])^\min$. Thus \cref{cor: ell minuscule equivalence} gives an equivalence between the purely algebraically defined category $\QCoh_\free(\Bun_{\ld{G}}^\ss(E))^{\min,\heartsuit}$ and a degeneration of the purely topologically defined category $(\Loc_{\ld{G}_c}(\Gr_G; k) \otimes_k F[u^{\pm 1}])^\min$. Observe, again, that if $\lambda_\bull$ and $\mu_\bull$ are two sequences of dominant minuscule weights of $\ld{G}$, there is an equivalence of $k$-modules
$$\Map_{(\Loc_{\ld{G}_c}(\Gr_G; k) \otimes_k F[u^{\pm 1}])^\min}(\cf_{\lambda_\bull}, \cf_{\mu_\bull}) \simeq \cf_{\ld{G}_c}(\ol{\Gr_G^{\lambda_\bull}} \times_{\Gr_G} \ol{\Gr_G^{\mu_\bull}}),$$
so that the category $(\Loc_{\ld{G}_c}(\Gr_G; k) \otimes_k F[u^{\pm 1}])^\min$ compares to the $k$-analogue of the category from \cite[Section 3.4]{cautis-kamnitzer}.

Let us end this section with a brief comment regarding loop-rotation equivariance.
Recall from \cref{def: nil-hecke} the algebra $\cH(\bH, T, W)$ associated to a $1$-dimensional group scheme $\bH$ over a field $F$ and a root system with torus $T$ and Weyl group $W$. In the following discussion, we will set $\bH = E$, so that $\bH_T = \Bun_T^0(E) = \cM_T$. Exactly the same argument as in \cref{thm: ordinary loop-rot flag} shows the following result; here, $G$ does not need to be simply-laced.
\begin{theorem}\label{thm: ell loop-rot flag}
    There is an isomorphism of sheaves of associative algebras over $\bH_{\GG_m^\rot} = E$:
    \begin{equation}\label{eq: ell comparison to nil hecke}
        \pi_0 \cf_{\tilde{T}_c}(\Fl_G)^\vee \cong \cH(E, \tilde{T}, \tilde{W}).
    \end{equation}
    Here, $\pi_0 \cf_{\tilde{T}_c}(\Fl_G)^\vee$ is equipped with the associative algebra structure coming from convolution. Moreover, the above isomorphism is also one of (cocommutative) Hopf $\co_{\cM_{\tilde{T},0}} \cong \co_{\bH_{\tilde{T}}}$-algebroids.
\end{theorem}
\begin{remark}
    Recall the quotient $\Bun_{\tilde{T}}^0(E)\mmod \tilde{W}$ from \cref{rmk: relationship to t mmod Waff}. The discussion therein combined with \cref{thm: ell loop-rot flag} gives an equivalence of categories
    $$\pi_0 \cf_{\tilde{T}_c}(\Fl_G)^\vee\modc \simeq \cH(E, \tilde{T}, \tilde{W})\modc \simeq \IndCoh(\Bun_{\tilde{T}}^0(E)\mmod \tilde{W}).$$
    It follows, via the argument of \cref{cor: reg locus quantized satake}, that $\Loc_{\tilde{T}_c}^\gr(\Fl_G; k) \otimes_{\pi_0(k)} F$ is equivalent to the quotient of $\QCoh(\Bun_{\tilde{T}}^0(E))$ by the action of $\IndCoh(\Bun_{\tilde{T}}^0(E)\mmod \tilde{W})$.
\end{remark}
Assume, again, that $G$ is simply-laced.
Just as in \cref{sec: review Q coeff}, one would like to use \cref{thm: ell loop-rot flag} to prove analogues of \cref{cor: reg locus quantized satake} and \cref{eq: gen quantized ABG}. However, unlike with \cref{thm: ku loop-rot flag}, we do not even have a putative description for the Langlands dual side. By analogy with the K-theoretic case, it is natural to expect that the dual side will be related to elliptic quantum groups \`a la \cite{felder-elliptic-quantum}; I am currently exploring this direction of research.
\newpage

\section{Power operations under Langlands duality}\label{sec: power operations}
We will momentarily review some of the rich theory of power operations in homotopy theory; these force the existence of additional structures on the Langlands dual side of \cref{thm: intro omnibus}. Our goal in this section is to describe these structures explicitly. This section is motivated by a discussion with David Treumann. 

Before proceeding, we warn the reader of a terminological mismatch. In \cite{lonergan-frob}, Lonergan uses ``Steenrod operators'' to construct new structures on Coulomb branches (and in particular, on $\ld{J}$). These operators, as we will explain in future work, are better viewed as \textit{$\E{3}$-power operations} coming from an $\E{3}$-algebra structure on $C^{G_c}_\ast(\Gr_G; \FF_p)$. While these are related to Steenrod operations in the usual sense of the word (as used by algebraic topologists), they are not the same. More generally, $\E{3}$-power operations on $\cf_G(\Gr_G)^\vee$ are closely related to, but distinct from, the power operations we will describe below. These $\E{3}$-power operations will be described in future work; there, we will prove a generalization to other $\Eoo$-rings of the ``Azumaya property'' of crystalline differential operators in characteristic $p$.

Let $k$ be an $\Eoo$-ring; we will momentarily specialize to the case when $k$ is $2$-periodic \textit{integral} cohomology, complex K-theory, or elliptic cohomology. The theory of power operations describes the additional structure acquired by $k$-cohomology from the $\Eoo$-structure on $k$. As we will see below, it is closely related to the structure of isogenies on the associated $1$-dimensional group scheme. This relationship is not new; we refer the reader to \cite{strickland-symmetric-gps, ando-power-operations, rezk-icm} for some sources.
\begin{construction}\label{cstr: power operations}
    Any $\Eoo$-ring $k$ admits a \textit{Tate-valued Frobenius} $k \to k^{t\Cp}$, which is given by the composite of the Tate diagonal $k \to (k^{\otimes p})^{t\Cp}$ with the $\Cp$-Tate construction of the multiplication map $k^{\otimes p} \to k$. See, e.g., \cite[Definition IV.1.1]{nikolaus-scholze} for a modern reference. 
    
    If $k$ admits additional structure, then this structure can be refined: namely, if $k$ admits a refinement to a normed algebra in the $\infty$-category of genuine $\Cp$-spectra (which will be true in the examples we will study), and $\Phi^{\Cp} k$ is its geometric fixed points, then the Tate-valued Frobenius $k \to k^{t\Cp}$ lifts to an $\Eoo$-map $\varphi: k \to \Phi^{\Cp} k$. This map is given by taking geometric fixed points of the $\Cp$-equivariant norm-multiplication map $N^{\Cp} k \to k$, where $N^{\Cp} k$ is the Hill-Hopkins-Ravenel norm from \cite{hhr}.
    
    If $X$ is any (finite) space, let $\cf_k(X)$ denote the $\Eoo$-$k$-algebra of $k$-cochains on $X$, and let $\cf_k(X)^\vee$ denote the $\Eoo$-$k$-coalgebra of $k$-chains on $X$. Then $\varphi$ induces maps
    $$\cf_k(X) \to \cf_{\Phi^{\Cp} k}(X), \ \cf_k(X)^\vee \to \cf_{\Phi^{\Cp} k}(X)^\vee.$$
    We will denote either of these maps by $\varphi_X$, and call them the \textit{decompleted Frobenius}. Sometimes, we will consider the further composites to $\cf_{k^{t\Cp}}(X)$ and $\cf_{k^{t\Cp}}(X)^\vee$; these composites exist for any $\Eoo$-ring $k$, even if it does not lift to a normed algebra in genuine $\Cp$-spectra.
\end{construction}
In the above context, one should view $\Phi^{\Cp} k$ as a decompletion of $k^{t\Cp}$; we will see this in \cref{ex: examples of power operations} below.
\begin{remark}\label{rmk: power op and tate frob}
    Let $I_\tr$ denote the transfer ideal in $\pi_0 \cf_k(X \times B\Cp)$, given by the image of the map $\pi_0 \cf_k(X) \to \pi_0 \cf_k(X \times B\Cp)$ induced by the transfer. On $\pi_0$, the map $\varphi_X: \cf_k(X) \to \cf_{k^{t\Cp}}(X)$ then factors as a composite
    $$\pi_0 \cf_k(X) \to \pi_0 \cf_k(X \times B\Cp)/I_\tr \to \pi_0 \cf_{k^{t\Cp}}(X).$$
    The first map in this composite is often referred to as the \textit{total power operation}. We will denote it by $\varphi^\tr_X$. It will not be used below in any serious way; we have mentioned it only for completeness.
\end{remark}
\begin{remark}\label{rmk: total power op and phiCp}
    \cref{cstr: power operations} might seem somewhat abstract, but it has very concrete consequences. Suppose, for simplicity, that $k$ is even and $2$-periodic, and that $\pi_0 \cf_k(X \times B\Cp) \cong \pi_0 \cf_k(X) \otimes_{\pi_0(k)} \pi_0 \cf_k(B\Cp)$. Under the assumption on $k$, this happens if, for instance, either $X$ is a finite space with even cells, or $\pi_0 \cf_k(B\Cp)$ is flat over $\pi_0(k)$. The total power operation is then a ring map
    $$\varphi_X^\tr: \pi_0 \cf(X) \to \pi_0 \cf_k(X) \otimes_{\pi_0(k)} \pi_0 \cf_k(B\Cp)/I_\tr.$$
    In fact, this can be upgraded to a map
    \begin{equation}\label{eq: Sigma-p power operation}
        \pi_0 \cf(X) \to \pi_0 \cf_k(X) \otimes_{\pi_0(k)} \pi_0 \cf_k(B\Sigma_p)/I_\tr,
    \end{equation}
    where $\Sigma_p$ is the symmetric group on $p$ letters.
    
    Moreover, under the hypothesis on $k$, there is an isomorphism $\pi_0 \cf_k(B\Cp) \cong \pi_0(k)\pw{t}/[p](t)$, where $[p](t)$ is the $p$-series of the formal group law over $\pi_0 \cf_k(\CP^\infty) \cong \pi_0(k)\pw{t}$.\footnote{Unfortunately, this $t$ is common practice in homotopy theory; but it conflicts with the $t$ which is the coordinate of the formal (punctured) disk used to define the affine Grassmannian. We will use the same symbol $t$ to denote both, and the distinction should be clear from context.} The composite of \cref{rmk: power op and tate frob} can be identified with the map
    $$\pi_0 \cf_k(X) \xrightarrow{\varphi_X^\tr} \pi_0 \cf_k(X) \otimes_{\pi_0(k)} \pi_0 \cf_k(B\Cp)/I_\tr \to \pi_0 \cf_k(X) \otimes_{\pi_0(k)} \pi_0 \cf_k(B\Cp)[1/t].$$
    If $k$ admits the structure of a normed algebra in genuine $\Cp$-spectra, then this composite factors through
    $$\pi_0 \cf_k(X) \xrightarrow{\varphi_X} \pi_0 \cf_k(X) \otimes_{\pi_0(k)} \pi_0 \Phi^{\Cp}(k) \to \pi_0 \cf_k(X) \otimes_{\pi_0(k)} \pi_0 \cf_k(B\Cp)[1/t].$$
    It follows, in particular, that $\varphi_X^\tr$ and $\varphi_X$ together define a map
    $$\pi_0 \cf_k(X) \to \pi_0 \cf_k(X) \otimes_{\pi_0(k)} \left(\pi_0 \Phi^{\Cp}(k) \times_{\pi_0 \cf_k(B\Cp)[1/t]} \pi_0 \cf_k(B\Cp)/I_\tr\right).$$
    The fiber product on the right-hand side does not have any denominators in $t$, and we will see this explicitly in the examples below.
\end{remark}
\begin{example}\label{ex: examples of power operations}
    Let us explicate the preceding remark in two examples.
    \begin{itemize}
        \item Suppose $k = \Z[u^{\pm 1}]$ with $u$ in degree $2$. Then $\pi_0 \cf(B\Cp) \cong \Z\pw{t}/pt$, and the transfer ideal is simply generated by $t$. Therefore, $\pi_0 \cf(B\Cp)/I_\tr \cong \FF_p\pw{t}$. If $X$ is a finite space with even cells, then the map of \cref{rmk: power op and tate frob} can be viewed as an (ungraded) map
        $$\H^\ast(X; \Z) \xrightarrow{\varphi_X^\tr} \H^\ast(X; \FF_p\pw{t}) \to \H^\ast(X; \FF_p\ls{t}).$$
        The decompleted Frobenius is given by an (ungraded) map
        $$\varphi_X: \H^\ast(X; \Z) \to \H^\ast(X; \FF_p[t^{\pm 1}]).$$
        Explicitly, these maps are given on a class $\alpha \in \H^\ast(X; \Z)$ by the formula
        $$\alpha \mapsto \sum_{i\geq 0} (-1)^i P^i(\alpha) t^{(p-1)i}.$$
        Here, $P^i$ is the $i$th Steenrod operation. That is to say, $\varphi_X$ encodes the action of the Steenrod operations on $\H^\ast(X; \Z)$.\footnote{This is perhaps bad terminology, because the Steenrod algebra does give an endomorphism of integral cohomology. Here, however, we are viewing the Steenrod algebra as acting on a class $\alpha$ in integral cohomology through its mod $p$ reduction $\ol{\alpha}$. Our $\varphi_X$ will only see the action of $P^i$ on $\ol{\alpha}$, and not the operations $\beta P^i$ (when $p=2$, this is $\Sq^{2i+1}$). In fact, it turns out that the decompleted Frobenius $\varphi: \Z \to \Phi^{\Cp} \Z$ factors as an $\Eoo$-map through the reduction map $\Z \to \FF_p$ (so that $\varphi_X(\alpha)$ depends only on $\ol{\alpha}$ in a coherently multiplicative way), but proving this is out of the scope of the present article.
        
        Let us mention that only tracking $P^i(\ol{\alpha})$ definitely loses some information about the entire Steenrod algebra action. First, since $\ol{\alpha}$ came from the integral class $\alpha$, its Bockstein $\beta(\ol{\alpha})$ vanishes. It is, however, possible that $\beta P^i(\ol{\alpha})$ be nonzero despite $\ol{\alpha}$ lifting to integral cohomology. For instance, if we identify $\H^\ast(\RP^4 \times \RP^4; \FF_2) = \FF_2[x,y]/(x^5, y^5)$, then the class $\ol{\alpha} = xy(x+y)$ lifts to integral cohomology, but $\Sq^3(\ol{\alpha}) = x^2 y^2 (x^2 + y^2) \neq 0$.} As expected by \cref{rmk: total power op and phiCp}, there are no denominators in $t$ in the above formula.
        For instance, if $X = \CP^n$ for any finite $n$, this map sends $x\in \H^2(\CP^n; \Z)$ to $x - t^{p-1} x^p$.
        \item Suppose $k = \KU$. Then $\pi_0 \cf(B\Cp) \cong \Z\pw{t}/((1+t)^p - 1)$, and the transfer ideal is simply generated by $t$. Therefore, 
        $$\pi_0 \cf(B\Cp)/I_\tr \cong \Z\pw{t}/\tfrac{(1+t)^p - 1}{t} \cong \Z[\zeta_p]^\wedge_t.$$
        Here, $\zeta_p$ is a primitive $p$th root of unity and $t = \zeta_p - 1$. Note that since $t^{p-1}$ is a unit multiple of $p$ in $\Z_p[\zeta_p]$, the $t$-completion above is equivalent to $p$-completion.
        The ring $\Z_p[\zeta_p]$ is flat over $\pi_0(k)^\wedge_p = \Z_p$, and so the composite of \cref{rmk: power op and tate frob} can be viewed as a ring map
        \begin{equation}\label{eq: KU completed tate frob}
            \KU^0(X) \xrightarrow{\varphi_X^\tr} \KU^0(X)[\zeta_p]^\wedge_p \to \KU^0(X)[\zeta_p]^\wedge_p[1/p]
        \end{equation}
        The geometric fixed points $\Phi^{\Cp} \KU$, on the other hand, has homotopy groups given by
        $$\pi_\ast \Phi^{\Cp} \KU \cong \Z[q^{\pm 1}, \beta^{\pm 1}][\tfrac{1}{(q-1)\cdots(q^{p-1}-1)}]/(q^p-1) \cong \Z[\zeta_p, \beta^{\pm 1}][1/p];$$
        the final isomorphism comes from noticing that $(\zeta_p-1)\cdots(\zeta_p^{p-1}-1)$ is $(-1)^{p-1} p$. The decompleted Frobenius is given by a ring map
        $$\varphi_X: \KU^0(X) \to \KU^0(X)[\zeta_p][1/p].$$
        Note that this map is, indeed, a de-$p$-adic completion of \cref{eq: KU completed tate frob}.
        Both $\varphi_X^\tr$ and $\varphi_X$ send a vector bundle $V$ to the $p$th Adams operation $\psi^p(V) \in \KU^0(X)$, viewed as a subalgebra of $\KU^0(X)[\zeta_p]^\wedge_p$ and of $\KU^0(X)[\zeta_p][1/p]$. As expected by \cref{rmk: total power op and phiCp}, there are no denominators in $t = \zeta_p - 1$ in this formula.
    \end{itemize}
\end{example}

In order to understand the interaction between these power operations and \cref{thm: intro omnibus}, we will need to port \cref{cstr: power operations} to the setting of genuine equivariant (co)homology. Namely, we need a decompletion of the map 
$$\varphi_{BS^1}: \cf_k(BS^1) \to \cf_{\Phi^{\Cp} k}(BS^1) \simeq \lim_n \cf_{\Phi^{\Cp} k}(\CP^n).$$
First, observe that this map factors through an $\Eoo$-map
$$\varphi_{BS^1}': \cf_k(BS^1) \to \Phi^{\Cp} k \otimes_k \cf_k(BS^1) \simeq \Phi^{\Cp} k \otimes_k \lim_n \cf_k(\CP^n);$$
the map from the target to $\cf_{\Phi^{\Cp} k}(BS^1)$ generally induces a strict inclusion on homotopy\footnote{For instance, take $k = \KU$. Then the map $\varphi_{BS^1}$ is given on homotopy by the map $\Z\pw{t} \to \Z[\zeta_p][1/p]\pw{t}$ which sends $t\mapsto (1+t)^p - 1$. This factors through a map $\Z\pw{t} \to \Z[\zeta_p]\pw{t}[1/p]$; this is the effect of the map $\varphi_{BS^1}'$ on homotopy. Note that there is a strict inclusion $\Z[\zeta_p]\pw{t}[1/p] \subseteq \Z[\zeta_p][1/p]\pw{t}$.}. Note that $\varphi_{BS^1}'$ can be viewed as a homomorphism
$$\hat{\GG} \times_{\spec k} \spec \Phi^{\Cp} k \to \hat{\GG},$$
where $\hat{\GG} = \spf \cf_k(BS^1)$.

We will now specialize to the case when $k$ is $2$-periodic {integral} cohomology, complex K-theory, or elliptic cohomology, and let $\GG$ denote $\GG_a$, $\GG_m$, or the spectral elliptic curve $E$ over $k$ (respectively). The choice of $\GG$ equips $k$ with a lift to the $\infty$-category of normed rings in genuine $\Cp$-spectra. As usual, let $\GG_0$ denote the underlying group scheme over $\pi_0(k)$. Our desired decompletion will then be given by a particular homomorphism
\begin{equation}\label{eq: phi on genuine S1 equiv}
    \varphi: \GG \times_{\spec k} \spec \Phi^{\Cp} k \to \GG.
\end{equation}
To describe it, we need to give a moduli-theoretic interpretation of $\Phi^{\Cp} k$. Let $\GG[p]$ denote the $p$-torsion subgroup of $\GG$, so that $\GG[p] = \Hom(\Z/p, \GG)$. 

There is a natural action of $\FF_p^\times$ on $\GG[p]$ given by sending $i\in \FF_p^\times$ to the multiplication-by-$i$ map $[i]$. Let $U\subseteq \GG[p]$ denote the open subscheme given by the complement of the closed subscheme
$$\bigcup_{i\in \FF_p^\times} \ker(\GG_0[p] \xrightarrow{[i]} \GG_0[p]) \subseteq \GG_0[p].$$
The following is a straightforward consequence of \cite[Proposition 2.25]{hausmann-meier}.
\begin{lemma}\label{lem: phi Cp and moduli problem}
    The spectral scheme $\spec \Phi^{\Cp} k$ is isomorphic to $U$ over $k$.
\end{lemma}
The spectral scheme $U \subseteq \GG[p]$ is specified by its underlying (classical) scheme $U_0 \subseteq \GG_0[p]$ over $\pi_0(k)$. If $Y$ is a $\pi_0(k)$-scheme, a map $Y \to U_0$ is equivalent to the data of a homomorphism $f: \Z/p \to \GG_Y = \GG \times_{\spec \pi_0(k)} Y$ such that $f(i)$ is not the identity section for $i\in \Z/p - \{0\}$. This implies that $f$ exhibits $\Z/p$ as a closed subgroup scheme of $\GG_Y$ which is isomorphic to the Cartier divisor $\sum_{j\in \FF_p} f(j)$.
\begin{construction}\label{cstr: genuine S1 frobenius}
    Over $U_0$, there is a universal isogeny $q_0: \GG_{0,U_0} \to \GG_{0,U_0}$ given by quotienting by the subgroup scheme $\Z/p \cong \sum_{j\in \FF_p} f(j)$. This isogeny defines an \textit{\'etale} morphism $\co_{\GG_{0,U_0}} \to \co_{\GG_{0,U_0}}$; so \cite[Theorem 7.5.0.6]{HA} implies that the isogeny $q_0$ lifts to a map $q: \GG_U \to \GG_U$ over $\spec \Phi^{\Cp} k$. (In general, $q_0$ is to be understood as an analogue for $\GG_0$ of the Artin-Schreier map on $\GG_a$.) The map \cref{eq: phi on genuine S1 equiv} is then given by the composite
    $$\GG_U \xrightarrow{q} \GG_U \simeq \GG \times_{\spec k} \spec \Phi^{\Cp} k \xrightarrow{\pr} \GG.$$
    We will denote its underlying map by
    $$\varphi_0: \GG_0 \times_{\spec \pi_0(k)} \spec \pi_0(\Phi^{\Cp} k) \to \GG_0.$$
\end{construction}
\begin{example}\label{ex: genuine isogeny examples}
    Let us explicate \cref{cstr: genuine S1 frobenius} in two examples.
    \begin{enumerate}
        \item\label{item: Z isogeny} Let $k = \Z[u^{\pm 1}]$ and $\GG = \GG_a$. Then $U_0 = \spec \FF_p[t^{\pm 1}]$, and the isogeny $q: \GG_{0,U_0} \to \GG_{0,U_0}$ is given by the Artin-Schreier map 
        $$x\mapsto x - t^{p-1} x^p.$$
        \item\label{item: KU isogeny} Let $k = \KU$ and $\GG = \GG_m$ with coordinate $y$. Then $U_0 = \spec \Z[\zeta_p][1/p]$, and $q: \GG_{0,U_0} \to \GG_{0,U_0}$ is given by the map 
        $$y\mapsto 1 + \prod_{j\in \FF_p} (y - \zeta_p^j) = y^p.$$
    \end{enumerate}
\end{example}
\begin{remark}\label{rmk: interpolating artin schreier}
    Let us mention for the sake of completeness that one can interpolate between the two cases in \cref{ex: genuine isogeny examples}, using the group scheme $\GG$ over connective complex K-theory $\ku$ studied in \cite{ku-rel-langlands}. (Using this, the results discussed below can be extended to the case $k = \ku$, too, but we will not address this here.) Let $\GG_\beta := \GG_0$ denote its underlying group scheme. Explicitly, $\GG_\beta$ is the group scheme over $\Z[\beta]$ given by $\spec \Z[\beta, v^{\pm 1}][\tfrac{v-1}{\beta}]$, where the group law is determined by $v \mapsto v \otimes v$. In an abuse of notation, we will also write $\GG_{f(\beta)}$ for an element $f(\beta)\in \Z[\beta]$ to denote the group scheme given by $\spec \Z[\beta, v^{\pm 1}][\tfrac{v-1}{f(\beta)}]$; hopefully this will not cause any confusion to the reader. We will (perhaps unexpectedly) define $t^{-1} := \tfrac{v-1}{\beta}$, and also define the scheme
    $$U_0 = \spec \Z[\beta, v^{\pm 1}][\tfrac{v-1}{\beta}, \tfrac{\beta^{p-1}}{(v-1)\cdots(v^{p-1}-1)}]/\tfrac{v^p-1}{\beta}.$$
    Note that $v = \zeta_p$ is a primitive $p$th root of unity, and $\beta = \tfrac{\zeta_p - 1}{t^{-1}}$. The scheme $U_0$ is rather remarkable: its fiber over the locus where $\beta$ is a unit is precisely $\spec \Z[\zeta_p, \beta^{\pm 1}][1/p]$, while its fiber over $\beta = 0$ is given by $\spec \FF_p[t^{\pm 1}]$. (In homotopy theory, $U_0$ arises as $\spec \pi_\ast (\Phi^{\Cp} \ku)$, where $\ku$ is connective complex K-theory.)
    
    Let $y$ denote the invertible coordinate on $\GG_{\beta,U_0}$, and let $x = \tfrac{y-1}{\beta}$. Then the map $q: \GG_{\beta,U_0} \to \GG_{p\beta,U_0}$ is given by the map $y \mapsto y^p$ and $\beta \mapsto p\beta$, so that it sends
    $$q: x = \tfrac{y-1}{\beta} \mapsto \tfrac{y^p - 1}{p\beta} = \tfrac{(1+\beta x)^p - 1}{p\beta}.$$
    We claim that, as a morphism over $\spec \Z[\beta]$, this map interpolates between the isogenies of Cases \ref{item: Z isogeny} and \ref{item: KU isogeny} in \cref{ex: genuine isogeny examples}. First, it is obvious that when $\beta$ is a unit, we simply recover Case \ref{item: KU isogeny}. Next, let us consider the fiber over $\beta = 0$. Recall that $\beta = (\zeta_p - 1)t$, so the binomial theorem gives
    $$q(x) = \tfrac{1}{p} \sum_{i=1}^p \binom{p}{i} \beta^{i-1} x^i = \sum_{i=1}^p \tfrac{(\zeta_p - 1)^{i-1}}{p} \binom{p}{i} t^{i-1} x^i.$$
    Almost all terms vanish modulo $\beta$, except for the terms $i=1,p$; one is left with
    $$q(x) \equiv x + \tfrac{(\zeta_p - 1)^{p-1}}{p} t^{p-1} x^p = x + \tfrac{t^{p-1} x^p}{[1]_{\zeta_p} \cdots [p-1]_{\zeta_p}} = x - t^{p-1} x^p \pmod{\beta},$$
    as desired. (Here, $[j]_q = \tfrac{q^j - 1}{q-1}$ is the $q$-integer corresponding to $j\in \Z$; we are using the fact that $[1]_{\zeta_p} \cdots [p-1]_{\zeta_p} \equiv -1\pmod{\zeta_p - 1}$, which amounts to the fact that $(p-1)! = -1 \in \FF_p$.) In general, $\ku$ gives a degeneration of power operations/the $p$th Adams operation on $\KU$ to power operations/the Steenrod algebra action on ordinary cohomology; this goes back (albeit not in the form presented above) to \cite[Proposition 6.4 and Theorem 6.5]{atiyah-power-operations}.
\end{remark}
For any compact torus $T_c$, we obtain a map 
$$\varphi_T: \cM_T \times_{\spec k} \spec \Phi^{\Cp} k \to \cM_T,$$
whose underlying map on classical $\pi_0(k)$-schemes will be denoted by $\varphi_{T,0}$.
If $X$ is any (ind-)finite $T_c$-space $X$, we then obtain maps
$$\cf_T(X) \to \varphi_{T,\ast} \varphi_T^\ast \cf_T(X), \ \cf_T(X)^\vee \to \varphi_{T,\ast} \varphi_T^\ast (\cf_T(X)^\vee).$$
We will denote these maps by $\varphi_{T,X}$, and call them the \textit{$T_c$-equivariant decompleted Frobenius}. Note that $\varphi_{T,X}$ on $\cf_T(X)$ is a map of $\Eoo$-algebras in $\QCoh(\cM_T)$, and similarly $\varphi_{T,X}$ on $\cf_T(X)^\vee$ is a map of $\Eoo$-coalgebras in $\QCoh(\cM_T)$. The map $\varphi_{T,X}$ in fact comes from a functor
$$\varphi_{T,X}: \Loc_T(X; k) \to \Loc_T(X; \Phi^{\Cp} k).$$
\begin{remark}
    It is easy to see that if $\cH(\GG_0, T, W)$ denotes the nil-Hecke algebra from \cref{def: nil-hecke} associated to a root system with torus $T$ and Weyl group $W$, then the map $\varphi_{T,0}$ induces a map $\cH(\GG_0, T, W) \to \cH(\GG_0, T, W) \otimes_{\pi_0(k)} \pi_0(\Phi^{\Cp} k)$. This map is very interesting, but we will postpone a detailed study of its combinatorial implications to a future article. When $\GG_0 = \GG_a$, for instance, this map describes the total Steenrod operation on the nil-Hecke algebra; similar ideas are explored in \cite{kitchloo-steenrod, beliakova-cooper}.
\end{remark}

We will now study the case $X = \Gr_G$, where $G$ is connected, almost simple, and simply-laced over $\cc$. For notational simplicity, we will write $\Loc^\gr_T(\Gr_G; \Phi^{\Cp} k)$ to denote the tensor product $\Loc^\gr_T(\Gr_G; k) \otimes_{\pi_0 k} \pi_0 (\Phi^{\Cp} k)$. The $T_c$-equivariant decompleted Frobenius on $\cf_T(\Gr_G)^\vee$ induces a functor 
\begin{equation}\label{eq: frob on loc gr}
    (\varphi_{T, \Gr_G})_\ast: \Loc^\gr_{T_c}(\Gr_G; k) \to \Loc^\gr_{T_c}(\Gr_G; \Phi^{\Cp} k).
\end{equation}
Moreover, the homomorphism $\varphi_{\ld{T},0}$ induces a map in the opposite direction on Cartier duals, and hence a morphism
\begin{equation}\label{eq: frob on Bun B^}
    \varphi_{T,0}: \Bun_{\ld{B}}^0(\GG_0^\vee) \times_{\spec \pi_0(k)} \spec \pi_0(\Phi^{\Cp} k) \to \Bun_{\ld{B}}^0(\GG_0^\vee).
\end{equation}
\begin{remark}
    In fact, it follows from \cref{rmk: total power op and phiCp} that one can replace $\pi_0(\Phi^{\Cp} k)$ above by the fiber product $\pi_0(\Phi^{\Cp} k) \times_{\pi_0 \cf_k(B\Cp)[1/t]} \pi_0 (\cf_k(B\Cp))/I_\tr$, which has the effect of working in a ``$t$-lattice'' inside $\pi_0(\Phi^{\Cp} k)$. For simplicity, we will ignore this point below, and just work with $\pi_0(\Phi^{\Cp} k)$.
\end{remark}
The following says that the map \cref{eq: frob on Bun B^} is precisely the effect of the $T_c$-equivariant decompleted Frobenius under Langlands duality.
\begin{theorem}\label{thm: frobenius and langlands}
    Under the equivalence of \cref{thm: intro omnibus} as rephrased in \cref{rmk: 1-shifted cartier} (which continues to hold true in the case $k = \Z[u^{\pm 1}]$, at least upon inverting enough primes), the functor $(\varphi_{\ld{T}, \Gr_G})_\ast$ of \cref{eq: frob on loc gr} identifies with the functor given by pullback along the map \cref{eq: frob on Bun B^}. That is, the following diagram commutes:
    $$\xymatrix{
    F \otimes_{\pi_0(k)} \Loc^\gr_{\ld{T}_c}(\Gr_G; k) \ar[r]^-{(\varphi_{\ld{T}, \Gr_G})_\ast} \ar[d]_-\sim & F \otimes_{\pi_0(k)} \Loc^\gr_{\ld{T}_c}(\Gr_G; \Phi^{\Cp} k) \ar[d]^-\sim \\
    \QCoh(\Bun_{\ld{B}}^0(\GG_0^\vee)^\reg) \ar[r]_-{\varphi_{\ld{T},0}^\ast} & \QCoh(\Bun_{\ld{B}}^0(\GG_0^\vee)^\reg) \otimes_{\pi_0(k)} \pi_0(\Phi^{\Cp} k).
    }$$
\end{theorem}
\begin{proof}
    The argument is essentially that of \cref{prop: cplx conj KU and B mod B^}, so we only give a sketch. Let us begin by observing that if $\kappa: \cM_{\ld{T}, 0} \to \Bun_{\ld{B}}^0(\GG_0^\vee)$ denotes the Kostant section, there is a commutative diagram
    $$\xymatrix{
    \cM_{\ld{T},0} \times_{\spec \pi_0(k)} \spec \pi_0(\Phi^{\Cp} k) \ar[d]_-\kappa \ar[r]^-{\varphi_{\ld{T},0}} & \cM_{\ld{T},0} \ar[d]^-\kappa \\
    \Bun_{\ld{B}}^0(\GG_0^\vee) \times_{\spec \pi_0(k)} \spec \pi_0(\Phi^{\Cp} k) \ar[r]_-{\varphi_{\ld{T},0}} & \Bun_{\ld{B}}^0(\GG_0^\vee).
    }$$
    The proof of \cref{thm: intro omnibus} shows that it suffices to prove that under the isomorphism
    \begin{equation}\label{eq: iso of reg centr for frob}
        \spec_{\cM_{\ld{T},0}}(\pi_0 \cf_{\ld{T}}(\Gr_G)^\vee) \cong \cM_{\ld{T}, 0} \times_{\Bun_{\ld{B}}^0(\GG_0^\vee)} \cM_{\ld{T},0},
    \end{equation}
    the $\ld{T}_c$-equivariant decompleted Frobenius on $\cf_{\ld{T}}(\Gr_G)^\vee$ identifies with the effect of the map $\varphi_{\ld{T},0}$ on the right-hand side. For brevity, we will phrase this condition as the ``Frobenius-equivariance'' of \cref{eq: iso of reg centr for frob}.
    
    Let $\cM_{\ld{T},0}^\gen \subseteq \cM_{\ld{T},0}$ denote the complement of $\bigcup_{\alpha} \cM_{\ld{T}_\alpha,0}$ as $\alpha$ ranges over the roots of $\ld{G}$, and $\ld{T}_\alpha$ denotes the kernel of the map $\alpha: \ld{T} \to \GG_m$. Since both sides of \cref{eq: iso of reg centr for frob} are flat over $\cM_{\ld{T},0}$, their sheaves of functions inject into the corresponding localizations along the map $\cM_{\ld{T},0}^\gen \subseteq \cM_{\ld{T},0}$. It therefore suffices to show that when restricted to $\cM_{\ld{T},0}^\gen$, the isomorphism of \cref{eq: iso of reg centr for frob} is Frobenius-equivariant.

    By \cref{lem: atiyah localization}, there is an isomorphism 
    $$\pi_0 \cf_{\ld{T}}(\Gr_G)^\vee|_{\cM_{\ld{T},0}^\gen} \cong \pi_0 \cf_{\ld{T}}(\Gr_T)^\vee|_{\cM_{\ld{T},0}^\gen} \cong \co_{\cM_{\ld{T},0}^\gen}[\bX_\ast(T)].$$
    Under this isomorphism, the $\ld{T}_c$-equivariant decompleted Frobenius is given simply by the Frobenius on $\cM_{\ld{T},0}^\gen$, and acts trivially on $\bX_\ast(T)$.
    Similarly, there is an isomorphism
    $$(\cM_{\ld{T}, 0} \times_{\Bun_{\ld{B}}^0(\GG_0^\vee)} \cM_{\ld{T},0}) \times_{\cM_{\ld{T},0}} \cM_{\ld{T},0}^\gen \cong \cM_{\ld{T},0}^\gen \times \ld{T}.$$
    Under this isomorphism, the action of $\varphi_{\ld{T},0}$ is given simply by the Frobenius on $\cM_{\ld{T},0}^\gen$, and acts trivially on $\ld{T}$. It is clear that this matches with the Frobenius on $\pi_0 \cf_{\ld{T}}(\Gr_G)^\vee|_{\cM_{\ld{T},0}^\gen}$, as desired.
\end{proof}
The entire discussion above can be adapted without much difficulty to the setting of $G_c$-equivariant local systems. If $G$ is almost simple and simply-laced, and has torsion-free fundamental group, then the analogue of \cref{thm: frobenius and langlands} states the following. Under the equivalence of \cref{rmk: 1-shifted cartier} (which continues to hold true in the case $k = \Z[u^{\pm 1}]$, at least upon inverting enough primes), the following diagram commutes:
$$\xymatrix{
F \otimes_{\pi_0(k)} \Loc^\gr_{\ld{G}_c}(\Gr_G; k) \ar[r]^-{(\varphi_{\ld{G}, \Gr_G})_\ast} \ar[d]_-\sim & F \otimes_{\pi_0(k)} \Loc^\gr_{\ld{G}_c}(\Gr_G; \Phi^{\Cp} k) \ar[d]^-\sim \\
\QCoh(\Bun_{\ld{G}}^\ss(\GG_0^\vee)^\reg) \ar[r]_-{\varphi_{\ld{G},0}^\ast} & \QCoh(\Bun_{\ld{G}}^\ss(\GG_0^\vee)^\reg) \otimes_{\pi_0(k)} \pi_0(\Phi^{\Cp} k).
}$$
The top and bottom maps are defined just as in \cref{eq: frob on loc gr} and \cref{eq: frob on Bun B^}. 
\begin{remark}\label{rmk: steenrod on G^?}
    Recall from the proof of \cref{thm: intro omnibus} that there is a closed immersion $\spec \pi_0 \cf_{\ld{T}}(\Gr_G)^\vee \hookrightarrow \ld{G} \times \cM_{T,0}$. One can try to extend the action of the decompleted Frobenius to $\ld{G} \times \cM_{T,0}$ itself, but such an extension will \textit{not} be canonical (and seems to be essentially useless in studying $\ld{G}$).
\end{remark}
\begin{remark}
    Let $\lambda$ be a dominant minuscule weight for $\ld{G}$, and let $G/P_\lambda$ denote the corresponding flag variety for $G$ as in \cref{table: minuscule varieties}. The decompleted Frobenius acts on $\pi_0 \cf_{\ld{T}}(G/P_\lambda)$, and does so compatibly with its action on $\pi_0 \cf_{\ld{T}}(\Gr_G)^\vee$, in the sense that the action of $\spec \pi_0 \cf_{\ld{T}}(\Gr_G)^\vee$ on $\pi_0 \cf_{\ld{T}}(G/P_\lambda)$ is equivariant for the decompleted Frobenius. It would be interesting to understand, in some uniform manner, the action of the decompleted Frobenius on $\pi_0 \cf_{\ld{T}}(G/P_\lambda)$. In the case of ordinary cohomology, this amounts to understanding Steenrod operations on $\H^\ast(G/P_\lambda; \Z)$. This is already interesting in the case when $G$ is of type $A$ (i.e., in the case of Grassmannians), where it was studied, for instance, in \cite{borel-serre-steenrod, borel-serre-steenrod-ii, lance-steenrod-dyer-lashof-BU}.
\end{remark}

Let us now explicate \cref{thm: frobenius and langlands} in some examples. Since the description in the case of elliptic cohomology is not much more explicit than the statement of \cref{thm: frobenius and langlands} -- that is, that the decompleted Frobenius on $\Bun_{\ld{G}}^\ss(E)$ is induced by the degree $p$ \'etale isogeny $E \to E$ over $\spec \pi_0(\Phi^{\Cp} k)$ -- we will mostly focus on the cases of ordinary cohomology and complex K-theory below for simplicity. We will also briefly discuss the example of ``Tate K-theory'', where one can also make the decompleted Frobenius explicit at the level of isomorphism classes of objects of $\Bun_{\ld{G}}^\ss(E)$.

Before proceeding, we warn the reader that our discussion above only shows that the decompleted Frobenius is canonically defined on the stack $\Bun_{\ld{G}}^\ss(\GG_0^\vee)$, and not necessarily on a uniformization. For instance, when $\GG_0 = \GG_a$, so that $\Bun_{\ld{G}}^\ss(\GG_0^\vee) = \ld{\g}/\ld{G}$, we will often compute the decompleted Frobenius as a map on $\ld{\g}$; but the resulting formulas are only unique up to $\ld{G}$-conjugation.
\begin{example}\label{ex: steenrod operations langlands}
    Let $k = \Z[u^{\pm 1}]$, $\GG = \GG_a$, and invert $N\gg 0$ so that the equivalence of \cref{cor: reg locus ordinary ABG} continues to hold: that is, so that there is an equivalence $\Loc^\gr_{\ld{T}_c}(\Gr_G; k) \simeq \QCoh(\ld{\fr{b}}^\reg/\ld{B})$. This can be proved by showing that the isomorphism of \cref{thm: ordinary hmlgy reg centr} over $\spec \Z[1/N]$ for some $N\gg 0$. In fact, \cite{homology-langlands} shows that one can take $N$ to be the integer $n_G$ from \cite[Remark 5.8]{homology-langlands}.\footnote{In fact, in the setup at hand, one can take $N = 1$.}
    Under the identification $\Bun_{\ld{B}}^0(\GG_0^\vee) \cong \ld{\fr{b}}/\ld{B}$, the map \cref{eq: frob on Bun B^} is given (for $p\nmid N$) by the map
    $$\varphi_{\ld{T},0}: (\ld{\fr{b}} \times_{\spec \Z[1/N]} \spec \FF_p[t^{\pm 1}])/\ld{B} \to \ld{\fr{b}}/\ld{B}$$
    which is the $\ld{B}$-quotient of the map
    $$\ld{\fr{b}} \times_{\spec \Z[1/N]} \spec \FF_p[t^{\pm 1}] \to \ld{\fr{b}}, \ (x,t) \mapsto x - t^{p-1} x^{[p]}.$$
    Here, $x^{[p]}$ denotes the restricted Lie operation on $\ld{\fr{b}}$. It follows from \cref{thm: frobenius and langlands} that this map implements the action of the decompleted Frobenius/Steenrod operations on $\Loc^\gr_{\ld{T}_c}(\Gr_G; \Z[u^{\pm 1}])$ (upon inverting $N \gg 0$).
    
    Similarly, under the identification $\Bun_{\ld{G}}^\ss(\GG_0^\vee) \cong \ld{\g}/\ld{G}$, the analogue of the map \cref{eq: frob on Bun B^} is given (for $p\nmid N$) by the map
    $$\varphi_{\ld{G},0}: (\ld{\g} \times_{\spec \Z[1/N]} \spec \FF_p[t^{\pm 1}])/\ld{G} \to \ld{\g}/\ld{G}$$
    which is the $\ld{G}$-quotient of the map
    \begin{equation}\label{eq: artin-schreier on ld-g}
        \ld{\g} \times_{\spec \Z[1/N]} \spec \FF_p[t^{\pm 1}] \to \ld{\g}, \ (x,t) \mapsto x - t^{p-1} x^{[p]}.
    \end{equation}
    Again, this map implements the action of the decompleted Frobenius/Steenrod operations on $\Loc^\gr_{\ld{G}_c}(\Gr_G; \Z[1/N,u^{\pm 1}])$ under the equivalence between $\Loc^\gr_{\ld{G}_c}(\Gr_G; \Z[1/N,u^{\pm 1}])$ and $\QCoh(\ld{\g}^\reg/\ld{G})$.

    For instance, suppose $G = \SL_2$, and assume $p>2$. When restricted to the Kostant slice $f + \ld{\g}^e = \left\{\begin{psmallmatrix}
        0 & x \\
        1 & 0
    \end{psmallmatrix}\right\} \subseteq \ld{\g} = \pgl_2$, the map $\varphi_{\ld{G},0}$ sends 
    $$\left(\begin{psmallmatrix}
        0 & x \\
        1 & 0
    \end{psmallmatrix}, t\right) \mapsto \begin{psmallmatrix}
        0 & x - t^{p-1} x^{(p+1)/2} \\
        1 - t^{p-1} x^{(p-1)/2} & 0
    \end{psmallmatrix}.$$
    This is conjugate to the matrix $\begin{psmallmatrix}
        0 & x(1 - t^{p-1} x^{(p-1)/2})^2 \\
        1 & 0
    \end{psmallmatrix}$, so we find that $\varphi_{\ld{G},0}$ is given in coordinates by the map 
    $$\varphi_{\ld{G},0}: x \mapsto x - 2 t^{p-1} x^{(p+1)/2} + t^{2(p-1)} x^p = \prod_{j\in \FF_p} (x - j^2 t^2)$$
    on $f + \ld{\g}^e$. Under the isomorphism $f + \ld{\g}^e \cong \spec \H^\ast_{\SU(2)}(\ast; \Z)$, the coordinate $x$ identifies with the first Pontryagin class $p_1$; and $\varphi_{\ld{G},0}(x)$ is exactly the total Steenrod operation on this class, as expected. Alternatively, one could conjugate the above description of $\varphi_{\ld{G},0}$ to find that the decompleted Frobenius acts on a binary quadratic form $q(x,y)\in \pgl_2 = \Sym^2(\AA^2)$ by
    $$\varphi_{\SL_2,0}: q(x,y) \mapsto (1 - t^{p-1} \det(q)^{(p-1)/2}) q(x,y),$$
    where $\det(q)$ is the discriminant of $q$.
\end{example}

\cref{ex: steenrod operations langlands} has the following algebraic consequence. This result is not new, and can be found in the literature as \cite[Section 4.1]{jantzen-kohomologie};
it also holds in the non-simply-laced case. (The proof below is a thinly veiled topological analogue of Jantzen's argument.)
\begin{prop}\label{prop: pth power zero on nilcone}
    The map $x \mapsto x^{[p]}$ is zero on the nilpotent cone $\ld{\cN} \subseteq \ld{\g}$ if $p$ is at least the Coxeter number of $\ld{G}$.
\end{prop}
\begin{proof}
    The map $\varphi_{\ld{G},0}$ from \cref{eq: artin-schreier on ld-g} is given by taking affine closures of the map 
    $$\ld{\g}^\reg \times_{\spec \Z[1/N]} \spec \FF_p[t^{\pm 1}] \to \ld{\g}^\reg, \ (x,t) \mapsto x - t^{p-1} x^{[p]}.$$
    It suffices to show that $\varphi_{\ld{G},0}|_{\ld{\cN}^\reg}$ sends $x\mapsto x$. If we identify $\ld{\cN}^\reg = \ld{G}/Z_{\ld{G}}(e)$ and $Z_{\ld{G}}(e) = \spec \H_\ast(\Gr_G; \Z[1/N])$ by \cref{thm: ordinary hmlgy reg centr} (or, \cite[Theorem 6.1]{homology-langlands}), then $\varphi_{\ld{G},0}|_{\ld{\cN}^\reg}$ is induced by the map 
    $$Z_{\ld{G}}(e) \times_{\spec \Z[1/N]} \spec \FF_p[t^{\pm 1}] \to Z_{\ld{G}}(e)$$
    coming from the decompleted Frobenius/total Steenrod operation on $\H_\ast(\Gr_G; \Z[1/N])$. It therefore suffices to show that the decompleted Frobenius acts by the identity on $\H_\ast(\Gr_G; \Z[1/N])$.

    This can be proved using the generating complexes from \cite{bott-space-of-loops}, as elaborated upon in \cite{littig-mitchell}. Namely, recall that if $X$ is a homotopy commutative H-space and $f: Y \to X$ is a map from a CW-complex into $X$, then $f$ is said to exhibit $Y$ as a generating complex for $X$ if $f$ induces a surjection $\Sym(\H_\ast(Y; \Z[1/N])) \twoheadrightarrow \H_\ast(X; \Z[1/N])$. In \cite{littig-mitchell}, it was shown that if $\theta$ denotes the highest (short) coroot of $G$, then the Schubert variety $\ol{\Gr_G^{-\theta}}$ corresponding to the antidominant weight $-\theta$ is a generating complex for $\Gr_G$. Since $\H_\ast(\ol{\Gr_G^{-\theta}}; \Z[1/N])$ generates $\H_\ast(\Gr_G; \Z[1/N])$ as a ring, and the decompleted Frobenius is a ring map, it suffices to show that the decompleted Frobenius/total Steenrod operation on $\H_\ast(\ol{\Gr_G^{-\theta}}; \Z[1/N])$ sends $x\mapsto x$. Equivalently, it suffices to show that all Steenrod operations $P^i$ act trivially on $\H_\ast(\ol{\Gr_G^{-\theta}}; \Z[1/N])$ for $i>0$.
    
    To see this, observe that the dimension of $\ol{\Gr_G^{-\theta}}$ is given by $2(h-1)$, where $h$ is the Coxeter number of $\ld{G}$. The operation $P^i$ sends a class in $\H_\ast(\ol{\Gr_G^{-\theta}}; \Z[1/N])$ in homological degree $j$ to a class in $\H_\ast(\ol{\Gr_G^{-\theta}}; \FF_p)$ in homological degree $j - 2i(p-1)$. Since $\H_\ast(\ol{\Gr_G^{-\theta}}; \FF_p)$ is concentrated in nonnegative degrees and $p\geq h$, we see that $P^i$ could only possibly act nontrivially when $p=h$ and $i=1$, and that too only on classes in $\H_{2(h-1)}(\ol{\Gr_G^{-\theta}}; \Z[1/N])$. However, $P^1$ applied to such a class would land in $\H_0(\ol{\Gr_G^{-\theta}}; \FF_p)$. This implies that it is zero: any Steenrod operation landing in $\H_0(X; \FF_p)$ necessarily vanishes if $X$ is a connected space.
\end{proof}
\begin{example}\label{ex: small p and steenrod on SLn}
    Running the argument of \cref{prop: pth power zero on nilcone} backwards tells us that if the map $x \mapsto x^{[p]}$ is not zero on the nilpotent cone $\ld{\cN} \subseteq \ld{\g}$, then the decompleted Frobenius/total Steenrod operation on $\H_\ast(\Gr_G; \Z)$ must be nontrivial. (The following example was shown to me by David Treumann, and was my impetus for more generally exploring the decompleted Frobenius.) Indeed, suppose (for simplicity) that $G = \SL_3$ and $p = 2$. Then the map $x \mapsto x^{[2]}$ is not zero on the nilpotent cone in $\fr{pgl}_3$, and in fact the map $\varphi: x \mapsto x - t^{p-1} x^{[p]}$ sends the principal nilpotent $e = \begin{psmallmatrix}
        0 & 1 & 0\\
        0 & 0 & 1\\
        0 & 0 & 0
    \end{psmallmatrix}$ to the principal nilpotent $\begin{psmallmatrix}
        0 & 1 & t\\
        0 & 0 & 1\\
        0 & 0 & 0
    \end{psmallmatrix}$. This is conjugate to $e$ itself by the matrix $n_e = \begin{psmallmatrix}
        1 & t & 0\\
        0 & 1 & 0\\
        0 & 0 & 0
    \end{psmallmatrix}$. Conjugating the centralizer $Z_{\ld{G}}(e)$ by $n_e$ sends 
    \begin{equation}\label{eq: frob on homology of SL3}
        \begin{psmallmatrix}
        1 & a & b\\
        0 & 1 & a\\
        0 & 0 & 1
        \end{psmallmatrix} \mapsto \begin{psmallmatrix}
        1 & a & b + at\\
        0 & 1 & a\\
        0 & 0 & 1
        \end{psmallmatrix}.
    \end{equation}
    Indeed, this is exactly how the decompleted Frobenius acts on $\H_\ast(\Gr_{\SL_3}; \Z) = \Z[a,b]$. (One can verify this by observing that the generating complex in this case is given by the map $\CP^2 \to \Gr_{\SL_3}$. The $2$- and $4$-cells of $\CP^2$ give the classes $a$ and $b$, respectively, and they are connected by the Steenrod square $\Sq^2$.)
    Note that since the action of the decompleted Frobenius on $Z_{\ld{G}}(e)$ is just conjugation by $n_e$, one can extend it to an action on all of $\ld{G}$. However, the element $n_e$ is not canonical, and a different choice of $n_e$ will act differently on $\ld{G}$.

    Recall from \cref{rmk: steenrod on G^?} that if $\lambda$ denotes a dominant minuscule weight for $\PGL_3$, the action of $\spec \H_\ast(\Gr_G; \Z)$ on $\H^\ast(\PGL_3/P_\lambda; \Z)$ must be equivariant for the decompleted Frobenius. Let us quickly verify this in the case when $\lambda$ is the fundamental weight: in this case, $\PGL_3/P_\lambda = \CP^2$, and if we write $\H^\ast(\PGL_3/P_\lambda; \Z) = \Z\{x,y,z\}$, the decompleted Frobenius sends $y \mapsto y + tz$. (Indeed, $\H^\ast(\CP^2; \Z) \cong \Z[w]/w^3$, and the total Steenrod operation sends $w \mapsto w + tw^2$. Writing $x = w^0$, $y = w$, and $z = w^2$ gives the desired claim.) It is straightforward to see that the action of $Z_{\ld{G}}(e)$ on $\Z\{x,y,z\}$ is equivariant for the decompleted Frobenius as described in \cref{eq: frob on homology of SL3}.
\end{example}

\begin{example}\label{ex: adams operations langlands}
    Let $k = \KU$ and $\GG = \GG_m$. Under the identification $\Bun_{\ld{B}}^0(\GG_0^\vee) \cong \ld{B}/\ld{B}$, the map \cref{eq: frob on Bun B^} is given by the $\ld{B}$-quotient of the $p$th power map on $\ld{B}$. That is, if $F$ is an algebraically closed field, then under the equivalence
    $$\Loc^\gr_{\ld{T}_c}(\Gr_G; \KU) \otimes_\Z F \simeq \QCoh(\ld{B}^\reg/\ld{B})$$
    of \cref{cor: ku reg locus ordinary ABG}, the decompleted Frobenius on the left-hand side (which encodes the $p$th Adams operation on $\KU$) identifies with the $p$th power map on $\ld{B}^\reg$. Similarly, under the equivalence
    $$\Loc^\gr_{\ld{G}_c}(\Gr_G; \KU) \otimes_\Z F \simeq \QCoh(\ld{G}^\reg/\ld{G}),$$
    the decompleted Frobenius on the left-hand side (which encodes the $p$th Adams operation on $\KU$) identifies with the $p$th power map on $\ld{G}^\reg$.

    For instance, suppose $\ld{G} = \SL_2$. When restricted to the Kostant slice inside $\ld{G} = \SL_2$ of matrices of the form $\begin{psmallmatrix}
        x-1 & x-2 \\
        1 & 1
    \end{psmallmatrix}$, the map $\varphi_{\ld{G},0}$ is given by raising to the $p$th power. It turns out that
    $$\begin{psmallmatrix}
        x-1 & x-2 \\
        1 & 0
    \end{psmallmatrix}^p \text{ is conjugate to } \kappa(x) = \begin{psmallmatrix}
        L_p(x)-1 & L_p(x)-2 \\
        1 & 1
    \end{psmallmatrix},$$
    where $L_n(x)$ is the $n$th ``Lucas polynomial'', given by
    $$L_n(x) = \sum_{j=0}^{\lfloor n/2\rfloor} (-1)^j \tfrac{n}{n-j} \binom{n-j}{j} x^{n-2j} = D_n(x,1).$$
    Here, $D_n(x,\alpha)$ is the ``Dickson polynomial'' from \cite{dickson-polynomial}. We therefore find that $\varphi_{\ld{G},0}$ is given on $g\in \SL_2$ by the map 
    $$\varphi_{\ld{G},0}(g) = L_p(g).$$
    Under the isomorphism between the Kostant slice for $\SL_2$ and $\spec \pi_0 \KU_{\SU(2)}$, the coordinate $x$ identifies with the $\KU$-theoretic Pontryagin class; and $\varphi_{\ld{G},0}(x)$ is exactly the $p$th Adams operation on this class. 
\end{example}
\begin{remark}\label{rmk: adams connective ku}
    In fact, one can interpolate between \cref{ex: steenrod operations langlands} and \cref{ex: adams operations langlands} using the results of \cite{ku-rel-langlands} and \cref{rmk: interpolating artin schreier}. To state the result, we will use notation from \cref{rmk: interpolating artin schreier}. Namely, the aforementioned results imply that there are equivalences
    \begin{align}
        \Loc^\gr_{\ld{T}_c}(\Gr_G; \ku) \otimes_\Z F & \simeq \QCoh(\ld{B}_\beta^\reg/\ld{B}) \\
        \Loc^\gr_{\ld{G}_c}(\Gr_G; \ku) \otimes_\Z F & \simeq \QCoh(\ld{G}_\beta^\reg/\ld{G}), \label{eq: G-equiv reg satake connective ku}
    \end{align}
    where, for a group scheme $H$, we define $H_\beta$ to be (the stacky quotient by $\GG_m$ of) the $1$-parameter degeneration of $H$ into its Lie algebra. Explicitly, $H_\beta = \Hom(\DD(\GG_0), H)$, where $\DD(\GG_0)$ is the Cartier dual of the $1$-dimensional group scheme over $\spec(\pi_\ast(\ku))/\GG_m = \spec (\Z[\beta])/\GG_m$ from \cref{rmk: interpolating artin schreier}. For instance, if $H = \SL_n$, then $\SL_{n,\beta}$ consists of (the stacky quotient by $\GG_m$ of) the group scheme of those $n\times n$-matrices $x$ such that $\tfrac{\det(\id + \beta x) - 1}{\beta} = 0$.
    
    For simplicity, let us focus on the equivalence \cref{eq: G-equiv reg satake connective ku} above. The decompleted Frobenius on the topological side of \cref{eq: G-equiv reg satake connective ku} interpolates between the $p$th Adams operation and the total Steenrod operation, and it identifies with pullback along the map $(\ld{G}_\beta \times_{\spec \Z[\beta]} U_0)/\ld{G} \to \ld{G}_{p\beta}/\ld{G}$ given by the $\ld{G}$-quotient of the map on $\ld{G}_\beta$ defined by
    $$x\mapsto \tfrac{(1 + \beta x)^p - 1}{p\beta}.$$
    Using an argument similar to \cref{rmk: interpolating artin schreier}, one finds that when $\beta = 0$, the above map reduces to the Artin-Schreier map on $\ld{\g}$ from \cref{ex: steenrod operations langlands}. In the case $\ld{G} = \SL_2$, for instance, the decompleted Frobenius on the Kostant slice is given by $x\mapsto f_p(x)$, where $f_n(x)$ is the polynomial 
    given by\footnote{For instance, $f_2(x) = 4x - \beta^2 x^2$, $f_3(x) = \beta^4 x^3 - 6\beta^2 x^2 + 9x$, and $f_5(x) = \beta^8 x^5 - 10 \beta^6 x^4 + 35 \beta^4 x^3 - 50 \beta^2 x^2 + 25 x$.}
    $$f_n(x) = \sum_{j=0}^{n - 1} (-1)^j \tfrac{2n}{2n-j} \binom{2n-j}{j} \beta^{2(n-j)-2} x^{n-j} = \beta^{-2} (D_{2n}(\beta x^{1/2}, 1) - 2),$$
    where $D_n(x,\alpha)$ is the ``Dickson polynomial'' from \cite{dickson-polynomial}.
    Elementary arithmetic manipulations confirm that the polynomial $f_p(x)$ indeed computes the decompleted Frobenius on $\spec \pi_0 \ku_{\SU(2)}$, and furthermore that upon writing $\beta = (\zeta_p - 1)t$ in $\co_{U_0}$, we have 
    $$\tfrac{f_p(x)}{(\zeta_p - 1)^{2(p-1)}} = \sum_{j=0}^{p - 1} \tfrac{(-1)^j}{(\zeta_p - 1)^{2j}} \tfrac{2p}{2p-j} \binom{2p-j}{j} t^{2(p-1-j)} x^{p-j}.$$
    Upon reducing modulo $\zeta_p-1$, only the terms indexed by $j=0,\tfrac{p-1}{2}$, and $p-1$ survive. When $j=\tfrac{p-1}{2}$, the coefficient of $t^{p-1} x^{(p+1)/2}$ is 
    $$\tfrac{(-1)^{(p-1)/2}}{(\zeta_p - 1)^{p-1}} \tfrac{4p}{3p+1} \binom{(3p+1)/2}{(p-1)/2} \equiv -2\pmod{(\zeta_p-1)},$$
    so that
    $$\tfrac{f_p(x)}{(\zeta_p - 1)^{2(p-1)}} \equiv x - 2t^{p-1} x^{(p+1)/2} + t^{2(p-1)} x^p \pmod{(\zeta_p - 1)}.$$
    This is exactly as expected from \cref{ex: steenrod operations langlands}.
\end{remark}
\begin{example}\label{ex: small p and adams on SLn}
    For the sake of completeness, let us explain the analogue of the calculation in \cref{ex: small p and steenrod on SLn} for $\KU$, so that $G = \SL_3$ and $p = 2$.  The map $\varphi: x \mapsto x^2$ sends $e = \begin{psmallmatrix}
        1 & 1 & 0\\
        0 & 1 & 1\\
        0 & 0 & 1
    \end{psmallmatrix}$ to $e^2 = \begin{psmallmatrix}
        1 & 2 & 1\\
        0 & 1 & 2\\
        0 & 0 & 1
    \end{psmallmatrix}$. This is conjugate to $e$ itself by the matrix $n_e = \begin{psmallmatrix}
        4 & 1 & 0\\
        0 & 2 & 0\\
        0 & 0 & 1
    \end{psmallmatrix}$. Conjugating the centralizer $Z_{\ld{G}}(e)$ by $n_e$ sends 
    \begin{equation}\label{eq: frob on KU homology of SL3}
        \begin{psmallmatrix}
        1 & a & b\\
        0 & 1 & a\\
        0 & 0 & 1
        \end{psmallmatrix} \mapsto \begin{psmallmatrix}
        1 & 2a & a + 4b\\
        0 & 1 & 2a\\
        0 & 0 & 1
        \end{psmallmatrix}.
    \end{equation}
    Indeed, this is exactly how the decompleted Frobenius acts on $\KU_0(\Gr_{\SL_3}) = \Z[a,b]$.
    (One can verify this by observing that the generating complex in this case is given by the map $\CP^2 \to \Gr_{\SL_3}$. The $2$- and $4$-cells of $\CP^2$ give the classes $a$ and $b$, respectively, and the Adams operation $\psi^2$ sends $a\mapsto 2a$ and $b\mapsto a + 4b$.)
    Again, since the action of the decompleted Frobenius on $Z_{\ld{G}}(e)$ is just conjugation by $n_e$, one can extend it to an action on all of $\ld{G}$; but the element $n_e$ is not canonical, and a different choice of $n_e$ will act differently on $\ld{G}$.

    Recall from \cref{rmk: steenrod on G^?} that if $\lambda$ denotes a dominant minuscule weight for $\PGL_3$, the action of $\spec \KU_0(\Gr_G)$ on $\KU^0(\PGL_3/P_\lambda)$ must be equivariant for the decompleted Frobenius. Let us quickly verify this in the case when $\lambda$ is the fundamental weight: in this case, $\PGL_3/P_\lambda = \CP^2$, and if we write $\KU^0(\PGL_3/P_\lambda) = \Z\{x,y,z\}$, the decompleted Frobenius sends $y \mapsto z + 2y$ and $z \mapsto 4z$. (Indeed, $\KU^0(\CP^2) \cong \Z[w]/w^3$, and the Adams operation $\psi^2$ is given by the ring map sending $w \mapsto w^2 + 2w$. Writing $x = w^0$, $y = w$, and $z = w^2$ gives the desired claim.) It is straightforward to see that the action of $Z_{\ld{G}}(e)$ on $\Z\{x,y,z\}$ is equivariant for the decompleted Frobenius as described in \cref{eq: frob on KU homology of SL3}.
\end{example}

\begin{example}
    Let $k$ denote \textit{Tate K-theory} \cite[Section 2.7]{ando-hopkins-strickland}, so that $k = \KU\ls{q}$ and $\GG$ is a lift to $k$ of the Tate elliptic curve $\GG_0 = \tate(q)$ over $\Z\ls{q} = \pi_0(k)$. (See \cite[Section 4.3]{survey} for a sketch of the construction of $\GG$.) As usual, we will identify $\tate(q)^\vee$ with $\tate(q)$. Take $F = \cc$, and let $q$ be a point in the punctured open unit disk, so that it defines a continuous embedding $\Z\ls{q} \hookrightarrow \cc$. Then there are equivalences
    \begin{align*}
        \Loc^\gr_{\ld{T}_c}(\Gr_G; \KU\ls{q}) \otimes_{\Z\ls{q}} \cc & \simeq \QCoh(\Bun_{\ld{B}}^0(\tate(q))^\reg) \\
        \Loc^\gr_{\ld{G}_c}(\Gr_G; \KU\ls{q}) \otimes_{\Z\ls{q}} \cc & \simeq \QCoh(\Bun_{\ld{G}}^\ss(\tate(q))^\reg).
    \end{align*}
    The ring $\pi_0 \Phi^{\Cp} k$ and the Frobenius $\varphi: \pi_0(k) \to \pi_0 \Phi^{\Cp} k$ can be computed explicitly using \cref{lem: phi Cp and moduli problem}.
    We will not review the precise description here; instead, we only note that $\varphi$ sends $q\mapsto q^p$ on homotopy, and refer the reader to \cite[Section 6.3]{ando-power-operations} and \cite[Theorem 3.5]{huan-finite-subgroup-tate} for a description of the degree $p$-isogeny $\varphi^\ast \tate(q) \to \varphi^\ast \tate(q)$. This isogeny defines a map $\Bun_{\ld{G}}^\ss(\varphi^\ast \tate(q)) \to \Bun_{\ld{G}}^\ss(\tate(q))$, pullback along which identifies (by \cref{thm: frobenius and langlands}) with the decompleted Frobenius $\Loc^\gr_{\ld{G}_c}(\Gr_G; \KU\ls{q}) \to \Loc^\gr_{\ld{G}_c}(\Gr_G; \Phi^{\Cp} \KU\ls{q})$.

    In \cite{baranovsky-ginzburg}, Baranovsky and Ginzburg explicitly describe the set of $\cc$-points of $\Bun_{\ld{G}}^\ss(\tate(q))$. Namely, define the $q$-twisted conjugation action $G\ls{z}$ on itself as follows:
    $$\Ad^q_{h(z)}(g(z)) := h(qz) g(z) h(z)^{-1}.$$
    Then, there is a natural bijection between $\Bun_{\ld{G}}^\ss(\tate(q))(\cc)$ and the set of those $q$-twisted conjugacy classes in $G\ls{z}$ which contain an element of $G\pw{z}$. Under this bijection, one can show that the decompleted Frobenius on $\Bun_{\ld{G}}^\ss(\tate(q))(\cc)$ can be identified with the effect of the map
    $$g(z) \mapsto g(q^{p-1} z) g(q^{p-2} z) \cdots g(qz) g(z)$$
    on $q$-twisted conjugacy classes in $G\ls{z}$.
\end{example}

The structures imposed by \cref{thm: frobenius and langlands} are quite rigid. For instance, there is an action of $\Loc^\gr_{\ld{G}_c}(\Gr_G; k)$ on $\Loc^\gr_{\ld{T}_c}(\Gr_G; k)$ by convolution, which, under the equivalences of \cref{thm: intro omnibus} as rephrased in \cref{rmk: 1-shifted cartier}, defines an action of $\QCoh(\Bun_{\ld{G}}^\ss(\GG_0^\vee)^\reg)$ on $\QCoh(\Bun_{\ld{B}}^0(\GG_0^\vee)^\reg)$. This action is given by pullback along the map $\Bun_{\ld{B}}^0(\GG_0^\vee) \to \Bun_{\ld{G}}^\ss(\GG_0^\vee)$, and it is compatible with power operations.
\begin{example}\label{ex: steenrod on TA2}
    When $k = \Z[u^{\pm 1}]$ and $\GG = \GG_a$ (where we again invert some $N \gg 0$ so that the equivalence of \cref{cor: reg locus ordinary ABG} holds), the action of $\Loc^\gr_{G_c}(\Gr_G; k)$ on $\Loc^\gr_{T_c}(\Gr_G; k)$ by convolution identifies with the action of $\QCoh(\ld{\g}^{\ast,\reg}/\ld{G})$ on $\QCoh(\ld{\fr{n}}^{\perp,\reg}/\ld{B})$ via pullback along the map $\ld{\fr{n}}^{\perp,\reg}/\ld{B} \to \ld{\g}^{\ast,\reg}/\ld{G}$. It follows from \cref{ex: steenrod operations langlands} that this map is compatible with the decompleted Frobenius/Steenrod operations.
    
    The composite map
    $$\ld{\fr{n}}^{\perp,\reg}/\ld{N} \to \ld{\fr{n}}^{\perp,\reg}/\ld{B} \to \ld{\g}^{\ast,\reg}/\ld{G}$$
    can be realized as the $\ld{G}$-quotient of the restriction to regular loci of the moment map $\mu: T^\ast(\ld{G}/\ld{N}) \to \ld{\g}^\ast$. The action of the decompleted Frobenius/Steenrod operations on the regular locus of $T^\ast(\ld{G}/\ld{N})$ in fact extends to all of $T^\ast(\ld{G}/\ld{N})$ itself (and hence on its affine closure $\ol{T^\ast(\ld{G}/\ld{N})}$), and the moment map $\mu$ is equivariant for this action. The action of the decompleted Frobenius on $\ol{T^\ast(\ld{G}/\ld{N})}$ commutes with the Gelfand-Graev action of the Weyl group from \cref{prop: ordinary gelfand-graev}; this can be seen by reducing to the rank $1$ case described below (with a bit of care in keeping track of the difference between $\AA^2$ and $(\AA^2)^\ast$).
    
    An explicit description of this action when $\ld{G} = \SL_2$ is as follows. If we identify $\ol{T^\ast(\ld{G}/\ld{N})} = T^\ast(\AA^2)$ with coordinates $(u,v) \in \AA^2 \oplus (\AA^2)^\ast$, then the total power operation is given by the map
    $$\varphi: (u,v) \mapsto (u, v - t^{p-1} v\pdb{u,v}^{p-1}).$$
    Since the moment map $T^\ast(\AA^2) \to \sl_2^\ast \cong \pgl_2$ sends $(u,v) \mapsto \begin{psmallmatrix}
        u_1 v_1 & u_1 v_2 \\
        u_2 v_1 & u_2 v_2
    \end{psmallmatrix}$, it is easy to check that this map is compatible with the action of the decompleted Frobenius on $\sl_2^\ast$ as described in \cref{ex: steenrod operations langlands}.
\end{example}
\begin{example}\label{ex: adams on mult quiver}
    When $k = \KU$ and $\GG = \GG_m$, the action of $\Loc^\gr_{G_c}(\Gr_G; k)$ on $\Loc^\gr_{T_c}(\Gr_G; k)$ by convolution identifies with the action of $\QCoh(G^\reg/\ld{G})$ on $\QCoh(B^\reg/\ld{B})$ via pullback along the map $B^\reg/\ld{B} \to G^\reg/\ld{G}$. It follows from \cref{ex: steenrod operations langlands} that this map is compatible with the decompleted Frobenius/$p$th Adams operation.
    The composite map
    $$B^\reg/\ld{N} \to B^\reg/\ld{B} \to G^\reg/\ld{G}$$
    can be realized as the $\ld{G}$-quotient of the restriction to regular loci of the multiplicative moment map $\mu: \ld{G}\times^{\ld{N}} B \to G$. The action of the decompleted Frobenius/$p$th Adams operation on the regular locus of $\ld{G}\times^{\ld{N}} B$ in fact extends to all of $\ld{G}\times^{\ld{N}} B$ itself (and hence on its affine closure $\ol{\ld{G}\times^{\ld{N}} B}$), and the moment map $\mu$ is equivariant for this action. The action of the decompleted Frobenius on $\ol{\ld{G}\times^{\ld{N}} B}$ commutes with the Gelfand-Graev action of the Weyl group from \cref{prop: ku gelfand-graev}; this can be seen by reducing to the rank $1$ case described below (with a bit of care in keeping track of the difference between $\AA^2$ and $(\AA^2)^\ast$).
    
    An explicit description of the action of the decompleted Frobenius when $\ld{G} = \SL_2$ is as follows. As in \cref{ex: Z/2 multiplicative symplectic fourier}, we may identify $\ol{\ld{G}\times^{\ld{N}} B}$ with an open subset of $T^\ast(\AA^2)$ with coordinates $(u,v) \in \AA^2 \oplus (\AA^2)^\ast$. The total power operation is then given by the map
    $$\varphi: (u,v) \mapsto \left(u, v\tfrac{(1 + \pdb{u,v})^p - 1}{\pdb{u,v}}\right).$$
    Since the moment map $\ol{\ld{G}\times^{\ld{N}} B} \to \PGL_2$ sends $(u,v) \mapsto \begin{psmallmatrix}
        1 + u_1 v_1 & u_1 v_2 \\
        u_2 v_1 & 1 + u_2 v_2
    \end{psmallmatrix}$, it is easy to check that this map is compatible with the action of the $p$th power map on $\PGL_2$ as described in \cref{ex: adams operations langlands}. In checking that the total power operation is compatible with the Gelfand-Graev action as described in \cref{ex: Z/2 multiplicative symplectic fourier}, the basic input is the identity $q^{-1} [p]_{q^{-1}} = q^{-p} [p]_q$ applied to $q = 1 + \pdb{u,v}$ (where $[p]_q = \tfrac{q^p - 1}{q-1}$).
\end{example}
More generally, (a mild variant of) the relative Langlands program from \cite{bzsv} predicts that if $X$ is an affine spherical $G$-variety, there exists a graded affine Hamiltonian $\ld{G}$-variety $\ld{M}$ over $\Z$ (possibly with an integer $N \gg 0$ inverted) with moment map $\mu: \ld{M} \to \ld{\g}^\ast$ such that there is an equivalence
$$\Shv_{G\pw{t}}^c(X\ls{t}; \Z) \simeq \Perf^{\sh}(\ld{M}/\ld{G}).$$
Here, $\Perf^{\sh}(\ld{M}/\ld{G})$ denotes the $\infty$-category of perfect complexes on the shearing of $\ld{M}$ with respect to its grading. Moreover, under a $\Z$-linear analogue of the derived geometric Satake equivalence, the natural action of $\Shv_{G\pw{t}}^c(\Gr_G; \Z)$ on the left-hand side by convolution should identify with the action of $\Perf(\ld{\g}^\ast[2]/\ld{G})$ on $\Perf^\sh(\ld{M}/\ld{G})$ via pullback along the moment map. This equivalence will restrict (and degenerate) to an equivalence
$$\Loc_{G\pw{t}}^\gr(X\ls{t}; \Z) \simeq \Perf(\ld{M}^\reg/\ld{G})$$
for some open $\ld{M}^\reg \subseteq \ld{M}$, which again satisfies a form of Hecke compatibility. Following the discussion above, the left-hand side will admit an action of the decompleted Frobenius/Steenrod operations, and so one expects the right-hand side to also admit such a structure. That is to say, $\ld{M}$ should admit an action of the decompleted Frobenius, and the moment map $\mu: \ld{M} \to \ld{\g}^\ast$ should be compatible with this action; here, $\ld{\g}^\ast$ is equipped with the action of the decompleted Frobenius described in \cref{ex: steenrod operations langlands}. It is worth remarking that this picture of relative Langlands duality only predicts that the decompleted Frobenius/Steenrod operations only act canonically on the \textit{stack} $\ld{M}/\ld{G}$, and that any formula one writes on $\ld{M}$ will not be canonical. This will be abundantly clear in the examples below, where it is obvious that the formulas we write are not unique (but any other choice will be an $\ld{G}$-translate of our formulas). In any case, these extra symmetries on $\ld{M}/\ld{G}$ are very interesting, and we expect them to play an important role in positive-characteristic analogues of the relative Langlands program.

In \cite{ku-rel-langlands}, we propose a version of this picture for sheaves with coefficients in $\KU$ (and more generally in $\ku$): the main difference is that $\ld{M}$ must be replaced by a \textit{quasi-Hamiltonian} $\ld{G}$-variety in the sense of \cite{amm-qham}, so that its moment map goes from $\ld{M}$ to $\ld{G}$. Again, $\ld{M}$ (or more canonically, $\ld{M}/\ld{G}$) should admit an action of the decompleted Frobenius/$p$th Adams operation on $\KU$, and the multiplicative moment map $\ld{M} \to \ld{G}$ should be compatible with this action, where the action of the decompleted Frobenius on $\ld{G}$ is as described in \cref{ex: adams operations langlands}. Outside of simple cases like \cref{ex: adams on mult quiver}, the quasi-Hamiltonian varieties can be quite complicated; so we will not discuss this case below. 

Let us present two explicit and nontrivial examples of ``Frobenius compatibility'' in the context of relative Langlands. The simplest is perhaps the following example, which generalizes \cref{ex: steenrod on TA2} and \cref{ex: adams on mult quiver}.
\begin{example}[Mirabolic Satake]\label{ex: mirabolic satake}
    This example is concerned with the relative Langlands dual to $G = \GL_n \times \GL_{n-1}$ acting on $G/\GL_{n-1}^\mathrm{diag}$.
    In \cite{mirabolic-satake}, it was shown that there is an equivalence
    $$\Shv_{\GL_{n-1}\pw{t}}^{c,\mathrm{Sat}}(\Gr_{\GL_n}; \QQ) \simeq \Perf^{\sh}(T^\ast \Hom(\AA^n, \AA^{n-1})/(\GL_n \times \GL_{n-1})),$$
    where, if we identify $T^\ast \Hom(\AA^n, \AA^{n-1})$ with $\Hom(\AA^{n-1}, \AA^n) \oplus \Hom(\AA^n, \AA^{n-1})$, the moment map $\mu: T^\ast \Hom(\AA^n, \AA^{n-1}) \to \gl_n^\ast \times \gl_{n-1}^\ast$ sends 
    $$\mu: (f,g)\mapsto (fg, gf).$$
    The equivalence of categories above will continue to hold over $\Z[1/N]$ for some $N \gg 0$, so we may consider the decompleted Frobenius for $p\nmid N$.
    Unwinding the proof of the above equivalence shows that the decompleted Frobenius/Steenrod algebra acts on $T^\ast \Hom(\AA^n, \AA^{n-1})$ via
    $$\varphi: (f,g) \mapsto (f, g - t^{p-1} g (fg)^{p-1}).$$
    It is easy to check that the moment map is indeed Frobenius-equivariant.

    There is also a multiplicative version of this picture. Namely, it follows from \cite[Remark 4.3.4]{ku-rel-langlands} that there is an equivalence
    $$\Loc_{\GL_{n-1}\pw{t}}^\gr(\Gr_{\GL_n}; \KU) \simeq \Perf(\cB(\AA^n, \AA^{n-1})^\reg/(\GL_n \times \GL_{n-1})),$$
    where $\cB(\AA^n, \AA^{n-1})^\reg$ is a particular open subset inside Van den Bergh's variety from \cite{van-den-bergh-double-poisson}:
    $$\cB(\AA^n, \AA^{n-1}) = \{(f,g) \in \Hom(\AA^{n-1}, \AA^n) \oplus \Hom(\AA^n, \AA^{n-1}) | \id + fg \in \GL_n\}.$$
    There is a multiplicative moment map $\mu: \cB(\AA^n, \AA^{n-1}) \to \GL_n \times \GL_{n-1}$ which sends 
    $$\mu: (f,g)\mapsto (\id + fg, \id + gf).$$
    The decompleted Frobenius/$p$th Adams operation acts on $\cB(\AA^n, \AA^{n-1})$ via
    $$\varphi: (f,g) \mapsto (f, f^{-1} ((\id + fg)^p - \id)),$$
    and again, the multiplicative moment map is Frobenius-equivariant.
\end{example}
\begin{example}
    In \cite{mirabolic-satake}, it was also shown that there is an equivalence
    $$\Shv_{\GL_n\pw{t}}^{c,\mathrm{Sat}}(\Gr_{\GL_n} \times \AA^n\ls{t}; \QQ) \simeq \Perf^{\sh}(T^\ast \gl_n/(\GL_n \times \GL_n)),$$
    where, if we identify $T^\ast \gl_n$ with $\gl_n \oplus \gl_n$, the moment map $\mu: T^\ast \gl_n \to \gl_n^\ast \times \gl_n^\ast$ sends 
    $$\mu: (f,g)\mapsto (fg, gf).$$
    Such an equivalence will continue to hold over $\Z[1/N]$ for some $N \gg 0$, so we may consider the decompleted Frobenius for $p\nmid N$. 
    Unwinding the proof of the above equivalence shows that, just as in \cref{ex: mirabolic satake}, the decompleted Frobenius/Steenrod algebra acts on $T^\ast \gl_n$ via
    $$\varphi: (f,g) \mapsto (f, g - t^{p-1} g (fg)^{p-1}).$$
    Again, it is easy to check that the moment map is indeed Frobenius-equivariant.
\end{example}

\begin{example}[Symplectic period]\label{ex: symplectic period}
    The ``quaternionic'' Satake equivalence is concerned with the relative Langlands dual to $G = \GL_{2n}$ acting on $\GL_{2n}/\Sp_{2n}$.
    The main result of \cite{quat-satake} says that there is an equivalence
    $$\Shv_{\GL_{2n}\pw{t}}^c(\GL_{2n}\ls{t}/\Sp_{2n}\ls{t}; \QQ) \simeq \Perf^{\sh}(\ld{M}/\GL_{2n}),$$
    where $\ld{M} \cong \GL_{2n} \times^{\GL_n} \gl_n^\ast[4]$ is equipped with a particular Hamiltonian structure. (Here, $\GL_n$ sits diagonally inside $\GL_{2n}$.) Such an equivalence will continue to hold over $\Z[1/N]$ for some $N \gg 0$ (in fact, one can take $N=1$), so we may consider the decompleted Frobenius for $p\nmid N$. In particular, we will assume $p>2$. The moment map $\ld{M} \to \gl_{2n}^\ast$ is induced by the inclusion $\gl_n^\ast \to \gl_{2n}^\ast$ sending 
    $$\mu: x \mapsto \begin{psmallmatrix}
        0 & \id_n \\
        x & 0
    \end{psmallmatrix}.$$
    Unwinding the proof of \cite{quat-satake} shows that the decompleted Frobenius/Steenrod algebra acts on $\ld{M}$ via the map 
    $$\varphi: x \mapsto x - 2 t^{p-1} x^{(p+1)/2} + t^{2(p-1)} x^p = \prod_{j\in \FF_p} (x - j^2 t^2 \id_n)$$
    on $\gl_n^\ast$. (Observe that the formula for $\varphi$ is a matrix version of the total Steenrod operation on $\H^\ast_{\SU(2)}(\ast; \FF_p)$.) If $x \in \gl_n^\ast$, it is \textit{not} true that $\varphi(\mu(x)) = \mu(\varphi(x))$; but these two elements of $\gl_{2n}^\ast$ are conjugate, from which it follows that the moment map $\ld{M}/\GL_{2n} \to \gl_{2n}^\ast/\GL_{2n}$ is equivariant for the action of the decompleted Frobenius. 

    There is also a multiplicative version of this picture. Namely, using the methods of \cite[Theorem 3.6.4]{ku-rel-langlands}, one obtains a K-theoretic version of the quaternionic Satake equivalence:
    $$\Loc_{\GL_{2n}\pw{t}}^c(\GL_{2n}\ls{t}/\Sp_{2n}\ls{t}; \KU) \simeq \Perf(\ld{M}_\KU^\reg/\GL_{2n}).$$
    Here, $\ld{M}_\KU^\reg$ is an open subset inside $\ld{M}_\KU \cong \GL_{2n} \times^{\GL_n} \gl_n$ (with $\GL_n$ sitting diagonally inside $\GL_{2n}$). The scheme $\ld{M}_\KU$ is equipped with a particular quasi-Hamiltonian structure, which we will now describe. The multiplicative moment map $\mu: \ld{M}_\KU \to \GL_{2n}$ is induced by the inclusion $\gl_n \to \GL_{2n}$ sending 
    $$\mu: x \mapsto \begin{psmallmatrix}
        x + \id_n & \id_n \\
        x & \id_n
    \end{psmallmatrix}.$$
    In this case, the decompleted Frobenius/$p$th Adams operation acts on $\ld{M}_\KU$ via the map $\varphi(x) = L_p(x)$ on $\gl_n$, with $L_p(x)$ as in \cref{ex: adams operations langlands}. For $x \in \gl_n$, the elements $\varphi(\mu(x))$ and $\mu(\varphi(x))$ of $\GL_{2n}$ are not equal. However, they are conjugate, which implies that the moment map $\ld{M}_\KU/\GL_{2n} \to \GL_{2n}/\GL_{2n}$ is equivariant for the action of the decompleted Frobenius.
    
    {In fact, there is also an equivalence between $\Loc_{\GL_{2n}\pw{t}}^\gr(\GL_{2n}\ls{t}/\Sp_{2n}\ls{t}; \KO)$ and $\Perf(\ld{M}_\KU^\reg/\GL_{2n} \times B\Z/2)$. In other words, complex conjugation on $\KU$ acts trivially on $\ld{M}_\KU$. This is easy to see algebraically: using \cref{def: cplx conj on B mod B} and \cref{rmk: cplx conj on G equiv KU}, this follows from the observation that if $x\in \gl_n$, then $\mu(x)^{-1}$ is conjugate to $\mu(x)$. However, the triviality of complex conjugation in this case also has a topological explanation. Namely, the quotient $\GL_{2n}\pw{t} \backslash \GL_{2n}\ls{t}/\Sp_{2n}\ls{t}$ is homotopy equivalent to the quaternionic affine Grassmannian $\Gr_{\GL_n(\bH)}$. The map $\HHP^{n-1} \to \Gr_{\GL_n(\bH)}$ exhibits $\HHP^{n-1}$ as a generating complex for $\Gr_{\GL_n(\bH)}$. Since $\HHP^{n-1}$ is a $\Spin$-manifold, it is $\KO$-oriented \cite{atiyah-bott-shapiro}, which implies that complex conjugation on $\KU$ acts trivially on $\KU^\ast(\HHP^{n-1})$ (and hence on $\Loc_{\GL_{2n}\pw{t}}^\gr(\GL_{2n}\ls{t}/\Sp_{2n}\ls{t}; \KU)$).}
\end{example}
\begin{remark}\label{rmk: ku symplectic}
    There is a $\ku$-theoretic variant of \cref{ex: symplectic period}. Just as in \cite{ku-rel-langlands}, there is an equivalence
    $$\Loc_{\GL_{2n}\pw{t}}^\gr(\GL_{2n}\ls{t}/\Sp_{2n}\ls{t}; \ku) \simeq \Perf(\ld{M}_\beta^\reg/\GL_{2n});$$
    in fact, in the notation of \cite{ku-rel-langlands}, there is an equivalence
    $$\Shv_{\GL_{2n}\pw{t}}^{c,\Sat}(\GL_{2n}\ls{t}/\Sp_{2n}\ls{t}; \ku)^\faux \simeq \Perf(\ld{M}_\beta/\GL_{2n});$$
    Here, $\ld{M}_\beta^\reg$ is an open subset inside $\ld{M}_\beta \cong \GL_{2n} \times^{\GL_n} \gl_n$ (with $\GL_n$ sitting diagonally inside $\GL_{2n}$).
    In fact, there is also an equivalence between $\Loc_{\GL_{2n}\pw{t}}^\gr(\GL_{2n}\ls{t}/\Sp_{2n}\ls{t}; \ko)$ and $\Perf(\gl_n^\reg/\GL_{2n} \times \spev(\ko))$, where we are using notation as in \cref{rmk: connective ko def}.
    
    The scheme $\ld{M}_\beta$ is equipped with a particular $\ku$-Hamiltonian structure (in the sense of \cite{ku-rel-langlands}), which we will now describe. The multiplicative moment map $\mu: \ld{M}_\beta \to \GL_{2n,\beta}$ is induced by the inclusion $\gl_n \to \GL_{2n,\beta}$ sending 
    $$\mu: x \mapsto \begin{psmallmatrix}
        \id_n + \beta^2 x & \beta \id_n \\
        \beta x & \id_n
    \end{psmallmatrix}.$$
    Observe that when $\beta$ is inverted (or, more informally, set to $1$), this is the quasi-Hamiltonian moment map from \cref{ex: symplectic period}; similarly, $\left.\tfrac{\mu(x) - \id_{2n}}{\beta}\right|_{\beta=0}$ is well-defined, and reduces to the moment map $x \mapsto \begin{psmallmatrix}
        0 & \id_n\\
        x & 0
    \end{psmallmatrix}$ from the ordinary symplectic period of \cref{ex: symplectic period}. In this case, the decompleted Frobenius/$p$th Adams operation acts on $\ld{M}_\beta$ via the map $\varphi(x) = f_p(x)$ on $\gl_n$, with $f_p(x)$ as in \cref{rmk: adams connective ku}. For $x \in \gl_n$, the elements $\varphi(\mu(x))$ and $\mu(\varphi(x))$ of $\GL_{2n}$ are conjugate, so the moment map $\ld{M}_\beta/\GL_{2n} \to \GL_{2n,\beta}/\GL_{2n}$ is equivariant for the action of the decompleted Frobenius. 
\end{remark}
In the language of \cite{bzsv, ku-rel-langlands}, the preceding discussion says that the stack which is relative Langlands dual to the Hamiltonian $\GL_{2n}$-space $T^\ast(\GL_{2n}/\Sp_{2n})$ is isomorphic to $\gl_n(4)/\GL_n$ with coefficients in both ordinary cohomology \textit{and} complex/real K-theory. However, this will no longer be true for elliptic cohomology. Geometrically, this is because elliptic cohomology is not $\Spin$-oriented, but is only ``$\String$-oriented'' \cite{koandtmf}; and $\HHP^{n-1}$ is a generating complex for the quaternionic affine Grassmannian $\Gr_{\GL_n(\bH)}$, but it is not a $\String$-manifold.\footnote{If $\TMF$ denotes the universal elliptic cohomology theory \cite{tmf}, then the $\TMF$-homology of $\HHP^n$ is described explicitly in \cite[Proposition 7.5]{meier-tmf-modules}.}

There are some examples where the generating complex for the real Grassmannian $\Gr_{G,\RR}$ \textit{is} orientable for elliptic cohomology, such as the case of (the simply-connected form of) $E_6$ equipped with the involution whose fixed subgroup is $F_4$. (This is the Cartan symmetric space EIV, and in the parlance of relative Langlands duality, it corresponds to the ``octonionic'' Satake equivalence of \cite{octonionic-period} and \cite[Remark 3.6.5]{ku-rel-langlands}.) In this case, the generating complex for $\Gr_{E_6,\RR}$ is given by the octonionic projective plane $\OP^2$, which is indeed a $\String$-manifold (and hence is orientable for elliptic cohomology). It is possible to use this observation to compute the relative Langlands dual to the Hamiltonian $E_6$-space $T^\ast(E_6/F_4)$ with coefficients in elliptic cohomology, but the calculations become very intricate (so we will leave it to future work).

Finally, let us discuss the Frobenius for a non-polarized example. The most famous example of this is the Gan-Gross-Prasad period, which, in the parlance of \cite{bzsv}, is concerned with the relative Langlands dual to the homogeneous spherical $G = \SO_{2n-1} \times \SO_{2n}$-variety given by $G/\SO_{2n-1}^\mathrm{diag}$. This dual is given by the Hamiltonian $\ld{G} = \Sp_{2n-2} \times \SO_{2n}$-space $\std_{2n-2} \otimes \std_{2n}$. It was studied geometrically in \cite{orthosymplectic-satake}. The following is one of the simplest nontrivial cases of the Gan-Gross-Prasad period:
\begin{example}[Triple product period]
    The triple product period, studied geometrically in \cite{pgl2-cubes}, is concerned with the relative Langlands dual to $G = \PGL_2^{\times 3}$ acting on $X = G/\PGL_2^\mathrm{diag}$. (This can be regarded as a special case of the Gan-Gross-Prasad period, because $\PGL_2 \cong \SO_3$ and $\PGL_2^{\times 2} \cong \PSO_4$.) The dual Hamiltonian $\ld{G} = \SL_2^{\times 3}$-variety in this case is given by the $8$-dimensional symplectic vector space $\std^{\otimes 3}$, with each factor of $\SL_2$ in $\ld{G}$ acting on the corresponding tensor factor. Let us quickly summarize a few facts from \cite{pgl2-cubes}: if we identify $\sl_2^\ast \cong \pgl_2$ with the space $\Sym^2(\std)$ of binary quadratic forms, the moment map $\mu: \std^{\otimes 3} \to \Sym^2(\std)^{\times 3}$ is given precisely by Bhargava's construction \cite{bhargava-composition-i} of three quadratic forms from a $2 \times 2 \times 2$-cube $\cC$; furthermore, all three of these quadratic forms have the same discriminant $\det(\cC)$, called the \textit{hyperdeterminant} of the cube \cite{cayley-original, gelfand-hyperdet}.
    
    This information can be used to compute the action of the Frobenius on $\std^{\otimes 3}/\SL_2^{\times 3}$, at least for $p>2$. We will only describe the \textit{completed} Frobenius, i.e., the functor
    $$\Shv^\gr_{G\pw{t}}(X\ls{t}; k) \otimes_{\pi_0(k)} F \to \Shv^\gr_{G\pw{t}}(X\ls{t}; k^{t\Cp}) \otimes_{\pi_0(k)} F,$$
    which, when $k = \Z[u^{\pm 1}]$, identifies with the functor given by pullback along a map
    $$\varphi: \std^{\otimes 3}/\SL_2^{\times 3} \times_{\spec(F)} \spec(F\ls{t}) \to \std^{\otimes 3}/\SL_2^{\times 3}.$$
    To describe it, pick a basis $e_1,e_2\in \std$, and equip $\std$ with the $\Z[1/3]$-grading where $e_1$ has weight $2/3$ and $e_2$ has weight $-1/3$. This equips $\std^{\otimes 3}$ with an $\Z$-grading, and one can then show that the Frobenius map is given by scaling the cube by its natural $\GG_m$-action with respect to the scalar $\delta = 1-t^{p-1} \det(\cC)^{(p-1)/2}$. In other words, it is given by  multiplying each coordinate $\cC_{ijk}$ of a cube $\cC$ by $\delta^{|\cC_{ijk}|}$, where $|\cC_{ijk}|$ is the weight of $\cC_{ijk}$.
    Note that some coordinates will have negative weight, and in this case one must interpret $\delta^{-1}$ as $\sum_{n\geq 0} t^{n(p-1)} \det(\cC)^{n(p-1)/2}$; ensuring convergence of this power series is why we elected to work with the completed Frobenius in the present example.
\end{example}
It might be interesting to describe the (de)completed Frobenius explicitly for the general case of the Gan-Gross-Prasad period, as well as for other non-polarized examples.
\newpage

\section{Comparison to Brylinski-Zhang}\label{sec: brylinski zhang}
In \cite{brylinski-zhang}, Brylinski-Zhang compute the $G_c$-equivariant complex K-theory of $G_c$ for a connected compact Lie group $G_c$ with torsion-free fundamental group as the ring $\Omega^\ast_{\mathrm{RU}(G)/\Z} = \Omega^\ast_{T\mmod W/\Z}$ of K\"ahler differentials on the complex representation ring of $G$. Our goal in this section is to describe the relationship between this calculation and (the proof of) \cref{thm: intro omnibus}. 

We begin by stating an obvious corollary of \cref{thm: intro omnibus}. Recall that if $\GG_0$ is either $\GG_a$, $\GG_m$, or an elliptic curve $E$, and $\cM_{T,0} = \Hom(\bX^\ast(T), \GG_0)$, there is a Kostant section $\kappa: \cM_{T,0} \to \Bun_{\ld{B}}^0(\GG_0^\vee)$ as described in \cref{def: additive kostant slice}, \cref{def: mult kostant slice}, and \cref{elliptic-kostant}. Recall that $F$ is an algebraically closed field of characteristic zero containing $\pi_0(k)$.
\begin{theorem}\label{thm: T-equiv loc on G}
    Let $G$ be a connected almost simple simply-laced group. Let $k$ denote either $\QQ[u^{\pm 1}]$, $\KU$, or elliptic cohomology, and let $\GG_0$ be either $\GG_a$, $\GG_m$, or an elliptic curve $E$ over $\pi_0(k)$, respectively.
    Then there is an equivalence 
    $$\Loc_{\ld{T}_c}^\gr(G_c; k) \otimes_{\pi_0(k)} F \simeq \QCoh(\cM_{\ld{T},0} \times_{\Bun_{\ld{B}}^0(\GG_0^\vee)} \cM_{\ld{T},0}),$$
    where the right-hand side denotes the self-intersection of the Kostant slice.
\end{theorem}
\begin{proof}
    Recall from \cref{def: graded Loc} that
    $$\Loc_{\ld{T}_c}^\gr(G_c; k) = \LMod_{\pi_0(\cf_{\ld{T}}(\Gr_G)^\vee)}(\QCoh(\cM_{\ld{T},0})).$$
    In \cref{thm: ordinary hmlgy reg centr}, \cref{thm: ku hmlgy reg centr}, and \cref{thm: elliptic hmlgy reg centr}, we showed that $\spec_{\cM_{\ld{T},0}}(\pi_0(\cf_{\ld{T}}(\Gr_G)^\vee))$ is isomorphic to the self-intersection $\cM_{\ld{T},0} \times_{\Bun_{\ld{B}}^0(\GG_0^\vee)} \cM_{\ld{T},0}$, so the desired equivalence follows.
\end{proof}
In the same way, if $G$ is further assumed to have torsion-free fundamental group, and $\cM_{G,0}$ denotes the moduli \textit{space} of semistable $G$-bundles on $\GG_0^\vee$, there is a Kostant section $\kappa: \cM_{G,0} \to \Bun_{\ld{G}}^\ss(\GG_0^\vee)$. In the additive and multiplicative cases, this follows from \cref{def: additive kostant slice}, \cref{def: mult kostant slice}, and in the elliptic case, it can be deduced from \cite{davis-elliptic-springer} as in \cref{elliptic-kostant}. Just as in \cref{thm: T-equiv loc on G}, there is an equivalence 
\begin{equation}\label{eq: G-equiv loc on G}
    \Loc_{\ld{G}_c}^\gr(G_c; k) \otimes_{\pi_0(k)} F \simeq \QCoh(\cM_{\ld{G},0} \times_{\Bun_{\ld{G}}^\ss(\GG_0^\vee)} \cM_{\ld{G},0})
\end{equation}
where the right-hand side denotes the self-intersection of the Kostant slice. Under this equivalence, the ``constant sheaf'' in $\Loc_{\ld{G}_c}^\gr(G_c; k)$ is sent to the pushforward of the structure sheaf under the relative diagonal
$$\delta: \cM_{\ld{G},0} \to \cM_{\ld{G},0} \times_{\Bun_{\ld{G}}^\ss(\GG_0^\vee)} \cM_{\ld{G},0}.$$

In the remainder of this section, we will explain how \cref{eq: G-equiv loc on G} implies the calculation of \cite{brylinski-zhang}, as well as the relationship to the Hochschild-Kostant-Rosenberg theorem. (This, of course, is a triple of authors distinct from Hopkins-Kuhn-Ravenel with initials ``HKR''!)
For simplicity, we will only focus on the case when $k$ is $\QQ[u^{\pm 1}]$ or $\KU$ (so $\GG_0$ is either $\GG_a$ or $\GG_m$, and $\Bun_{\ld{G}}^\ss(\GG_0^\vee)$ is either $\ld{\g}/\ld{G}$ or $\ld{G}/\ld{G}$). With a little bit of elbow grease, one can show that most of the results below continue to work for elliptic cohomology, too.

Recall that $\Loc_{{G}_c}^\gr(G_c; k)$ is intended to be an approximation to a $k$-linear $\infty$-category of $G_c$-equivariant local systems on $G_c$. The algebra of endomorphisms of the constant sheaf in this $\infty$-category is given by the equivariant cochains $\cf_G(G_c)$. This is a quasicoherent sheaf over the spectral $k$-scheme $\cM_G$, and it can be described explicitly as follows. If $\fr{X}_k$ is a spectral prestack over $k$, let $\cL \fr{X}_k$ denote the free loop space of $\fr{X}_k$, i.e., the mapping prestack $\Map(B\Z, \fr{X}_k)$. Here, $\Z$ is viewed as a constant stack over $k$. The global sections of the structure sheaf of $\cL \fr{X}_k$ computes the Hochschild homology $\HH(\fr{X}_k/k)$.
\begin{prop}\label{prop: G-equiv coh of G and HH}
    Assume (for simplicity) that $k$ is either $\QQ[u^{\pm 1}]$ or $\KU$. If $G$ is connected, then there is an isomorphism of spectral $k$-schemes
    $$\spec_{\cM_G}(\cf_G(G_c)) \cong \cL \cM_G.$$
    In particular, there is an isomorphism of $\Eoo$-$k_{G_c}$-algebras
    $$\cf_G(G_c) \cong \HH(\cM_G/k).$$
\end{prop}
\begin{proof}
    Recall that $B\Z$ is isomorphic to the constant $k$-stack $S^1$, which can be written as the pushout $\ast \sqcup_{\ast \sqcup \ast} \ast$. Therefore, since $\cM_G = \spec k_{G_c}$ is affine (because $k$ is either $\QQ[u^{\pm 1}]$ or $\KU$), we may wite $\cL \cM_G = \spec (k_{G_c} \otimes_{k_{G_c} \otimes_k k_{G_c}} k_{G_c})$. Since the functor $\cf_G: \Top(G_c)^\op \to \Mod_{k_{G_c}}$ sends finite products of connected finite $G$-spaces to tensor products, we find that 
    $$k_{G_c} \otimes_{k_{G_c} \otimes_k k_{G_c}} k_{G_c} \cong \cf_G(\ast) \otimes_{\cf_{G \times G}(\ast)} \cf_G(\ast) \cong \cf_G(G_c),$$
    since there is an isomorphism of orbispaces
    $$\ast/G_c\times_{\ast/(G_c \times G_c)} \ast/G_c \cong G_c/G_c.\qedhere$$
\end{proof}
\begin{remark}\label{rmk: equiv coh of G mod K and HH}
    The approach of \cref{prop: G-equiv coh of G and HH} can be used to compute the equivariant cohomology $\cf_G(\Omega G_c)$, too. Namely, observe that there is an isomorphism of orbispaces
    $$\ast/G_c\times_{\ast/G_c\times_{\ast/(G_c \times G_c)} \ast/G_c} \ast/G_c \cong (\Omega G_c)/G_c,$$
    so that there is an isomorphism
    $$\cf_G(\Omega G_c) = k_{G_c} \otimes_{k_{G_c} \otimes_{k_{G_c} \otimes_k k_{G_c}} k_{G_c}} k_{G_c}.$$
    The right-hand side can be expressed more succinctly as the factorization homology $\int_{S^2}(k_{G_c}/k)$.

    More generally, observe that if $K_c \subseteq G_c$ is a closed subgroup such that $G_c/K_c$ is a finite $K_c$-space (where $K_c$ acts on the left by multiplication), and $L(G_c/K_c)$ denotes the (topological) free loop space of $G_c/K_c$, then
    $$G_c\backslash L(G_c/K_c) \simeq K_c \backslash \Omega(G_c/K_c) \simeq (\ast \times_{\ast \times_{\ast/G_c} \ast/K_c} \ast)/K_c \simeq \ast/K_c \times_{\ast/K_c \times_{\ast/G_c} \ast/K_c} \ast/K_c.$$
    It follows that there is an isomorphism
    $$\cf_G(\cL(G_c/K_c)) = k_{K_c} \otimes_{k_{K_c} \otimes_{k_{G_c}} k_{K_c}} k_{K_c}.$$
    The right-hand side can be expressed more succinctly as the relative Hochschild homology $\HH(\cM_K/\cM_G)$, so that there is an isomorphism of spectral $k$-schemes
    $$\spec_{\cM_G}(\cf_G(G_c/K_c)) \cong \cL(\cM_K/\cM_G) \cong \cL(\cM_K) \times_{\cL(\cM_G)} \cM_G.$$
    The discussion above computing $\cf_K(\Omega K_c)$ is the special case of the above calculation when $G_c = K_c \times K_c$, with $K_c$ embedded diagonally.
\end{remark}
\begin{example}
    Let $k = \QQ[u^{\pm 1}]$. Then the preceding discussion shows that $C^\ast_{G_c}(\Omega G_c; \QQ[u^{\pm 1}])$ is isomorphic to the factorization homology $\int_{S^2}(k_{G_c}/k) = \HH(k_{G_c}/k_{G_c} \otimes_k k_{G_c})$. The latter has a Hochschild-Kostant-Rosenberg filtration whose associated graded is given by the $2$-periodification $L\Omega^\ast_{\fr{t}\mmod W/(\fr{t}\mmod W \times_{\spec \QQ} \fr{t}\mmod W)}[u^{\pm 1}]$ of the derived Hodge complex of $\fr{t}\mmod W$ embedded diagonally into $\fr{t}\mmod W \times_{\spec \QQ} \fr{t}\mmod W$.
    Since we are working rationally, the Hochschild-Kostant-Rosenberg filtration splits, and so there is an isomorphism
    $$\int_{S^2}(k_{G_c}/k) \cong L\Omega^\ast_{\fr{t}\mmod W/(\fr{t}\mmod W \times_{\spec \QQ} \fr{t}\mmod W)}[u^{\pm 1}].$$
    Note that if $X$ (like $\fr{t}\mmod W$) is an affine space over a commutative ring $R$, then $L\Omega^\ast_{X/(X \times_{\spec(R)} X)} \cong \Gamma^\ast(\Omega^1_{X/R})$; so the above isomorphism could instead be stated as
    $$\int_{S^2}(k_{G_c}/k) \cong \Sym_{\co_{\fr{t}\mmod W}}(\Omega^1_{\fr{t}\mmod W})[u^{\pm 1}] = \co_{T(\fr{t}\mmod W)}[u^{\pm 1}],$$
    where $T(\fr{t}\mmod W)$ is the tangent bundle of $\fr{t}\mmod W$. It follows that there is an isomorphism
    $$\spec C^\ast_{G_c}(\Omega G_c; \QQ[u^{\pm 1}]) \cong T(\fr{t}\mmod W) \times_{\spec(\QQ)} \spec(\QQ[u^{\pm 1}]).$$
    This recovers the $\hbar = 0$ case of \cite[Theorem 1]{bf-derived-satake}. The case with loop-rotation equivariance included, i.e., when $\hbar$ need not be zero, follows from \cref{lem: hochschild and def to nc} below, which recovers the description of $\spec C^\ast_{G_c \times S^1_\rot}(\Omega G_c; \QQ[u^{\pm 1}])$ as the deformation to the normal cone of the diagonal embedding $\fr{t}\mmod W \hookrightarrow \fr{t}\mmod W \times \fr{t}\mmod W$.
\end{example}
The following statement is essentially Koszul dual to the usual Hochschild-Kostant-Rosenberg theorem describing Hochschild homology with its circle action via the de Rham complex:
\begin{lemma}\label{lem: hochschild and def to nc}
    Let $X = \spec(A)$ be a smooth affine scheme over $\QQ$, and let $\Def_\hbar^\Delta(X)$ denote the deformation to the normal cone of the diagonal embedding $X \hookrightarrow X \times X$. Then there is an isomorphism
    $$\spec \pi_\ast \left(\int_{S^2}(A/\QQ)\right)^{hS^1} \cong \Def_\hbar^\Delta(X)$$
    of $\pi_\ast \QQ^{hS^1} \cong \QQ\pw{\hbar}$-algebras.
\end{lemma}
\begin{proof}
    By standard arguments, it suffices to check the claim when $A$ is a finitely generated polynomial algebra. Let us demonstrate the claim when $A$ is a polynomial algebra on a single class; an easy modification of this argument will prove the claim in general when $A = \QQ[V]$ for some finite-dimensional $\QQ$-vector space $V$.
    Let us identify $\QQ[x] \otimes \QQ[x] = \QQ[x,y]$, so that the standard resolution of $\QQ[x]$ as a $\QQ[x,y]$-algebra identifies 
    $$\QQ[x] \otimes_{\QQ[x] \otimes \QQ[x]} \QQ[x] \cong \QQ[x, \sigma(x-y)]/(\sigma(x-y)^2),$$
    where $\sigma(x-y)$ is in degree $1$. This implies that 
    $$\int_{S^2}(A/\QQ) \simeq \QQ[x] \otimes_{\QQ[x] \otimes_{\QQ[x] \otimes \QQ[x]} \QQ[x]} \QQ[x] \cong \QQ[x, \sigma^2(x-y)],$$
    with $\sigma^2(x-y)$ in degree $2$. Note that $\int_{S^2}(A/\QQ)$ is an $S^1$-equivariant $\Eoo$-$\QQ[x,y]$-algebra, so that $\pi_\ast\left(\int_{S^2}(A/\QQ)\right)^{hS^1}$ is an $\Eoo$-$\QQ[x,y]$-algebra; let us now determine this algebra structure. Since this ring is concentrated in even degrees, the homotopy fixed point spectral sequence computing $\pi_\ast\left(\int_{S^2}(A/\QQ)\right)^{hS^1}$ degenerates, and we find that $\pi_\ast\left(\int_{S^2}(A/\QQ)\right)^{hS^1} \cong \QQ\pw{\hbar}[x, \sigma^2(x-y)]$ with $\sigma^2(x-y)$ in weight $2$ and $\hbar$ in weight $-2$. The $\QQ[x,y]$-algebra structure is given by the observation that 
    $$x - y = \hbar \sigma^2(x-y);$$
    this relation is true for abstract reasons (as explained, for instance, in \cite[Appendix A]{hahn-wilson-bpn}). It follows that there is an isomorphism
    $$\pi_\ast\left(\int_{S^2}(A/\QQ)\right)^{hS^1} \cong \QQ\pw{\hbar}[x, y, \tfrac{x-y}{\hbar}]$$
    of $\QQ[x,y]$-algebras. The spectrum of the right-hand side identifies with $\Def_\hbar^\Delta(\AA^1)$, as desired.
\end{proof}
\begin{remark}
    More generally, suppose $A$ is a commutative $\Z$-algebra, and let $X = \spec(A)$. Let $\Def_\hbar^\Delta(X)$ denote the divided power deformation to the normal cone of the diagonal embedding $X \hookrightarrow X \times X$, so that its fiber over $\hbar = 0$ is the PD-hull $T_X^\sharp$ of the (derived) tangent bundle of $X$. Then there is a filtration on $\left(\int_{S^2}(A/\Z)\right)^{hS^1}$ whose associated graded is given by $\co_{\Def_\hbar^\Delta(X)}$. The proof is exactly as in \cref{lem: hochschild and def to nc}: the desired filtration is given by left Kan extending the Postnikov filtration $\tau_{\geq \star} \left(\int_{S^2}(A/\Z)\right)^{hS^1}$ from polynomial $\Z$-algebras to all commutative $\Z$-algebras.
\end{remark}

Let us now discuss the relationship between \cref{prop: G-equiv coh of G and HH} and \cref{eq: G-equiv loc on G}. Although the cases $k = \QQ[u^{\pm 1}]$ and $k = \KU$ can be treated simultaneously, we will present the discussion separately for both for the sake of clarity. The upshot of this discussion is that the approximation to $\cf_G(G_c)$ afforded by the degeneration of $\Loc_{G_c}(G_c; k)$ to $\Loc_{{G}_c}^\gr(G_c; k)$ identifies, under \cref{prop: G-equiv coh of G and HH} and \cref{eq: G-equiv loc on G}, with the Hochschild-Kostant-Rosenberg spectral approximation of $\pi_\ast \HH(\cM_G/k)$ by $\Omega^\ast_{\cM_{G,0}/\pi_0(k)}$.
\begin{lemma}\label{lem: endomorphisms of delta sheaf}
    Let $H$ be a smooth affine group scheme over an affine scheme $S = \spec(R)$, let $\delta: S \to H$ denote the zero section, and let $\fr{h}$ denote its Lie algebra (viewed as a vector bundle over $S$). Then $\End_{\QCoh(H)}(\delta_\ast \co_S)$ has a filtration whose associated graded is isomorphic to $\co_{\fr{h}^\ast[1]}$. If $R$ is a $\QQ$-algebra, this filtration splits.
\end{lemma}
\begin{proof}
    The endomorphism algebra $\End_{\QCoh(H)}(\delta_\ast \co_S)$ is isomorphic to the $R$-linear dual of $\co_{S \times_H S}$. The derived scheme $S \times_H S$ depends only on the formal completion $\hat{H}$. Note that $\hat{H}$ admits a filtration (coming from powers of the ideal sheaf of the zero section of $\hat{H}$) whose associated graded is isomorphic to $\hat{\fr{h}}$; furthermore, the exponential map defines a splitting of this filtration when $R$ is a $\QQ$-algebra. This defines a filtration on $S \times_H S$ whose associated graded is isomorphic to $S \times_{\fr{h}} S = \fr{h}[-1]$. Therefore, the $R$-linear dual of $\co_{S \times_H S}$ is isomorphic to $\co_{\fr{h}^\ast[1]}$.
\end{proof}
\begin{example}\label{ex: rational coh of G mod G}
    Suppose $k = \QQ[u^{\pm 1}]$, and let $\ld{J} = \fr{t}\mmod W \times_{\ld{\g}^\ast/\ld{G}} \fr{t}\mmod W$. Then \cref{eq: G-equiv loc on G} states that there is an equivalence 
    $$\Loc_{{G}_c}^\gr(G_c; k) \otimes_{\QQ} F \simeq \QCoh(\ld{J}),$$
    and the ``constant sheaf'' $\ul{k}^\gr$ in $\Loc_{\ld{G}_c}^\gr(G_c; k)$ is sent to the pushforward of the structure sheaf under the identity section $\delta: \fr{t}\mmod W \to \ld{J}$. Taking endomorphisms, we find that
    $$\End_{\Loc_{{G}_c}^\gr(G_c; k)}(\ul{k}^\gr) \otimes_{\QQ} F \cong \End_{\QCoh(\ld{J})}(\delta_\ast \co_{\fr{t}\mmod W}).$$
    By \cref{lem: endomorphisms of delta sheaf}, the right-hand side admits a (split) filtration whose associated graded is isomorphic to the algebra of functions on $\Lie_{\fr{t}\mmod W}(\ld{J})^\ast[1]$. By \cite[Theorem 3.4.2]{riche}, one finds that the Lie algebra $\Lie_{\fr{t}\mmod W}(\ld{J})$ is isomorphic to the cotangent bundle $T^\ast(\fr{t}\mmod W)$, so that $\Lie_{\fr{t}\mmod W}(\ld{J})^\ast[1]$ is isomorphic to $T[1](\fr{t}\mmod W)$. Its ring of functions is precisely the Hodge cohomology $\Omega^\ast_{\fr{t}\mmod W/F} = \bigoplus (\Omega^i_{\fr{t}\mmod W/F})[-i]$ of $\fr{t}\mmod W$. Summarizing, we have found that there is an isomorphism
    $$\End_{\Loc_{{G}_c}^\gr(G_c; k)}(\ul{k}^\gr) \otimes_{\QQ} F \cong \Omega^\ast_{\fr{t}\mmod W/F}.$$
    
    On the other hand, it follows from the constructions in \cref{sec: degenerations} that there is a filtration on $\cf_G(G_c) = \End_{\Loc_{{G}_c}(G_c; k)}(\ul{k})$ whose associated graded is $\End_{\Loc_{{G}_c}^\gr(G_c; k)}(\ul{k}^\gr)[u^{\pm 1}]$. By the above discussion, the latter is $\Omega^\ast_{\fr{t}\mmod W/F}$. \cref{prop: G-equiv coh of G and HH} shows that $\cf_G(G_c) \otimes_k F[u^{\pm 1}]$ is isomorphic to the Hochschild homology $\HH(\fr{t}\mmod W/F)[u^{\pm 1}]$. There is therefore a filtration on $\HH(\fr{t}\mmod W/F)[u^{\pm 1}]$ whose associated graded is $\Omega^\ast_{\fr{t}\mmod W/F}[u^{\pm 1}]$. This filtration is precisely the Hochschild-Kostant-Rosenberg filtration on Hochschild homology (see, e.g., \cite{antieau-filtration-HP, raksit, toen-hkr} for modern references).
\end{example}
\begin{example}\label{ex: KU of G mod G}
    Suppose $k = \KU$, and assume $G$ is simply-laced and has torsion-free fundamental group. Let $\ld{J}_\mu = T\mmod W \times_{G/\ld{G}} T\mmod W$. Then \cref{eq: G-equiv loc on G} states that there is an equivalence 
    $$\Loc_{{G}_c}^\gr(G_c; \KU) \otimes_{\Z} F \simeq \QCoh(\ld{J}_\mu),$$
    and the ``constant sheaf'' $\ul{\KU}^\gr$ in $\Loc_{\ld{G}_c}^\gr(G_c; \KU)$ is sent to the pushforward of the structure sheaf under the identity section $\delta: T\mmod W \to \ld{J}_\mu$. Taking endomorphisms, we find that
    $$\End_{\Loc_{{G}_c}^\gr(G_c; \KU)}(\ul{\KU}^\gr) \otimes_\Z F \cong \End_{\QCoh(\ld{J}_\mu)}(\delta_\ast \co_{T\mmod W}).$$
    By \cref{lem: endomorphisms of delta sheaf}, the right-hand side admits a (split) filtration whose associated graded is isomorphic to the algebra of functions on $\Lie_{T\mmod W}(\ld{J}_\mu)^\ast[1]$. There is a multiplicative analogue of \cite[Theorem 3.4.2]{riche} which states the Lie algebra $\Lie_{T\mmod W}(\ld{J}_\mu)$ is isomorphic to the cotangent bundle $T^\ast(T\mmod W)$. In particular, $\Lie_{T\mmod W}(\ld{J}_\mu)^\ast[1]$ is isomorphic to $T[1](T\mmod W)$. Its ring of functions is precisely the Hodge cohomology $\Omega^\ast_{T\mmod W/F} = \bigoplus (\Omega^i_{T\mmod W/F})[-i]$ of $T\mmod W$. Summarizing, we have found that there is an isomorphism
    $$\End_{\Loc_{{G}_c}^\gr(G_c; k)}(\ul{k}^\gr) \otimes_{\Z} F \cong \Omega^\ast_{T\mmod W/F}.$$

    On the other hand, it follows from the constructions in \cref{sec: degenerations} that there is a filtration on $\cf_G(G_c) = \End_{\Loc_{{G}_c}(G_c; \KU)}(\KU)$ with associated graded given by $\End_{\Loc_{{G}_c}^\gr(G_c; \KU)}(\ul{\KU}^\gr)[u^{\pm 1}]$. By the above discussion, the latter is $\Omega^\ast_{T\mmod W/F}$. \cref{prop: G-equiv coh of G and HH} shows that $\cf_G(G_c) \otimes_{\KU} F[u^{\pm 1}]$ is isomorphic to the Hochschild homology $\HH(T\mmod W/F)[u^{\pm 1}]$. There is therefore a filtration on $\HH(T\mmod W/F)[u^{\pm 1}]$ whose associated graded is $\Omega^\ast_{T\mmod W/F}[u^{\pm 1}]$. Again, this filtration is precisely the Hochschild-Kostant-Rosenberg filtration on Hochschild homology.
\end{example}
\begin{remark}
    While we are on the topic of the equivariant K-theory of $G_c$, let us note the relationship between \cref{eq: G-equiv loc on G} and the work of Freed-Hopkins-Teleman \cite{fht-i, fht-ii, fht-iii, fht-complex, fht-categorified}.\footnote{Nearly the same perspective can also be found in some of Teleman's talks; e.g., \cite{teleman-slides}.} We will be brief, since we will not use these results below. Associated to a class $\tau \in \H^4(BG_c; \Z)$ is the ``twisted equivariant K-homology'' $\KU^G_\tau(G_c)$. When $\tau$ is sufficiently nondegenerete, Freed-Hopkins-Teleman computed that $\pi_\ast \KU^G_\tau(G_c)$ is isomorphic to $\RU(G)/I^\tau$ for a particular ideal $I^\tau$ (called the ``Verlinde ideal''). The categorification of this isomorphism from \cite{fht-categorified} shows that, associated to $\tau$, there is a map $f: T\mmod W \cong \spec \pi_0 \KU_G \to \AA^1$ such that (under certain hypotheses on $\tau$), there is an isomorphism between $\pi_\ast \KU^G_\tau(G_c) \otimes_\Z F$ and the Jacobian ring of $f$.
    
    This is related to \cref{eq: G-equiv loc on G} in the following manner. Below, we will implicitly base-change all rings from $\Z$ to $F$, to avoid cumbersome notation. Recall from \cite[Equation 3]{fht-i} that there is a spectral sequence
    \begin{equation}\label{eq: twisted k-theory sseq}
        E_1^{\ast,\ast} \cong \pi_\ast \KU_G \otimes_{\pi_\ast \cf_G(\Gr_G)^\vee} \pi_\ast \KU_G \Rightarrow \pi_\ast \KU^G_\tau(G_c).
    \end{equation}
    The tensor product is derived; moreover, the class $\tau$ defines a particular $\pi_\ast \cf_G(\Gr_G)^\vee$-module structure on $\pi_\ast \KU_G$, and one of the tensor factors is given this module structure. (The other tensor factor is given the module structure coming from the augmentation.) Using \cref{thm: ku hmlgy reg centr}, let us view $\spec \pi_\ast \cf_G(\Gr_G)^\vee$ as the ($2$-periodification of) $\ld{J}_\mu$. Then $\tau$ defines a particular closed subscheme $T\mmod W \cong L_\tau \hookrightarrow \ld{J}_\mu$ (which is in fact a Lagrangian), and the $E_1$-page of this spectral sequence can be identified with (the $2$-periodification of) the ring of functions on $L_\tau \times_{\ld{J}_\mu} T\mmod W$. If $L_\tau$ lies in the formal neighborhood of $\ld{J}_\mu$, then we may replace $\ld{J}_\mu$ in this fiber product by its formal completion $\hat{\ld{J}}_\mu$ at the zero section. Since we have implicitly base-changed everything to the characteristic zero field $F$, the argument of \cref{lem: endomorphisms of delta sheaf} further lets us replace $\hat{\ld{J}}_\mu$ by its Lie algebra, which (as mentioned in \cref{ex: KU of G mod G}) is given by $T^\ast(T\mmod W)$. Under this replacement, the map $T\mmod W \cong L_\tau \to \hat{\ld{J}}_\mu$ becomes identified with the map $df: T\mmod W \to T^\ast(T\mmod W)$, where $f: T\mmod W \to \AA^1$ is the map from \cite{fht-categorified}. The derived fiber product $L_\tau \times_{T^\ast(T\mmod W)} T\mmod W$ is precisely the Jacobian ring of $f$; that is to say, the $E_1$-page of the spectral sequence \cref{eq: twisted k-theory sseq} identifies with the Jacobian ring of $f$. If the spectral sequence \cref{eq: twisted k-theory sseq} degenerates at the $E_1$-page, then we conclude that $\pi_\ast \KU^G_\tau(G_c)$ is isomorphic to the Jacobian ring of $f$, as desired.
\end{remark}

In fact, the Hochschild-Kostant-Rosenberg filtrations on $\HH(\fr{t}\mmod W/F)$ and $\HH(T\mmod W/F)$ from \cref{ex: rational coh of G mod G} and \cref{ex: KU of G mod G} both split, since $F$ is of characteristic zero and $\fr{t}\mmod W$ and $T\mmod W$ are smooth schemes. We therefore conclude that there are isomorphisms
\begin{align*}
    \H^\ast_{G_c}(G_c; F[u^{\pm 1}]) & \cong \Omega^\ast_{\fr{t}\mmod W/F}[u^{\pm 1}], \\
    \KU^\ast_{G_c}(G_c) \otimes_\Z F & \cong \Omega^\ast_{T\mmod W/F}[u^{\pm 1}],
\end{align*}
the latter for $G_c$ being simply-laced. (This assumption can be removed with further work.)
The final isomorphism above recovers (the base-change to $F$ of) the isomorphism of Brylinski-Zhang. Arguing as above, one also finds that if $k$ is an elliptic cohomology theory, $G_c$ is simply-laced and has torsion-free fundamental group, and $i: \spec(F[u^{\pm 1}]) \to \cM_G$ is a map with $F$ being an algebraically closed field of characteristic zero, there is an isomorphism of quasicoherent sheaves over $\spec(F[u^{\pm 1}])$:
\begin{equation}\label{eq: kahler diff and G mod G}
    \pi_\ast i^\ast \cf_G(G_c) \cong \Omega^\ast_{\cM_{G,0}/F}[u^{\pm 1}].
\end{equation}
As stated, \cref{eq: kahler diff and G mod G} holds if $k$ is $\QQ[u^{\pm 1}]$, complex K-theory, or elliptic cohomology. One could ask whether \cref{eq: kahler diff and G mod G} holds over the sphere spectrum.
\begin{remark}\label{rmk: miller splitting}
    At least in the case of classical groups, additive isomorphisms of the form discussed in this section follow from stronger statements about splittings of the suspension spectrum $(G_c)_+$. Such statements were proved in \cite{miller-stable-splittings}; let us illustrate this when $G = \GL_n$. For $j\leq n$, let $\Gr_j(\cc^n) = \U(n)/(\U(j) \times \U(n-j))$, and let $\Gr_j(\cc^n)^{\fr{u}(j)}$ denote the Thom spectrum of the vector bundle over $\Gr_j(\cc^n)$ given by the pulling back the adjoint representation of $\U(j)$ along the map $\Gr_j(\cc^n) \to \BU(j)$. Then there is a $\U(n)$-equivariant splitting
    $$(G_c)_+ \simeq \bigoplus_{j=0}^n \Gr_j(\cc^n)^{\fr{u}(j)}.$$
    This induces a splitting of $\cf_G(G_c)$, and hence of $i^\ast \cf_G(G_c)$ for any map $i: \spec(F[u^{\pm 1}]) \to \cM_G$ with $F$ being an algebraically closed field of characteristic zero. One can show that there is an isomorphism
    $$\pi_\ast i^\ast \cf_G(\Gr_j(\cc^n)^{\fr{u}(j)}) \cong \Omega^j_{\cM_{\GL_n,0}/F}[u^{\pm 1}],$$
    so taking the direct sum over $j = 0, \cdots, n$ gives an additive equivalence of the form \cref{eq: kahler diff and G mod G}.
\end{remark}
Although such splittings of $(G_c)_+$ were proved in \cite{miller-stable-splittings} only for classical groups, they can also be extended with some work to the exceptional groups, too.
However, one encounters an important difficulty in trying to extend \cref{eq: kahler diff and G mod G} to a statement about the stable homotopy type of $G_c$ itself. Namely, suppose that \cref{eq: kahler diff and G mod G} holds for $F$ of arbitrary characteristic; in fact, let us even suppose that the \textit{non}equivariant version of \cref{eq: kahler diff and G mod G} holds, i.e., that there is an isomorphism
\begin{equation}\label{eq: ansatz all char}
    \pi_\ast i^\ast \cf(G_c) \cong \Omega^\ast_{\cM_{G,0}/F}[u^{\pm 1}] \otimes_{\co_{\cM_{G,0}}} F
\end{equation}
for any map $i: \spec(F[u^{\pm 1}]) \to \cM_G$.
Motivated by the example of \cref{rmk: miller splitting}, it is natural to wonder whether this putative splitting could arise from a(n additive) splitting of $(G_c)_+$ itself, where the summands of $(G_c)_+$ realize the individual summands $\Omega^j_{\cM_{G,0}/F}[u^{\pm 1}] \otimes_{\co_{\cM_{G,0}}} F$. Unfortunately, this turns out to be impossible, at least if interpreted naively.
\begin{example}
    Suppose $G = \G_2$. Since $\G_2$ is a framed manifold, its top cell stably splits, and so there is a splitting
    $$(\G_2)_+ \simeq S^0 \oplus X \oplus S^{\g_2},$$
    where $S^{\g_2}$ is the one-point compactification of the adjoint representation of $\G_2$, and $X$ is a finite CW-complex with partial cell diagram shown in \cref{fig: cell diagram G2}.
    \begin{figure}[H]
    \begin{tikzpicture}[scale=0.75]
    \draw [fill] (0, 0) circle [radius=0.05];
    \draw [fill] (2, 0) circle [radius=0.05];
    \draw [fill] (3, 0) circle [radius=0.05];
    \draw [fill] (5, 0) circle [radius=0.05];
    \draw [fill] (6, 0) circle [radius=0.05];
    \draw [fill] (8, 0) circle [radius=0.05];
    
    \draw (0,0) to[out=-90,in=-90] node[below] {\footnotesize{$\Sq^2$}} (2,0);
    \draw (2,0) to node[below] {\footnotesize{$\Sq^1$}} (3,0);
    \draw (5,0) to node[below] {\footnotesize{$\Sq^1$}} (6,0);
    \draw (6,0) to[out=-90,in=-90] node[below] {\footnotesize{$\Sq^2$}} (8,0);
    \end{tikzpicture}
    \caption{A partial cell diagram for the stable summand $X$ of $(\G_2)_+$. The dots indicate the cells; starting from the left, the cells lie in dimensions $3,5,6,8,9$, and $11$. The labels represent the action of the Steenrod operations in mod $2$ cohomology.}
    \label{fig: cell diagram G2}
    \end{figure}
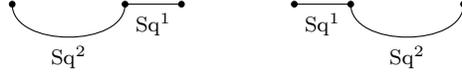
    Let us now take $F$ to be a field of characteristic $2$. Then $\H^\ast_{\G_2}(\ast; F) \cong F[w_4, w_6, w_7]$, where the subscript indicates the cohomological degree. This implies that 
    $$\Omega^\ast_{\H^\ast_{\G_2}(\ast; F)/F} \otimes_{\H^\ast_{\G_2}(\ast; F)} F \cong \Lambda(dw_4, dw_6, dw_7),$$ 
    where $\Lambda$ denotes the exterior algebra on the classes $dw_4$, $dw_6$, and $dw_7$. According to \cref{eq: ansatz all char}, these classes would contribute to $\H^\ast(\G_2; F)$ in cohomological degrees $3,5$, and $6$ respectively. First of all, let us observe that the above ring is \textit{not} isomorphic to $\H^\ast(\G_2; F)$: instead, there is an isomorphism
    $$\H^\ast(\G_2; F) \cong F[dw_4, dw_6]/((dw_4)^4, (dw_6)^2).$$
    Nevertheless, there is an additive isomorphism between $\Omega^\ast_{\H^\ast_{\G_2}(\ast; F)/F} \otimes_{\H^\ast_{\G_2}(\ast; F)} F$ and $\H^\ast(\G_2; F)$, so we can still ask for a stable splitting of $(\G_2)_+$ which realizes the individual summands $\Omega^j_{\H^\ast_{\G_2}(\ast; F)/F} \otimes_{\H^\ast_{\G_2}(\ast; F)} F$. This already fails for $j=1$. Indeed, the $6$-skeleton of $X$ provides a CW-complex $Y$ with a map $Y \to \G_2$ which realizes the inclusion of the subspace $\Omega^1_{\H^\ast_{\G_2}(\ast; F)/F} \otimes_{\H^\ast_{\G_2}(\ast; F)} F \cong F\{dw_4, dw_6, dw_7\}$ into $\H^\ast(\G_2; F)$. However, the map $Y \to \G_2$ cannot stably split (in particular, the $6$-skeleton of $X$ does not stably split off $X$). This was proved ``by hand'' in \cite[Theorem 1.10]{cohen-peterson} using Dyer-Lashof operations.
\end{example}
The preceding example is not special to the non-simply-laced case; one can show that a similar result holds for $G$ of type $E$, too.
\newpage

\appendix

\section{(Not) Lifting $\SL_2$}\label{sec: lifting SL2}
In this brief section, we study the question of lifting $\SL_2$ as a group scheme over $\Z$ to other $\Eoo$-rings like K-theory $\KU$ or the sphere spectrum $S^0$. To set up the question, let us first make the notion of ``lifting'' precise: if $X$ is a scheme over $\Z$ with structure sheaf $\co_X$ and $k$ is an $\Eoo$-ring equipped with an $\Eoo$-map $k \to \Z$, a flat lifting of $X$ to $k$ as an $\E{n}$-scheme (see \cite{francis-thesis}) will mean the data of a sheaf $\co_X^\top$ of $\E{n}$-$k$-algebras on $X$ along with an isomorphism $\co_X^\top \otimes_k \Z \xrightarrow{\simeq} \co_X$. 
It is easy to lift $\GL_n$ to the sphere as an $\Eoo$-scheme (i.e., a spectral scheme in the sense of \cite{SAG}), because it is an open subset in $\AA^{n^2}$. However, we will see in a moment that a simple calculation proves that $\SL_2$ itself cannot be lifted in a natural way (and slight variants of this question lead to very subtle issues that we have been unable to resolve).

Observe that many schemes associated to $\SL_2$ lift all the way to the sphere spectrum. For instance, each choice of Borel subgroup $B\subseteq \SL_2$ (with unipotent radical $U$) defines a surjection $\SL_2 \to \AA^2 - \{0\}$, given by quotienting on the left or the right by $U$. The scheme $\AA^2$ admits a flat lift to a spectral scheme $(\AA^2)_{S^0}$ over $S^0$, given by $\spec S^0[\Z_{\geq 0} \times \Z_{\geq 0}]$. The simple observation is the following:
\begin{prop}\label{prop: E4-lifting SL2}
    There is no flat lifting $(\SL_2)_{S^0}$ of $\SL_2$ to $S^0$ (or even to connective complex K-theory $\ku$) as an $\E{4}$-scheme along with a lifting $(\SL_2)_{S^0} \to (\AA^2)_{S^0}$ of the maps $\SL_2 \to \AA^2 - \{0\} \subseteq \AA^2$.
\end{prop}
\begin{proof}
    Fix a prime $p$, and let $n\geq 1$.
    A flat lifting to $\ku$ of an affine (say) scheme $X = \spec(R)$ over $\Z$ defines power operations on $R$. Indeed, if $\tilde{R}$ is the $\E{n}$-$\ku$-algebra lifting $R$, then $R^\wedge_p \cong \pi_0(L_{K(1)} \tilde{R})$. If $A$ is a $K(1)$-local $\E{2n+1}$-$\KU$-algebra, then $\pi_0(A)$ admits the structure of a ``weak $\delta_n$-ring'', in the sense that there is a map $\delta: \pi_0(A) \to \pi_0(A/p^n)$ of sets (where $A/p^n$ denotes the derived quotient) such that 
    \begin{align*}
        \delta(x+y) & = \delta(x) + \delta(y) - \tfrac{1}{p}((x+y)^p - x^p - y^p) \pmod{p^{n-1}}, \\
        \delta(xy) & = \delta(x) y^p + \delta(y) x^p + p\delta(x) \delta(y) \pmod{p^{n-1}}.
    \end{align*}
    If $A$ refines to an $\E{2n+2}$-$\KU$-algebra, then $\pi_0(A)$ further admits the structure of a ``$\delta_n$-ring'', meaning that the above relations hold modulo $p^n$. (I am grateful to Ishan Levy for a discussion about this.) In this case, the map $\psi: \pi_0(A) \to \pi_0(A/p^{n+1})$ sending $x\mapsto x^p + p\delta(x)$ is a ring map lifting the Frobenius. Observe that
    $$\delta(-x) = \begin{cases}
        -\delta(x) - x^2 & p=2, \\
        -\delta(x) & p>2.
    \end{cases}$$
    The operation $\delta$ is furthermore natural in maps of $K(1)$-local $\E{2n+1}$-$\KU$-algebras. When $n = \infty$, the power operation $\delta$ is constructed in \cite{k1local}, and its construction for finite $n$ is nearly identical.

    The $\Z$-algebra $R = \Z[x]$ admits a canonical lifting to $S^0$ as an $\Eoo$-ring (via $S^0[\Z_{\geq 0}] = S^0[x]$). The corresponding $\delta$-operation on $R$ is simply given by $\delta(x) = 0$. By choosing $U \subseteq \SL_2$ to be the subgroup of upper or lower triangular matrices, one obtains two maps $\SL_2 \to \AA^2$ which send a matrix $\begin{psmallmatrix}
        a & b \\
        c & d
    \end{psmallmatrix}$ to $(a,b)$ and $(d,b)$. The resulting map $f: \co_{\AA^2} \otimes_\Z \co_{\AA^2} \to \co_{\SL_2}$ is a surjection, with kernel given by the determinant ideal $(ad - bc - 1)$. If $\SL_2$ admits a lift to an $\E{4}$-$\ku$-algebra compatibly with the two maps $\SL_2 \to \AA^2$, then the map $f$ must be one of $\delta_1$-rings. It follows that $\delta$ vanishes on the generators $a,b,c,d\in \co_{\SL_2}$. In particular, $\delta(ad) = \delta(a) d^p + \delta(d) a^p + p\delta(a) \delta(d)$ must vanish in $\co_{\SL_2}/p$; similarly for $\delta(bc)$.

    If $p=2$, then
    \begin{align*}
        \delta(ad - bc) & = \delta(ad) + \delta(-bc) + adbc \\
        & = \delta(ad) - \delta(bc) - (-bc)^2 + adbc = bc,
    \end{align*}
    where the final equality is because $\delta(ad) = \delta(bc) = 0$ and $ad - bc = 1$. Similarly, if $p>2$, then
    \begin{align*}
        \delta(ad - bc) & = \delta(ad) + \delta(-bc) - \tfrac{1}{p} ((ad - bc)^p - (ad)^p - (-bc)^p) \\
        & = \tfrac{1}{p} ((bc+1)^p - b^p c^p - 1),
    \end{align*}
    again because $\delta(ad) = \delta(bc) = 0$. The fact that $\delta(ad - bc) = \delta(1) = 0$ implies that for any commutative $\FF_p$-algebra $R$ and a matrix $\begin{psmallmatrix}
        a & b \\
        c & d
    \end{psmallmatrix} \in \SL_2(R)$, the polynomial $\tfrac{1}{p} ((bc+1)^p - b^p c^p - 1)$ must vanish in $R$. This is clearly false: take $R = \FF_p[x]$ and the matrix $\begin{psmallmatrix}
        x+1 & x \\
        1 & 1
    \end{psmallmatrix}$. (One could of course use any prime $p$ to obtain this contradiction; but we allow flexibility in the choice of $p$ to assuade any worries about $\SL_2$ being liftable upon localization at some primes but not others.)
\end{proof}
\begin{remark}\label{rmk: delta and PD}
    Since $\delta(x)$ behaves like the $p$th divided power $-\gamma_p(x)$, the argument of \cref{prop: E4-lifting SL2} can alternatively be interpreted as showing that the ideal which cuts out $\SL_2 \hookrightarrow \GL_2$ does not have a divided power structure, even over $\Z/p^2$.
\end{remark}
Since a weak $\delta_1$-ring structure on a commutative ring $R$ is just a map of sets $\delta: R \to R/p$ satisfying no relations, the above argument does not prove the analogue of \cref{prop: E4-lifting SL2} with $\E{4}$ replaced by $\E{3}$ or $\E{2}$. In particular, \cref{prop: E4-lifting SL2} intriguingly leaves open the possibility that $\SL_2$ admits a lift as an $\E{2}$- or $\E{3}$-scheme to $S^0$ or $\ku$ compatibly with its natural actions on $\AA^2$. Note that the same argument in \cref{prop: E4-lifting SL2} shows that $\SL_n$ also cannot be lifted as an $\E{4}$-scheme to to $S^0$ (or even to connective complex K-theory $\ku$) for any $n\geq 2$ compatibly with its natural actions on $\AA^n$.
\begin{remark}
    The argument of \cref{prop: E4-lifting SL2} is very robust. It can be used to show, for instance, that if $1<k<n-1$, then the Grassmannian $\Gr_k(\AA^n)$ over $\Z$ cannot be lifted as an $\E{4}$-scheme to $S^0$, or even to $\ku$, compatibly with its Pl\"ucker embedding into $\PP(\wedge^k \AA^n) \cong \PP^{\binom{n}{k}-1}$ (which is lifted via the flat projective space of \cite[Construction 5.4.1.3]{SAG}). In fact, an even stronger statement is true: \cite[Theorem 6]{frobenius-on-toric} implies that for a semisimple algebraic group $G$, the flag variety $G/P$ over $\Z/p^2$ does not have a lift of Frobenius as long as the parabolic subgroup $P$ is contained in one of the maximal parabolics enumerated in \cite[Examples 4.3.1-4.3.7]{frobenius-on-toric}. This, in particular, recovers the statement about Grassmannians above. The proof of the general claim uses Bott vanishing, which is more sophisticated than the hands-on approach of \cref{prop: E4-lifting SL2}.
    
    For concreteness, let us demonstrate this non-liftability for $\Gr_2(\AA^4)$, which is cut out inside $\PP^5$ (with coordinates $[x_0: x_1: x_2: y_0: y_1: y_2]$) via the formula $x_0 y_0 - x_1 y_1 + x_2 y_2 = 0$. Let $a = x_0 y_0$, $b = x_1 y_1$, and $c = x_2 y_2$, so that $b = a+c$. Again, $\delta(a) = \delta(b) = \delta(c) = 0$. When $p=2$, for instance, this implies that
    \begin{align*}
        \delta(a - b + c) & = \delta(a) + \delta(-b) + \delta(c) + ab + bc - ac\\
        & = -b^2 + ab + bc - ac = -ac.
    \end{align*}
    For a general prime, one has $\delta(a - b + c) = \tfrac{1}{p} (a^p + c^p - (a+c)^p)$.
    Since $\delta(a-b+c) = \delta(0) = 0$, this implies that $\tfrac{1}{p} (a^p + c^p - (a+c)^p) = 0$; since this function is not identically zero on $\Gr_2(\AA^4)_{\FF_p}$, we obtain the desired contradiction. Note that, just as in \cref{rmk: delta and PD}, this argument says that the ideal cut out by $x_0 y_0 - x_1 y_1 + x_2 y_2$ in $\Z[x_0, \cdots, y_2]/p^2$ does not admit a divided power structure; from this perspective, the above observation should be attributed to Koblitz \cite[Section 3.3(4)]{berthelot-ogus}. 
\end{remark}
\begin{corollary}
    Let $(\GL_2)_{S^0}$ denote the spectral scheme $(\AA^4)_{S^0}[\tfrac{1}{ad- bc}]$, and let $(\GL_1)_{S^0} = \spec S^0[x^{\pm 1}]$. Then the map $\det: \GL_2 \to \GG_m$ over $\Z$ does not lift to a map $(\GL_2)_{S^0} \to (\GL_1)_{S^0}$ exhibiting $(\GL_2)_{S^0}$ as an $\E{4}$-scheme over $(\GL_1)_{S^0}$; in fact, such a lifting is prohibited even over $\ku$.
\end{corollary}
\begin{proof}
    If there was a lifting $(\GL_2)_\ku \to (\GL_1)_\ku$ which exhibits $(\GL_2)_\ku$ as an $\E{4}$-scheme over $(\GL_1)_\ku$, then there would be a map $\ku[x^{\pm 1}] \to \co_{(\GL_2)_\ku} = \ku[a,b,c,d,\tfrac{1}{ad - bc}]$ sending $x\mapsto ad - bc$ which exhibits $\co_{(\GL_2)_\ku}$ as an $\E{4}$-$\ku[x^{\pm 1}]$-algebra. Base-changing along the map $\ku[x^{\pm 1}] \to \ku$ sending $x\mapsto 1$ would then produce an $\E{4}$-$\ku$-algebra $\co_{(\GL_2)_\ku} \otimes_{\ku[x^{\pm 1}]} \ku$ which lifts $\co_{\SL_2}$ to $\ku$. Such a lifting is prohibited by \cref{prop: E4-lifting SL2}. 
\end{proof}
If $(\GL_1^\free)_{S^0} = \spec S^0\{x\}[1/x]$ where $S^0\{x\}$ denotes the free $\Eoo$-ring on one generator, then there \textit{is} a map $(\GL_2)_{S^0} \to (\GL_1^\free)_{S^0}$ exhibiting $(\GL_2)_{S^0}$ as an $\Eoo$-scheme over $(\GL_1^\free)_{S^0}$. However, $(\GL_1^\free)_{S^0}$ is not a flat lift of $\GL_1$ to $S^0$. In other words, there is no reasonable way to construct ``strict'' determinants over the sphere spectrum (or even over $\ku$), at least in the setting of spectral algebraic geometry of $\E{4}$-schemes. 
\begin{remark}
    There \textit{is} a lifting of $\det$ to a map $(\GL_2)_\MU \to (\GL_1)_\MU$ of $\E{2}$-$\MU$-schemes\footnote{However, it does \textit{not} necessarily exhibit $(\GL_2)_\MU$ as an $\E{2}$-scheme over $(\GL_1)_\MU$; so the fiber product $(\GL_2)_\MU \times_{(\GL_1)_\MU} \spec(\MU)$ may not exist in $\E{2}$-$\MU$-schemes. This is one of the quirks of spectral algebraic geometry with $\E{n}$-rings for finite $n$: even if $X$ and $Y$ are spectral schemes with sheaves of $\Eoo$-rings, a map $f: X \to Y$ of $\E{n}$-schemes may not exhibit $X$ as an $\E{n}$-$Y$-scheme. This is because an $\E{n}$-map $A \to B$ between $\Eoo$-rings need not exhibit $B$ as an $\E{n}$-$A$-algebra.}. In fact, any $\E{1}$-$\MU$-algebra map from $\MU[x^{\pm 1}]$ to an even $\E{2}$-$\MU$-algebra $A$ can be refined to an $\E{2}$-$\MU$-algebra map. Indeed, an $\E{1}$-$\MU$-algebra map $f: \MU[x^{\pm 1}] \to A$ can be viewed as the map $g: S^1 \to \BGL_1(A)$ which detects $f(x) \in \pi_1(\BGL_1(A)) = \pi_0(A)^\times$. The map $f$ can be upgraded to an $\E{2}$-$\MU$-algebra map if and only if $g$ can be delooped once. Since $BS^1 = \CP^\infty$ has an even cell structure and $B^2\GL_1(A)$ has even homotopy, there are no obstructions to extending the map $S^2 \to B^2\GL_1(A)$ detecting $f(x)$ along the inclusion $S^2 \to BS^1$. It follows from this discussion that there is an $\E{2}$-$\MU$-algebra map $\det_\MU: \MU[x^{\pm 1}] \to \co_{(\GL_2)_\MU} = \MU[a,b,c,d,\tfrac{1}{ad - bc}]$ which sends $x\mapsto ad - bc$. This, however, only exhibits $\co_{(\GL_2)_\MU}$ as an $\E{1}$-$\MU[x^{\pm 1}]$-algebra. To exhibit it as an $\E{2}$-$\MU[x^{\pm 1}]$-algebra, one would need to upgrade $\det_\MU$ to an $\E{3}$-$\MU$-algebra map, but this seems quite difficult.
\end{remark}

The argument for \cref{prop: E4-lifting SL2} does not require any knowledge of the coalgebra structure on $\co_{\SL_2}$, so it is possible that $\SL_2$ lifts as an $\E{3}$-scheme to $S^0$ or $\ku$, but does not lift as an $\E{1}$-group object therein. The group scheme $\GG_a$ provides a simple example of this phenomenon of lifting as a spectral scheme, but not as a group scheme:
\begin{prop}\label{prop: lifting Ga as group scheme}
    There is no flat lifting $(\GG_a)_{S^0}$ of $\GG_a$ to $S^0$ (or even to connective complex K-theory $\ku$) as an $\E{1}$-group object in $\E{4}$-schemes.
\end{prop}
\begin{proof}
    The $\E{4}$-$\ku$-algebra of functions on the flat lifting $(\GG_a)_\ku$ must be given by $\ku[x]$. The group law on $\GG_a$ is given by the coproduct $\Z[x] \to \Z[x,y]$ sending $x\mapsto x + y$. We therefore need to show that there is no $\E{4}$-$\ku$-algebra map $\Delta:\ku[x] \to \ku[x,y]$ given by $\Delta(x) =  x+y$ on $\pi_0$. This follows from the $\delta_1$-ring structure: we need $\Delta(\delta(x)) = \delta(\Delta(x))$ in $\FF_p[x]$. But $\delta(x) = \delta(y) = 0$, so $\delta(\Delta(x)) = \tfrac{1}{p} (x^p + y^p - (x+y)^p)$ must vanish in $\FF_p[x,y]$, which is a contradiction.
\end{proof}
It was already shown in \cite[Proposition 1.6.20]{elliptic-ii} that there is no flat lifting $(\GG_a)_{S^0}$ of $\GG_a$ to the truncation $\tau_{\leq 1}(S^0)$ as a group scheme; this of course prohibits such a lifting to $S^0$, too. The proof in \textit{loc. cit.} uses the nontriviality of the Hopf element $\eta \in \pi_1(S^0)$. Since this element vanishes in $\ku$, the proof therein cannot be directly adapted to prove \cref{prop: lifting Ga as group scheme}. 
However, let us note that using \cite[Proposition 5.4.9]{rotinv}, one can show that the additive group over $\Z$ admits a flat lifting to a group object in $\E{2}$-schemes over $S^0$.
\newpage

\section{Comparison to Hopkins-Kuhn-Ravenel}\label{sec: comparison hkr}
The calculations of this article (more precisely, the perspective of \cref{rmk: 1-shifted cartier}) were motivated by the work of Hopkins-Kuhn-Ravenel \cite{hkr}, who study the case of finite groups.
In this section, we will describe a relationship to their work. Our discussion will be rather heuristic, and we will sweep a few details under the rug to keep the exposition readable.

Before proceeding, the first thing to note is that while the present article only discusses \textit{connected} compact Lie groups, Hopkins-Kuhn-Ravenel only study \textit{discrete} compact Lie groups (that is, finite groups).  Next, the work of \cite{hkr} only deals with Borel-equivariant cohomology. This means that one does \textit{not} need to assume that the complex-oriented $2$-periodic $\Eoo$-ring $k$ is equipped with an oriented commutative $k$-group $\GG$; recall from \cref{sec: equiv coh} that the purpose of $\GG$ is to provide a decompletion of Borel-equivariant cohomology for compact abelian Lie groups. All that is needed is the formal completion $\hat{\GG}$ of $\GG$ at the identity section. Note that this is not extra data associated to $k$, since $\hat{\GG} = \spf k^{\CP^\infty_+}$. Let $\hat{\GG}_0$ denote the underlying $1$-dimensional formal group over $\pi_0(k)$.

In fact, an even more stringent condition is required of $k$ in \cite{hkr}: not only is it required to be complex-oriented and $2$-periodic, but $\pi_0(k)$ is required to be a complete local Noetherian domain with maximal ideal $\fr{m}$ whose residue field $\pi_0(k)/\fr{m}$ is of characteristic $p>0$, such that $p$ is not nilpotent in $\pi_0(k)$. Let $n$ denote the height of the formal group $\hat{\GG}_0$ base-changed along $\pi_0(k) \to \pi_0(k)/\fr{m}$. In the following discussion, we will simply write $k^0(X)$ to denote $\pi_0$ of the the $k$-cochains on $X$ (instead of the more cumbersome notation $\pi_0 \cf(X)$).

Let $\cc_p$ denote the completion of the algebraic closure of $\QQ_p$, and choose a continuous embedding $\pi_0(k) \to \co_{\cc_p}$. The base-change of $\hat{\GG}_0$ to $\co_{\cc_p}$ defines a formal group law on the maximal ideal of $\co_{\cc_p}$; assume that the base-change of $\hat{\GG}_0$ along the map $\pi_0(k) \to \pi_0(k)/\fr{m}$ has finite height. Then, there exists an exponential isomorphism 
\begin{equation}\label{eq: exponential iso}
    e: (\QQ_p/\Z_p)^n \xrightarrow{\sim} (\fr{m}_{\co_{\cc_p}}, +_{\hat{\GG}_0}),
\end{equation}
where $n$ is the height of $\hat{\GG}_0$. The basic calculation driving the results of \cite{hkr} is the following.
\begin{prop}\label{prop: coh of BZpj}
    There is an isomorphism 
    $$k^0(B\Z/p^j) \cong \pi_0(k)\pw{t}/[p^j](t),$$
    where $[p^j](t) \in \pi_0(k)\pw{t}$ is the $p^j$-series of the formal group law $\hat{\GG}_0$, and $t$ is the first Chern class of the standard character $\Z/p^j \cong \mu_{p^j} \subseteq S^1$. That is, there is an isomorphism $\spf k^0(B\Z/p^j) \cong \hat{\GG}_0[p^j]$.
\end{prop}
\begin{construction}\label{cstr: hkr}
    \cref{prop: coh of BZpj} and the discussion preceding it gives an isomorphism
    $$\spf(k^0(B\Z/p^j)) \otimes_{\spf \pi_0(k)} \spec \cc_p \cong \tfrac{1}{p^j}\Z/\Z,$$
    where the right-hand side denotes the constant group scheme over $\cc_p$. A choice of generator (e.g., $\tfrac{1}{p^j}$) of this group therefore gives a map $k^0(B\Z/p^j) \to \cc_p$.
    Now let $F$ be a finite group, and let $f: \Z_p^n \to F$ be a homomorphism. Then $f$ factors as a map $\Z_p^n \to (\Z/p^j)^n \to F$ for some $j$, so there is a map $k^0(BF) \to k^0(B(\Z/p^j)^n)$. Taking the product of the maps $k^0(B\Z/p^j) \to \cc_p$ described above gives a map $k^0(B(\Z/p^j)^n) \to \cc_p$, which finally defines a composite map
    $$k^0(BF) \to k^0(B(\Z/p^j)^n) \to \cc_p.$$
    This composite depends only on the conjugacy class of $f$, and so this construction defines a map $\Hom(\Z_p^n, F)\mmod F \to \Map(k^0(BF), \cc_p)$, whose adjoint is a map $k^0(BF) \to \Map(\Hom(\Z_p^n, F)\mmod F, \cc_p)$. Here, $F$ acts on $\Hom(\Z_p^n, F)$ by conjugation.
\end{construction}
In the discussion below, $F$ will be a finite group. For simplicity, we will further assume that $k^\ast(BF)$ is concentrated in even degrees (so, by the $2$-periodicity of $k$, it is completely determined by $k^0(BF)$). If $X$ is an $F$-space, the homotopy orbits of $X$ will be denoted $X_{hF}$, while the ordinary quotient of $X$ by the $F$-action will be denoted $X\mmod F$.
\begin{theorem}[Hopkins-Kuhn-Ravenel]\label{thm: hkr}
    The map from \cref{cstr: hkr} defines an isomorphism 
    $$k^0(BF) \otimes_{\pi_0(k)} \cc_p \xrightarrow{\cong} \Map(\Hom(\Z_p^n, F)\mmod F, \cc_p).$$
    The quotient $\Hom(\Z_p^n, F)\mmod F$ can be replaced by the homotopy orbits $\Hom(\Z_p^n, F)_{hF}$, since $F$ is a finite group and its order is invertible in $\cc_p$.
\end{theorem}
Note that the homotopy orbits $\Hom(\Z_p^n, F)_{hF}$ can be identified with $\Map(BT_p^n, BF)$, where $T_p^n = (\QQ_p/\Z_p)^n$ is the $p$-adic $n$-torus.
One can use a ring smaller than $\cc_p$ in \cref{thm: hkr}; essentially, one only needs to extend scalars to the rationalization of the smallest ring containing $\pi_0(k)$ over which the exponential isomorphism \cref{eq: exponential iso} holds.

In \cite{elliptic-iii}, Lurie observes that the isomorphism of \cref{thm: hkr} can be categorified, at least if one assumes the data of a decompletion $\GG$ of $\hat{\GG}$. (We refer the reader to \cite{elliptic-iii} for further details, since the specific setup will not concern us much below.) Namely, if $F$ is a finite group, Lurie defines an $\infty$-category $\Loc_F(\ast; k)$ (denoted by $\mathrm{LocSys}_\GG(BF)$ in \textit{loc. cit.}), and proves the following as (a consequence of) \cite[Theorem 6.4.1]{elliptic-iii}:
\begin{theorem}[Lurie]\label{thm: lurie tempered}
    Fix an embedding $\pi_0(k) \to \cc_p$, so it defines an $\Eoo$-map $k \to \cc_p[u^{\pm 1}]$.
    There is a symmetric monoidal fully faithful embedding
    $$\Loc_F(\ast; k) \otimes_k \cc_p[u^{\pm 1}] \hookrightarrow \Loc(\Map(BT_p^n, BF); \cc_p[u^{\pm 1}]).$$
\end{theorem}
The essential image of the above embedding is described in \cite[Theorem 6.5.13]{elliptic-iii}.

Let us examine the isomorphism \cref{thm: hkr} and the embedding \cref{thm: lurie tempered} further; we will rephrase the right-hand sides of both results as algebro-geometric objects. To do this, note that the exponential isomorphism between $\hat{\GG}_0 \otimes_{\pi_0(k)} \cc_p$ and $(\QQ_p/\Z_p)^n$ defines an isomorphism between $\bD(\hat{\GG}_0) \otimes_{\pi_0(k)} \cc_p$ and $\Z_p^n$. Here, $\bD(\hat{\GG}_0) = \Hom(\hat{\GG}_0, \GG_m)$ is the Cartier dual of $\hat{\GG}_0$.  Note that the $1$-shifted Cartier dual $\hat{\GG}_0^\vee$ can be identified with the classifying stack of $\bD(\hat{\GG}_0)$.

View the finite group $F$ as defining a constant group scheme $\ul{F}$ over $\cc_p$. Since $\hat{\GG}_0^\vee \otimes_{\pi_0(k)} \cc \cong \Z_p^n$, the mapping stack $\Map(\hat{\GG}_0^\vee, B\ul{F})$ is the quotient of the discrete scheme $\ul{\Hom(\Z_p^n, F)}$ by the constant group scheme $\ul{F}$ acting by conjugation. It follows that the $\cc_p$-algebra $\Map(\Hom(\Z_p^n, F)\mmod F, \cc_p)$ is the algebra of functions on the mapping stack $\Map(\hat{\GG}_0^\vee, B\ul{F})$. (Not that since the order of $F$ is invertible in $\cc_p$, the derived and classical algebras of functions agree.) Similarly, $\Loc(\Map(BT_p^n, BF); \cc_p)$ can be viewed as the category of quasicoherent sheaves on the mapping stack $\Map(\hat{\GG}_0^\vee, B\ul{F})$. Therefore, \cref{thm: hkr} and \cref{thm: lurie tempered} can be restated as:
\begin{align}
    \pi_0 k_F \otimes_{\pi_0(k)} \cc_p & \xrightarrow{\cong} \Gamma(\Map(\hat{\GG}_0^\vee, B\ul{F}); \co), \label{eq: restated hkr} \\
    \Loc_F(\ast; k) \otimes_k \cc_p[u^{\pm 1}] & \hookrightarrow \QCoh(\Map(\hat{\GG}_0^\vee, B\ul{F})) \otimes_{\Mod_{\cc_p}} \Mod_{\cc_p[u^{\pm 1}]}. \label{eq: restated lurie tempered}
\end{align}
One can even replace $\hat{\GG}_0$ in the above by $\GG_0$. Observe, now, that $\Map(\GG_0^\vee, B\ul{F})$ is simply the stack $\Bun_{\ul{F}}(\GG_0^\vee)$. 

We can now compare \cref{eq: restated hkr} and \cref{eq: restated lurie tempered} to the discussion in the body of this article. Assume now that $k$ is either $\QQ[u^{\pm 1}]$, $\KU$, or elliptic cohomology. If $G_c$ was instead  a connected compact Lie group, the analogue of \cref{eq: restated hkr} states that $\pi_0 k_{G_c} \otimes_{\pi_0(k)} \cc$ is the ring of (classical, not derived!) global sections of the structure sheaf on $\Bun_G^\ss(\GG_0^\vee)$, where $G$ is the complex reductive group corresponding to $G_c$. This is clear when $\GG_0$ is $\GG_a$ (and $k = \QQ[u^{\pm 1}]$) or $\GG_m$ (and $k = \KU$). In the case when $\GG_0$ is an elliptic curve, this is essentially part of the definition of equivariant elliptic cohomology as sketched in \cite{survey} and constructed in \cite{gepner-meier, t-equiv-tmf}.

Let us continue to assume that $G_c$ is a connected compact Lie group, and further impose that it is simply-laced and almost simple. We will now a give a heuristic argument suggesting that \cref{thm: intro omnibus} -- or rather, its variant from \cref{rmk: g-equiv regular satake elliptic} describing $\Loc_{G_c}^\gr(\Gr_G; k)$ -- can be viewed as an analogue of \cref{eq: restated lurie tempered}. 

Indeed, the rephrasing of \cref{rmk: g-equiv regular satake elliptic} from \cref{rmk: 1-shifted cartier} states that there is an equivalence
\begin{equation}\label{eq: appendix 1-shifted cartier}
    \Loc_{\ld{G}_c}^\gr(\Gr_G; k) \otimes_{\pi_0(k)} \cc \simeq \QCoh(\Bun_{\ld{G}}^\ss(\GG_0^\vee)^\reg).
\end{equation}
The regular locus $\Bun_{\ld{G}}^\ss(\GG_0^\vee)^\reg$ is an open substack of $\Bun_{\ld{G}}^\ss(\GG_0^\vee)$ (whose complement has codimension $\geq 2$, as proved in \cite[Proposition 3.1.16]{davis-elliptic-springer}), and so there is a fully faithful embedding $\QCoh(\Bun_{\ld{G}}^\ss(\GG_0^\vee)^\reg) \hookrightarrow \QCoh(\Bun_{\ld{G}}^\ss(\GG_0^\vee))$. That is, there is a fully faithful embedding
\begin{equation}\label{eq: rephrased-fully faithful}
    \Loc_{\ld{G}_c}^\gr(\Gr_G; k) \otimes_{\pi_0(k)} \cc \hookrightarrow \QCoh(\Bun_{\ld{G}}^\ss(\GG_0^\vee)).
\end{equation}
Assume for the moment that \cref{eq: rephrased-fully faithful} holds if $\ld{G}$ is a finite group $\ld{F}$ (and replace $\cc$ above by $\cc_p$). Of course, it is not clear what the Langlands dual $F$ of $\ld{F}$ should mean; but it is reasonable to believe that, whatever it is, $F$ should be a finite group (or perhaps a finite group scheme). In any case, $\Gr_F$ will just be a point, so the left-hand side of \cref{eq: rephrased-fully faithful} is simply $\Loc_{\ld{F}}^\gr(\ast; k) \otimes_{\pi_0(k)} \cc$. It is reasonable to expect that, thanks to a formality-type statement, the $2$-periodification of the category $\Loc_{\ld{F}}^\gr(\ast; k) \otimes_{\pi_0(k)} \cc$ is equivalent to $\Loc_{\ld{F}}(\ast; k) \otimes_{k} \cc[u^{\pm 1}]$.

Turning to the right-hand side of \cref{eq: rephrased-fully faithful}, note that because $\ld{F}$ is a finite group, there is no meaningful notion of semistability, and so $\Bun_{\ld{F}}^\ss(\GG_0^\vee) = \Bun_{\ld{F}}(\GG_0^\vee)$. With these translations made (so the left-hand side of \cref{eq: rephrased-fully faithful} is replaced by $\Loc_{\ld{F}}(\ast; k) \otimes_{k} \cc[u^{\pm 1}]$, and the right-hand side by the $2$-periodification of $\QCoh(\Bun_{\ld{F}}(\GG_0^\vee))$), \cref{eq: rephrased-fully faithful} is precisely of the form \cref{eq: restated lurie tempered}, as claimed.

\begin{remark}\label{rmk: Bun S2 finite group}
    The above comparison between the quotient $\Gr_G/G\pw{t}$ for a connected compact Lie group $G_c$ and the classifying space $BF$ for a finite group $F$ can be made more precise by noting that $\Gr_G/G\pw{t}$ is homotopy equivalent to the mapping space $\Map(S^2, BG_c) = \Bun_{G_c}(S^2)$, and that if $F$ is a finite group, then $\Bun_F(S^2) = BF$. 
\end{remark}

The work of Hopkins-Kuhn-Ravenel in fact proves a statement which is much more general than \cref{thm: hkr} (and similarly, Lurie's work in \cite{elliptic-iii} yields a much stronger statement than \cref{thm: lurie tempered}). Namely, they prove the following.
\begin{theorem}[Hopkins-Kuhn-Ravenel]\label{thm: upgraded hkr}
    Let $F$ be a finite group, and let $X$ be a finite $F$-space. For each homomorphism $\alpha: \Z_p^n \to F$, let $X^\alpha$ denote the fixed locus of $\im(\alpha)$. Then there is an isomorphism
    $$k^\ast(X_{hF}) \otimes_{\pi_0(k)} \cc_p \xrightarrow{\cong} \H^\ast\left(\left(\sqcup_{\alpha\in \Hom(\Z_p^n, F)} X^\alpha\right)\mmod F;  \cc_p[u^{\pm 1}]\right).$$
\end{theorem}
The isomorphism of \cref{thm: hkr} is the special case when $X$ is a point. In \cite{elliptic-iii}, Lurie shows that \cref{thm: upgraded hkr} is a consequence of a more general statement. If $X$ is a finite $F$-space, Lurie defines an $\infty$-category $\Loc_F(X; k)$ (denoted by $\mathrm{LocSys}_\GG(X//F)$ in \textit{loc. cit.}), and proves the following as \cite[Theorem 6.4.1]{elliptic-iii}:
\begin{theorem}[Lurie]\label{thm: upgraded lurie tempered}
    There is a symmetric monoidal fully faithful embedding
    $$\Loc_F(X; k) \otimes_k \cc_p[u^{\pm 1}] \hookrightarrow \Loc(\Map(BT_p^n, X_{hF}); \cc_p[u^{\pm 1}]).$$
\end{theorem}
The essential image of the above embedding is described in \cite[Theorem 6.5.13]{elliptic-iii}. For the reader interested in chasing down references: specifically, \cref{thm: upgraded lurie tempered} generalizes \cite[Theorem 4.3.2]{elliptic-iii}; the latter implies \cref{thm: upgraded hkr} by \cite[Corollary 4.3.4]{elliptic-iii}. The basic observation is that the mapping space $\Map(BT_p^n, X_{hF})$ is equivalent to $\left(\sqcup_{\alpha\in \Hom(\Z_p^n, F)} X^\alpha\right)_{hF}$. Note that the homotopy quotient $\Hom(\Z_p^n, F)_{hF}$ can be written as a disjoint union $\sqcup_{[\alpha]} BZ(\alpha)$ ranging over conjugacy classes of homomorphisms $\alpha: \Z_p^n \to F$; here $Z(\alpha)$ denotes the centralizer of the image of $\alpha$. Similarly, the homotopy orbits $\left(\sqcup_{\alpha\in \Hom(\Z_p^n, F)} X^\alpha\right)_{hF}$ can be rewritten as the disjoint union $\sqcup_{[\alpha]} X^\alpha_{hZ(\alpha)}$.

\begin{remark}
    One could contemplate a variant of \cref{thm: upgraded hkr} and \cref{thm: upgraded lurie tempered} which replaces $\cc_p$ by other $\Eoo$-$k$-algebras (e.g., over which the base-change of $\hat{\GG}_0$ is not necessarily isomorphic to $(\QQ_p/\Z_p)^n$, but over which it has $(\QQ_p/\Z_p)^j$ as a summand for some $j<n$). The analogues of \cref{thm: upgraded hkr} and \cref{thm: upgraded lurie tempered} in this generality were proved in \cite{stapleton-hkr, stapleton-2} and \cite{elliptic-iii}. 
\end{remark}

Given the analogy between \cref{thm: lurie tempered} and \cref{thm: intro omnibus}, it is natural to ask for an analogue of \cref{thm: upgraded lurie tempered} for connected compact Lie groups. In the following discussion, we suggest an analogy: namely, one could view the $k$-theoretic variant (described for $k = \ku$ in \cite{ku-rel-langlands}) of the local unramified relative Langlands conjecture of \cite{bzsv} as an analogue of the aforementioned results.

To understand this, let us again massage \cref{thm: upgraded hkr} and \cref{thm: upgraded lurie tempered} to a form more suited to algebro-geometric considerations.  We will continue to assume for simplicity that $k^\ast(BF)$ is concentrated in even degrees.
\cref{thm: upgraded hkr} describes how, under the isomorphism of \cref{thm: hkr}, the $k^0(BF) \otimes_{\pi_0(k)} \cc_p$-module $k^\ast(X_{hF}) \otimes_{\pi_0(k)} \cc_p$ decomposes as a module over $\Gamma(\Map(\hat{\GG}_0^\vee, B\ul{F}); \co)$. Similarly, \cref{thm: upgraded lurie tempered} says that there is an explicit $\QCoh(\Map(\hat{\GG}_0^\vee, B\ul{F}))$-module category $\tilde{\cC}_X$ and a fully faithful $\Loc_F(\ast; k) \otimes_k \cc_p[u^{\pm 1}]$-linear embedding
$$\Loc_F(X; k) \otimes_k \cc_p[u^{\pm 1}] \hookrightarrow \tilde{\cC}_X \otimes_{\cc_p} \cc_p[u^{\pm 1}].$$ 
Note that one source of $\QCoh(\Map(\hat{\GG}_0^\vee, B\ul{F}))$-module categories are maps $\ld{L} \to \Map(\hat{\GG}_0^\vee, B\ul{F})$: namely, $\QCoh(\ld{L})$ is a $\QCoh(\Map(\hat{\GG}_0^\vee, B\ul{F}))$-module category. That is, one could imagine that $\tilde{\cC}_X$ is of the form $\QCoh(\ld{L})$ for some such $\ld{L}$ as above which is associated to $X$. (While one can give a somewhat \textit{ad hoc} definition of $\ld{L}$ in terms of the fixed point spaces $X^\alpha$ and their (co)homology\footnote{For instance, take $\ld{L}$ to be the stack $\sqcup_{[\alpha]} \spec(\H^\ast(X^\alpha; \cc_p))/\ul{Z(\alpha)}$. }, it should be rather interesting to intrinsically understand the algebro-geometric properties of $\ld{L}$ directly.)

More generally, recall that the data of a $k$-linear $\infty$-category with $F$-action is just a $\Fun(BF, \Mod_k)$-module category. Since $\Fun(BF, \Mod_k)$ is a completion of the $\infty$-category $\Loc_F(\ast; k)$, one might view the data of a $\Loc_F(\ast; k)$-module category $\cC$ as a decompletion of the notion of a $k$-linear $\infty$-category with $F$-action. One example of such a category is $\Loc_F(X; k)$ for a finite $F$-space $X$. If $\unit_\cC$ is a distinguished object of $\cC$, then $\End_\cC(\unit_\cC) \otimes_{\pi_0(k)} \cc_p$ is a $k^0(BF) \otimes_{\pi_0(k)} \cc_p$-module, and hence a $\Gamma(\Map(\hat{\GG}_0^\vee, B\ul{F}); \co)$-module. One could now ask for a description of this module structure; when $\cC = \Loc_F(X; k)$ and $\unit_\cC$ is the constant sheaf therein, this is precisely answered by \cref{thm: upgraded hkr}.  Similarly, one could ask for an analogue of \cref{thm: upgraded lurie tempered} in this generalized context.
Summarizing, both \cref{thm: upgraded hkr} and \cref{thm: upgraded lurie tempered} can be understood as describing how a $\Loc_F(\ast; k)$-module category decomposes over the mapping stack $\Map(\hat{\GG}_0^\vee, B\ul{F})$. 

Let $G_c$ be a connected, almost simple, simply-laced compact Lie group. Then, as discussed above, the analogue of $\Loc_F(\ast; k)$ is the $\infty$-category $\Loc_{\ld{G}_c}(\Gr_G; k)$. Moreover, the analogue of the tensor product on $\Loc_F(\ast; k)$ is the \textit{convolution} tensor product on $\Loc_{\ld{G}_c}(\Gr_G; k)$ coming, for instance, from the $\ld{G}_c$-equivariant $\E{2}$-space structure on $\Gr_G \cong \Omega G_c$. As mentioned in \cref{rmk: 1-shifted cartier}, the equivalence of \cref{eq: appendix 1-shifted cartier} is monoidal for the convolution tensor product on $\Loc_{\ld{G}_c}(\Gr_G; k)$ and the ordinary tensor product of quasicoherent sheaves on $\Bun_{\ld{G}}^\ss(\GG_0^\vee)^\reg$.

Based on the discussion above, one can interpret the following question as an analogue of \cref{thm: upgraded hkr} and \cref{thm: upgraded lurie tempered}: how does a $\Loc_F(\ast; k)$-module category decompose over $\Bun_{\ld{G}}^\ss(\GG_0^\vee)^\reg$? More precisely, any finite $G_c$-space $X$ should:
\begin{enumerate}
    \item define a $\Loc_{\ld{G}_c}(\Gr_G; k)$-module category $\cC_X$; this is the analogue of the $\Loc_F(\ast; k)$-module category $\Loc_F(X; k)$.
    \item define a fully faithful embedding $\cC_X \hookrightarrow \tilde{\cC}_X$ into an explicit $\QCoh(\Bun_{\ld{G}}^\ss(\GG_0^\vee))$-module category $\tilde{\cC}_X$; this is the analogue of the fully faithful embedding $\Loc_F(X; k) \hookrightarrow \bigoplus_{[\alpha]} \Loc(X^\alpha_{hZ(\alpha)}; \cc_p)$ from \cref{thm: upgraded lurie tempered}.
\end{enumerate}

In the following discussion, we will quietly replace $\Loc_{\ld{G}_c}(\Gr_G; k)$ by $\Loc_{G_c}(\Gr_G; k)$ for conceptual simplicity; this, of course, changes the quasicoherent side, but to avoid getting into more detail than is necessary, we will pretend that the dual side remains unchanged\footnote{If $k = \QQ[u^{\pm 1}]$ and $\GG = \GG_a$, the object $\Bun_{\ld{G}}^\ss(\GG_0^\vee) = \ld{\g}/\ld{G}$ must be replaced by $\ld{\g}^\ast/\ld{G} = \g/\ld{G}$; and similarly, if $k = \KU$ and $\GG = \GG_m$, the object $\Bun_{\ld{G}}^\ss(\GG_0^\vee) = \ld{G}/\ld{G}$ must be replaced by $G/\ld{G}$.}. To describe a candidate for $\cC_X$, recall that the quotient $\Gr_G/G\pw{t}$ is homotopy equivalent to the mapping space $\Map(S^2, BG_c) = \Bun_{G_c}(S^2)$. This, in turn, can be described as the double coset stack $G_c\backslash (LG_c)/G_c$, where $LG_c$ denotes the (topological) free loop space of $G_c$.  Any $G_c$-space $X$ defines an $LG_c$-space $LX$, and the stack $G_c\backslash (LG_c)/G_c$ acts on $(LX)/G_c$ by convolution. That is, the $\infty$-category $\Loc_{G_c}(\Gr_G; k)$ with its convolution tensor product acts on $\Loc_{G_c}(LX; k)$. One could therefore regard the latter category as a candidate for $\cC_X$, and further ask for the following strengthening of (a) and (b) above:
\begin{itemize}
    \item there should be a stack $\ld{L}^\reg$ equipped with a map $\ld{L}^\reg \to \Bun_{\ld{G}}^\ss(\GG_0^\vee)^\reg$ such that there is an equivalence
    $$\cC_X = \Loc_{G_c}(LX; k) \simeq \QCoh(\ld{L}^\reg).$$
    \item the stack $\ld{L}^\reg$ should be an open substack of a larger stack $\ld{L}$, and the map $\ld{L}^\reg \to \Bun_{\ld{G}}^\ss(\GG_0^\vee)^\reg$ extends to a map $\ld{L} \to \Bun_{\ld{G}}^\ss(\GG_0^\vee)$. This gives a fully faithful embedding 
    $$\cC_X \hookrightarrow \tilde{\cC}_X := \QCoh(\ld{L}).$$
\end{itemize}
Note that $\Bun_{\ld{G}}^\ss(\GG_0^\vee)$ is the quotient of $\Bun_{\ld{G}}^\ss(\GG_0^\vee)_\triv$ by $\ld{G}$, so one could equivalently view $\ld{L}$ as the data of a $\ld{G}$-stack $\ld{M}$ equipped with a $\ld{G}$-equivariant map 
$$\mu: \ld{M} \to \Bun_{\ld{G}}^\ss(\GG_0^\vee)_\triv.$$
The relation between $\ld{L}$ and $\ld{M}$ is that $\ld{L} = \ld{M}/\ld{G}$. (There is more to say, regarding shifted symplectic structures \cite{ptvv}, but we refer the reader to \cite[Section 5.2]{ku-rel-langlands} for further discussion.)
\begin{example}
    If $k = \QQ[u^{\pm 1}]$ and $\GG_0 = \GG_a$, then $\ld{M}$ is simply a $\ld{G}$-stack equipped with a $\ld{G}$-equivariant map $\mu: \ld{M} \to \ld{\g}^\ast$. Similarly, if $k = \KU$ and $\GG_0 = \GG_m$, then $\ld{M}$ is simply a $\ld{G}$-stack equipped with a $\ld{G}$-equivariant map $\mu: \ld{M} \to G$.
\end{example}

Suppose $X$ is the analytification of an affine $G$-variety $X_\cc$. In \cite{bzsv}, Ben-Zvi--Sakellaridis--Venkatesh study (under certain additional conditions on $X_\cc$) the full $\infty$-category $\Shv_{G\pw{t}}(X_\cc\ls{t}; \cc)$ as a module over $\Shv_{G\pw{t}}(\Gr_G; \cc)$. The local unramified geometric conjecture of \cite{bzsv} (see \cite[Conjecture 7.5.1]{bzsv}) says -- up to the issue of shearing, which we will ignore here -- that associated to $X_\cc$ is a Hamiltonian $\ld{G}$-stack $\ld{M}$ such that there is an equivalence of categories $\Shv_{G\pw{t}}(X_\cc\ls{t}; \cc) \simeq \QCoh(\ld{M}/\ld{G})$. The data of a Hamiltonian $\ld{G}$-structure on $\ld{M}$ gives, in particular, an $\ld{G}$-equivariant moment map $\ld{M} \to \ld{\g}^\ast$ which makes $\QCoh(\ld{M}/\ld{G})$ into a $\QCoh(\ld{\g}^\ast/\ld{G})$-module category. Moreover, under certain assumptions on $X_\cc$, there is a fully faithful embedding $\Loc_{G_c}(LX; \cc) \hookrightarrow \Shv_{G\pw{t}}(X_\cc\ls{t}; \cc)$. Putting this together, we find a picture exactly like the one described in the preceding paragraph: namely, assuming \cite[Conjecture 7.5.1]{bzsv}, there is a fully faithful embedding
$$\Loc_{G_c}(LX; \cc) \hookrightarrow \Shv_{G\pw{t}}(X_\cc\ls{t}; \cc) \simeq \QCoh(\ld{M}/\ld{G})$$
of $\Loc_{G_c}(LX; \cc)$ into an explicit $\QCoh(\ld{\g}^\ast/\ld{G})$-module category. Therefore, one could view (the $2$-periodification of) \cite[Conjecture 7.5.1]{bzsv} as a conjectural analogue for connected compact Lie groups and $k = \cc[u^{\pm 1}]$ of \cref{thm: upgraded hkr} and \cref{thm: upgraded lurie tempered}.\footnote{Of course, since $F$ is a finite group, \cref{thm: upgraded hkr} and \cref{thm: upgraded lurie tempered} are contentless if $k = \cc[u^{\pm 1}]$; so what we mean by the analogy between \cite[Conjecture 7.5.1]{bzsv} and \cref{thm: upgraded lurie tempered} is that the latter admits a conjectural generalization to connected compact Lie groups, and that the resulting statement specialized to $k = \cc[u^{\pm 1}]$ is still interesting and bears analogy to \cite[Conjecture 7.5.1]{bzsv}.} Motivated by this discussion, we propose in \cite{ku-rel-langlands} that there should be a variant of \cite[Conjecture 7.5.1]{bzsv} for sheaves with coefficients in other $\Eoo$-rings (like connective complex K-theory $\ku$ or elliptic cohomology).
\newpage

\section{Coulomb branches of pure supersymmetric gauge theories}\label{sec: coulomb}
In this brief appendix, we explain some motivation for the results of this article from the perspective of Coulomb branches of $4$d $\cN=2$ and $5$d $\cN=1$ gauge theories with a generic choice of complex structure. The goal here is not to be precise, but instead explain some motivation for the ideas in this article. While reading this appendix, the reader should keep in mind that I know very little physics!
In \cite{bfn-ii, nakajima-coulomb} (see also \cite{nakajima-intro}), it is argued that the Coulomb branch of $3$d $\cN=4$ pure gauge theory on $\RR^3$ can be modeled by the algebraic symplectic variety $\cM_C := \spec \H^{G_c}_\ast(\Gr_G; \cc)$ over $\cc$. This is in turn isomorphic by \cite[Theorem 3]{bf-derived-satake} (reproved here as \cref{cor: loop-rot Gr and biWhit}) to the phase space of the Toda lattice for $\ld{G}$, as well as (by \cite[Theorem A.1]{bfn-ii}) to the moduli space of solutions of Nahm's equations on $[-1,1]$ for a compact form of $\ld{G}$ with an appropriate boundary condition.
The \textit{quantized} Coulomb branch of $3$d $\cN=4$ pure gauge theory on $\RR^3$ is then modeled by $\cA_\epsilon := \H^{G_c\times S^1_\rot}_\ast(\Gr_G; \cc)$. Note that $\cA_\epsilon$ is isomorphic to the algebra of operators of the quantized Toda lattice for $\ld{G}$.

The physical reason for the definition of $\cA_\epsilon$ is the ``$\Omega$-background'' (introduced in \cite{nek-shat}); we refer the reader to \cite{ben-zvi-susy, teleman-icm} for helpful expositions on this topic. The essential idea is as follows: the equivariant homology $C^G_\ast(\Gr_G; \cc)$ admits the structure of an $\Efr{3}$-algebra. In particular, the $\E{3}$-algebra structure on $C^G_\ast(\Gr_G; \cc)$ is equivariant for the action of $S^1$ on $C^G_\ast(\Gr_G; \cc)$ via loop rotation, and the action of $S^1$ on $\E{3}$ via rotation about a line $\ell\subseteq \RR^3$. Using the fact that the fixed points of the $S^1$-action on $\RR^3$ are given by the line $\ell$, it is argued in \cite{ben-zvi-susy} that the homotopy fixed points of $C^G_\ast(\Gr_G; \cc)$ admits the structure of an $\E{1}$-$C_{S^1}^\ast(\ast; \cc)$-algebra. Furthermore, the associative multiplication on $C^{G_c\times S^1_\rot}_\ast(\Gr_G; \cc)$ degenerates to the $2$-shifted Poisson bracket on $\H^{G_c}_\ast(\Gr_G; \cc)$ obtained from the $\E{3}$-algebra structure. The ``$\Omega$-background'' is supposed to refer to the compatibility of the $S^1$-action on $C^G_\ast(\Gr_G; \cc)$ with the $S^1$-action on the $\E{3}$-operad.

From the mathematical perspective, the idea that $S^1$-actions can be viewed as deformation quantizations has been made precise by \cite{preygel, toen-icm}, and more recently in \cite{butson-i, butson-ii}, at least in characteristic zero.  Although often not said explicitly, the idea has been a cornerstone of the development of Hochschild homology and its relatives. (The reader can skip the following discussion, since it will not be necessary in the remainder of this section; we only include it for completeness.) 

Consider a smooth $\cc$-scheme $X$, so that the Hochschild-Kostant-Rosenberg theorem gives an isomorphism $\HH(X/\cc) \simeq \Sym(\Omega^1_{X/\cc}[1])$. There is an isomorphism $\Sym(\Omega^1_{X/\cc}[1]) \simeq \bigoplus_{n\geq 0} (\wedge^n \Omega^1_{X/\cc})[n]$, so $\Sym(\Omega^1_{X/\cc}[1])$ can be understood as a shearing of the algebra $\Omega^\ast_{X/\cc} = \bigoplus_{n\geq 0} (\wedge^n \Omega^1_{X/\cc})[-n]$ of differential forms. The Hochschild-Kostant-Rosenberg theorem further states that the $S^1$-action on $\HH(X/\cc)$ is a shearing of the de Rham differential on $\Omega^\ast_{X/\cc}$. 
    
The Koszul dual of the algebra $\HH(X/\cc) \simeq \Sym(\Omega^1_{X/\cc}[1])$ is $\Sym(T_{X/\cc}[-2]) \simeq \co_{T^\ast[2] X}$; in the same way, the sheaf of differential operators on $X$ is Koszul dual to the de Rham complex of $X$. This can be drawn pictorially as follows:
$$\xymatrix{
\Sym(T_{X/\cc}[-2]) \simeq \co_{T^\ast[2] X} \ar@{~>}[r]^-{\text{def. quant}} \ar@{~>}[d]_-{\text{Koszul dual}} & \cd^\hbar_{X/\cc} \ar@{~>}[d]^-{\text{Koszul dual}} \\
\Sym_{\co_X}(\Omega^1_{X/\cc}[1]) \simeq \HH(X/\cc) \ar@{~>}[r]_-{S^1\text{-action}} & \text{shearing of }(\Omega^\ast_{X/k}, d_\dR).
}$$
Since the algebra $\cd_X^\hbar$ of differential operators is a quantization of $T^\ast[2] X$, this diagram illustrates the idea that the $S^1$-action on Hochschild homology plays the role of a Koszul dual to deformation quantization.

\begin{example}\label{ex: 3d-sl2}
    We will keep $G = \PGL_2$ as a running example in discussing Coulomb branches (see also \cite[Section 2]{seiberg-witten-coulomb}), so that $\ld{G} = \SL_2$. In this case,
    $$\cM_C \cong \spec \cc[x, a^{\pm 1}, \tfrac{a-a^{-1}}{x}]^{\Z/2} \cong \spec \cc[x^2, a+a^{-1}, \tfrac{a-a^{-1}}{x}]$$
    by \cref{thm: ordinary hmlgy reg centr}, where $\Z/2$ acts on $\cc[x, a^{\pm 1}, \tfrac{a-a^{-1}}{x}]$ by $x\mapsto -x$ and $a\mapsto a^{-1}$. 
    This is the regular centralizer group scheme of $\SL_2$. 
    Let us denote by 
    \begin{align*}
        \Phi & = x^2,  \\
        U & = a + a^{-1}, \\
        V & = \tfrac{a-a^{-1}}{x}.
    \end{align*}
    Then we have the single relation
    $$U^2 - \Phi V^2 = (a + a^{-1})^2 - (a - a^{-1})^2 = 4,$$
    so $\cM_C$ is isomorphic to the subvariety of $\AA^3_\cc$ cut out by the above equation. 
    This is known as the \textit{Atiyah-Hitchin manifold}, and was studied in great detail in \cite{atiyah-hitchin} (see \cite[Page 20]{atiyah-hitchin} for the definition). In \cite[Theorem A.1]{bfn-ii}, it was shown that the Atiyah-Hitchin manifold is isomorphic to the moduli space of solutions of Nahm's equations on $[-1,1]$ for $\mathrm{SU}(2)$ with an appropriate boundary condition.

    Since a normal vector to the defining equation of $\cM_C$ is $2U\partial_U - V^2 \partial_\phi - 2V\Phi \partial_V$, the standard holomorphic $3$-form $dU \wedge d\Phi \wedge dV$ on $\AA^3_\cc$ induces a holomorphic symplectic form $\tfrac{d\Phi \wedge dV}{2U}$ on $\cM_C$. (This can also be written as $\tfrac{dU \wedge dV}{V^2}$ or as $\tfrac{d\Phi \wedge dU}{2\Phi V}$.) The associated Poisson bracket on $\co_{\cM_C} \cong \H^{G_c}_\ast(\Gr_G; \cc)$ agrees with the $2$-shifted Poisson bracket arising from the $\E{3}$-structure on $C^{G_c}_\ast(\Gr_G; \cc)$.

    The quantized algebra $\cA_\epsilon$ can be described explicitly as follows. Let us write $\theta = \tfrac{1}{x}(s-1)$, where $s$ is the simple reflection generating the Weyl group of $\SL_2$. Then $\cA_\epsilon$ is generated as an algebra over $\cc\pw{\hbar}$ by $\Z/2$-invariant polynomials in $x$, $a^{\pm 1}$, and $\theta$, where $x$ is to be viewed as $a\partial_a$. Moreover, under the isomorphism $\cA_\epsilon/\hbar \cong \co_{\cM_C}$, the class $x$ is sent to $x$, and $\theta$ is sent to $\tfrac{a-1}{x}$. We then have the commutation relation $[x,a^{\pm 1}] = \pm \hbar a^{-1}$, induced by $[\partial_a, a] = \hbar$; see \cref{ex: ordinary quantized diffop}. This implies that 
    $$[x^2, a^{\pm 1}] = \hbar^2 a^{\pm 1} \pm 2\hbar a^{\pm 1} x,$$
    which in turn implies that $\cA_\epsilon$ is the quotient of the free associative $\cc\pw{\hbar}$-algebra on $\Phi$, $U$, and $V = \tfrac{1}{x}(a-a^{-1})$ subject to the relations
    \begin{align*}
        [\Phi,V] & = 2\hbar U - \hbar^2 V,\\
        [\Phi, U] & = 2\hbar \Phi V - \hbar^2 U, \\
        [U,V] & = \hbar V^2,\\
        U^2 - 4 & = \Phi V^2 - \hbar UV.
    \end{align*}
    Note that the commutation relations for $[\Phi, U]$ and $[U,V]$ in \cite[Equation B.3]{dimofte-garner} have typos, but it is stated correctly in \cite[Equation 5.51]{bullimore-dimofte-gaiotto}. 
\end{example}
\begin{example}
    When $G = \SL_2$, we can identify $\cM_C$ with the quotient of the scheme of \cref{ex: 3d-sl2} by the free $\Z/2$-action sending $U\mapsto -U$ and $V\mapsto -V$; so
    $$\cM_C \cong \spec \cc[x^2, (a+a^{-1})^2, \left(\tfrac{a-a^{-1}}{2x}\right)^2, \tfrac{(a+a^{-1})(a-a^{-1})}{2x}].$$
    This is the regular centralizer group scheme for $\PGL_2$. Note that if we denote
    \begin{align*}
        \Phi & = x^2,\\
        A & = (a+a^{-1})^2,\\
        B & = 4\left(\tfrac{a-a^{-1}}{2x}\right)^2 = \tfrac{(a-a^{-1})^2}{x^2},\\
        C & = 2\tfrac{(a+a^{-1})(a-a^{-1})}{2x} = \tfrac{(a+a^{-1})(a-a^{-1})}{x},
    \end{align*}
    then we have relations
    \begin{align*}
        AB & = C^2,\\
        A - \Phi B & = 4.
    \end{align*}
    In particular, $\cM_C$ is cut out in $\AA^3_\cc$ (with coordinates $\Phi$, $B$, and $C$) via the equation
    $$C^2 - \Phi B^2 = 4B.$$
    Note the similarity to the manifold from \cref{ex: 3d-sl2}: in fact, it is the quotient of the aforementioned manifold by the free $\Z/2$-action sending $U\mapsto -U$ and $V\mapsto -V$. In terms of these coordinates, $B = V^2$ and $C = UV$. (Sometimes, this quotient is also referred to as the Atiyah-Hitchin manifold.) It is also possible to describe $\cA_\epsilon$; we leave this to the reader, since it is rather tedious.
\end{example}

\begin{heuristic}\label{heuristic: 4d-n2}
    An unpublished conjecture of Gaiotto (which I learned about from Nakajima) says that the Coulomb branch of $4$d $\cN=2$ pure gauge theory over $\RR^3 \times S^1$ with a generic choice of complex structure can be modeled by $\cM_C^\fourd := \spec \KU^{G_c}_0(\Gr_G) \otimes_\Z \cc$. Although I do not know Gaiotto's motivation for this conjecture (it is probably inspired by \cite{seiberg-witten-coulomb}), my attempt at heuristically justifying it goes as follows. (In \cite[Appendix C(b)]{ku-rel-langlands}, I suggest that it might be slightly better to consider $\spec \ku^{G_c}_\ast(\Gr_G) \otimes_\Z \cc$ instead, where $\ku$ denotes \textit{connective} complex K-theory. The Bott class generating $\pi_2(\ku)$ plays the role of the radius of the circle $S^1$.)
    
    Recall that $\Gr_G/G\pw{t}$ can be viewed as $\Bun_G(S^2)$. It is reasonable to view $\KU_0(\Bun_G(S^2)) \otimes \cc$ as closely related to $\H_\ast(L\Bun_G(S^2); \cc)$, where $L\Bun_G(S^2)$ denotes the topological free loop space of $\Bun_G(S^2)$. Since $LBG \simeq BLG$, we have $L\Bun_G(S^2) \simeq \Bun_{LG}(S^2)$, so one might view $\H_\ast(L\Bun_G(S^2); \cc)$ as the ring of functions on the ``Coulomb branch of $3$d $\cN=4$ pure gauge theory on $\RR^3$ with gauge group $LG$''.

    Making precise sense of this phrase seems difficult, but one possible workaround could be the following. It is often useful to view gauge theory with gauge group $LG$ as ``finite temperature'' gauge theory with gauge group $G$. Recall that Wick rotation relates $(3+1)$-dimensional quantum field theory at a finite temperature $T$ to statistical mechanics over $\RR^3 \times S^1$ where the circle has radius $\tfrac{1}{2\pi T}$. This suggests that $\H_\ast(L\Bun_G(S^2); \cc)$ (which is more precisely to be replaced by $\KU^{G_c}_0(\Gr_G) \otimes \cc$) can be viewed as the ring of functions on the ``Coulomb branch of $4$d $\cN=2$ pure gauge theory on $\RR^3 \times S^1$ with gauge group $G$''. See \cite[Remark 3.14]{bfn-ii}. In \cite{bfm}, $\spec \KU^{G_c}_0(\Gr_G) \otimes \cc$ was identified with the phase space of the relativistic Toda lattice for $\ld{G}$.

    One can also define a quantization of $\cM_C^\fourd$ via $\cA_\epsilon^\fourd := \KU^{G_c \times S^1_\rot}_0(\Gr_G) \otimes \cc$; this can be viewed as a model for the quantized Coulomb branch of $4$d $\cN=2$ pure gauge theory on $\RR^3\times S^1$. The algebra $\cA_\epsilon^\fourd$ can be identified with the algebra of operators of the quantized relativistic Toda lattice for $\ld{G}$.
\end{heuristic}
\begin{example}\label{ex: 4d-sl2}
    When $G = \PGL_2$, \cref{thm: ku hmlgy reg centr} tells us that 
    $$\cM_C^\fourd \cong \spec \cc[x^{\pm 1}, a^{\pm 1}, \tfrac{a-a^{-1}}{x-1}]^{\Z/2} \cong \spec \cc[x+x^{-1}, a+a^{-1}, \tfrac{(a-a^{-1})(x+1)}{x-1}],$$
    where $\Z/2$ acts by $x\mapsto x^{-1}$ and $a\mapsto a^{-1}$.
    For simplicity, let us consider instead a slight variant of $\cM_C^\fourd = \spec \KU^{\mathrm{PSU}(2)}_0(\Gr_{\PGL_2}) \otimes_\Z \cc$, given by ${\cM'}_C^\fourd = \spec \KU^{\SU(2)}_0(\Gr_{\PGL_2}) \otimes_\Z \cc$. Then
    $${\cM'}_C^\fourd \cong \spec \cc[x^{\pm 1}, a^{\pm 1}, \tfrac{a-a^{-1}}{x-x^{-1}}]^{\Z/2} \cong \spec \cc[x+x^{-1}, a+a^{-1}, \tfrac{a-a^{-1}}{x-x^{-1}}].$$
    Let us write $\Psi = x + x^{-1}$, $W = a+a^{-1}$, and $Z = \tfrac{a-a^{-1}}{x-x^{-1}}$. Then, one easily verifies that ${\cM'}_C^\fourd$ is the subvariety of $\AA^3_\cc$ cut out by the equation
    $$W^2 - (\Psi^2 - 4)Z^2 = 4.$$
    This may be regarded as a multiplicative analogue of the Atiyah-Hitchin manifold. It would be very interesting to understand a relationship between this manifold and the moduli space of solutions to some analogue of Nahm's equations for $\mathrm{PSU}(2)$ with an appropriate boundary condition. The complex manifold ${\cM'}_C^\fourd$ has a holomorphic symplectic form given by $\tfrac{d\Psi \wedge dZ}{W}$, which can also be written as $\tfrac{d\Psi \wedge dW}{(\Psi^2-4)Z}$ or as $\tfrac{dZ \wedge dW}{\Psi Z^2}$.

    It is also possible to explicitly describe the quantized algebra $\cA_\epsilon^\fourd$. The resulting description is not very enlightening, so we will only indicate how one reaches the answer. In this case, instead of the relation $[\partial_a, a] = \hbar$ which appeared in \cref{ex: 3d-sl2}, we have the relation $xa = qax$ (i.e., $xax^{-1} a^{-1} = q$); see \cref{ex: q quantized diffop}. In particular, $xa^{-1} = q^{-1} a^{-1} x$, $x^{-1} a = q^{-1} a x^{-1}$, and $x^{-1} a^{-1} = qa^{-1}x^{-1}$. It follows after some tedious calculation that $\cA_\epsilon^\fourd$ is the quotient of the free associative $\cc[q^{\pm 1}]$-algebra on $\Psi$, $W$, and $\tfrac{x+1}{x-1} (a-a^{-1})$ subject to four relations.

    Suppose we consider instead the variant of $\cA_\epsilon^\fourd$ defined by ${\cA'}_\epsilon^\fourd = \KU^{\SU(2) \times S^1_\rot}_0(\Gr_{\PGL_2}) \otimes \cc$.  Then ${\cA'}_\epsilon^\fourd$ is the quotient of the free associative $\cc[q^{\pm 1}]$-algebra on $\Psi$, $W$, and $Z = \tfrac{1}{x-x^{-1}} (a-a^{-1})$ subject to the relations
    \begin{align*}
        [\Psi, W] & = (q-1)(\Psi^2-4)Z - \tfrac{(q-1)^2}{2q} ((\Psi^2-4)Z + \Psi W), \\
        [\Psi, Z] & = (q-1)W - \tfrac{(q-1)^2}{2q}(\Psi Z + W),\\
        [Z,W] & = (q-1) \Psi Z^2 - \tfrac{(q-1)^2}{2q} (\Psi Z + W)Z, \\
        W^2 - 4 & = (\Psi^2-4)Z^2 - \tfrac{(q-1)^2}{2q} (\Psi^2-4)Z^2 + \tfrac{q^2-1}{2q} \Psi WZ.
    \end{align*}
\end{example}
\begin{remark}
    As always, one can also replace $\KU$ by connective complex K-theory $\ku$. This introduces a new ``Bott'' parameter $\beta$ which recovers $\KU$ when $\beta$ is set to $1$ (informally speaking), and recovers ordinary cohomology when $\beta$ is killed. In \cite[Appendix C(b)]{ku-rel-langlands}, I have suggested that working with $\ku$ instead of $\KU$ in \cref{heuristic: 4d-n2}  should produce the ``Coulomb branch of $4$d $\cN=2$ pure gauge theory on $\RR^3 \times S^1$ with gauge group $G$'' where the Bott parameter $\beta$ identifies with the radius of the circle $S^1$.
    
    In the case of \cref{ex: 4d-sl2}, the resulting manifold ${\cM'}_C^\beta = \spec \ku^{\SU(2)}_\ast(\Gr_{\PGL_2}) \otimes_{\Z[\beta]} \cc[\beta]$ is given by
    $${\cM'}_C^\beta \cong \spec \cc[\beta, x, \tfrac{1}{1+\beta x}, a^{\pm 1}, \tfrac{a-a^{-1}}{x-\ol{x}}]^{\Z/2} \cong \spec \cc[x+\ol{x}, a+a^{-1}, \tfrac{a-a^{-1}}{x-\ol{x}}].$$
    Here, $\ol{x}$ is the inverse of $x$ in the group law $x + y + \beta xy$, so that $\ol{x} = -\tfrac{x}{1+\beta x}$. 
    Let us write $\Psi' = x + \ol{x}$, $W = a+a^{-1}$, and $Z = \tfrac{a-a^{-1}}{x-\ol{x}}$; then ${\cM'}_C^\beta$ is the subvariety of $\AA^3_\cc \times \AA^1_\beta$ cut out by the equation
    $$W^2 - (\beta^2 \Psi' - 4)\Psi' Z^2 = 4.$$
    The complex manifold ${\cM'}_C^\beta$ has a holomorphic symplectic form given by $\tfrac{d\Psi' \wedge dZ}{W}$, which can also be written as $\tfrac{d\Psi' \wedge dW}{2Z\Psi' (\beta^2 \Psi'-4)}$ or as $\tfrac{dZ \wedge dW}{2Z^2(\beta^2 \Psi' - 2)}$.
    When $\beta$ is set to $1$, one can identify $\Psi'$ with $\Psi + 2$ with $\Psi$ as in \cref{ex: 4d-sl2}; then ${\cM'}_C^\beta$ recovers the multiplicative Atiyah-Hitchin manifold. When $\beta$ is killed, ${\cM'}_C^\beta$ is the usual Atiyah-Hitchin manifold. Again, it would be very interesting to understand a relationship between ${\cM'}_C^\beta$ and the moduli space of solutions to some analogue of Nahm's equations for $\mathrm{PSU}(2)$ with an appropriate boundary condition.  It is also possible to compute the loop-rotation equivariant version of ${\cM'}_C^\beta$, but we leave this to the reader.
\end{remark}
\begin{example}
    When $G = \SL_2$, one can view $\cM_C^\fourd$ with the quotient of the scheme ${\cM'}_C^\fourd$ of \cref{ex: 4d-sl2} by the free $\Z/2$-action sending $W\mapsto -W$ and $Z\mapsto -Z$; so
    $$\cM_C^\fourd \cong \spec \cc[x + x^{-1}, (a + a^{-1})^2, \left(\tfrac{a - a^{-1}}{x - x^{-1}}\right)^2, \tfrac{(a - a^{-1})(a + a^{-1})}{x - x^{-1}}].$$
    This is the regular centralizer group scheme for $\PGL_2$. Note that if we denote
    \begin{align*}
        \Psi & = x + x^{-1},\\
        A & = (a+a^{-1})^2 = a^2 + a^{-2} + 2,\\
        B & = \left(\tfrac{a-a^{-1}}{x-x^{-1}}\right)^2 = \tfrac{a^2+a^{-2}-2}{x^2 + x^{-2} - 2},\\
        C & = \tfrac{(a+a^{-1})(a-a^{-1})}{x - x^{-1}} = \tfrac{a^2-a^{-2}}{x-x^{-1}},
    \end{align*}
    then we have relations
    \begin{align*}
        AB & = C^2,\\
        A - (\Psi^2 - 4) B & = 4.
    \end{align*}
    In particular, $\cM_C^\fourd$ is cut out in $\AA^3_\cc$ (with coordinates $\Psi$, $B$, and $C$) via the equation
    $$C^2 - (\Phi^2 - 4) B^2 = 4B.$$
    Note the similarity to \cref{ex: 4d-sl2}. It is also possible to describe $\cA_\epsilon^\fourd$; again, we leave this to the reader, since it is rather tedious.

    Let us again note the variant involving connective complex K-theory $\ku$: in this case, 
    $$\cM_C^\beta \cong \spec \cc[\beta, x + \ol{x}, (a + a^{-1})^2, \left(\tfrac{a - a^{-1}}{x - \ol{x}}\right)^2, \tfrac{(a - a^{-1})(a + a^{-1})}{x - \ol{x}}].$$
    If we denote
    \begin{align*}
        \Psi' & = x + \ol{x},\\
        A & = (a+a^{-1})^2 = a^2 + a^{-2} + 2,\\
        B' & = \left(\tfrac{a-a^{-1}}{x-\ol{x}}\right)^2 = \tfrac{a^2+a^{-2}-2}{x^2 + \ol{x}^2 - 2x\ol{x}},\\
        C' & = \tfrac{(a+a^{-1})(a-a^{-1})}{x - \ol{x}} = \tfrac{a^2-a^{-2}}{x-\ol{x}},
    \end{align*}
    then we have relations
    \begin{align*}
        AB' & = {C'}^2,\\
        A - (\beta^2 \Psi' - 4)\Psi' B & = 4.
    \end{align*}
    In particular, $\cM_C^\beta$ is cut out in $\AA^3_\cc \times \AA^1_\beta$ (with coordinates $\Psi$, $B$, $C$, and $\beta$) via the equation
    $$C^2 - (\beta^2 \Psi' - 4)\Psi' B^2 = 4B.$$
\end{example}

Consider an elliptic curve $E(\cc)$ over $\cc$. Motivated by \cref{heuristic: 4d-n2} and \cite{nakajima-yoshioka}, one might expect that (in some specific complex structure) the Coulomb branch of $5$d $\cN=1$ pure gauge theory over $\RR^3 \times E(\cc)$ can be modeled by the complexification of the $G_c$-equivariant $k$-homology of $\Gr_G$, where $k$ is an elliptic cohomology theory associated to a putative integral lift of $E$. Unfortunately, a classical result of Tate says that there are no smooth elliptic curves over $\Z$ (see \cite{ogg-ell-curve-over-Z} for an elementary proof); so $E(\cc)$ cannot literally lift to $\Z$ (i.e., $\pi_0(k)$ cannot be $\Z$).

As a fix, one can more generally simultaneously consider all possible ``Coulomb branches'' $\cM_C^\fived := \spec \pi_0 \cf_G(\Gr_G)^\vee \otimes \cc$ associated to every complex-oriented $2$-periodic $\Eoo$-ring $k$ equipped with an oriented elliptic curve (this is very nearly the same as considering the universal example $\spec \tmf^{G_c}_0(\Gr_G) \otimes \cc$). We have described $\spec \pi_0 \cf_T(\Gr_G)^\vee \otimes \cc$ in \cref{thm: elliptic hmlgy reg centr}, from which one can calculate $\cM_C^\fived$. Similarly, one can even use \cref{thm: ell loop-rot flag} to calculate $\pi_0 \cf_{T\times \GG_m^\rot}(\Gr_G)^\vee \otimes \cc$ and $\cA_\epsilon^\fived := \pi_0 \cf_{G\times \GG_m^\rot}(\Gr_G)^\vee \otimes \cc$, but this is already incredibly complicated for $G = \SL_2$.

It would be very interesting to give a physical interpretation to $\pi_0 \cf_G(\Gr_G)^\vee \otimes \cc$ and $\pi_0 \cf_{G\times \GG_m^\rot}(\Gr_G)^\vee \otimes \cc$ for other $2$-periodic $\Eoo$-rings $k$, although we expect this to be very difficult. Indeed, most other chromatically interesting generalized equivariant cohomology theories only exist after profinite or $p$-adic completion, and do not admit transcendental analogues; but see \cref{rmk: morava e-theory}. It would also be very interesting to describe the analogue of our calculations for the ind-schemes $\cR_{G,\mathbf{N}}$ introduced in \cite{bfn-ii}. By adapting the methods of \cite[Section 4]{bfn-ii}, this is easy when $G$ is a torus. We expect it to lead to interesting geometry for nonabelian $G$. In \cite{ku-rel-langlands}, we extend the discussion of this paper (at least, the parts concerning ordinary cohomology and K-theory) to connective K-theory, and suggest an analogue of the relative Langlands program of \cite{bzsv} in this setting; as mentioned in the introduction of \cite{ku-rel-langlands}, the story therein also admits an elliptic variant.
\newpage

\bibliographystyle{alphanum}
\bibliography{main}
\end{document}